\documentclass[11pt,twoside]{article}

\usepackage[english]{babel}

\usepackage{amsmath}
\usepackage{amssymb}

\usepackage{mathrsfs}
\usepackage{amsthm}

\usepackage{indentfirst}
\usepackage{color}
\usepackage{txfonts}

\usepackage{anysize}

\usepackage[colorlinks=true,
  linkcolor=blue,
  citecolor=red,
  urlcolor=magenta,
  ]{hyperref}

\allowdisplaybreaks

\newtheorem{theorem}{Theorem}[section]
\newtheorem{lemma}[theorem]{Lemma}

\newtheorem{proposition}[theorem]{Proposition}

\theoremstyle{definition}
\newtheorem{remark}[theorem]{Remark}
\newtheorem{definition}[theorem]{Definition}

\numberwithin{equation}{section}

\begin{document}

\title{\bf\Large Atomic Characterization and Its Applications of
Matrix-Weighted Variable Hardy Spaces
\footnotetext{\hspace{-0.35cm} 2020 {\it
Mathematics Subject Classification}. Primary 42B30; Secondary 42B25, 42B20, 47A56,
46E40, 46E35, 42B35. \endgraf
{\it Key words and phrases}. variable Hardy space, variable matrix weight,
convex-body maximal function, atom,
dual space, Calder\'on--Zygmund operator.
}}
\date{}
\author{}
\author{Yiqun Chen, Dachun Yang\footnote{Corresponding author,
E-mail: \texttt{dcyang@bnu.edu.cn}/{\color{red}\today}/Final version.},\ \ Wen Yuan and
Zongze Zeng}
\maketitle

\vspace{-0.8cm}

\begin{center}
\begin{minipage}{13cm}
{\small {\bf Abstract:}\quad
In this article, by means of the matrix-weighted grand maximal function
we first introduce the variable Hardy space
$H^{p(\cdot)}_W$ on $\mathbb{R}^n$ with the $\mathscr{A}_{p(\cdot),\infty}$
matrix weight $W$ and with the variable exponent $p(\cdot)$
having globally log-H\"older continuity, and then via using several
different convex body valued maximal
functions we establish its various maximal function equivalent
characterizations. Combining a refined Whitney decomposition with
both the convex body valued maximal function and its corresponding
convex-body reducing operator, we obtain the atomic characterization
of $H^{p(\cdot)}_W$. As applications, we give its dual space
and establish the boundedness of Calder\'on--Zygmund operators
from $H^{p(\cdot)}_W$ to the matrix-weighted variable Lebesgue space
$L^{p(\cdot)}_W$ and to itself.
This approach to establishing atomic characterization
differs from all previous ones.
}
\end{minipage}
\end{center}

\vspace{0.15cm}

\tableofcontents

\vspace{0.15cm}

\section{Introduction}

The study of matrix weights can be traced back to
the work of Wiener and Masani \cite{wm58} to develop
the prediction theory of multivariate stochastic processes. In the 1990s,
motivated by problems concerning the angle between the past and the future of
multivariate random stationary processes and the boundedness of inverses of Toeplitz
operators, Treil and Volberg \cite{tv97} introduced the appropriate matrix-valued
analogue of the Muckenhoupt $A_2$ condition on ${\mathbb{R}^n}$. Later, Nazarov and Treil
\cite{nt96}, and independently Volberg \cite{v97} by a different approach,
extended this theory to matrix $A_p$ weights with $p\in(1,\infty)$. Since then,
the theory of matrix weights has attracted considerable attention. In particular,
Christ and Goldberg \cite{cg01,g03} obtained the boundedness of certain maximal
operators and Calder\'on--Zygmund operators adapted to matrix $A_p$ weights.
Nazarov et al. \cite{nptv17} obtained the boundedness of Calder\'on--Zygmund
operators on matrix-weighted $L^2_W$ with operator norm controlled by
$[W]_{A_2}^{\frac 32}$, and Domelevo et al. \cite{dptv24} later proved that the
exponent $\frac 32$ is indeed sharp.
In addition, Bownik and Cruz-Uribe
\cite{bc22} extended the Jones factorization theorem and the Rubio de Francia
extrapolation theorem to matrix Muckenhoupt weights.
Moreover, matrix weights arise naturally in partial differential equations
\cite{cmr,IM} and in the theory of multivariate stationary processes
\cite{tv97,tv99}. We refer to
\cite{dhl20,dkps24,llor24,llor24 2,v24} for recent studies on the boundedness
of operators on matrix-weighted Lebesgue spaces and to
\cite{kn26,n13,nr18,ns21,ns25,ns26} for matrix weights on more general bases.
Besides matrix-weighted Lebesgue spaces, matrix-weighted theory
has also been developed for more refined function spaces.
A systematic study of matrix-weighted
Besov and Triebel--Lizorkin spaces was initiated by Frazier and Roudenko
\cite{fr04,fr21,r03,r04}. More recently, Bu et al.
\cite{bhyy23,bhyy23 2,bhyy23 3,bhyy24} developed the theory of
matrix-weighted Besov--Triebel--Lizorkin-type spaces.
For more results on other matrix-weighted function spaces,
we refer to \cite{byyz25,cyy25,cp23,kg25,n25,yyz25 2}.
It is then natural to develop a corresponding Hardy
space theory in the matrix-weighted setting,
which is the main target of the present article.

Recall that the Hardy space was introduced by Stein and Weiss
\cite{sw60} and was further developed by Fefferman and Stein in their seminal
work \cite{fs72}. In the scalar weighted setting,
Garc\'ia-Cuerva \cite{g79}, Bui \cite{b81}, and Str\"omberg and Torchinsky
\cite{st89} established a systematic theory of weighted Hardy spaces. Related
weighted and generalized Hardy-type spaces, together with their applications,
were further studied by Bonami et al. \cite{bnv10,bnv11}, Bui \cite{b14},
Ky \cite{ky14}, Yang et al. \cite{ylk17}, Ho \cite{h13,h17,h19}, Cruz-Uribe and Wang
\cite{cw14}, Nakai and Sawano \cite{ns12}, Sawano et al. \cite{shyy17}, and
Izuki et al. \cite{inns23}.
Hardy spaces and their variants have also played an important role in harmonic
analysis and partial differential equations; see, for example,
\cite{bdt21,clms93,cw77,gnns19,gnns21,yyz21,zyyw21} and
\cite{bdl18,bdl20,m94}, as well as the recent developments in
\cite{bd25,bd25 2,bhh23,bhh24,bl24}. For comprehensive introductions
of Hardy spaces and real-variable methods, we refer to the monographs
\cite{d01,gcrdf85,g14,g14 2,mc97,s93,t86}.
In particular, motivated by the aforementioned classical 
real-variable theory of Hardy spaces, Bu et al.
\cite{bcyy24} recently initiated the study of matrix-weighted Hardy spaces
with the matrix $A_p$ weight which, when $p\in (0,1]$ and $m=1$,
coincides with the scalar Muckenhoupt
$A_1$ weight.

On the other hand, the theory of variable Muckenhoupt $\mathcal{A}_{p(\cdot)}$ weights was
initiated by Cruz-Uribe et al. \cite{cdh11}. Subsequently,
Cruz-Uribe and Penrod \cite{cp23} introduced variable
matrix $\mathscr{A}_{p(\cdot)}$ weights,
while Yang et al. \cite{yyz25}
further extended them to variable matrix
$\mathscr{A}_{p(\cdot),\infty}$ weights.
We refer to \cite{cf13,cfn12,dhr17,np25} for more studies
on variable Lebesgue spaces and their corresponding (matrix) weights
and to \cite{ah10,dhr09,wgx24,x08} for more studies on variable exponent
function spaces. Variable Hardy spaces were introduced by Nakai and
Sawano \cite{ns12} and Cruz-Uribe and Wang \cite{cw14}, and weighted variable
Hardy spaces were later studied by Ho \cite{h17}. We refer to
\cite{cmn19,cmn20,cn21,h19,inns23} for further developments on variable Hardy
spaces and related operator estimates.

In this article, by means of the matrix-weighted grand maximal function
we first introduce the variable Hardy space
$H^{p(\cdot)}_W$ on $\mathbb{R}^n$ with the $\mathscr{A}_{p(\cdot),\infty}$
matrix weight $W$ and with the variable exponent $p(\cdot)$
having the globally log-H\"older continuity, and then via using several
different convex body valued maximal
functions we establish its various maximal function equivalent
characterizations. Combining a refined Whitney decomposition with
both the convex body valued maximal function and its corresponding
convex-body reducing operator, we obtain the atomic characterization
of $H^{p(\cdot)}_W$. As applications, we give its dual space
and establish the boundedness of Calder\'on--Zygmund operators
from $H^{p(\cdot)}_W$ to the matrix-weighted variable Lebesgue space
$L^{p(\cdot)}_W$ and to itself.
It is worth mentioning that, even when $p(\cdot)\equiv p$ with $p\in(0,1]$
is a constant exponent, this article also extends the atomic characterization
and its applications in \cite{bcyy24,cyy25} from matrix $A_p$ weights
to matrix $A_{p,\infty}$ weights,
which in the scalar case is exactly the extension
from the Muckenhoupt $A_1$ weight to the Muckenhoupt $A_\infty$ weight.

This generalization needs an improved approach for the atomic
decomposition, which is essentially different from the one used in \cite{bcyy24}.
Indeed, the key of the approach used in \cite{bcyy24} is the level-set construction.
A level set is determined by single or finite scalar quantities related
to vector-valued functions under consideration and therefore records
only their sizes, but not directions. However, in the matrix-weighted setting,
the direction of vector-valued functions under consideration matters under the action
of the matrix weight. Thus, the level-set construction has to involve the whole
matrix-weighted object such as $W\vec f$, which in turn leads to a stronger
assumption of the matrix weight under consideration,
namely $W\in A_p$ in \cite{bcyy24}. We escape this by introducing the convex-body
reducing operator, which represents the local average of convex body valued
functions under consideration as a positive definite matrix, i.e, a discretization
to a certain extent.
This average serves as a vector-valued
substitute for the scalar level in the classical level-set method. Thus,
instead of comparing a pointwise quantity with a number such as $2^j$,
in this article we introduce the level set $E_Q$ in \eqref{eq}
with $Q$ being a dyadic cube via comparing the pointwise convex-valued function with
its local average on a given cube. This, combined with
the stopping-time construction appearing in the proof
of \cite[(2-12)]{ccdo17} (namely a refined Whitney decomposition,
which is formulated as Lemma \ref{stop coll}), yields the atomic
characterization of $H^{p(\cdot)}_W$, which allows us to weaken the
assumption on matrix weights from $A_p$ in \cite{bcyy24} to
$\mathscr{A}_{p(\cdot),\infty}$ in the present article.
This approach to establishing atomic characterization
differs from all previous ones.

In addition, as mentioned above we introduce several convex body valued maximal functions,
which provide a convenient way to separate the matrix action from
the construction of the matrix-weighted maximal functions. This also plays a crucial
role in establishing the atomic characterization and the mutual
equivalences among various maximal function characterizations of $H^{p(\cdot)}_W$.
Moreover, Lemmas \ref{f dec eq le} and \ref{a atom eq le} serve as substitutes
of the Fefferman--Stein type vector-valued inequality in the scalar case, which is commonly used in
the variable Hardy space theory. These two estimates are frequently used in the proofs of the
atomic characterization and the boundedness of Calder\'on--Zygmund operators.

The organization of the remainder of this article is as follows.

Section \ref{sec weight} consists of three subsections.
Subsection \ref{vlspace} recalls some basic facts on variable
Lebesgue spaces, Subsection \ref{sec weight 2}
collects some necessary properties of variable matrix ${\mathscr A}_{p(\cdot),\infty}$
weights, and Subsection \ref{sec convex} recalls the elementary notions on
convex body valued functions.

In Section \ref{sec Hardy}, we recall the definition of
several matrix-weighted maximal functions (see Definition \ref{def all M}),
and, moreover, we introduce the corresponding convex body valued maximal functions
(see Definition \ref{def convex max}) and show their measurability (see Lemma \ref{meas M}).
After that, we introduce the matrix-weighted variable Hardy space $H^{p(\cdot)}_W$
in terms of the matrix-weighted grand maximal function (see Definition \ref{def Hardy})
and establish its various equivalent characterizations (see Theorem \ref{H equal}).
Furthermore, as applications, we give several fundamental properties of $H^{p(\cdot)}_W$,
such as embedding into the space of tempered distributions and its completeness
(see Propositions \ref{H embed} and \ref{H comp}).
Finally, we prove that $H^{p(\cdot)}_W$
coincides with the matrix-weighted variable Lebesgue space $L^{p(\cdot)}_W$
when $p(\cdot) \in {\mathcal P}\cap LH$ and $W\in {\mathscr A}_{p(\cdot)}$ (see Theorem \ref{H p W}).

In Section \ref{sec atom}, we first introduce a natural variant of classical atoms in the
variable matrix-weighted setting (see Definition \ref{def atom})
and then establish the atomic characterization of $H^{p(\cdot)}_W$
(see Theorem \ref{atom con}).
To achieve this goal, we first
give two vector-valued inequalities for $H^{p(\cdot)}_W$
(see Lemmas \ref{f dec eq le} and \ref{a atom eq le}),
which are substitutes for the commonly used Fefferman--Stein inequalities
on variable Hardy spaces.
Then we prove a density result (namely Proposition \ref{H dense}),
which is used to obtain the other density result
(namely Proposition \ref{H dense O}).
Finally, we introduce the convex-body reducing operator
(see Lemma \ref{M u exist}).
With the aid of the convex body valued maximal function
and its corresponding convex-body reducing operator,
together with a refined Whitney decomposition Lemma \ref{stop coll},
Proposition \ref{H dense O}, and Lemmas \ref{f dec eq le} and \ref{a atom eq le},
we establish the atomic characterization of $H^{p(\cdot)}_W$.

In Section \ref{sec dual}, using the atomic characterization,
we prove that the reducing matrix-weighted variable Campanato space
(see Definition \ref{def Com}) is the dual space of $H^{p(\cdot)}_W$
(see Theorem \ref{dual Com}).

Finally, in Section \ref{sec CZ}, applying the atomic characterization,
we show that, for any $p(\cdot)\in {\mathcal P}_0\cap LH$
and $W\in {\mathscr A}_{p(\cdot),\infty}$, Calder\'on--Zygmund operators
are bounded from $H^{p(\cdot)}_W$ to $L^{p(\cdot)}_W$ and from $H^{p(\cdot)}_W$ to itself
(see Theorem \ref{bound CZ}).

We end this introduction by making some notational conventions.
Throughout this article, we work in $\mathbb{R}^n$
and, unless otherwise specified, we always take $\mathbb{R}^n$
as the default underlying space.
Let $\mathbb{Z}$ be the collection of all integers,
$\mathbb{Z}_+:=\{0,1,\dots\}$, $\mathbb{N} := \{1,2,\dots\}$,
and $\mathbb{Q}$ be the set of all rational numbers.
For any $\gamma := (\gamma_1,\dots,\gamma_n)\in \mathbb{Z}^n_+$,
let $|\gamma| := \gamma_1 + \cdots + \gamma_n$ and, for any
$x := (x_1,\dots,x_n) \in \mathbb{R}^n$, let $x^\gamma := x_1^{\gamma_1}\cdots x_n^{\gamma_n}$
and $D^\gamma := (\frac{\partial}{\partial x_1})^{\gamma_1}\cdots(\frac{\partial}{\partial x_n})^{\gamma_n}$.
For any measurable set $E$ in $\mathbb{R}^n$,
denote by the \emph{symbol $\mathscr{M}(E)$} the set
of all measurable functions on $E$
and, when $E = \mathbb{R}^n$, simply write $\mathscr{M}(\mathbb{R}^n)$ as $\mathscr{M}$.
In addition, we use the symbol $L^p_{\rm loc}$ with $p\in (0,\infty)$ to denote
the set of all locally $p$-integrable functions on $\mathbb{R}^n$
and use the symbol $C^{\infty}_{\rm c}$ to denote the set of
all infinitely differentiable functions on $\mathbb{R}^n$ with compact support.
For any $x\in\mathbb{R}^n$ and $r\in (0,\infty)$,
the \emph{open ball $B(x,r)$} is defined to be the set $\{y\in\mathbb{R}^n:\ |x-y|< r\}$ and let
$\mathbb{B} := \{B(x,r):\ x\in\mathbb{R}^n\ \text{and}\ r\in(0,\infty)\}.$
A \emph{cube} $Q$ in $\mathbb{R}^n$ always has finite edge length
and edges of cubes are always assumed to be parallel to the coordinate axes,
but $Q$ is not necessary to be open or closed.
For any cube $Q$ in $\mathbb{R}^n$,
we use $l(Q)$ to denote its edge length and $c_Q$ its center.
If $E$ is a measurable set in $\mathbb{R}^n$,
then we denote by $\mathbf{1}_E$ its \emph{characteristic function}
and, for any bounded measurable set $E\subset \mathbb{R}^n$ with $|E| \neq 0$
and for any $f\in L^1_{\rm loc}$,
let $$\fint_E f(x)\,dx := \frac{1}{|E|} \int_E f(x)\,dx.$$
For any $p\in [1,\infty]$, let $p'$ be its conjugate number,
that is, $\frac1p+\frac{1}{p'} = 1$.
We always use $C$ to denote a positive constant
independent of the main parameters involved,
but it may vary from line to line.
The notation $f\lesssim g$ means $f\leq  Cg$
and, if $f\lesssim g\lesssim f$, we then write $f\sim g$.
The \emph{notation $s\to0^+$} means that there exists
$c_0\in (0,\infty)$ such that $s\in(0,c_0)$ and $s\to0$.
Finally, in all proofs we
consistently retain the notation introduced
in the original theorem (or related statement).

\section{Preliminaries}\label{sec weight}

This section is devoted to some necessary preliminaries.
We first recall some basic properties of variable Lebesgue spaces (Subsection \ref{vlspace}),
then recall some results of matrix $\mathscr{A}_{p(\cdot),\infty}$ weights (Subsection \ref{sec weight 2}),
and finally present several elementary properties of convex body valued functions (Subsection \ref{sec convex}).
\subsection{Variable Lebesgue Spaces}\label{vlspace}
We begin with the definition of exponent functions.
A measurable function $p:\ \mathbb{R}^n\to(0,\infty]$ is called an \emph{exponent function}.
We use the \emph{symbol $\mathcal{P}$} to denote the set of all exponent functions $p:\ \mathbb{R}^n \rightarrow [1,\infty]$,
and we use the \emph{symbol $\mathcal{P}_0$} to denote the set of all exponent functions $p:\ \mathbb{R}^n \rightarrow (0,\infty]$
satisfying $\mathop{\rm{ess}\inf}_{x\in \mathbb{R}^n} p(x) > 0$.
For any $p(\cdot)\in \mathcal{P}_0$ and any measurable set $E$ in $\mathbb{R}^n$,
let
$$ p_-(E) := \mathop{\rm{ess}\inf}_{x\in E} p(x)\ \  \text{and}\ \  p_+(E) := \mathop{\rm{ess}\sup}_{x\in E} p(x); $$
moreover, write $p_- := p_-(\mathbb{R}^n)$ and $p_+ := p_+(\mathbb{R}^n)$.

Then we recall the definition of variable Lebesgue spaces
(see, for instance, \cite[Definition 2.16]{cf13}).
\begin{definition}\label{def Leb}
The \emph{variable Lebesgue space $L^{p(\cdot)}$} associated
with $p(\cdot)\in \mathcal{P}_0$
is defined to be the set of all $f\in\mathscr{M}$ such that
$$ \|f\|_{L^{p(\cdot)}} := \inf\left\{ \lambda\in (0,\infty):\ \rho_{L^{p(\cdot)}}\left(\frac{f}{\lambda}\right) \leq 1 \right\} < \infty, $$
where $\rho_{L^{p(\cdot)}}$ is the \emph{variable exponent modular}
defined by setting
$$\rho_{L^{p(\cdot)}} (f) :=  \int_{\mathbb{R}^n\setminus \Omega_\infty} \left|f(x)\right|^{p(x)}\,dx
+ \mathop{\rm{ess}\sup}_{x\in \Omega_\infty} |f(x)| $$
with $\Omega_\infty := \{x\in\mathbb{R}^n:\ p(x) = \infty\}$.
\end{definition}

The following log-H\"older continuous condition of variable exponents
(see, for instance, \cite[Definition 2.2]{cf13}) is frequently used in the theory of variable function spaces.

\begin{definition}\label{def LH}
A measurable real-valued function $r$ on $\mathbb{R}^n$ is said to be
\emph{locally log-H\"older continuous},
denoted by $r(\cdot) \in LH_0$,
if there exists a positive constant $C_0$
such that, for any $x,y\in\mathbb{R}^n$ with $|x-y| < \frac12$,
\begin{align*}
|r(x)-r(y)| \leq -\frac{C_0}{\log|x-y|}.
\end{align*}
A measurable real-valued function $r$ on $\mathbb{R}^n$
is said to be \emph{log-H\"older continuous at infinity},
denoted by $r(\cdot) \in LH_\infty$,
if there exist positive constants $r_\infty$ and $C_{\infty}$
such that, for any $x\in \mathbb{R}^n$,
\begin{align*}
|r(x)-r_\infty| \leq \frac{C_\infty}{\log(e+|x|)}.
\end{align*}
Furthermore, a measurable real-valued function $r$  on $\mathbb{R}^n$ is said to be
\emph{globally log-H\"older continuous},
denoted by $r(\cdot) \in LH$,
if $r(\cdot)$ is both locally log-H\"older continuous
and log-H\"older continuous at infinity.
\end{definition}

We now recall some basic properties of $L^{p(\cdot)}$ which are needed later.
In what follows, for any $p(\cdot) \in \mathcal{P}_0$ and any cube $Q$ in $\mathbb{R}^n$,
let $$p_Q:= \left[\fint_{Q} \frac{1}{p(x)}\,dx\right]^{-1}$$
and, for any $p(\cdot)\in\mathcal P$, the conjugate $p'(\cdot)$ of $p(\cdot)$ is defined to be the exponent function such that
$\frac{1}{p(x)} + \frac{1}{p'(x)} = 1$ for almost every $x\in{\mathbb{R}^n}$.
Also, if a positive constant $C$ depends on some indices associated with $p(\cdot)$ or,
more precisely, $C$ depends on some of $\{p_-,p_+,p_\infty,C_0,C_\infty\}$,
then we simply say that \emph{$C$ depends on $p(\cdot)$}.
The following lemma is precisely \cite[Theorem 4.5.7]{dhr17}.

\begin{lemma}\label{est Q}
Let $p(\cdot) \in \mathcal{P}\cap LH$.
Then, for any cube $Q$ in $\mathbb{R}^n$,
$$ \left\| \mathbf{1}_Q \right\|_{L^{p(\cdot)}}\sim\left| Q \right|^{\frac{1}{p_Q}},
\ \
\left\| \mathbf{1}_Q \right\|_{L^{p'(\cdot)}}\sim \left| Q \right|^{\frac{1}{p'_Q}},
\ \ \text{and}\ \
\left\| \mathbf{1}_Q \right\|_{L^{p(\cdot)}}\left\| \mathbf{1}_Q \right\|_{L^{p'(\cdot)}} \sim |Q|, $$
where the positive equivalence constants depend only on $p(\cdot)$ and $n$.
\end{lemma}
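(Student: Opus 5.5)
We split the argument into the three estimates in the order stated: the first two are instances of one claim, and the third follows from them by the log-H\"older continuity of $p(\cdot)$.

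\emph{First estimate.} Since $p(\cdot)\in\mathcal P\cap LH$, we have $p_-\ge1$, and $1/p$ is bounded. Write $\ell:=l(Q)$. We treat small and large cubes separately.

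Suppose first $\ell\le1$. Local log-H\"older continuity gives the standard consequence $|Q|^{-(1/p_-(Q)-1/p_+(Q))}\lesssim1$, where the bound depends only on $C_0$ and $n$. Hence $|Q|^{1/p(x)}\sim|Q|^{1/p_Q}$ uniformly for $x\in Q$. It follows that the modular of $\mathbf 1_Q/(c|Q|^{1/p_Q})$ is comparable to $|Q|^{-1}\int_Q 1\,dx=1$, up to constants. Adjusting $c$ and using the monotone dependence of the modular on the scaling parameter, we obtain $\|\mathbf 1_Q\|_{L^{p(\cdot)}}\sim|Q|^{1/p_Q}$.

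The case $p=\infty$ on part of $Q$ needs a little care. There the modular contains the ess sup term, and $1/p$ is simply $0$, so the same comparison still works.

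\emph{Large cubes.} Suppose now $\ell>1$. We compare with $p_\infty$ using the $LH_\infty$ condition. First, $|Q|^{1/p_Q}\sim|Q|^{1/p_\infty}$, because
\[
\left|\frac1{p_Q}-\frac1{p_\infty}\right|\le\fint_Q\frac{C}{\log(e+|x|)}\,dx\lesssim\frac1{\log(e+\ell)} .
\]
This holds since most of $Q$ lies at distance $\gtrsim\ell$ from the origin, in an averaged sense; one can make this precise by splitting $Q$ into $Q\cap B(0,\sqrt\ell)$ and its complement. Second, $\|\mathbf 1_Q\|_{L^{p(\cdot)}}\sim\|\mathbf 1_Q\|_{L^{p_\infty}}=|Q|^{1/p_\infty}$. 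Here we use the standard device from \cite{cf13}: for $|f|\le1$, the integrals $\int|f|^{p(x)}$ and $\int|f|^{p_\infty}$ differ by at most $\int(e+|x|)^{-N}\,dx$. Applying this to $f=\mathbf 1_Q/\lambda$ with $\lambda\sim|Q|^{1/p_\infty}\ge1$ requires $|f|\le 1$, which holds since $\lambda\ge1$.

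\emph{Second estimate.} Because $p(\cdot)\in LH$ with $p_->1$ or $p_-\geq 1$, the conjugate exponent $p'$ also lies in $LH$, or is infinite in the appropriate sense. Therefore the second estimate is the same claim applied to $p'$.

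\emph{Third estimate.} This follows by multiplying the first two, since $1/p_Q+1/p'_Q=\fint_Q(1/p+1/p')=1$.

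\emph{Main obstacle.} The delicate step is the large-cube comparison, in particular controlling the part of $Q$ near the origin when $Q$ is large. I would first try a direct modular estimate with the weight $(e+|x|)^{-N}$, citing the standard lemma from \cite{cf13} rather than reproving it. In fact, since the statement is exactly \cite[Theorem 4.5.7]{dhr17}, the cleanest proof is simply to cite it, and the above is the outline of its argument.
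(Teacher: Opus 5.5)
As a citation, your proposal matches the paper exactly: the paper gives no proof and simply quotes \cite[Theorem 4.5.7]{dhr17}. Your self-contained sketch is fine for the first estimate when $p_+<\infty$, which is automatic under the real-valued Definition \ref{def LH}. It is also fine for the third estimate.

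One precision point. The device from \cite{cf13}, for $0\le F\le1$, reads $\int_E F^{p(y)}\,dy\le C\int_E F^{p_\infty}\,dy+\int_E(e+|y|)^{-Np_-}\,dy$, together with its reverse. So the two integrals agree up to a multiplicative constant plus an additive error, not "differ by at most". Consequently you should pass from $\rho_{L^{p(\cdot)}}(\mathbf 1_Q/|Q|^{1/p_\infty})\le A$ to $\|\mathbf 1_Q\|_{L^{p(\cdot)}}\le A|Q|^{1/p_\infty}$ via $\rho(f/A)\le\rho(f)/A$ for $A\ge1$, which is valid since $p\ge1$.

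\textbf{The gap.} The step that fails as written is reducing the second estimate to the first one applied to $p'$. Since $p\in\mathcal P$ allows $p_-=1$, the exponent $p'$ can be unbounded and equal to $\infty$ on the set where $p=1$. It can even have $p'_\infty=\infty$: take $p(x)=1+1/\log(e+|x|)$, so $p'(x)=1+\log(e+|x|)$. Then $p'\notin LH$ in the sense of Definition \ref{def LH}; only $1/p'=1-1/p$ is log-H\"older.

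Your small-cube step survives, because it only uses the oscillation of $1/p$, which $1/p'$ shares, and you do handle the $\Omega_\infty$ term. Your large-cube step does not survive. Comparing with $L^{p_\infty}$ needs $p_\infty<\infty$ and $|p(y)-p_\infty|\lesssim 1/\log(e+|y|)$, and both can fail for $p'$.

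\textbf{How to close it.} Either run the whole argument under the weaker hypothesis $1/p\in LH$, with a modular that accommodates $p=\infty$; this is exactly why the class in \cite{dhr17} is stable under conjugation. Or treat $p'$ directly for $l(Q)>1$:
\begin{itemize}
\item The lower bound $\|\mathbf 1_Q\|_{L^{p'(\cdot)}}\gtrsim |Q|/\|\mathbf 1_Q\|_{L^{p(\cdot)}}\sim|Q|^{1/p'_Q}$ follows from Lemma \ref{Holder} and the first estimate.
\item Upper bound when $p'_\infty=\infty$: then $p'(y)\ge\log(e+|y|)/C_\infty$. Hence $\int_{\mathbb{R}^n\setminus\Omega_\infty}K^{-p'(y)}\,dy+K^{-1}\le1$ for large $K$, giving $\|\mathbf 1_Q\|_{L^{p'(\cdot)}}\lesssim1\sim|Q|^{1/p'_Q}$.
\item Upper bound when $p'_\infty<\infty$: on $\{|y|\ge R_0\}$ with $R_0$ large, one has $p'\le2p'_\infty$ and $|p'(y)-p'_\infty|\lesssim1/\log(e+|y|)$, so your device applies there. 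The ball $B(\mathbf 0,R_0)$ contains $\Omega_\infty$ and adds at most $(|B(\mathbf 0,R_0)|+1)\lambda^{-1}$ to the modular for $\lambda\ge1$.
\end{itemize}
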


We recall the following H\"older's inequality in variable Lebesgue space,
which is exactly \cite[Theorem 2.26]{cf13}.

\begin{lemma}\label{Holder}
Let $p(\cdot) \in \mathcal{P}$.
If $f\in L^{p(\cdot)}$ and $g\in L^{p'(\cdot)}$,
then $fg\in L^1$ and, moreover,
$$ \int_{\mathbb{R}^n} \left| f(x)g(x) \right|\,dx \lesssim \|f\|_{L^{p(\cdot)}} \|g\|_{L^{p'(\cdot)}}, $$
where the implicit positive constant depends only on $p(\cdot)$.
\end{lemma}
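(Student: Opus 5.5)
Writing $\Omega_\infty:=\{x:\ p(x)=\infty\}$, $\Omega_1:=\{x:\ p(x)=1\}$ and $\Omega_*:=\mathbb{R}^n\setminus(\Omega_\infty\cup\Omega_1)$, I would prove the inequality by normalization, splitting $\mathbb{R}^n$ according to the values of $p(\cdot)$ and applying pointwise Young's inequality on the part where $1<p(x)<\infty$. Note that $p'(x)=\infty$ exactly on $\Omega_1$ and $p'(x)=1$ on $\Omega_\infty$.

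First, by homogeneity I may assume $\|f\|_{L^{p(\cdot)}}=\|g\|_{L^{p'(\cdot)}}=1$; the degenerate case where either norm vanishes is trivial, since then $f=0$ or $g=0$ almost everywhere. The modular is left-continuous in $\lambda$ (monotone convergence for the integral part, and the ess sup part scales linearly), so the infimum in Definition \ref{def Leb} is attained. Hence $\rho_{L^{p(\cdot)}}(f)\le1$ and $\rho_{L^{p'(\cdot)}}(g)\le1$. Concretely, this gives:
\begin{itemize}
\item $\int_{\Omega_*\cup\Omega_1}|f|^{p(x)}\,dx\le1$ and $|f|\le1$ almost everywhere on $\Omega_\infty$;
\item $\int_{\Omega_*\cup\Omega_\infty}|g|^{p'(x)}\,dx\le1$ and $|g|\le1$ almost everywhere on $\Omega_1$.
\end{itemize}

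Next, split $\int_{\mathbb{R}^n}|fg|=I_\infty+I_1+I_*$ according to $\Omega_\infty$, $\Omega_1$ and $\Omega_*$.
\begin{itemize}
\item On $\Omega_\infty$, $|fg|\le|g|=|g|^{p'(x)}$, so $I_\infty\le1$.
\item On $\Omega_1$, symmetrically $|fg|\le|f|=|f|^{p(x)}$, so $I_1\le1$.
\item On $\Omega_*$, Young's inequality $ab\le\frac{a^{p}}{p}+\frac{b^{p'}}{p'}$, applied pointwise with $p=p(x)$, gives $|fg|\le|f|^{p(x)}+|g|^{p'(x)}$, so $I_*\le2$.
\end{itemize}
Altogether $\int|fg|\le4$. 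Rescaling then yields the lemma with constant $4$, which depends on nothing at all and is in particular admissible as "depending only on $p(\cdot)$". Integrability of $fg$ follows at once.

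The only delicate step is verifying that the normalization really gives modular at most $1$, which is the left-continuity argument above. If one prefers to avoid it, one can normalize by $\lambda=(1+\varepsilon)\|f\|_{L^{p(\cdot)}}$, which gives modular at most $1$ directly from the definition of the infimum, and then let $\varepsilon\to0^+$. Everything else is routine bookkeeping of the three regions, taking care that the ess sup terms in the modular are used only on the sets where the dual exponent equals $\infty$.
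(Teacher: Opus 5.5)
Your argument is correct and matches the proof behind this statement: the paper gives no argument of its own but quotes \cite[Theorem 2.26]{cf13}, whose proof is exactly your route of normalizing so that $\rho_{L^{p(\cdot)}}(f)\le 1$ and $\rho_{L^{p'(\cdot)}}(g)\le 1$, splitting into $\Omega_\infty$, $\Omega_1$, $\Omega_*$, and applying Young's inequality on $\Omega_*$, which gives a constant at most $4$. One cosmetic slip: monotone convergence gives continuity of $\lambda\mapsto\rho_{L^{p(\cdot)}}(f/\lambda)$ as $\lambda\downarrow\|f\|_{L^{p(\cdot)}}$, i.e.\ from the right rather than ``left-continuity in $\lambda$'', and this is what attainment of the infimum needs; your $(1+\varepsilon)$ normalization avoids the point entirely.
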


As a consequence of Lemmas \ref{est Q} and \ref{Holder}, we immediately obtain
the following conclusion, which is precisely, for instance, \cite[Lemma 2.8]{yyz25}.

\begin{lemma}\label{est fQ}
Let $p(\cdot) \in \mathcal{P}\cap LH$.
Then, for any $f\in \mathscr{M}$ and any cube $Q$ in $\mathbb{R}^n$,
\begin{align*}
\fint_Q \left|f(x)\right|\,dx \lesssim \frac{1}{\|\mathbf{1}_Q\|_{L^{p(\cdot)}}} \left\| f \mathbf{1}_Q \right\|_{L^{p(\cdot)}},
\end{align*}
where the implicit positive constant depends only on $p(\cdot)$ and $n$.
\end{lemma}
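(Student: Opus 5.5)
The estimate follows by applying H\"older's inequality (Lemma \ref{Holder}) to $|f|\mathbf{1}_Q$ and $\mathbf{1}_Q$, and then replacing the conjugate norm using Lemma \ref{est Q}. We may assume $\|f\mathbf{1}_Q\|_{L^{p(\cdot)}}<\infty$; otherwise there is nothing to prove.

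Write
$$\int_Q |f(x)|\,dx=\int_{\mathbb{R}^n}\left|f(x)\mathbf{1}_Q(x)\right|\mathbf{1}_Q(x)\,dx.$$
Since $p(\cdot)\in\mathcal P$, Lemma \ref{Holder} applies with $f\mathbf{1}_Q\in L^{p(\cdot)}$ and $\mathbf{1}_Q\in L^{p'(\cdot)}$. Here $\|\mathbf{1}_Q\|_{L^{p'(\cdot)}}<\infty$ by Lemma \ref{est Q}. This gives
$$\int_Q |f(x)|\,dx\lesssim \left\|f\mathbf{1}_Q\right\|_{L^{p(\cdot)}}\left\|\mathbf{1}_Q\right\|_{L^{p'(\cdot)}}.$$

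Next, the third equivalence in Lemma \ref{est Q} gives
$$\left\|\mathbf{1}_Q\right\|_{L^{p'(\cdot)}}\sim \frac{|Q|}{\|\mathbf{1}_Q\|_{L^{p(\cdot)}}},$$
with constants depending only on $p(\cdot)$ and $n$. Substituting this into the previous bound and dividing by $|Q|$ yields
$$\fint_Q |f(x)|\,dx\lesssim \frac{1}{\|\mathbf{1}_Q\|_{L^{p(\cdot)}}}\left\|f\mathbf{1}_Q\right\|_{L^{p(\cdot)}},$$
which is the claim.

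The only step needing care is this replacement of $\|\mathbf{1}_Q\|_{L^{p'(\cdot)}}$ by $|Q|/\|\mathbf{1}_Q\|_{L^{p(\cdot)}}$. It is uniform over all cubes precisely because $p(\cdot)\in LH$, and it is exactly the content of Lemma \ref{est Q}. Tracking constants, the implicit constant comes from Lemma \ref{Holder} (depending only on $p(\cdot)$) and from Lemma \ref{est Q} (depending only on $p(\cdot)$ and $n$). Hence it depends only on $p(\cdot)$ and $n$, as stated.
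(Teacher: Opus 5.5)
Your proposal is correct and follows exactly the route the paper indicates: Hölder's inequality (Lemma \ref{Holder}) applied to $f\mathbf{1}_Q$ and $\mathbf{1}_Q$, followed by the bound $\|\mathbf{1}_Q\|_{L^{p'(\cdot)}}\lesssim |Q|/\|\mathbf{1}_Q\|_{L^{p(\cdot)}}$ from Lemma \ref{est Q}. The paper says the result is immediate from these two lemmas and cites \cite[Lemma 2.8]{yyz25}, so there is nothing to add.
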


The following one is exactly \cite[Theorem 2.34]{cf13}.

\begin{lemma}\label{fg Lp}
Let $p(\cdot) \in \mathcal{P}$.
Then, for any $f\in \mathscr{M}$,
$f\in L^{p(\cdot)}$ if and only if
$$ \left\|f\right\|'_{L^{p(\cdot)}} := \sup_{\|g\|_{L^{p'(\cdot)}} \leq 1}\int_{\mathbb{R}^n}  |f(x)g(x)| \,dx < \infty $$
and, moreover, $ \|f\|_{L^{p(\cdot)}} \sim \|f\|'_{L^{p(\cdot)}}$,
where the positive equivalence constants depend only on $p(\cdot)$.
\end{lemma}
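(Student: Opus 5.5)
The statement in question is Lemma \ref{fg Lp}, the norm-conjugate formula for $L^{p(\cdot)}$. I would prove the two directions separately.

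\emph{Easy direction.} Let $f\in L^{p(\cdot)}$. For every $g$ with $\|g\|_{L^{p'(\cdot)}}\le 1$, Lemma \ref{Holder} gives
$$\int_{\mathbb{R}^n}|fg|\,dx\lesssim \|f\|_{L^{p(\cdot)}}.$$
Taking the supremum over such $g$ yields $\|f\|'_{L^{p(\cdot)}}\lesssim\|f\|_{L^{p(\cdot)}}$.

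\emph{Reverse inequality: reductions.} Assume first that $f\in L^{p(\cdot)}$ with $\|f\|_{L^{p(\cdot)}}=1$; by homogeneity this suffices. I would then construct one admissible $g$. Split $\mathbb{R}^n$ into three sets: $\Omega_1:=\{p=1\}$, $\Omega_\infty$, and $\Omega_*:=\{1<p<\infty\}$. By the definition of the Luxemburg norm, $\rho_{L^{p(\cdot)}}(f/\lambda)>1$ for every $\lambda<1$. Hence one of the three pieces of the modular of $f$ carries a fixed proportion of it, and I choose $g$ according to which piece that is.
\begin{itemize}
\item On $\Omega_*$, take $g:=|f|^{p(\cdot)-1}\mathbf 1_{\Omega_*}$. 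Since $(p(x)-1)p'(x)=p(x)$, we have $\rho_{L^{p'(\cdot)}}(g)=\int_{\Omega_*}|f|^{p}\le 1$, so $\|g\|_{L^{p'(\cdot)}}\le1$. Moreover $\int|fg|=\int_{\Omega_*}|f|^p$.
\item On $\Omega_1$, take $g:=\mathbf 1_{\Omega_1}$; note that $p'=\infty$ there.
\item On $\Omega_\infty$, take $g:=\mathbf 1_E/|E|$, where $E\subset\Omega_\infty$ is a set of finite positive measure on which $|f|$ is close to $\|f\mathbf 1_{\Omega_\infty}\|_{L^\infty}$; note that $p'=1$ there.
\end{itemize}
Each choice is admissible and gives $\int|fg|$ bounded below by a constant. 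Combining with the easy direction, $\|f\|_{L^{p(\cdot)}}\lesssim\|f\|'_{L^{p(\cdot)}}$ with constants depending only on $p(\cdot)$.

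\emph{Main obstacle.} The key step is converting "modular $\ge 1$ at scale $\lambda<1$" into a quantitative lower bound on the relevant piece, since the three pieces of the modular scale differently. I would use the elementary facts $\rho(f/\lambda)\le\lambda^{-p_+}\rho(f)$ when $p_+<\infty$, and $\rho(f)\le\|f\|^{p_-}$ for $\|f\|\le1$. The delicate case is $p_+=\infty$. There I would bound each piece directly: $\|f\mathbf 1_{\Omega_\infty}\|_\infty\le1$, and the remaining integral part is at least $1-\|f\mathbf 1_{\Omega_\infty}\|_\infty$ after letting $\lambda\to1^-$ and using monotone convergence.

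\emph{Finite-$\|f\|'$ implies $f\in L^{p(\cdot)}$.} Here I would apply the estimate above to the truncations $f_k:=\min\{|f|,k\}\mathbf 1_{B(0,k)}$, which lie in $L^{p(\cdot)}$. This gives $\|f_k\|_{L^{p(\cdot)}}\lesssim\|f_k\|'\le\|f\|'$. Then the Fatou property of the norm, which follows from monotone convergence in the modular, yields $\|f\|_{L^{p(\cdot)}}\le\liminf_k\|f_k\|_{L^{p(\cdot)}}<\infty$.
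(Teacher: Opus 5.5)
Your easy direction, the case $p_+<\infty$, and the final truncation/Fatou step are fine. For comparison, the paper gives no argument here and simply quotes \cite[Theorem 2.34]{cf13}. The gap is in your treatment of $p_+=\infty$.

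\textbf{Where the argument fails.} As $\lambda\to1^-$, the integrands $(|f|/\lambda)^{p(x)}$ \emph{decrease} to $|f|^{p(x)}$, so monotone convergence does not apply. You would need a finite majorant, that is, $\rho_{L^{p(\cdot)}}(f/\lambda_0)<\infty$ for some $\lambda_0<1$. This fails whenever $p$ is unbounded on $\{p<\infty\}$.

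Concretely, take $n=1$, $p(x):=1/x$ on $(0,1)$, $p:=2$ elsewhere, and $f:=\mathbf{1}_{(0,\epsilon)}$.
\begin{itemize}
\item For every $\lambda<1$, $\rho_{L^{p(\cdot)}}(f/\lambda)=\int_0^\epsilon\lambda^{-1/x}\,dx=\infty$.
\item For $\lambda\ge1$, $\rho_{L^{p(\cdot)}}(f/\lambda)\le\epsilon$, so $\|f\|_{L^{p(\cdot)}}=1$ while $\rho_{L^{p(\cdot)}}(f)=\epsilon$.
\item Here $\Omega_1=\Omega_\infty=\emptyset$, so your claimed bound ``integral part $\ge 1-\|f\mathbf{1}_{\Omega_\infty}\|_{L^\infty}=1$'' is false.
\item Your test function $g=|f|^{p(\cdot)-1}\mathbf{1}_{\Omega_*}=\mathbf{1}_{(0,\epsilon)}$ gives $\int|fg|=\epsilon$, and the other two choices are vacuous.
\end{itemize}
So no lower bound uniform in $f$ comes out. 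The underlying issue is that for unbounded $p$ the modular evaluated exactly at the norm can be arbitrarily small. Normalizing $g$ at scale $1$ is therefore the wrong move.

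\textbf{How to repair it.} Never pass to the limit inside the modular. Fix $\lambda<1$, so that $\rho_{L^{p(\cdot)}}(f/\lambda)>1$; then one of its three pieces exceeds $\frac13$.

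In the $\Omega_*$ case, use increasing truncations such as $E_k:=\Omega_*\cap B(\mathbf{0},k)\cap\{p\le k,\ |f|\le k\}$. These give a set $E$ with $R:=\int_E(|f|/\lambda)^{p(x)}\,dx\in(\tfrac13,\infty)$. Put
\[
g:=\max\{1,R\}^{-1}(|f|/\lambda)^{p(\cdot)-1}\mathbf{1}_E .
\]
Since $p'\ge1$ and $(p-1)p'=p$, you get $\rho_{L^{p'(\cdot)}}(g)\le\max\{1,R\}^{-1}R\le1$. Moreover $\int|fg|=\lambda R/\max\{1,R\}\ge\lambda/3$.

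The $\Omega_1$ and $\Omega_\infty$ cases go through exactly as you wrote, but at scale $\lambda$. Letting $\lambda\to1^-$ gives $\|f\|'_{L^{p(\cdot)}}\ge\frac13$.

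Alternatively, run your bounded-exponent argument on $f\mathbf{1}_{\{p\le k\}\setminus\Omega_\infty}$, where the modular is continuous in $\lambda$. Treat $\Omega_\infty$ separately, combine the pieces with the triangle inequality, and let $k\to\infty$ using the Fatou property you already invoke in your last step.
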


On the following convexification of variable Lebesgue spaces,
we refer to,  for instance, \cite[Proposition 2.18]{cf13} and
\cite[Lemma 3.2.6]{dhr17}.

\begin{lemma}\label{con f}
Let $p(\cdot) \in\mathcal{P}_0$ with $p_+ < \infty$.
Then, for any $r \in (0,\infty)$ and $f\in\mathscr{M}$,
$ \|f\|_{L^{rp(\cdot)}} = \|\, |f|^r \|^\frac1r_{L^{p(\cdot)}} $.
\end{lemma}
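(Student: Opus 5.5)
The plan is to argue directly from Definition \ref{def Leb}, showing that the two infima defining $\|f\|_{L^{rp(\cdot)}}$ and $\|\,|f|^r\|_{L^{p(\cdot)}}^{1/r}$ run over families of $\lambda$ that correspond under $\lambda\mapsto\lambda^r$. First, since $p_+<\infty$, the set $\Omega_\infty=\{x:\ p(x)=\infty\}$ is null. The same holds for the exponent $rp(\cdot)$, because $(rp)_+=rp_+<\infty$. Therefore the ess-sup term in the modular vanishes for both exponents, and both modulars reduce to pure integrals:
$$\rho_{L^{rp(\cdot)}}(g)=\int_{\mathbb{R}^n}|g(x)|^{rp(x)}\,dx,\qquad \rho_{L^{p(\cdot)}}(h)=\int_{\mathbb{R}^n}|h(x)|^{p(x)}\,dx .$$
We also note that $rp(\cdot)\in\mathcal{P}_0$, so $L^{rp(\cdot)}$ is defined.

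The key step is a pointwise identity. For any $\lambda\in(0,\infty)$ and almost every $x$,
$$\left|\frac{f(x)}{\lambda}\right|^{rp(x)}=\left(\frac{|f(x)|^r}{\lambda^r}\right)^{p(x)} .$$
Integrating gives $\rho_{L^{rp(\cdot)}}(f/\lambda)=\rho_{L^{p(\cdot)}}(|f|^r/\lambda^r)$. Consequently $\lambda$ lies in the admissible set $A:=\{\lambda:\ \rho_{L^{rp(\cdot)}}(f/\lambda)\le1\}$ if and only if $\mu=\lambda^r$ lies in $B:=\{\mu:\ \rho_{L^{p(\cdot)}}(|f|^r/\mu)\le1\}$. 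Thus $B=\{\lambda^r:\ \lambda\in A\}$.

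To conclude, note that $t\mapsto t^r$ is an increasing bijection of $(0,\infty)$ onto itself, so it commutes with taking infima; this gives $\inf B=(\inf A)^r$. Taking $r$-th roots yields $\|f\|_{L^{rp(\cdot)}}=\|\,|f|^r\|^{1/r}_{L^{p(\cdot)}}$. The empty case is included: $A=\emptyset$ if and only if $B=\emptyset$, and then both sides equal $\infty$ under the convention $\inf\emptyset=\infty$.

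There is no real obstacle here. The only point needing care is the first step: the hypothesis $p_+<\infty$ is exactly what removes the $\Omega_\infty$ term. Without it, the sup term would scale like $\lambda^{-1}$ for $rp(\cdot)$ but like $\lambda^{-r}$ for $p(\cdot)$, and the equivalence of the admissible sets $A$ and $B$ would fail.
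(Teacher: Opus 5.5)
Your proof is correct and complete. It is the standard direct verification from Definition~\ref{def Leb}: the identity $\rho_{L^{rp(\cdot)}}(f/\lambda)=\rho_{L^{p(\cdot)}}(|f|^r/\lambda^r)$, once $p_+<\infty$ makes $\Omega_\infty$ null, combined with the order isomorphism $\lambda\mapsto\lambda^r$ of $(0,\infty)$. The paper gives no proof of its own and simply cites \cite[Proposition 2.18]{cf13} and \cite[Lemma 3.2.6]{dhr17}, where the identity comes from this same modular computation.

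Your remark that $p_+<\infty$ is genuinely needed for the ess-sup form of the modular used here is also accurate.
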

\subsection{Variable Matrix ${\mathscr A}_{p(\cdot),\infty}$ Weights}\label{sec weight 2}
In this subsection, we recall some basic properties of
matrix $\mathscr{A}_{p(\cdot),\infty}$ weights obtained in our previous work \cite{yyz25}.
For any $m,n\in\mathbb{N}$, the set of all $m\times n$ complex-valued matrices
is denoted by the \emph{symbol $M_{m, n}$},
and $M_{m, m}$ is simply denoted by $M_m$.
For any $A\in M_m$, let
\begin{align*}
\|A\| := \sup_{\vec{z}\in\mathbb{C}^m, |\vec{z}| = 1} \left| A\vec{z} \right|.
\end{align*}
Then $(M_m, \|\cdot\|)$ is a Banach space.
Moreover, we have the following well-known result
(see, for instance, \cite[Lemma 2.3]{bhyy23}).

\begin{lemma}\label{norm matrix}
Let $A,B\in M_m$ be two nonnegative definite matrices.
Then $\|AB\| = \|BA\|$.
\end{lemma}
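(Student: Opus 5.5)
We prove $\|AB\|=\|BA\|$ from two elementary facts about the operator norm on $M_m$: it is invariant under taking adjoints, and it is submultiplicative only in the weak sense we need here, namely $\|C\|=\|C^*\|$ for every $C\in M_m$. The plan is to exhibit $BA$ as the adjoint of $AB$, using the self-adjointness that nonnegative definiteness forces.

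First, since $A$ and $B$ are nonnegative definite, they are Hermitian: $A^*=A$ and $B^*=B$. Here $A^*$ is the conjugate transpose, characterized by $\langle A\vec z,\vec w\rangle=\langle \vec z,A^*\vec w\rangle$ for all $\vec z,\vec w\in\mathbb{C}^m$. Hence
\begin{align*}
(AB)^*=B^*A^*=BA.
\end{align*}

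Second, we check that $\|C\|=\|C^*\|$ for every $C\in M_m$. By the Cauchy--Schwarz inequality, for every $\vec z\in\mathbb{C}^m$,
\begin{align*}
|C\vec z|^2=\langle C\vec z, C\vec z\rangle=\langle C^*C\vec z,\vec z\rangle\le |C^*(C\vec z)|\,|\vec z|\le \|C^*\|\,|C\vec z|\,|\vec z|.
\end{align*}
If $C\vec z\neq\vec 0$, dividing by $|C\vec z|$ gives $|C\vec z|\le\|C^*\|\,|\vec z|$; if $C\vec z=\vec 0$ this inequality is trivial. Taking the supremum over $|\vec z|=1$ yields $\|C\|\le\|C^*\|$. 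Applying the same argument to $C^*$, and using $(C^*)^*=C$, gives the reverse inequality, so $\|C\|=\|C^*\|$.

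Applying this with $C:=AB$ gives $\|AB\|=\|(AB)^*\|=\|BA\|$, which is the claim. There is no genuine obstacle in this argument. The only point to be careful about is that nonnegative definite matrices are Hermitian. Over $\mathbb{C}$ this is automatic, because $\langle A\vec z,\vec z\rangle\in\mathbb{R}$ for all $\vec z$ forces $A=A^*$ by polarization. If nonnegative definiteness is instead understood to include self-adjointness by definition, then this step is immediate.
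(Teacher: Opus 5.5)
Your proof is correct: Hermitian symmetry of nonnegative definite matrices gives $(AB)^*=BA$, and adjoint invariance of the operator norm then yields $\|AB\|=\|(AB)^*\|=\|BA\|$. The paper has no proof of its own to compare against, since it simply cites \cite[Lemma 2.3]{bhyy23} for this well-known fact. One wording slip: the identity $\|C\|=\|C^*\|$ has nothing to do with submultiplicativity, so that phrase in your opening sentence should be deleted.
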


Now, we recall the concept of matrix weights
(see, for instance, \cite[Definition 2.7]{bhyy23}).
\begin{definition}\label{def matrix}
A matrix-valued function $W:\ \mathbb{R}^n \rightarrow M_m$ is called a \emph{matrix weight}
if $W$ satisfies that
\begin{itemize}
\item[{\rm (i)}] for almost every $x\in\mathbb{R}^n$, $W(x)$ is nonnegative definite,
\item[{\rm (ii)}] for almost every $x\in\mathbb{R}^n$, $W(x)$ is invertible,
\item[{\rm (iii)}] the entries of $W$ are all locally integrable.
\end{itemize}
\end{definition}

Next, we recall the definition of matrix-weighted variable Lebesgue spaces
(see, for instance, \cite[p.\,1135]{cp23}).
\begin{definition}
Let $p(\cdot)\in{\mathcal P}_0$ and $W$ be a matrix weight.
Then the \emph{matrix-weighted variable Lebesgue space $L^{p(\cdot)}_W$} associated
with $p(\cdot)$ is defined to be the set of all $\vec f\in(\mathscr{M})^m$ such that
$ \|f\|_{L^{p(\cdot)}_W} := \|\,|W(\cdot)\vec{f} |\,\|_{L^{p(\cdot)}} < \infty. $
\end{definition}

The following is the definition of matrix ${\mathscr A}_{p(\cdot)}$ weights,
which was first introduced in \cite[(1.2)]{cp23}.
\begin{definition}
Let $p(\cdot)\in{\mathcal P}$. A matrix weight $W$ on $\mathbb{R}^n$ is
called a \emph{matrix $\mathscr{A}_{p(\cdot)}$ weight} if
$$ \left[W\right]_{\mathscr{A}_{p(\cdot)}} := \sup_{Q}\frac1{|Q|} \left\|\, \left\|\, \left\| W(x)W^{-1}(\cdot) \right\| \mathbf{1}_Q \right\|_{L^{p'(\cdot)}}\mathbf{1}_{Q} \right\|_{L^{p(\cdot)}_x} < \infty, $$
where the supremum is taken over all cubes $Q$ in $\mathbb{R}^n$ and $L^{p(\cdot)}_x$ indicates to take the norm
with respect to the variable $x$.
\end{definition}
\begin{remark}\label{rem Ap}
Let $p(\cdot)\in{\mathcal P}$ with $p_+ < \infty$. Then, by the definition of ${\mathscr A}_{p(\cdot)}$,
we find that, for any $W\in {\mathscr A}_{p(\cdot)}$, $W^{-1} \in {\mathscr A}_{p'(\cdot)}$.
\end{remark}
Now, we recall matrix $\mathscr{A}_{p(\cdot),\infty}$ weights
introduced in \cite[Definition 1.1(ii)]{yyz25}.
\begin{definition}\label{def infty}
Let $p(\cdot)\in \mathcal{P}_0$.
A matrix weight $W$ on $\mathbb{R}^n$ is called a
\emph{matrix $\mathscr{A}_{p(\cdot),\infty}$ weight} if
$$ \left[W\right]_{\mathscr{A}_{p(\cdot),\infty}} := \sup_{Q} \exp\left( \fint_{Q} \log\left( \frac{1}{\|\mathbf{1}_Q\|_{L^{p(\cdot)}}} \left\|\, \left\| W(\cdot)W^{-1}(y) \right\| \mathbf{1}_Q \right\|_{L^{p(\cdot)}}
 \right)\,dy \right) <\infty , $$
where the supremum is taken over all cubes $Q$ in $\mathbb{R}^n$.
\end{definition}
In what follows, all positive constants related to matrix weights $W\in\mathscr A_{p(\cdot),\infty}$
mean that they depend only on $[W]_{\mathscr A_{p(\cdot),\infty}}$, rather that $W$ themselves.
\begin{remark}\label{rem Apinfty}
\begin{itemize}
\item[{\rm (i)}] If $p(\cdot) \equiv p$ is a constant exponent,
then, for any $W\in \mathscr{A}_{p,\infty}$,
the $p$-th power of $W$ is a matrix $A_{p,\infty}$ weight
(see, for example, \cite{bhyy24,bhyy23 1,v97} for the definition of $A_{p,\infty}$
weights and its various equivalent characterizations).

\item [{\rm (ii)}] From \cite[Theorem 3.1]{yyz25},
it follows that, for any scalar-valued weight $w$,
if $p(\cdot)\in \mathcal{P}_0\cap LH$, then
$w \in \mathscr{A}_{p(\cdot),\infty}$ if and only if $w^{p(\cdot)} \in A_\infty$.
There exists no analogue for matrix weights.
\end{itemize}
\end{remark}

Next, we recall the concept of reducing operators
for matrix $\mathscr{A}_{p(\cdot),\infty}$ weights,
which is precisely \cite[Definition 3.8]{yyz25}.

\begin{definition}\label{def reducing operator}
Let $p(\cdot) \in \mathcal{P}_0$ and $W$ be a matrix weight
and let $Q$ be any cube in $\mathbb{R}^n$.
The matrix $A_Q\in M_m$ is called a
\emph{reducing operator of order $p(\cdot)$ for $W$}
if $A_Q$ is positive definite and self-adjoint such that,
for any $\vec{z} \in \mathbb{C}^m$,
\begin{align}\label{eq redu}
\left| A_Q \vec{z} \right|
\sim \frac{1}{\|\mathbf{1}_Q\|_{L^{p(\cdot)}}} \left\|\, \left| W(\cdot) \vec{z} \right|  \mathbf{1}_{Q} \right\|_{L^{p(\cdot)}},
\end{align}
where the positive equivalence constants depend only on $m$ and $p(\cdot)$.
\end{definition}

The following lemma guarantees the existence of reducing operators of order $p(\cdot)$
for matrix weights, which is exactly \cite[Proposition 3.9]{yyz25}.

\begin{lemma}\label{ext redu}
Let $p(\cdot) \in \mathcal{P}_0$.
Then, for any matrix weight $W$ and any cube $Q$ in $\mathbb{R}^n$,
the reducing operator $A_Q$ of order $p(\cdot)$ for $W$ always exists.
\end{lemma}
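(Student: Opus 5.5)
The strategy is to realize $A_Q$ through the John ellipsoid of a suitable symmetric convex body. Fix $W$ and a cube $Q$, and for $\vec z\in\mathbb{C}^m$ define
$$\rho_Q(\vec z):=\frac{1}{\|\mathbf{1}_Q\|_{L^{p(\cdot)}}}\left\|\,|W(\cdot)\vec z|\mathbf{1}_Q\right\|_{L^{p(\cdot)}}.$$
We would show, in order:
\begin{itemize}
\item[(a)] $\rho_Q(\vec z)$ is finite and strictly positive for $\vec z\neq\vec 0$;
\item[(b)] $\rho_Q$ is a quasi-norm, and some fixed power of it is equivalent to a genuine norm;
\item[(c)] the unit ball of that norm is comparable to an ellipsoid $\{\vec z:\ |A_Q\vec z|\le 1\}$ with $A_Q$ positive definite and self-adjoint.
\end{itemize}
Together these give \eqref{eq redu}, with constants depending only on $m$ and $p(\cdot)$.

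\emph{Step (a).} For positivity, $W(x)$ is invertible for almost every $x$, so $|W(x)\vec z|>0$ a.e.\ on $Q$ whenever $\vec z\neq\vec 0$; hence the norm is positive. Finiteness is the delicate point, because the entries of $W$ are only locally integrable, which does not obviously place them in $L^{p(\cdot)}(Q)$ when $p_+>1$. In this setting (as in \cite{yyz25}) one implicitly works with weights for which these quantities are finite; in particular $W\in\mathscr A_{p(\cdot),\infty}$ forces $\|\,\|W(\cdot)W^{-1}(y)\|\mathbf{1}_Q\|_{L^{p(\cdot)}}<\infty$ for almost every $y$. If $\rho_Q$ could be infinite in some directions, we would restrict to the subspace where it is finite. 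I expect to treat finiteness as given and not belabor it.

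\emph{Step (b).} Homogeneity $\rho_Q(\lambda\vec z)=|\lambda|\rho_Q(\vec z)$ is immediate. For the triangle inequality we use $|W(\vec z+\vec w)|\le|W\vec z|+|W\vec w|$. Set $r:=\min\{1,p_-\}$. By Lemma \ref{con f}, $\|\cdot\|^r_{L^{p(\cdot)}}$ applied to $|g|$ becomes $\|\,|g|^r\|_{L^{p(\cdot)/r}}$, and since $p(\cdot)/r\ge1$ the latter space is normed. Using $|a+b|^r\le|a|^r+|b|^r$, we obtain
$$\rho_Q(\vec z+\vec w)^r\le\rho_Q(\vec z)^r+\rho_Q(\vec w)^r.$$
Thus $\rho_Q^r$ is subadditive but only $r$-homogeneous. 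In finite dimensions, an $r$-subadditive, positive, continuous quasi-norm is equivalent to a norm up to constants depending on $m$ and $r$. Concretely, let $K$ be the convex hull of $\{\rho_Q\le1\}$ and let $\|\cdot\|_K$ be its Minkowski functional. Any $\vec z\in K$ is a convex combination of at most $2m+1$ points of $\{\rho_Q\le 1\}$ (Carathéodory, viewing $\mathbb{C}^m$ as $\mathbb{R}^{2m}$). The $r$-triangle inequality then gives $\rho_Q(\vec z)\le(2m+1)^{1/r}$. This shows $\|\cdot\|_K\le\rho_Q\le(2m+1)^{1/r}\|\cdot\|_K$. The set $K$ is bounded because $\rho_Q$ is positive on the unit sphere, which is compact; continuity of $\rho_Q$ follows from the quasi-triangle inequality in finite dimensions. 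The set $K$ is also balanced under complex scalars.

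\emph{Step (c).} Apply John's ellipsoid theorem to the symmetric convex body $K\subset\mathbb{R}^{2m}$. For complex balanced bodies we take the John ellipsoid, which is unique and hence invariant under multiplication by unimodular scalars, so it is a complex ellipsoid. This produces $E=\{\vec z:\ |A\vec z|\le1\}$ with $E\subset K\subset\sqrt{2m}\,E$, and therefore $\|\cdot\|_K\sim|A\vec z|$. Taking polar decomposition, we may replace $A$ by $(A^*A)^{1/2}$, which is positive definite and self-adjoint and satisfies $|(A^*A)^{1/2}\vec z|=|A\vec z|$. Combining with (b) gives \eqref{eq redu}. The main obstacle is (b), the passage from a quasi-norm to a norm, since $p_-$ may be less than $1$; the convexification of Lemma \ref{con f} handles it cleanly.
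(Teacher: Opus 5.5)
Your proof is correct. The paper gives no argument of its own here, only the citation \cite[Proposition 3.9]{yyz25}, and your route is the standard proof of such existence results: make $\vec z\mapsto\|\,|W(\cdot)\vec z|\mathbf{1}_Q\|_{L^{p(\cdot)}}$ $r$-subadditive with $r=\min\{1,p_-\}$, replace it by the Minkowski functional of the convex hull of its unit ball via Carath\'eodory, and apply John's theorem, whose uniqueness makes the ellipsoid complex. The resulting constants depend only on $m$ and $p_-$.

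Your finiteness caveat is genuine, not cosmetic: for a merely locally integrable weight with $p_+>1$ (e.g.\ $m=1$, $p\equiv2$, $W(x)=|x|^{-n/2}$, $Q\ni\mathbf{0}$) the right-hand side of \eqref{eq redu} is infinite for every $\vec z\neq\vec 0$, so the lemma must be read under $\|W(\cdot)\|\mathbf{1}_Q\in L^{p(\cdot)}$ (automatic if $p_+\le1$ or $W\in\mathscr{A}_{p(\cdot),\infty}$), and your ``restrict to a subspace'' fallback should be dropped since it cannot produce a positive definite $A_Q$; likewise, if $p_+=\infty$ then Lemma~\ref{con f} is unavailable, so run the argument on the equivalent $r$-subadditive functional $\vec z\mapsto\|\,|W(\cdot)\vec z|^r\mathbf{1}_Q\|_{L^{p(\cdot)/r}}^{1/r}$.
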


The next lemma  extends \eqref{eq redu} from any vector $\vec{z}$ to any matrix $M\in M_m$,
which is precisely \cite[Lemma 3.10]{yyz25}.

\begin{lemma}\label{eq reduc M}
Let $p(\cdot) \in \mathcal{P}_0$ and $W$ be a matrix weight and
let $Q$ be any cube in $\mathbb{R}^n$.
If $A_Q$ is a reducing operator of order $p(\cdot)$ for $W$,
then, for any matrix $M\in M_m$,
$$ \left\| A_Q M \right\| \sim \frac{1}{\|\mathbf{1}_Q\|_{L^{p(\cdot)}}} \left\| \, \left\| W(\cdot) M  \right\|  \mathbf{1}_{Q} \right\|_{L^{p(\cdot)}}, $$
where the positive equivalence constants depend only on $m$ and $p(\cdot)$.
\end{lemma}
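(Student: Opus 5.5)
The idea is to reduce the matrix statement to the vector equivalence \eqref{eq redu}, applied column by column. We use the elementary fact that, for any $B\in M_m$, the operator norm is comparable to the largest column norm:
$$\max_{1\le j\le m}|B\vec e_j|\le\|B\|\le\sqrt m\,\max_{1\le j\le m}|B\vec e_j|,$$
where $\{\vec e_j\}_{j=1}^m$ is the standard basis of $\mathbb C^m$. The constants here depend only on $m$, which is consistent with the claimed dependence.

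First, fix $M\in M_m$ and let $\vec m_j:=M\vec e_j$ be its columns. Applying the fact to $B=A_QM$ gives
$$\|A_QM\|\sim\max_j|A_Q\vec m_j|.$$
Applying it pointwise to $B=W(x)M$ gives
$$\|W(x)M\|\sim\max_j|W(x)\vec m_j|$$
for almost every $x$, with constants depending only on $m$.

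Second, apply \eqref{eq redu} with $\vec z=\vec m_j$:
$$|A_Q\vec m_j|\sim\frac{1}{\|\mathbf 1_Q\|_{L^{p(\cdot)}}}\bigl\|\,|W(\cdot)\vec m_j|\mathbf 1_Q\bigr\|_{L^{p(\cdot)}}.$$
It then remains to compare
$$\max_j\bigl\|\,|W\vec m_j|\mathbf 1_Q\bigr\|_{L^{p(\cdot)}}\quad\text{with}\quad\bigl\|\max_j|W\vec m_j|\mathbf 1_Q\bigr\|_{L^{p(\cdot)}}.$$
One direction is immediate from the monotonicity of the Luxemburg norm, since each $|W\vec m_j|\le\max_i|W\vec m_i|$ pointwise.

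For the other direction, bound $\max_j|W\vec m_j|\le\sum_j|W\vec m_j|$ pointwise. The $L^{p(\cdot)}$ quasi-norm is not a norm when $p_-<1$, so here we use a quasi-triangle inequality with a constant depending only on $p_-$ and $m$. One way to obtain it is Lemma \ref{con f} with $r=\min\{1,p_-\}$, when $p_+<\infty$. Alternatively, we can bypass this: the modular gives
$$\rho\Bigl(\frac{\max_j f_j}{\lambda}\Bigr)\le\sum_j\rho\Bigl(\frac{f_j}{\lambda}\Bigr)\le m$$
whenever $\lambda\ge\max_j\|f_j\|_{L^{p(\cdot)}}$. A further rescaling of $\lambda$ by $m^{1/\min\{1,p_-\}}$ then brings the modular below $1$. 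If $p_+=\infty$, the essential supremum part of the modular is handled trivially, since there the supremum of a maximum is the maximum of the suprema.

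The main point requiring care is this last step: the comparison between the norm of a maximum and the maximum of norms in the quasi-Banach range. It is resolved by the modular scaling just described, with constants depending only on $m$ and $p(\cdot)$. Combining the three equivalences yields the lemma.
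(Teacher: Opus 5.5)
Your proof is correct and follows essentially the standard route: the paper does not reprove this lemma (it quotes it from \cite[Lemma 3.10]{yyz25}), but it uses the same column-by-column reduction, $\|BM\|\sim\sum_{i=1}^m|BM\vec e_i|$ followed by the vector equivalence and a quasi-triangle inequality in $L^{p(\cdot)}$, for the analogous matrix statement in the proof of Lemma \ref{M u exist}. Taking a maximum over columns and rescaling the modular by $m^{1/\min\{1,p_-\}}$ is simply a clean way to obtain that quasi-triangle constant without assuming $p_+<\infty$.
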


We also
recall the following concepts of  the lower and the upper $\mathscr{A}_{p(\cdot),\infty}$
weight dimensions introduced in \cite[Definition 3.21]{yyz25}.

\begin{definition}
Let $p(\cdot) \in \mathcal{P}_0$ and $d\in\mathbb{R}$.
A matrix weight $W$ is said to have \emph{$\mathscr{A}_{p(\cdot),\infty}$-lower dimension} $d$,
denoted by $W \in \mathbb{D}^{\rm lower}_{p(\cdot),\infty,d}$,
if there exists a positive constant $C$ such that,
for any $\lambda \in [1,\infty)$ and any cube $Q$ in $\mathbb{R}^n$,
\begin{align*}
\exp\left( \fint_{\lambda Q} \log\left( \frac{1}{\|\mathbf{1}_Q\|_{L^{p(\cdot)}}} \left\|\, \left\| W(\cdot)W^{-1}(y) \right\| \mathbf{1}_Q \right\|_{L^{p(\cdot)}}
 \right)\,dy \right) \leq C \lambda^d.
\end{align*}
A matrix weight $W$ is said to have \emph{$\mathscr{A}_{p(\cdot),\infty}$-upper dimension} $d$,
denoted by $W \in \mathbb{D}^{\rm upper}_{p(\cdot),\infty,d}$,
if there exists a positive constant $C$ such that,
for any $\lambda \in [1,\infty)$ and any cube $Q$ in $\mathbb{R}^n$,
\begin{align*}
\exp\left( \fint_{Q} \log\left( \frac{1}{\|\mathbf{1}_{\lambda Q}\|_{L^{p(\cdot)}}} \left\|\, \left\| W(\cdot)W^{-1}(y) \right\| \mathbf{1}_{\lambda Q} \right\|_{L^{p(\cdot)}}
 \right)\,dy \right) \leq C \lambda^d.
\end{align*}
\end{definition}

On $\mathscr A_{p(\cdot),\infty}$ weight dimensions, we
have the following basic properties,
which is exactly \cite[Proposition 3.22]{yyz25}.

\begin{proposition}\label{dim ext}
Let $p(\cdot) \in \mathcal{P}_0\cap LH$.
Then the following statements hold.
\begin{itemize}
\item[{\rm (i)}] For any $d \in (-\infty,0)$,
$\mathbb{D}^{\rm lower}_{p(\cdot),\infty,d} = \emptyset$
and $\mathbb{D}^{\rm upper}_{p(\cdot),\infty,d} = \emptyset$.
\item[{\rm (ii)}] For any $W \in \mathscr{A}_{p(\cdot),\infty}$,
there exists $d_1 \in [0,\frac{n}{p_-})$
such that $W \in \mathbb{D}^{\rm lower}_{p(\cdot),\infty,d_1}$.
\item[{\rm (iii)}] For any $W \in \mathscr{A}_{p(\cdot),\infty}$,
there exists $d_2 \in [0,\infty)$
such that $W \in \mathbb{D}^{\rm upper}_{p(\cdot),\infty,d_2}$.
\end{itemize}
\end{proposition}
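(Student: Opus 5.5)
The argument works throughout with reducing operators. By Lemmas \ref{ext redu} and \ref{eq reduc M}, for every cube $R$ and almost every $y$,
$$\frac{1}{\|\mathbf{1}_R\|_{L^{p(\cdot)}}}\left\|\,\left\|W(\cdot)W^{-1}(y)\right\|\mathbf{1}_R\right\|_{L^{p(\cdot)}}\sim\left\|A_RW^{-1}(y)\right\|.$$
So the two quantities in the definitions of the dimensions are comparable to
$$\exp\left(\fint_{\lambda Q}\log\left\|A_QW^{-1}(y)\right\|\,dy\right)
\quad\text{and}\quad
\exp\left(\fint_{Q}\log\left\|A_{\lambda Q}W^{-1}(y)\right\|\,dy\right).$$
In the same language, $W\in\mathscr A_{p(\cdot),\infty}$ says exactly that $\fint_Q\log\|A_QW^{-1}(y)\|\,dy\le C$ uniformly in $Q$. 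All three parts are then comparisons between $A_Q$, $A_{\lambda Q}$ and $W^{-1}$ on different cubes.

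\textbf{Step 1 (a uniform lower bound).} For every cube $R$, I show $\fint_R\log\|A_RW^{-1}(y)\|\,dy\ge -C$. Since $1\le\|A_RW^{-1}(y)\|\,\|W(y)A_R^{-1}\|$, it suffices to bound $\fint_R\log\|W(y)A_R^{-1}\|\,dy$ from above. Jensen's inequality with exponent $r<p_-$ gives
$$\fint_R\log\|W(y)A_R^{-1}\|\,dy\le\frac1r\log\fint_R\|W(y)A_R^{-1}\|^r\,dy.$$
Applying Lemma \ref{est fQ} to $p(\cdot)/r\in\mathcal P$, together with Lemma \ref{con f}, bounds the right-hand side by
$$\frac{1}{\|\mathbf{1}_R\|_{L^{p(\cdot)}}}\left\|\,\|W(\cdot)A_R^{-1}\|\mathbf{1}_R\right\|_{L^{p(\cdot)}}\sim\left\|A_RA_R^{-1}\right\|=1,$$
the last step by Lemma \ref{eq reduc M}.

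\textbf{Step 2 (part (i)).} I compare the cubes $Q$ and $\lambda Q$ and aim for the lower bound $\gtrsim\lambda^0$, which rules out $d<0$ as $\lambda\to\infty$.

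For the upper dimension I use
$$\log\|A_{\lambda Q}W^{-1}(y)\|\ge\log\|A_QW^{-1}(y)\|-\log\|A_QA_{\lambda Q}^{-1}\|.$$
Then I need $\|A_QA_{\lambda Q}^{-1}\|\lesssim1$ or something sharper. Lemma \ref{eq reduc M} gives
$$\|A_QM\|\lesssim\frac{\|\mathbf{1}_{\lambda Q}\|_{L^{p(\cdot)}}}{\|\mathbf{1}_Q\|_{L^{p(\cdot)}}}\,\|A_{\lambda Q}M\|,$$
but only with the loss $\lambda^{n/p_-}$. If this loss cannot be avoided, I will instead replace operator norms by determinants, using $\|B\|\ge|\det B|^{1/m}$. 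This turns both sides into averages of $\log\det W^{-1}$ plus $\log\det A$. I will then exploit the fact that the $\log\det$ terms telescope across dyadic ancestors of $Q$, which should give the needed uniformity in $\lambda$.

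\textbf{Step 3 (parts (ii) and (iii)).} For the lower dimension I cover $\lambda Q$ by the chain $Q=Q_0\subset Q_1\subset\cdots\subset Q_k\supset\lambda Q$ with $l(Q_{j+1})=2l(Q_j)$ and $k\sim\log_2\lambda$. I write
$$\log\|A_QW^{-1}(y)\|\le\log\|A_QA_{Q_k}^{-1}\|+\log\|A_{Q_k}W^{-1}(y)\|.$$
The $\mathscr A_{p(\cdot),\infty}$ condition on $Q_k$ controls the average of the second term. Lemma \ref{eq reduc M} with the quasi-triangle inequality and Lemma \ref{est Q} control the first, giving
$$\|A_QA_{Q_k}^{-1}\|\lesssim\frac{\|\mathbf{1}_{Q_k}\|_{L^{p(\cdot)}}}{\|\mathbf{1}_{Q}\|_{L^{p(\cdot)}}}.$$
This yields some dimension $d\le n/p_-$. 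To obtain strict inequality $d<n/p_-$, I will use a reverse-Hölder / self-improvement property of $\mathscr A_{p(\cdot),\infty}$, namely that the condition persists for a slightly smaller exponent.

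For the upper dimension the symmetric argument bounds $\|A_{\lambda Q}A_Q^{-1}\|$ by iterating a doubling estimate
$$\|A_{2R}A_R^{-1}\|\le C.$$
I derive this doubling estimate from the $\mathscr A_{p(\cdot),\infty}$ condition via Step 1, applied on $R$ inside $2R$. Iterating it gives $\lambda^{\log_2C}=\lambda^{d_2}$.

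\textbf{Main obstacle.} The hardest points are proving the doubling estimate for $\|A_{2R}A_R^{-1}\|$ with only $\mathscr A_{p(\cdot),\infty}$ (not $\mathscr A_{p(\cdot)}$), and getting the strict bound $d_1<n/p_-$. For both I would first try the determinant trick combined with the local log-H\"older continuity of $p(\cdot)$, which makes $\|\mathbf{1}_{2R}\|_{L^{p(\cdot)}}\sim\|\mathbf{1}_R\|_{L^{p(\cdot)}}$.
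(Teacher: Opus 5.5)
Your outlines for (ii) and (iii) are essentially sound, but Step~2, and with it part (i), is aimed at a false target. No lower bound $\gtrsim\lambda^0$ holds uniformly in $Q$ for either quantity, and the estimate $\|A_QA_{\lambda Q}^{-1}\|\lesssim1$ you say you need fails.

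Take $m=1$, $p(\cdot)\equiv1$ and $W(x)=|x|^a$ with $a>-n$, so $W\in\mathscr A_{1,\infty}$ by Remark~\ref{rem Apinfty}(ii). One may take $A_R=\fint_RW$. For $Q$ centered at $\mathbf 0$:
\begin{itemize}
\item the upper-dimension quantity is $\fint_{\lambda Q}W\cdot\exp(-\fint_Q\log W)\sim\lambda^{a}$;
\item the lower-dimension quantity is $\fint_QW\cdot\exp(-\fint_{\lambda Q}\log W)\sim\lambda^{-a}$.
\end{itemize}
So for $a<0$ (resp.\ $a>0$) the upper (resp.\ lower) quantity tends to $0$ along these cubes, and $A_Q/A_{\lambda Q}\sim\lambda^{-a}$ is unbounded when $a<0$. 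No telescoping of $\log\det$ over dyadic ancestors can give uniformity. Nonnegativity of the dimensions only reflects cubes on which $W$ is nearly constant, so the proof of (i) must use the freedom to choose $Q$. Note also that Step~2 never treats the lower dimension.

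The missing idea is to keep your determinant reduction but iterate to small scales and use Lebesgue differentiation.
\begin{itemize}
\item Taking $\lambda=1$ shows that both classes lie in $\mathscr A_{p(\cdot),\infty}$.
\item Apply $\log|\det B|\le m\log\|B\|$ to $B=A_QW^{-1}(y)$ and to $B=W(y)A_Q^{-1}$. Combined with the $\mathscr A_{p(\cdot),\infty}$ condition and your Step~1, this gives $\phi:=\log\det W\in L^1_{\rm loc}$ and $|\log\det A_Q-\fint_Q\phi|\le C$ for every cube $Q$.
\item Suppose $W$ had upper dimension $d<0$. Since $\log\|A_{\lambda Q}W^{-1}(y)\|\ge\frac1m[\log\det A_{\lambda Q}-\phi(y)]$, you get, for one fixed large $\lambda$ and all $Q$, $\fint_{\lambda Q}\phi\le\fint_Q\phi-1$.
\item Apply this to $Q_j:=\lambda^{-j}Q_0$, with $Q_0$ centered at a Lebesgue point $x$ of $\phi$. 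This forces $\fint_{Q_j}\phi\ge\fint_{Q_0}\phi+j\to\infty$, contradicting $\fint_{Q_j}\phi\to\phi(x)$.
\item The lower dimension is symmetric: one gets $\fint_Q\phi\le\fint_{\lambda Q}\phi-1$, which forces $\fint_{Q_j}\phi\to-\infty$.
\end{itemize}

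For (iii), your Step~1 already gives the doubling bound, and nothing beyond it is needed. Split $\fint_{2R}\log\|A_{2R}W^{-1}\|\le C$ over $R$ and $2R\setminus R$. The $R$-piece is at least $\log\|A_{2R}A_R^{-1}\|-C$, and the other piece is at least $-C$ by Jensen on a set of measure comparable to $|2R|$.

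For the strict bound in (ii), the tool is Lemma~\ref{reverse Holder}. It passes to the larger exponent $r_Wp(\cdot)$, not a smaller one, and yields $d_1=n/(r_Wp_-)$. It is also simpler to compare $A_Q$ with $A_{\lambda Q}$ directly than to use the dyadic chain. The $\mathscr A_{p(\cdot),\infty}$ condition on $Q_k$ does not by itself control the average of $\log\|A_{Q_k}W^{-1}\|$ over the smaller cube $\lambda Q$.
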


Let $p(\cdot)\in \mathcal{P}_0\cap LH$.
Then, for any matrix weight $W \in \mathscr{A}_{p(\cdot),\infty}$,
let
\begin{align*}
d^{\rm lower}_{p(\cdot),\infty}(W) := \inf\left\{ d\in \left(0,\frac{n}{p_-}\right):\ W\ \text{has}\ \mathscr{A}_{p(\cdot),\infty}\text{-lower dimension}\ d \right\}
\end{align*}
and
\begin{align*}
d^{\rm upper}_{p(\cdot),\infty}(W) := \inf\left\{ d\in (0,\infty):\ W\ \text{has}\ \mathscr{A}_{p(\cdot),\infty}\text{-upper dimension}\ d \right\}.
\end{align*}

The upper and the lower $\mathscr{A}_{p(\cdot),\infty}$ weight dimensions play an important role in the
following estimate,
which is sharp and is precisely \cite[Lemma 3.27]{yyz25}
(see, for instance, \cite[Proposition 6.5]{bhyy23 1} for a similar result for matrix $A_{p,\infty}$ weights) and is frequently used below.

\begin{lemma}\label{QP5}
Let $p(\cdot) \in \mathcal{P}_0\cap LH$, $W \in \mathscr{A}_{p(\cdot),\infty}$,
$d_1 \in ( d^{\rm lower}_{p(\cdot),\infty}(W) ,\frac{n}{p_-})$,
$d_2 \in ( d^{\rm upper}_{p(\cdot),\infty}(W) ,\infty)$,
and $\{A_Q\}_{\mathrm{cube }Q}$ be a family of reducing operators of order $p(\cdot)$ for $W$.
Then, for any cubes $Q$ and $R$ in $\mathbb{R}^n$,
\begin{align*}
\left\| A_Q A_R^{-1} \right\| \lesssim \max\left\{ \left[ \frac{l(R)}{l(Q)} \right]^{d_1},
\left[ \frac{l(Q)}{l(R)} \right]^{d_2} \right\}\left[ 1+ \frac{|x_Q - x_R|}{\max\{l(Q), l(R)\}} \right]^{\Delta},
\end{align*}
where $x_Q$ and $x_R$ are, respectively, any points in $Q$ and $R$, $\Delta := d_1 + d_2$,
and the implicit positive constant is independent of $Q$ and $R$.
\end{lemma}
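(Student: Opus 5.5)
The plan is to factor $A_QA_R^{-1}$ through an intermediate cube, a large cube $P$ containing both $Q$ and $R$, and to bound each factor using the dimension properties of $W$, namely Proposition~\ref{dim ext} and the definitions of $\mathbb{D}^{\rm lower}_{p(\cdot),\infty,d_1}$ and $\mathbb{D}^{\rm upper}_{p(\cdot),\infty,d_2}$.

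\textbf{Step 1: nested cubes.} We first prove the estimate for a cube $Q\subset P$ with $l(P)=\lambda l(Q)$, essentially $P=\lambda Q$ up to bounded enlargement. We claim
\[
\|A_QA_P^{-1}\|\lesssim \lambda^{d_1}
\quad\text{and}\quad
\|A_PA_Q^{-1}\|\lesssim \lambda^{d_2}.
\]

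For the first bound, Lemma~\ref{eq reduc M} gives
\[
\|A_QA_P^{-1}\|\sim \|\mathbf 1_Q\|_{L^{p(\cdot)}}^{-1}\bigl\|\,\|W(\cdot)A_P^{-1}\|\mathbf 1_Q\bigr\|_{L^{p(\cdot)}}.
\]
We write $A_P^{-1}=W^{-1}(y)\,W(y)A_P^{-1}$ for $y\in P$ and use submultiplicativity:
\[
\|A_QA_P^{-1}\|\lesssim \|\mathbf 1_Q\|_{L^{p(\cdot)}}^{-1}\bigl\|\,\|W(\cdot)W^{-1}(y)\|\mathbf 1_Q\bigr\|_{L^{p(\cdot)}}\,\|W(y)A_P^{-1}\|.
\]
Since the left side is independent of $y$, we take $\exp\fint_P\log$ in $y$. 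The first factor then becomes exactly the lower-dimension quantity, bounded by $C\lambda^{d_1}$.

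For the second factor we need $\exp\fint_P\log\|W(y)A_P^{-1}\|\lesssim1$. This is the standard consequence of $W\in\mathscr A_{p(\cdot),\infty}$: by Lemma~\ref{eq reduc M}, $\|A_PW^{-1}(y)\|$ is comparable to the quantity inside the $\mathscr A_{p(\cdot),\infty}$ logarithm. The reverse estimate, bounding $\|W(y)A_P^{-1}\|$ by $\|A_PW^{-1}(y)\|^{-1}$ up to geometric means, follows by a John-ellipsoid and determinant argument: $\log|\det|$ is additive, and $\|M\|\le |\det M|\,\|M^{-1}\|^{m-1}$. I expect this to be the main technical obstacle. I would reuse the corresponding estimate from \cite{yyz25} if it is available; otherwise I would prove it through this determinant comparison combined with Jensen's inequality.

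The upper bound $\|A_PA_Q^{-1}\|\lesssim\lambda^{d_2}$ is symmetric. We average in $y$ over $Q$, the factor $\|W(\cdot)W^{-1}(y)\|\mathbf 1_P$ matches the upper-dimension definition, and the other factor is controlled over $Q$ as before.

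\textbf{Step 2: general cubes.} Without loss of generality $l(Q)\le l(R)$. Let $P$ be a cube containing $Q\cup R$ with
\[
l(P)\sim l(R)+|x_Q-x_R| = l(R)\,t,
\qquad t:=1+\frac{|x_Q-x_R|}{l(R)}.
\]
Then
\[
\|A_QA_R^{-1}\|\le\|A_QA_P^{-1}\|\,\|A_PA_R^{-1}\|\lesssim \Bigl(\frac{l(P)}{l(Q)}\Bigr)^{d_1}\Bigl(\frac{l(P)}{l(R)}\Bigr)^{d_2}
=\Bigl(\frac{l(R)}{l(Q)}\Bigr)^{d_1}t^{d_1+d_2},
\]
which is the claimed bound with $\Delta=d_1+d_2$. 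The case $l(Q)>l(R)$ is symmetric and yields $(l(Q)/l(R))^{d_2}t^{\Delta}$. Passing from $Q$ to a concentric dilate of it, as opposed to a general containing cube $P$, costs only constants: comparable cubes have comparable reducing operators by Step 1 applied with bounded $\lambda$. Arbitrary choices of $x_Q,x_R$ alter $t$ only by a factor of at most $2$.
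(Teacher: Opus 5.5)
Your proposal is correct and is essentially the standard argument behind \cite[Lemma 3.27]{yyz25}, which the paper cites rather than proves: you get nested-cube bounds from the lower/upper dimension definitions via $\|W(x)A_P^{-1}\|\le\|W(x)W^{-1}(y)\|\,\|W(y)A_P^{-1}\|$ and a geometric mean in $y$, and then pass through a common enclosing cube (for enclosing cubes that are not concentric dilates you also need the elementary monotonicity $\|A_{P_1}M\|\lesssim\|A_{P_2}M\|$ for $P_1\subset P_2$ with $l(P_2)\lesssim l(P_1)$, which follows from Lemma~\ref{eq reduc M} and $\|\mathbf 1_{P_1}\|_{L^{p(\cdot)}}\sim\|\mathbf 1_{P_2}\|_{L^{p(\cdot)}}$). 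The step you single out as the main obstacle needs neither determinants nor the $\mathscr A_{p(\cdot),\infty}$ condition: for any $v\in(0,p_-]$, Jensen's inequality, Lemma~\ref{est fQ} applied to $p(\cdot)/v$ together with Lemma~\ref{con f}, and Lemma~\ref{eq reduc M} with $M:=A_P^{-1}$ give
\[
\exp\left(\fint_P\log\left\|W(y)A_P^{-1}\right\|dy\right)\le\left[\fint_P\left\|W(y)A_P^{-1}\right\|^{v}dy\right]^{1/v}\lesssim\frac{\|\,\|W(\cdot)A_P^{-1}\|\mathbf 1_P\|_{L^{p(\cdot)}}}{\|\mathbf 1_P\|_{L^{p(\cdot)}}}\sim\left\|A_PA_P^{-1}\right\|=1.
\]
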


The following is the reverse H\"older inequality for $\mathscr{A}_{p(\cdot),\infty}$ weights
in variable Lebesgue spaces,
which is exactly \cite[Theorem 3.15]{yyz25}.
\begin{lemma}\label{reverse Holder}
Let $p(\cdot)\in\mathcal{P}_0\cap LH$.
Then, for any $W \in \mathscr{A}_{p(\cdot),\infty}$,
there exist positive constants $C_1$, $C_2$, $A$, and $A_1$,
depending only on $p(\cdot)$ and $n$,
such that, for any $r \in (1,r_W]$ with
\begin{align}\label{rW}
r_W := 1+ \frac{1}{C_1 [W]_{\mathscr{A}_{p(\cdot),\infty}}^{A_1} 2^{C_2[W]_{\mathscr{A}_{p(\cdot),\infty}}}},
\end{align}
any cube $Q$ in $\mathbb{R}^n$, and any matrix $M \in M_{m}$,
\begin{align*}
\frac{1}{\|\mathbf{1}_{Q}\|_{L^{rp(\cdot)}}} \left\|\left\| W(\cdot)M \right\|\mathbf{1}_Q\right\|_{L^{rp(\cdot)}}
\lesssim [W]_{\mathscr{A}_{p(\cdot),\infty}}^{A}
\frac{1}{\|\mathbf{1}_{Q}\|_{L^{p(\cdot)}}} \left\| \left\| W(\cdot)M \right\|\mathbf{1}_Q\right\|_{L^{p(\cdot)}},
\end{align*}
where the implicit positive constant depends only on $p(\cdot)$, $n$, and $m$.
\end{lemma}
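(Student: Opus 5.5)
The statement to prove is Lemma \ref{reverse Holder}, which is quoted from \cite[Theorem 3.15]{yyz25}. I would prove it by reducing to a scalar reverse Hölder inequality for an $A_\infty$-type weight.

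\textbf{Step 1: reduction to vectors.} Fix a cube $Q$ and a matrix $M\in M_m$. Write $Me_j$ for the columns of $M$. Since $\|W(x)M\|\sim\sum_{j=1}^m|W(x)Me_j|$ with constants depending only on $m$, it suffices to prove the inequality with $\|W(\cdot)M\|$ replaced by $|W(\cdot)\vec z|$ for an arbitrary vector $\vec z\in\mathbb{C}^m$. Set $w_{\vec z}(x):=|W(x)\vec z|$. The goal becomes
\[
\frac{\|w_{\vec z}\mathbf 1_Q\|_{L^{rp(\cdot)}}}{\|\mathbf 1_Q\|_{L^{rp(\cdot)}}}\lesssim [W]^{A}_{\mathscr{A}_{p(\cdot),\infty}}\,\frac{\|w_{\vec z}\mathbf 1_Q\|_{L^{p(\cdot)}}}{\|\mathbf 1_Q\|_{L^{p(\cdot)}}}.
\]

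\textbf{Step 2: $w_{\vec z}$ is a uniformly controlled scalar $\mathscr{A}_{p(\cdot),\infty}$ weight.} For $y\in Q$,
\[
|W(x)\vec z|\le \|W(x)W^{-1}(y)\|\,|W(y)\vec z|.
\]
Taking the $L^{p(\cdot)}_x$ norm over $Q$, dividing by $\|\mathbf 1_Q\|_{L^{p(\cdot)}}$, taking logarithms and averaging in $y$ over $Q$ gives
\[
\frac{\|w_{\vec z}\mathbf 1_Q\|_{L^{p(\cdot)}}}{\|\mathbf 1_Q\|_{L^{p(\cdot)}}}\le [W]_{\mathscr{A}_{p(\cdot),\infty}}\exp\left(\fint_Q\log w_{\vec z}\right).
\]
This is exactly the scalar $\mathscr{A}_{p(\cdot),\infty}$ condition for $w_{\vec z}$, with constant at most $[W]_{\mathscr{A}_{p(\cdot),\infty}}$, uniformly in $\vec z$. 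The same argument applies to every subcube, so it holds on all cubes. By Remark \ref{rem Apinfty}(ii), $v:=w_{\vec z}^{p(\cdot)}\in A_\infty$. The real task is to track the constants quantitatively.

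\textbf{Step 3: scalar reverse Hölder with explicit constants.}
\begin{itemize}
\item[(a)] Use log-Hölder continuity to localize the exponent. On $Q$ with $|Q|\le1$, Lemma \ref{est Q} and $LH_0$ give $|Q|^{-p_+(Q)}\lesssim|Q|^{-p_-(Q)}$. One can then compare the modular with $\fint_Q v$ and $\fint_Q v^r$, up to factors depending only on $p(\cdot)$.
\item[(b)] For large cubes, use $LH_\infty$ to replace $p(\cdot)$ by $p_\infty$, up to an integrable error term of the form $(e+|x|)^{-N}$. This is the standard Cruz-Uribe--Fiorenza device.
\item[(c)] Apply the sharp scalar reverse Hölder inequality for $A_\infty$ weights (Hytönen--Pérez type): $\fint_Q v^{r}\lesssim(\fint_Q v)^r$ for $r-1\sim[v]_{A_\infty}^{-1}$. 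Here $[v]_{A_\infty}$ must be bounded by something like $C[W]^{A_1}2^{C_2[W]}$; the exponential comes from passing between the exponential-log average and the $p(\cdot)$ power.
\item[(d)] Convert back to norms with Lemma \ref{con f}, which gives $\|f\|_{L^{rp(\cdot)}}=\||f|^r\|^{1/r}_{L^{p(\cdot)}}$.
\end{itemize}

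The main obstacle is step (c): making the dependence of $[v]_{A_\infty}$ on $[W]_{\mathscr{A}_{p(\cdot),\infty}}$ explicit and uniform in $\vec z$, so that a single $r_W$ as in \eqref{rW} works. For this I would first try a direct Gehring-type argument. I would do a Calderón--Zygmund decomposition of $Q$ at levels $2^{k}\fint_Q v$ and use the exponential-average control from Step 2 on each stopping cube to get a good-$\lambda$ decay. The decay rate would then produce $r_W-1$ of the stated form.
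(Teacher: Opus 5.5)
Your Steps~1 and~2 are correct. Passing from $\|W(\cdot)M\|$ to $|W(\cdot)\vec z|$ costs only constants depending on $m$ and on the quasi-triangle constant of $L^{rp(\cdot)}$, which is uniform in $r\ge1$. The bound $[\,|W(\cdot)\vec z|\,]_{\mathscr A_{p(\cdot),\infty}}\le[W]_{\mathscr A_{p(\cdot),\infty}}$ is a good observation. The paper gives no argument for this lemma; it imports it verbatim from the authors' earlier work. So everything rests on your Step~3, and it does not hold up.

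The claim driving Step~3(c) is false. You want $[w_{\vec z}^{p(\cdot)}]_{A_\infty}$ bounded by a function of $[W]_{\mathscr A_{p(\cdot),\infty}}$, uniformly in $\vec z$. Take $n=1$, $W\equiv I_m$ and $|\vec z|=\lambda$; equivalently $W\equiv\lambda I_m$ and $|\vec z|=1$. In both cases $[W]_{\mathscr A_{p(\cdot),\infty}}=1$. Take a Lipschitz, compactly varying exponent with $p(x)=1+x/2$ on $[0,1]$, so $p\in\mathcal P_0\cap LH$. On $[0,1]$ we have $v=\lambda^{p(\cdot)}=\lambda e^{cx}$ with $c:=\tfrac12\log\lambda$, and
\[
\frac{\fint_0^1 v(x)\,dx}{\exp\left(\fint_0^1\log v(x)\,dx\right)}=\frac{\sinh(c/2)}{c/2}\longrightarrow\infty\quad\text{as }\lambda\to\infty\text{ or }\lambda\to0.
\]
Moreover, with any fixed constant, the admissible reverse H\"older exponent of $v$ on $[0,1]$ is $1+O(1/\log|c|)$. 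The map $w\mapsto w^{p(\cdot)}$ destroys the homogeneity that the norm inequality has, and Remark~\ref{rem Apinfty}(ii) is purely qualitative. So no single $r_W$ can come out of the $A_\infty$ constant of $w_{\vec z}^{p(\cdot)}$.

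What the lemma actually needs is this: use homogeneity to normalize on the given cube $Q$, then run your Gehring argument for the cube-dependent weight $(w_{\vec z}/\mu_Q)^{p(\cdot)}$. On every stopping cube $Q'\subset Q$ you must convert the Step~2 hypothesis, which controls $\|w_{\vec z}\mathbf 1_{Q'}\|_{L^{p(\cdot)}}/\|\mathbf 1_{Q'}\|_{L^{p(\cdot)}}$, into control of modular averages at the fixed normalization $\mu_Q$. The losses are of size $(\text{level on }Q')^{p_+(Q')-p_-(Q')}$.

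For $|Q|\le1$ these losses are harmless. The stopping level is at most $|Q|/|Q'|\le|Q'|^{-1}$, and $LH_0$ applies; this is what your (a) is gesturing at.

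For large cubes your sketch gives nothing:
\begin{itemize}
\item The $LH_\infty$ device in (b) compares $\int_G F^{p(\cdot)}$ with $\int_G F^{p_\infty}$ only for $0\le F\le1$.
\item The error $\int_Q(e+|x|)^{-N}\,dx$ must be absorbed by a lower bound on the modular.
\end{itemize}
Neither holds automatically for a normalized weight. Such a weight can be polynomially large in $|Q|/|Q'|$ on small subcubes where $p(\cdot)$ is far from $p_\infty$.

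The Gehring argument itself is only announced, and the multiplicative factor $[W]^{A}_{\mathscr A_{p(\cdot),\infty}}$ is never tracked. So the real content of the lemma remains unproved: a reverse H\"older inequality in the variable norm, with $r_W$ and the constant depending only on $[W]_{\mathscr A_{p(\cdot),\infty}}$.
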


\begin{lemma}\label{Apinfty u}
Let $p(\cdot) \in \mathcal{P}_0\cap LH$.
Then there exist positive constants $C_1$ and $C_2$,
depending only on $p(\cdot)$ and $n$, such that,
for any $W\in \mathscr{A}_{p(\cdot),\infty}$ and $\alpha\in (0,\frac{\log 2}{C_1 + C_2 \log([W]_{\mathscr{A}_{p(\cdot),\infty}})})$,
\begin{align*}
\sup_Q \fint_Q \left\|W^{-1}(x) A_Q\right\|^\alpha\,dx < \infty,
\end{align*}
where the supremum is taken over all cubes $Q$ in $\mathbb{R}^n$
and $A_Q$ is the reducing operator of order $p(\cdot)$ for $W$.
\end{lemma}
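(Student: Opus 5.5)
The plan is to reduce the matrix estimate to a scalar reverse Hölder/John--Nirenberg-type fact, using the $\mathscr{A}_{p(\cdot),\infty}$ condition in Definition \ref{def infty} together with the reducing operators from Lemma \ref{ext redu}. Fix a cube $Q$.

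\emph{Step 1: Replace the inner norm by $A_Q$.} By Lemma \ref{eq reduc M} with $M=W^{-1}(y)$,
\[
\frac{1}{\|\mathbf 1_Q\|_{L^{p(\cdot)}}}\bigl\|\,\|W(\cdot)W^{-1}(y)\|\mathbf 1_Q\bigr\|_{L^{p(\cdot)}}\sim \|A_QW^{-1}(y)\|=\|W^{-1}(y)A_Q\|,
\]
where the last equality uses self-adjointness, since the norm of a matrix equals the norm of its adjoint. Hence Definition \ref{def infty} yields
\[
\exp\Bigl(\fint_Q\log\|W^{-1}(y)A_Q\|\,dy\Bigr)\le C[W]_{\mathscr A_{p(\cdot),\infty}}
\]
uniformly in $Q$.

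\emph{Step 2: Lower bound on the function.} We need more than the geometric-mean bound: we need control on how large $\|W^{-1}A_Q\|$ can be on large sets. The natural route is a distributional estimate. For $t>1$, set
\[
E_t:=\{y\in Q:\ \|W^{-1}(y)A_Q\|>t\}.
\]
We aim to show that $|E_t|\le (1-\eta)|Q|$ for $t\ge t_0$, with $\eta$ and $t_0$ depending only on $[W]$. Then we iterate via a Calderón--Zygmund stopping-time (John--Nirenberg type) argument to get $|E_{t_0^k}|\le (1-\eta)^k|Q|$, possibly with a constant loss per step. Summing the layer cake formula gives $\fint_Q\|W^{-1}A_Q\|^\alpha\lesssim 1$ once $t_0^\alpha(1-\eta)<1$. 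This is exactly the smallness $\alpha<\log 2/(C_1+C_2\log[W])$ if $1-\eta=1/2$ and $t_0\sim C[W]^{C_2}$.

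\emph{Step 3: The single-step estimate and the iteration (main obstacle).} For the single step, I would use that $\log\|W^{-1}A_Q\|$ is bounded below. Indeed, $\|W^{-1}(y)A_Q\|\,\|A_Q^{-1}W(y)\|\ge 1$. Also, by the reverse Hölder inequality Lemma \ref{reverse Holder}, or simply by Lemma \ref{eq reduc M} together with Lemma \ref{est Q}, the quantity $\|W(\cdot)A_Q^{-1}\|$ has $L^{p(\cdot)}$-average on $Q$ comparable to $\|A_QA_Q^{-1}\|=1$. Hence $\fint_Q\log^+\|A_Q^{-1}W\|\lesssim 1$, using Jensen with exponent $p_-$ and Lemma \ref{con f}. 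This gives $\fint_Q\log^-\|W^{-1}A_Q\|\lesssim1$. Combined with Step 1, we get $\fint_Q\log^+\|W^{-1}A_Q\|\le C_1+\log[W]$, and Chebyshev gives $|E_t|\le\frac{C_1+C_2\log[W]}{\log t}|Q|\le\frac12|Q|$ for $\log t=2(C_1+C_2\log[W])$.

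The iteration is the delicate part. On a maximal dyadic subcube $P\subset Q$ where the average of $\log^+\|W^{-1}A_Q\|$ is large, we must pass from $A_Q$ to $A_P$. I would use $\|W^{-1}A_Q\|\le\|W^{-1}A_P\|\,\|A_P^{-1}A_Q\|$, and bound $\|A_P^{-1}A_Q\|$ by the stopping condition through the same log-average argument applied on $P$. Then the single-step estimate applied to $P$ yields the geometric decay. If this bookkeeping fails, a fallback is to use $\alpha$ small directly. Applying Jensen to $\fint_Q\|W^{-1}A_Q\|^\alpha=\fint_Q e^{\alpha\log\|W^{-1}A_Q\|}$ together with the exponential integrability of $\log\|W^{-1}A_Q\|$ should give the same constraint on $\alpha$. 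That exponential integrability is a BMO-type bound, obtained from the uniform control of Steps 1--2 over all subcubes via the John--Nirenberg inequality.
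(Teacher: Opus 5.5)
The paper gives no proof of Lemma \ref{Apinfty u}. Like the other facts on $\mathscr{A}_{p(\cdot),\infty}$ weights in Subsection \ref{sec weight 2}, it is quoted from the authors' earlier work \cite{yyz25}, so your argument has to stand on its own. Its main line does. Step 1 and the single-step Chebyshev estimate are correct, and the stopping-time iteration you hedge on closes and gives exactly the stated range of $\alpha$. One citation fix: the bound $\fint_Q\|W(x)A_Q^{-1}\|^{p_-}\,dx\lesssim1$ behind $\fint_Q\log^+\|A_Q^{-1}W\|\lesssim1$ comes from Lemma \ref{est fQ} with exponent $p(\cdot)/p_-\in\mathcal P\cap LH$ plus Lemma \ref{con f}. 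Lemma \ref{est Q} alone does not give it.

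\textbf{Closing the iteration.} Write $\phi_Q(y):=\|W^{-1}(y)A_Q\|$ and let $\Lambda:=C_1+\log[W]_{\mathscr{A}_{p(\cdot),\infty}}$ be your bound for $\sup_Q\fint_Q\log^+\phi_Q$. Fix $Q$, set $\lambda:=2\Lambda$, and let $\{P_j\}$ be the maximal dyadic subcubes of $Q$ with $\fint_{P_j}\log^+\phi_Q>\lambda$. Then:
\begin{itemize}
\item $Q$ itself is not selected, and $\sum_j|P_j|\le\frac12|Q|$.
\item By Lebesgue differentiation, $\phi_Q\le e^\lambda$ a.e. on $Q\setminus\bigcup_jP_j$.
\item The bound on $\|A_{P_j}^{-1}A_Q\|$ does not come from the stopping inequality, which is a lower bound. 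It comes from maximality: the parent of $P_j$ is not selected, so $\fint_{P_j}\log^+\phi_Q\le2^n\lambda$.
\end{itemize}
Factor $A_QA_{P_j}^{-1}=[A_QW^{-1}(y)][W(y)A_{P_j}^{-1}]$ and average logarithms over $y\in P_j$:
\begin{align*}
\log\|A_{P_j}^{-1}A_Q\|=\log\|A_QA_{P_j}^{-1}\|\le\fint_{P_j}\log^+\phi_Q(y)\,dy+\fint_{P_j}\log\|W(y)A_{P_j}^{-1}\|\,dy\le2^n\lambda+C.
\end{align*}
Hence $\phi_Q\le K\phi_{P_j}$ on $P_j$, with $K:=e^{2^n\lambda+C}\ge e^\lambda$. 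Define $F(t):=\sup_Q|\{y\in Q:\ \phi_Q(y)>t\}|/|Q|$. Then $F(Kt)\le\frac12F(t)$ for $t\ge1$, so $F(K^k)\le2^{-k}$, and
\begin{align*}
\fint_Q\phi_Q^\alpha\le1+\sum_{k\ge0}K^{\alpha(k+1)}F(K^k)\le1+K^\alpha\sum_{k\ge0}\left(\frac{K^\alpha}{2}\right)^k<\infty
\end{align*}
for $\alpha<\log2/\log K$. Here $\log K=2^{n+1}(C_1+\log[W]_{\mathscr{A}_{p(\cdot),\infty}})+C$, which is exactly the form in the statement. Taking the supremum over all cubes inside $F$ is what makes the one-step estimate self-improving; a nested recursion over generations of stopping cubes works equally well.

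\textbf{Drop the fallback.} Jensen's inequality bounds $\fint_Qe^{\alpha\log\phi_Q}$ from below, not above. John--Nirenberg would need a uniform BMO bound for $\log\phi_Q$ on subcubes $P\subset Q$, which Steps 1--2 do not supply. Replacing $A_Q$ by $A_P$ does not shift $\log\phi_Q$ by a constant. It only sandwiches it between $\log\phi_P-\log\|A_Q^{-1}A_P\|$ and $\log\phi_P+\log\|A_P^{-1}A_Q\|$, and the gap between these need not stay bounded (compare Lemma \ref{QP5}).
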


\subsection{Convex Body Valued Functions}\label{sec convex}
Finally, in this subsection, we recall some elementary concepts of convex sets.
For any set $E\subset \mathbb{C}^m$, let $\overline{E}$ denote its
\emph{closure}. For any sets $E,F\subset \mathbb{C}^m$, their \emph{Minkowski
sum $E+F$} is defined by setting
$$
 E+F:=\{x+y:\ x\in E,\ y\in F\}.
$$
For any $\lambda\in \mathbb{C}$, let
$\lambda E:=\{\lambda x:\ x\in E\}.$
A set $E\subset \mathbb{C}^m$ is said to be \emph{symmetric} if $\lambda E=E$
for any $\lambda\in {\mathbb{C}}$ with $|\lambda| = 1$,
and \emph{absorbing} if, for every $v\in \mathbb{C}^m$,
there exists $t\in(0,\infty)$ such that $v\in tE$.
A set $K\subset \mathbb{C}^m$ is said to be \emph{convex} if,
for any $x,y\in K$ and any $\lambda\in (0,1)$, $\lambda x+(1-\lambda)y\in K.$
For any set $E\subset \mathbb{C}^m$, we denote by $\operatorname{conv}(E)$
its \emph{convex hull}, namely the smallest convex set containing $E$. Equivalently,
$$
 \operatorname{conv}(E)
 =
 \left\{
 \sum_{i=1}^{k}\alpha_i x_i:\
 x_i\in E,\ \alpha_i\in [0,1],\ \sum_{i=1}^{k}\alpha_i=1,\ k\in\mathbb{N}
 \right\}.
$$
Moreover, we write $\overline{\operatorname{conv}}(E)$ for the
\emph{closure of the convex hull} of $E$.
Let the \emph{symbol $\mathcal{K}$} denote the collection of all nonempty closed subsets of $\mathbb{C}^m$.
For any closed set $K\in {\mathcal K}$, $K$ is called a \emph{convex body}
if it is both convex and symmetric
and, moreover, we use the \emph{symbol ${\mathcal K}_{\mathrm{cs}}$} to denote the set of all convex bodies.
The \emph{symbol $ {\mathcal K}_{\mathrm{bcs}} $} denotes the set of all bounded convex bodies
and the \emph{symbol $ {\mathcal K}_{\mathrm{abcs}} $} denotes the set of all absorbing and bounded convex bodies.
For any set $K\subset {\mathbb{C}}^m$, we define its \emph{norm $|K|$}
by setting
$$ |K| := \sup\left\{ |\vec{v}|:\ \vec{v}\in K \right\}. $$
For any positive definite matrix $A \in M_m$ and any convex body $K$, the \emph{product $AK$}
is defined by setting
$$ AK := \left\{A\vec{v}:\ \vec{v}\in K\right\} $$
and it is obvious that $AK$ also is a convex body.
The following two lemmas can be easily deduced from the definition of the convex set; we omit the details.
\begin{lemma}\label{A F}
Let $\{K_\alpha\}_{\alpha\in \Lambda}\subset {\mathcal K}_{\mathrm{cs}}$ be a collection of convex bodies,
where $\Lambda$ may be uncountable.
Then, for any positive definite matrix $A\in M_m$,
$$ \left| A \overline{{\mathop\mathrm{\,conv\,}}}\left( \bigcup_{\alpha\in \Lambda} K_\alpha\right) \right| =
\sup_{\alpha\in \Lambda} \left| A K_\alpha \right|. $$
\end{lemma}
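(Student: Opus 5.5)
We prove the two inequalities separately; write $E:=\bigcup_{\alpha\in\Lambda}K_\alpha$ and $K:=\overline{\operatorname{conv}}(E)$.

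\emph{The lower bound} $\ge$ is immediate. Each $K_\alpha\subset E\subset K$, so $AK_\alpha\subset AK$. Taking suprema of $|\vec w|$ over $\vec w\in AK_\alpha$ gives $|AK_\alpha|\le |AK|$, and hence $\sup_{\alpha}|AK_\alpha|\le|AK|$.

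\emph{The upper bound} $\le$ uses three facts. Set $s:=\sup_{\alpha}|AK_\alpha|$; if $s=\infty$ there is nothing to prove, so assume $s<\infty$, and let $\overline{B}(0,s)$ be the closed ball of radius $s$ in $\mathbb{C}^m$.
\begin{itemize}
\item[(a)] Every $\vec v\in E$ lies in some $K_\alpha$, so $|A\vec v|\le |AK_\alpha|\le s$. Thus $AE\subset\overline{B}(0,s)$.
\item[(b)] Since $A$ is linear, it commutes with convex combinations, so $A\operatorname{conv}(E)=\operatorname{conv}(AE)$. The ball $\overline{B}(0,s)$ is convex (triangle inequality), hence $\operatorname{conv}(AE)\subset\overline{B}(0,s)$.
\item[(c)] For any $\vec v\in K$, choose $\vec v_k\in\operatorname{conv}(E)$ with $\vec v_k\to\vec v$. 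By continuity of $A$, $A\vec v_k\to A\vec v$, and $\overline{B}(0,s)$ is closed, so $|A\vec v|\le s$.
\end{itemize}
Combining (a)--(c), $AK\subset\overline{B}(0,s)$, which gives $|AK|\le s$.

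No step is a genuine obstacle. Only linearity and continuity of $A$ are used, so positive definiteness (or invertibility) of $A$ is not needed. The one point requiring care is the closure in (c): it is handled by continuity of $A$ together with closedness of the ball, rather than by any identity of the form $A\overline{X}=\overline{AX}$.
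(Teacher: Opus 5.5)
Your proof is correct and is essentially the argument the paper has in mind: the paper omits the details, saying only that the lemma follows easily from the definition of convex sets, and your two-inclusion argument (monotonicity for $\ge$; convexity and closedness of the ball $\overline{B}(0,s)$ together with linearity and continuity of $A$ for $\le$) is exactly that deduction. You are also right that only linearity and continuity of $A$ are used, so positive definiteness is not needed.
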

\begin{lemma}\label{Kconvex}
Let $\{K_{\alpha,\gamma}\}_{\alpha\in \Lambda, \gamma\in\Gamma}\subset {\mathcal K}_{\mathrm{cs}}$,
where $\Lambda$ and $\Gamma$ may be uncountable.
Then
$$ \overline{{\mathop\mathrm{\,conv\,}}}\left( \bigcup_{\gamma\in \Gamma} \bigcup_{\alpha\in \Lambda} K_{\alpha,\gamma} \right)
= \overline{{\mathop\mathrm{\,conv\,}}}\left[ \bigcup_{\gamma\in \Gamma} \overline{{\mathop\mathrm{\,conv\,}}}\left(\bigcup_{\alpha\in \Lambda} K_{\alpha,\gamma}\right) \right]. $$
\end{lemma}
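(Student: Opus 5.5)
To prove Lemma \ref{Kconvex}, I will show the two inclusions between the closed convex sets
$$L:=\overline{\operatorname{conv}}\Bigl(\bigcup_{\gamma\in\Gamma}\bigcup_{\alpha\in\Lambda}K_{\alpha,\gamma}\Bigr)
\quad\text{and}\quad
R:=\overline{\operatorname{conv}}\Bigl[\bigcup_{\gamma\in\Gamma}\overline{\operatorname{conv}}\Bigl(\bigcup_{\alpha\in\Lambda}K_{\alpha,\gamma}\Bigr)\Bigr].$$
The only tools are two elementary facts. First, $\overline{\operatorname{conv}}(E)$ is the smallest closed convex set containing $E$. Second, both $\operatorname{conv}$ and $\overline{\operatorname{conv}}$ are monotone with respect to inclusion. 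The symmetry of the $K_{\alpha,\gamma}$ plays no role.

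For $L\subset R$: fix $\gamma\in\Gamma$. Then
$$\bigcup_{\alpha\in\Lambda}K_{\alpha,\gamma}\subset\overline{\operatorname{conv}}\Bigl(\bigcup_{\alpha\in\Lambda}K_{\alpha,\gamma}\Bigr).$$
Taking the union over $\gamma$ shows that the double union $\bigcup_{\gamma}\bigcup_{\alpha}K_{\alpha,\gamma}$ is contained in $\bigcup_{\gamma}\overline{\operatorname{conv}}(\bigcup_{\alpha}K_{\alpha,\gamma})$. Applying the monotone operation $\overline{\operatorname{conv}}$ to both sides gives $L\subset R$.

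For $R\subset L$: fix $\gamma\in\Gamma$. Since $\bigcup_{\alpha}K_{\alpha,\gamma}\subset\bigcup_{\gamma'}\bigcup_{\alpha}K_{\alpha,\gamma'}\subset L$, and $L$ is closed and convex, minimality of the closed convex hull yields $\overline{\operatorname{conv}}(\bigcup_{\alpha}K_{\alpha,\gamma})\subset L$. Hence the union over $\gamma$ of these hulls lies in $L$. Using minimality once more, now for the outer hull, we obtain $R\subset L$.

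There is no genuine obstacle here. The one point to check is that $L$ is indeed closed and convex, which requires that the closure of a convex set in $\mathbb{C}^m$ is convex. This holds because addition and scalar multiplication are continuous: if $x_k\to x$ and $y_k\to y$ with $x_k,y_k$ in a convex set $C$, then $\lambda x_k+(1-\lambda)y_k\in C$ converges to $\lambda x+(1-\lambda)y$. Once this is in hand, the minimality characterization of $\overline{\operatorname{conv}}$ is justified and both inclusions go through.
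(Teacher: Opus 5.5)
Your proof is correct: the double inclusion via monotonicity and minimality of $\overline{\operatorname{conv}}$, together with the check that the closure of a convex set is convex, is the routine verification this identity needs. The paper omits the proof as an easy consequence of the definitions, so your argument is essentially the one it leaves to the reader. As you note, it holds for arbitrary subsets of $\mathbb{C}^m$, not just convex bodies.
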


Now, we recall some basic properties of convex body valued functions.
A function $F:\ {\mathbb{R}^n} \to {\mathcal K}_{\mathrm{cs}}$ is said to be \emph{measurable} if,
for every open set $U\subset {\mathbb{C}}^m$, it holds that the set
$$ F^{-1}(U) := \left\{ x\in{\mathbb{R}^n}:\ F(x)\cap U \neq \emptyset\right\} $$
is measurable in the sense of Lebesgue.

The following is the measurability of the convex hull union of a sequence of measurable convex body valued functions
(see, for instance, \cite[Theorem 3.3]{bc22} and \cite[Theorem 8.24]{af09}).
\begin{lemma}\label{meas 2}
For any sequence of measurable convex body valued functions $\{F_k\}_{k\in{\mathbb{N}}}$,
the convex hull union map $G:\ {\mathbb{R}^n} \to {\mathcal K}_{\mathrm{cs}}$,
defined by setting
$$ G := \overline{{\mathop\mathrm{\,conv\,}}}\left( \bigcup_{k\in{\mathbb{N}}} F_k \right), $$
is measurable.
\end{lemma}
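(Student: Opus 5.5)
The plan is to reduce the measurability of $G$ to the measurability of countably many vector-valued functions, using a Castaing representation of each $F_k$. Since $\mathbb{C}^m$ is a Polish space and each $F_k$ has nonempty closed values and is measurable in the sense above, the Kuratowski--Ryll-Nardzewski selection theorem yields a Castaing representation (see \cite[Theorem 8.24]{af09} or \cite{bc22}). Concretely, for each $k\in\mathbb{N}$ there is a sequence $\{f_{k,j}\}_{j\in\mathbb{N}}$ of Lebesgue measurable functions $f_{k,j}:\ {\mathbb{R}^n}\to\mathbb{C}^m$ such that, for every $x\in{\mathbb{R}^n}$,
$$ F_k(x)=\overline{\{f_{k,j}(x):\ j\in\mathbb{N}\}}. $$
This selection step is where I expect the real work to lie. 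The only delicate point is that the selection theorem is applied with respect to the Lebesgue $\sigma$-algebra rather than the Borel one. The theorem only requires that preimages of open sets are measurable, which is exactly our definition of measurability, so I expect it to apply as is.

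Next I form a countable family of measurable candidates. Let $\mathcal{G}$ be the collection of all finite combinations
$$ g:=\sum_{i=1}^{N}\alpha_i f_{k_i,j_i}, $$
where $N\in\mathbb{N}$, $k_i,j_i\in\mathbb{N}$, and $\alpha_i\in[0,1]\cap\mathbb{Q}$ with $\sum_{i=1}^N\alpha_i=1$. Then $\mathcal{G}$ is countable, and each $g\in\mathcal{G}$ is measurable as a finite linear combination of measurable functions. I then claim that, for every $x\in{\mathbb{R}^n}$,
$$ G(x)=\overline{\{g(x):\ g\in\mathcal{G}\}}. $$
The inclusion ``$\supset$'' is immediate, because each $g(x)$ lies in $\operatorname{conv}(\bigcup_k F_k(x))$. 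For ``$\subset$'', take any convex combination $\sum_{i=1}^N\beta_i v_i$ with $v_i\in F_{k_i}(x)$. By density we can choose $f_{k_i,j_i}(x)$ arbitrarily close to $v_i$, and rational weights $\alpha_i$ arbitrarily close to $\beta_i$ on the simplex. Since the resulting combinations involve only finitely many bounded vectors, they approximate $\sum_{i=1}^N\beta_i v_i$. Hence $\operatorname{conv}(\bigcup_k F_k(x))$ is contained in the closure of $\{g(x):\ g\in\mathcal{G}\}$, and taking closures gives the claim.

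I also check that $G$ actually takes values in ${\mathcal K}_{\mathrm{cs}}$. It is nonempty and closed by construction, and convex as the closure of a convex set. It is symmetric because each $F_k(x)$ is symmetric, so the union is symmetric, and multiplication by $\lambda$ with $|\lambda|=1$ is a linear homeomorphism commuting with convex hulls and closures.

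Finally, for any open set $U\subset\mathbb{C}^m$, a closed set of the form $\overline{D}$ meets $U$ if and only if $D$ meets $U$. Therefore
$$ G^{-1}(U)=\bigcup_{g\in\mathcal{G}}\left\{x\in{\mathbb{R}^n}:\ g(x)\in U\right\}, $$
which is a countable union of Lebesgue measurable sets, and hence measurable. This proves that $G$ is measurable.
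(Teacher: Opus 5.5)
Your argument is correct. The Castaing representation of each $F_k$ is valid for the Lebesgue $\sigma$-algebra, since Kuratowski--Ryll-Nardzewski needs only a measurable space and a Polish target. The countable family of rational convex combinations and the identity $G^{-1}(U)=\bigcup_{g\in\mathcal G}g^{-1}(U)$ also go through as you describe.

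The paper gives no argument of its own; it simply cites \cite[Theorem 3.3]{bc22} and \cite[Theorem 8.24]{af09}. Your proof is the standard Castaing-representation argument behind those results, written out in a self-contained way.
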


For any $p(\cdot)\in\mathcal P_0$, let the \emph{symbol $L^{p(\cdot)}({\mathcal K})$}
denote the set of all measurable
convex body valued functions $F$ such that
$\|F\|_{L^{p(\cdot)}({\mathcal K})} := \|\,|F|\,\|_{L^{p(\cdot)}} < \infty. $
Moreover, for any $p\in (0,\infty)$, let the \emph{symbol $L^{p}_{\rm loc}({\mathcal K})$}
denote the set of all measurable convex body valued functions $F$
such that $ |F| \in L^p_{\rm loc} $.

Finally, we introduce matrix-weighted maximal operators for convex body valued functions.
Let $\alpha\in(0,\infty)$ and $W$ be any given matrix weight.
Then the \emph{$\alpha$-convexification Christ--Goldberg convex body maximal operator
${\mathcal M}^{(\alpha)}_W$} is defined by setting,
for any $F\in L^{\alpha}_{\rm loc}({\mathcal K})$ and $x\in {\mathbb{R}^n}$,
$${\mathcal M}^{(\alpha)}_W(F) (x) := \sup_{x\in B} \left[\fint_B |W(x)W^{-1}(y)F(y)|^\alpha\,dy\right]^{\frac{1}{\alpha}}, $$
where the supremum is taken over all balls in ${\mathbb{R}^n}$ containing $x$.
When $\alpha = 1$, we simply use the \emph{symbol ${\mathcal M}_W$} to denote ${\mathcal M}_W^{(1)}$.

The following lemma is precisely \cite[Theorem 2.22]{yyz25}
with the vector-valued function $\vec{f}$ replaced by the convex body valued function $F$,
which can be proved by the same argument as that used in the proof of \cite[Theorem 6.9]{bc22};
we omit the details here.
\begin{lemma}\label{bound max}
Let $p(\cdot) \in \mathcal{P}_0\cap LH$.
Then, for any $W \in \mathscr{A}_{p(\cdot),\infty}$,
there exists $\alpha \in (0,1]$ such that,
for any $F \in L^{1}_{\rm loc}({\mathcal K})$,
\begin{align*}
\left\| \mathcal{M}_{W}^{(\alpha)} \left( F \right) \right\|_{L^{p(\cdot)}}
\lesssim \left\|F\right\|_{L^{p(\cdot)}({\mathcal K})},
\end{align*}
where the implicit positive constant is independent of $F$.
\end{lemma}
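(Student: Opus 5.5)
The plan is to reduce the convex body statement to the vector-valued boundedness of the $\alpha$-convexified Christ--Goldberg maximal operator $\mathcal{M}^{(\alpha)}_W$ from \cite[Theorem 2.22]{yyz25}, following the strategy of \cite[Theorem 6.9]{bc22}. The idea is to choose measurable selections of $F$ that realize the norm. Since $F$ is measurable and convex body valued, with $|F|<\infty$ almost everywhere because $F\in L^1_{\rm loc}(\mathcal{K})$, the Castaing representation theorem (see \cite[Theorem 8.1.3]{af09}, or \cite[Section 3]{bc22}) gives a sequence of measurable selections $\{\vec f_k\}_{k\in\mathbb{N}}$ with $F(y)=\overline{\{\vec f_k(y)\}_k}$ for almost every $y$. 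We may take each $\vec f_k$ to satisfy $|\vec f_k|\le|F|$, so $\vec f_k\in L^1_{\rm loc}$.

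First I need to pass from the single matrix $W(x)W^{-1}(y)$ acting on the set $F(y)$ to vectors. For fixed $x$, the norm $|W(x)W^{-1}(y)F(y)|$ is the supremum over $\vec v\in F(y)$ of $|W(x)W^{-1}(y)\vec v|$, and hence equals $\sup_k|W(x)W^{-1}(y)\vec f_k(y)|$. A pointwise supremum over $k$ inside the average is not controlled by one selection, however.

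Therefore, following \cite{bc22}, I will instead use the John ellipsoid / $\ell^2$-type reduction. For almost every $y$ the set $F(y)\cap(\text{span})$ contains a symmetric ellipsoid comparable to it, with constants depending only on $m$. This means there are measurable vectors $\vec e_1(y),\dots,\vec e_m(y)$, all in $F(y)$, such that
\[
|A F(y)|\sim_m \sum_{j=1}^m |A\vec e_j(y)|
\]
for every matrix $A$. The measurable choice follows from a measurable selection of the John ellipsoid; this is exactly the argument in \cite{bc22}. With $A=W(x)W^{-1}(y)$ this gives
\[
\mathcal{M}^{(\alpha)}_W(F)(x)\lesssim_m \sum_{j=1}^m \mathcal{M}^{(\alpha)}_W(\vec e_j)(x).
\]
Here I use the quasi-triangle inequality for the $\alpha$-power average when $\alpha<1$.

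Next I apply the vector version \cite[Theorem 2.22]{yyz25} with the same $\alpha\in(0,1]$ to each $\vec e_j$. This gives $\|\mathcal{M}^{(\alpha)}_W(\vec e_j)\|_{L^{p(\cdot)}}\lesssim\|\,|\vec e_j|\,\|_{L^{p(\cdot)}}\le\|F\|_{L^{p(\cdot)}(\mathcal{K})}$, since $|\vec e_j|\le|F|$ pointwise. Summing over $j$ uses the quasi-triangle inequality of $L^{p(\cdot)}$ with a constant depending on $p_-$, and this proves the claim.

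The main obstacle is the measurable selection of the vectors $\vec e_j$ realizing the John-ellipsoid comparability, including the case where $F(y)$ is degenerate (not absorbing). I would handle this by restricting to the span of $F(y)$ and noting that the John ellipsoid depends measurably (indeed upper semicontinuously) on the compact set $F(y)$. I would then invoke the Kuratowski--Ryll-Nardzewski selection theorem on the closed-valued measurable map $y\mapsto\{(\vec e_1,\dots,\vec e_m)\in F(y)^m:\ \text{comparability holds with constant } \sqrt m\}$.
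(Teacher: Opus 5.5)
Your argument is correct, but it takes a different route from the one the paper indicates.

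The paper writes no proof. It says the lemma is \cite[Theorem 2.22]{yyz25} with $\vec f$ replaced by $F$, and that it is proved by the same argument as \cite[Theorem 6.9]{bc22}. In other words, the vector-valued argument is run directly on $F$. That works because Goldberg-type proofs use $\vec f$ only through operator-norm factorizations via reducing operators, such as $|W(x)W^{-1}(y)\vec f(y)|\le \|W(x)A_Q^{-1}\|\,\|A_QW^{-1}(y)\|\,|\vec f(y)|$. These hold verbatim for convex bodies, since $|ABK|\le \|A\|\,|BK|$ for every $K\in\mathcal K_{\rm cs}$; compare the proof of Lemma \ref{bound max wid}.

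You instead treat the vector theorem as a black box and transfer it through a measurable John frame $\vec e_1(y),\dots,\vec e_m(y)\in F(y)$ with $|AF(y)|\sim_m\sum_j|A\vec e_j(y)|$ uniformly in $A$. This gives the pointwise bound $\mathcal M^{(\alpha)}_W(F)\lesssim\sum_j\mathcal M^{(\alpha)}_W(\vec e_j)$ with $|\vec e_j|\le |F|$, and the rest is immediate.

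The two routes buy different things:
\begin{itemize}
\item Your route needs no knowledge of how \cite[Theorem 2.22]{yyz25} is proved, and it would transfer other vector-valued bounds of the same form.
\item The paper's route avoids measurable-selection theory altogether.
\end{itemize}

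Your selection step is sound as sketched:
\begin{itemize}
\item The set-valued map is closed-valued.
\item It is nonempty-valued provided your constant is at least John's constant. Circled bodies have complex John ellipsoids, so $m$ vectors suffice; with a real ellipsoid use $2m$ vectors.
\item It is measurable. The condition ``for every $A$'' can be restricted to a countable dense set of matrices, because $A\mapsto |AF(y)|$ is Lipschitz when $|F(y)|<\infty$. Also $y\mapsto |AF(y)|=\sup_k|A\vec f_k(y)|$ is measurable by your Castaing representation, so that paragraph is in fact needed.
\end{itemize}
No semicontinuity of the John ellipsoid is required.
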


\section{Matrix-Weighted Variable Hardy Spaces}\label{sec Hardy}
In this section, we introduce the matrix-weighted variable Hardy space
and obtain some properties of matrix-weighted variable Hardy spaces.
We first introduce the concepts of several matrix
${\mathscr A}_{p(\cdot),\infty}$ weighted maximal operators
(see \cite[Definition 2.5]{bcyy24} for the case $W\in A_{p,\infty}$).
\begin{definition}\label{def all M}
Let $p(\cdot) \in {\mathcal P}_0$, $W\in {\mathscr A}_{p(\cdot),\infty}$,
$\psi\in{\mathcal S}$, $N\in{\mathbb{N}}$, and $a,l\in (0,\infty)$.
Let $\vec f\in(\mathcal S')^m.$
Then the \emph{matrix-weighted radial maximal function $M_W(\vec{f},\psi)$},
the \emph{matrix-weighted grand radial maximal function $(M_N)_W(\vec{f})$},
the \emph{matrix-weighted non-tangential maximal function $(M_a^{\ast})_W(\vec{f},\psi)$},
the \emph{matrix-weighted maximal function $(M_l^{\ast\ast})_W(\vec{f},\psi)$ of Peetre type},
and the \emph{matrix-weighted grand maximal function $(M_{l,N}^{\ast\ast})_W(\vec{f})$ of Peetre type}
are defined, respectively, by setting, for any $x\in {\mathbb{R}^n}$,
\begin{align*}
M_W \left(\vec{f},\psi\right)(x) := \sup_{t\in(0,\infty)} \left| W(x)\psi_t\ast \vec{f}(x) \right|,
\ \ \left(M_N\right)_W\left(\vec{f}\right)(x) := \sup_{\phi\in {\mathcal S}_N}\sup_{t\in(0,\infty)} \left| W(x)\phi_t\ast \vec{f}(x) \right|,
\end{align*}
\begin{align*}
\left(M_a^{\ast}\right)_W\left(\vec{f},\psi\right)(x) := \sup_{t\in(0,\infty)}\sup_{y\in B(x,at)} \left| W(x)\psi_t\ast \vec{f}(y) \right|,
\end{align*}
\begin{align*}
\left(M_l^{\ast\ast}\right)_W\left(\vec{f},\psi\right)(x) := \sup_{t\in(0,\infty)}
\sup_{y\in{\mathbb{R}^n}} \left| W(x) \psi_t \ast \vec{f}(x-y) \right|\left( 1 + \frac{|y|}{t} \right)^{-l},
\end{align*}
and
\begin{align*}
\left(M_{l,N}^{\ast\ast}\right)_W\left(\vec{f}\right)(x) := \sup_{\phi\in {\mathcal S}_N} \sup_{t\in(0,\infty)}
\sup_{y\in{\mathbb{R}^n}} \left| W(x) \phi_t \ast \vec{f}(x-y) \right|\left( 1 + \frac{|y|}{t} \right)^{-l}.
\end{align*}
\end{definition}

Now, we introduce the concept of matrix-weighted variable Hardy spaces
(see \cite[Definition 2.4]{bcyy24} for matrix-weighted Hardy spaces).
\begin{definition}\label{def Hardy}
Let $p(\cdot) \in {\mathcal P}_0$, $N\in\mathbb Z_+$, and $W\in {\mathscr A}_{p(\cdot),\infty}$.
The \emph{matrix-weighted variable Hardy space $H^{p(\cdot)}_{W,N}$}
is defined to be the set of all $\vec{f} \in ({\mathcal S}')^m$
such that
$$\left\|\vec{f}\right\|_{H^{p(\cdot)}_{W,N}} := \left\|(M_{N})_W (\vec{f})\right\|_{L^{p(\cdot)}} < \infty. $$
\end{definition}

In order to establish the equivalent characterizations of matrix-weighted variable Hardy spaces
in terms of the above introduced various matrix-weighted maximal functions,
we first introduce the following index.
Let $p(\cdot) \in \mathcal{P}_0\cap LH$.
For any $W\in {\mathscr A}_{p(\cdot),\infty}$, let
\begin{align}\label{def alphaW}
\alpha_W := \sup\left\{\alpha \in (0,1]:\ \mathcal{M}_{W}^{(\alpha)} \ \text{is bounded from}\ L^{p(\cdot)}({\mathcal K})\ \text{to}\ L^{p(\cdot)}\right\}.
\end{align}
Then, by Lemma \ref{bound max}, we find that,
for any $W\in \mathscr{A}_{p(\cdot),\infty}$, there exists $\alpha\in(0,1]$
such that the operator ${\mathcal M}^{(\alpha)}_W$ is bounded on $L^{p(\cdot)}({\mathcal K})$,
i.e., $\alpha_W\in(0,1]$ exists.

The following is the aforementioned equivalent characterizations
of matrix-weighted variable Hardy spaces
(see \cite[Theorem 2.28]{bcyy24} for the corresponding ones of matrix-weighted Hardy spaces).
\begin{theorem}\label{H equal}
Let $p(\cdot) \in {\mathcal P}_0\cap LH$ and $W\in {\mathscr A}_{p(\cdot),\infty}$.
Assume that $\psi \in {\mathcal S}$ with $\int_{{\mathbb{R}^n}} \psi(x)\,dx \neq 0$.
Let  $a\in (0,\infty)$,
$l\in (\frac{n}{\alpha_W},\infty)$,
and $N\in(l,\infty)\cap{\mathbb{N}}$, where $\alpha_W$ is as in \eqref{def alphaW}.
Then, for any $\vec{f} \in ({\mathcal S}')^m$,
\begin{align*}
\left\| \vec{f} \right\|_{H^{p(\cdot)}_{W,N}} \sim \left\| M_W \left( \vec{f},\psi \right) \right\|_{L^{p(\cdot)}}
\sim \left\| \left(M_a^\ast\right)_W \left( \vec{f},\psi \right) \right\|_{L^{p(\cdot)}}\sim \left\| \left(M^{\ast\ast}_l\right)_W \left( \vec{f},\psi \right) \right\|_{L^{p(\cdot)}}
\sim \left\| \left(M^{\ast\ast}_{l,N}\right)_W \left( \vec{f} \right) \right\|_{L^{p(\cdot)}},
\end{align*}
where the positive equivalence constants are independent of $\vec{f}$.
\end{theorem}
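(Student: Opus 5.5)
We will prove the chain of equivalences by the classical Fefferman--Stein scheme, adapted to matrix weights through the convex body valued maximal operators of Lemma \ref{bound max}.

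\textbf{The trivial inequalities.} Since $\psi$ is a fixed multiple of an element of ${\mathcal S}_N$, we have the pointwise bounds
$$M_W(\vec f,\psi)\le (M_a^\ast)_W(\vec f,\psi)\lesssim (M_l^{\ast\ast})_W(\vec f,\psi)\lesssim (M_{l,N}^{\ast\ast})_W(\vec f).$$
The middle inequality uses $(1+|y|/t)^{-l}\ge (1+a)^{-l}$ for $|y|<at$. We also have $(M_N)_W(\vec f)\lesssim (M^{\ast\ast}_{l,N})_W(\vec f)$, by taking $y=0$. So two nontrivial estimates remain:
\begin{itemize}
\item[(a)] $\|(M^{\ast\ast}_{l,N})_W(\vec f)\|_{L^{p(\cdot)}}\lesssim \|(M^{\ast\ast}_l)_W(\vec f,\psi)\|_{L^{p(\cdot)}}$, together with $\|(M^{\ast\ast}_{l,N})_W(\vec f)\|_{L^{p(\cdot)}}\lesssim\|(M_N)_W(\vec f)\|_{L^{p(\cdot)}}$;
\item[(b)] $\|(M^{\ast\ast}_l)_W(\vec f,\psi)\|_{L^{p(\cdot)}}\lesssim\|M_W(\vec f,\psi)\|_{L^{p(\cdot)}}$.
\end{itemize}

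\textbf{Step (a).} We follow the standard argument (as in \cite[Theorem 2.28]{bcyy24}). Since $\int\psi\ne0$, every $\phi\in{\mathcal S}_N$ can be written as $\phi=\sum_{k\ge0}\eta^{(k)}_{2^{-k}}\ast\psi_{2^{-k}}$ with $\|\eta^{(k)}\|$ decaying like $2^{-kN}$ in suitable Schwartz seminorms. This gives, for any $x,y$,
$$|W(x)\phi_t\ast\vec f(x-y)|(1+|y|/t)^{-l}\lesssim \sum_k 2^{-k(N-l)}\,\sup_{z}|W(x)\psi_{2^{-k}t}\ast\vec f(x-z)|(1+2^k|z|/t)^{-l}.$$
Because $N>l$, the sum is bounded by $(M_l^{\ast\ast})_W(\vec f,\psi)(x)$. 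The matrix $W(x)$ is constant in these pointwise manipulations, so nothing new arises. The same reasoning, applied with an arbitrary $\phi$ in place of $\psi$, handles the $(M_N)_W$ comparison once (b) is known for grand maximal functions. That case reduces to (b) by a similar Peetre argument.

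\textbf{Step (b): the main obstacle.} In the scalar case one proves the pointwise Peetre estimate
$$(M_l^{\ast\ast}(f,\psi))^{\alpha}\lesssim M\big((M(f,\psi))^\alpha\big)$$
and uses the boundedness of $M$ on $L^{p(\cdot)/\alpha}$. Here the weight $W(x)$ sits outside, while the quantity we can average lives at the points $y$. So we define the convex body valued function
$$F(y):=\overline{{\mathop\mathrm{\,conv\,}}}\Big(\bigcup_{t}\{\lambda\,\psi_t\ast\vec f(y):\ |\lambda|\le1\}\Big),$$
for which $|W(y)F(y)|=M_W(\vec f,\psi)(y)$ by Lemma \ref{A F}. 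Its measurability comes from Lemma \ref{meas 2}, using a countable dense set of $t$.

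We then redo the Peetre/Str\"omberg--Torchinsky argument with $W(x)$ fixed. First, a Taylor (mean value) estimate on $\psi_t\ast\vec f$ over the ball $B(x-y,\delta t)$ gives
$$|W(x)\psi_t\ast\vec f(x-y)|^\alpha\lesssim \delta^{-n}\fint_{B(x-y,\delta t)}|W(x)\psi_t\ast\vec f(z)|^\alpha\,dz+\delta^{\alpha}\,\big[(M_l^{\ast\ast})_W(\vec f,\psi)(x)\big]^\alpha(1+|y|/t)^{l\alpha}.$$
Next we enlarge the ball to $B(x,(|y|+t))$, which costs a factor $(1+|y|/t)^{n}$. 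Since $l\alpha>n$, the choice $\alpha<\alpha_W$ with $l>n/\alpha$ absorbs this factor. We then bound $|W(x)\psi_t\ast\vec f(z)|\le|W(x)W^{-1}(z)W(z)F(z)|$, which yields
$$\big[(M_l^{\ast\ast})_W(\vec f,\psi)(x)\big]^\alpha\lesssim \big[{\mathcal M}^{(\alpha)}_W(F)(x)\big]^\alpha+\delta^\alpha\big[(M^{\ast\ast}_l)_W(\vec f,\psi)(x)\big]^\alpha.$$

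Absorbing the last term requires $(M^{\ast\ast}_l)_W(\vec f,\psi)(x)<\infty$. We would first prove the estimate for a truncated version, with $t\in(\epsilon,1/\epsilon)$ and an extra decay factor $(1+\epsilon|x-y|)^{-L}$, which is finite since $\vec f\in({\mathcal S}')^m$ and $W$ is locally integrable, and then let $\epsilon\to0$. Finally, Lemma \ref{bound max} with $\alpha<\alpha_W$ gives
$$\|{\mathcal M}^{(\alpha)}_W(F)\|_{L^{p(\cdot)}}\lesssim\|F\|_{L^{p(\cdot)}({\mathcal K})}.$$
Here the relevant convex body function is $WF$, whose norm is $|W(y)F(y)|=M_W(\vec f,\psi)(y)$, so the bound is in terms of $\|M_W(\vec f,\psi)\|_{L^{p(\cdot)}}$. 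This closes the chain.

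The hardest point is this absorption and truncation step. The weight $W(x)$ makes finiteness of the truncated maximal function less immediate than in the scalar case, and I expect to need Lemma \ref{QP5}-type growth control of $\|A_QA_R^{-1}\|$ to justify it.
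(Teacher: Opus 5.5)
Your trivial chain and Step (a) agree with Theorem \ref{M equal}(i),(iii). The extra comparison with $(M_N)_W$ in (a) is unnecessary, because $M_W(\vec f,\psi)\le\|\psi\|_{\mathcal S_N}(M_N)_W(\vec f)$ already closes the chain.

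Step (b) takes a genuinely different route from the paper. The paper splits it into two parts. First, $M_l^{\ast\ast}\to M_a^\ast$ (Lemma \ref{a ast l}). Second, $M_a^\ast\to M_W$ through the Fefferman--Stein set $E_\epsilon$, on which the gradient maximal function is dominated; there the absorption is done at the level of $L^{p(\cdot)}$ norms. That is why the paper needs $L^{p(\cdot)}$-finiteness of the truncated maximal function (Lemma \ref{M finite}, hence Lemma \ref{QP5}). Your pointwise Peetre-type inequality $(M_l^{\ast\ast})_W(\vec f,\psi)\lesssim\mathcal M^{(\alpha)}_W(WM^{\mathcal K}(\vec f,\psi))$ absorbs pointwise. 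It therefore needs only a.e. finiteness, which comes from the polynomial growth of $\psi_t\ast\vec f$ and from $\|W(x)\|<\infty$ a.e. Lemma \ref{QP5} plays no role, so your closing remark about it is off target.

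\textbf{The gap.} The truncation you propose breaks the absorption. The error term in your mean value step is $\delta\, t\,\|W(x)\nabla(\psi_t\ast\vec f)(w)\|$. To bound it by a Peetre maximal function of $\psi$, you must write $\partial_j\psi=\int_0^1\Theta^{(s)}\ast\psi_s\,ds$; this is where $\int\psi\neq0$ is used, as in Lemma \ref{UM 1}. In other words, $(\partial_j\psi)_t\ast\vec f$ is controlled by $\psi_{st}\ast\vec f$ at the \emph{finer} scales $st$, $s\in(0,1]$. With a hard cutoff $t\in(\epsilon,1/\epsilon)$, these scales leave the window when $t$ is near $\epsilon$. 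The error is then not bounded by the truncated maximal function, so nothing can be absorbed. Nor is it a negligible additive term: for tempered $\vec f$, $\psi_\tau\ast\vec f$ can grow like $\tau^{-n-L}$ as $\tau\to0$.

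\textbf{First repair.} Use the soft truncation $t\in(0,1/\epsilon)$ with weight $(t/(t+\epsilon))^K(1+\epsilon|x-y|)^{-K}$, i.e., $(M_l^{\ast\ast})_W^{\epsilon,K}$. Since $(st/(st+\epsilon))^K\ge s^K(t/(t+\epsilon))^K$, the losses $s^{-K}$ and $s^{-l}$ are paid by the decay $s^{L_0}$ of $\Theta^{(s)}$, provided $L_0>l+K$.

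\textbf{Second repair.} The constants in this argument depend on $K$, which must be taken large depending on $\vec f$. So ``let $\epsilon\to0$'' only yields $(M_l^{\ast\ast})_W(\vec f,\psi)<\infty$ a.e., when $M_W(\vec f,\psi)\in L^{p(\cdot)}$. You must then rerun the argument with $\epsilon=K=0$, which this finiteness now justifies, to get constants independent of $\vec f$. This is the same two-pass device used at the end of the proof of Theorem \ref{M equal}(iv).

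With these two repairs your route closes, and it is somewhat more economical than the paper's.
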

\begin{remark}
\begin{itemize}
\item[{\rm (i)}] Based on Theorem \ref{H equal}, in what follows, for any $p(\cdot)\in {\mathcal P}_0\cap LH$
and $W\in {\mathscr A}_{p(\cdot),\infty}$,
we denote $H^{p(\cdot)}_{W,N}$ simply by $H^{p(\cdot)}_W$
if $N \in (\frac{n}{\alpha_W},\infty)\cap\mathbb N$.
\item[{\rm (ii)}] Let $m = 1$, $p(\cdot)\in \mathcal{P}_0\cap LH$, and $W \equiv 1$;
then, in this case, $H^{p(\cdot)}_W$ coincides with the classical variable Hardy space,
and Theorem \ref{H equal} coincides with \cite[Theorem 3.3]{ns12} and \cite[Theorem 3.1]{cw14}.
On the other hand, for more general case $m\neq1$ and
$W\in\mathscr A_{p(\cdot),\infty}$, Theorem \ref{H equal} is new.
\end{itemize}
\end{remark}

To prove Theorem \ref{H equal},
we need to introduce the concepts of their corresponding convex body valued maximal functions.
In what follows, for any vector $\vec{v}\in {\mathbb{C}}^m$,
let
$${\mathcal K}(\vec{v}) := \{\lambda \vec{v}:\ \lambda\in
{\mathbb{C}} \ \text{with}\ |\lambda|\leq 1\}.$$
\begin{definition}\label{def convex max}
Let $p(\cdot) \in {\mathcal P}_0$, $W\in {\mathscr A}_{p(\cdot),\infty}$,
$\psi\in{\mathcal S}$, $N\in{\mathbb{N}}$, and $a,l\in (0,\infty)$.
Let $\vec f\in(\mathcal S')^m$.
Then the \emph{convex body valued radial maximal function $M^{{\mathcal K}}(\vec{f},\psi)$},
the \emph{convex body valued grand radial maximal function $M_N^{{\mathcal K}}(\vec{f})$},
the \emph{convex body valued non-tangential maximal function $M_a^{\ast, {\mathcal K}}(\vec{f},\psi)$},
and the \emph{convex body valued maximal function $M_l^{\ast\ast,{\mathcal K}}(\vec{f},\psi)$ of Peetre type}
are defined, respectively, by setting, for any $x\in {\mathbb{R}^n}$,
\begin{align*}
M^{{\mathcal K}} \left(\vec{f},\psi\right)(x) := \overline{{\mathop\mathrm{\,conv\,}}} \left( \bigcup_{t\in(0,\infty)} {\mathcal K}\left(\psi_t\ast \vec{f}\right)(x) \right),
\end{align*}
\begin{align*}
M_N^{{\mathcal K}} \left(\vec{f}\right)(x) := \overline{{\mathop\mathrm{\,conv\,}}} \left( \bigcup_{\phi\in {\mathcal S}_N}\bigcup_{t\in(0,\infty)} {\mathcal K}\left(\phi_t\ast \vec{f}\right)(x) \right),
\end{align*}
\begin{align*}
M_a^{\ast,{\mathcal K}}\left(\vec{f},\psi\right)(x) := \overline{{\mathop\mathrm{\,conv\,}}} \left(  \bigcup_{t\in(0,\infty)}\bigcup_{y\in B(x,at)} {\mathcal K}\left( \psi_t\ast \vec{f}\right)(y) \right),
\end{align*}
and
\begin{align*}
M_l^{\ast\ast,{\mathcal K}}\left(\vec{f},\psi\right)(x) := \overline{{\mathop\mathrm{\,conv\,}}} \left(
\bigcup_{t\in(0,\infty)} \bigcup_{y\in{\mathbb{R}^n}}
{\mathcal K}\left( \psi_t \ast \vec{f} \right) (x-y) \left( 1 + \frac{|y|}{t} \right)^{-l}\right).
\end{align*}
\end{definition}
\begin{remark}
Let $ p(\cdot) $, $W$, $\psi$, $N$, $a$, and $l$ be the same as in Definition \ref{def convex max}.
Then, from Lemma \ref{A F}, we infer that,
for any vector-valued function $\vec{f}\in ({\mathcal S}')^m$ and any $x\in{\mathbb{R}^n}$,
$$M_W \left(\vec{f},\psi\right)(x) = \left|W(x) M^{{\mathcal K}} \left(\vec{f},\psi\right)(x)\right|,\ \
(M_N)_W\left(\vec{f}\right)(x) = \left|W(x) M_N^{{\mathcal K}} \left(\vec{f}\right)(x)\right|,$$
$$(M_a^{\ast})_W\left(\vec{f},\psi\right)(x) = \left|W(x) M_a^{\ast,{\mathcal K}}\left(\vec{f},\psi\right)(x)\right|,
\text{ and } (M_l^{\ast\ast})_W\left(\vec{f},\psi\right)(x) = \left|W(x) M_l^{\ast\ast,{\mathcal K}}\left(\vec{f},\psi\right)(x)\right|.$$
\end{remark}
The following lemma shows the measurability of these convex body valued maximal functions.
\begin{lemma}\label{meas M}
Let $\psi\in{\mathcal S}$,
$N\in{\mathbb{N}}$, and $a,l\in (0,\infty)$.
Then, for any $\vec{f} \in ({\mathcal S}')^m$,
$M^{{\mathcal K}} (\vec{f},\psi)$, $M_N^{{\mathcal K}} (\vec{f})$, $M_a^{\ast,{\mathcal K}}(\vec{f},\psi)$,
and $M_l^{\ast\ast,{\mathcal K}}(\vec{f},\psi)$ are measurable.
\end{lemma}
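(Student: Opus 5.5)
The plan is to reduce each of the four convex body valued maximal functions to the convex hull union of a countable family of measurable convex body valued functions, and then apply Lemma \ref{meas 2}. The basic building block is as follows. For fixed $\phi\in\mathcal S$, $t\in(0,\infty)$, and $y\in\mathbb R^n$, the map $x\mapsto \phi_t\ast\vec f(x-y)$ is a $C^\infty$ (hence continuous) vector-valued function, because $\vec f\in(\mathcal S')^m$. If $\vec g$ is continuous, then $x\mapsto\mathcal K(\vec g(x))$ is measurable. Indeed, for an open $U\subset\mathbb C^m$ we have $\mathcal K(\vec g(x))\cap U\neq\emptyset$ if and only if there is $\lambda\in\mathbb C$ with $|\lambda|\le1$ and $\lambda\vec g(x)\in U$. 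Since $U$ is open, $\lambda$ may be taken in the countable set $D_{\mathbb Q}:=\{\lambda\in\mathbb Q+i\mathbb Q:\ |\lambda|\le 1\}$ (approximate $\lambda$ from inside the disc, e.g. by $(1-\varepsilon)\lambda$). Therefore
$$\mathcal K(\vec g)^{-1}(U)=\bigcup_{\lambda\in D_{\mathbb Q}}\{x:\ \lambda\vec g(x)\in U\},$$
which is a countable union of open sets. The Peetre-type case carries the scalar factor $(1+|y|/t)^{-l}$, which is continuous in $(y,t)$; $\mathcal K(\vec v)c=\mathcal K(c\vec v)$ for $c>0$, so it is absorbed into $\vec g$.

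The second step, which is the only real issue, is to replace the uncountable unions by countable ones without changing the closed convex hull. For $M^{\mathcal K}$, I would prove
$$\overline{{\mathop\mathrm{\,conv\,}}}\left(\bigcup_{t\in(0,\infty)}\mathcal K(\psi_t\ast\vec f)(x)\right)=\overline{{\mathop\mathrm{\,conv\,}}}\left(\bigcup_{t\in\mathbb Q_+}\mathcal K(\psi_t\ast\vec f)(x)\right).$$
The key input is that $t\mapsto\psi_t\ast\vec f(x)=\langle\vec f,\psi_t(x-\cdot)\rangle$ is continuous on $(0,\infty)$, since $t\mapsto\psi_t(x-\cdot)$ is continuous into $\mathcal S$. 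Hence every $\lambda\psi_t\ast\vec f(x)$ is a limit of points $\lambda\psi_{t_k}\ast\vec f(x)$ with $t_k\in\mathbb Q$, and these lie in the closed right-hand side. The reverse inclusion is trivial. Writing $\overline{\mathrm{conv}}$ of the union of the closed sets $\mathcal K(\cdot)$ this way, Lemma \ref{meas 2} then gives measurability.

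The same argument handles $M^{\ast,\mathcal K}_a$ and $M^{\ast\ast,\mathcal K}_l$. There the parameter set is $\{(t,y)\}$, and I would use joint continuity of $(t,y)\mapsto\psi_t\ast\vec f(x-y)$, again from continuity of $(t,y)\mapsto\psi_t(x-y-\cdot)$ into $\mathcal S$. For $M^{\ast,\mathcal K}_a$ one more point is needed, because the parameter domain $\{(t,y): |x-y|<at\}$ depends on $x$. I would reparametrize by $y=x-z$ with $|z|<at$. The domain $\{(t,z):\ |z|<at\}$ is open and independent of $x$, and its rational points are dense in it. Then $x\mapsto\mathcal K(\psi_t\ast\vec f(x-z))$ is measurable for each rational $(t,z)$, and the countable reduction again works by continuity.

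For $M_N^{\mathcal K}$, the extra union over $\phi\in\mathcal S_N$ is handled by separability. Since $\mathcal S$ is separable, $\mathcal S_N$ (a subset of the Schwartz space, given by a finite-seminorm bound) has a countable subset $\mathcal D$ that is dense in $\mathcal S_N$ in the topology of $\mathcal S$. The map $(\phi,t)\mapsto\phi_t\ast\vec f(x)$ is continuous for this topology, so the union can be restricted to $\phi\in\mathcal D$ and $t\in\mathbb Q_+$. One might worry that density should be taken with respect to the finitely many seminorms defining $\mathcal S_N$, not the full topology. This is harmless: $\mathcal S$ is a separable metrizable space, so every subset of it is separable in the $\mathcal S$-topology, and that is exactly what continuity of $\phi\mapsto\langle\vec f,\phi_t(x-\cdot)\rangle$ requires.

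In each case, Lemma \ref{Kconvex} lets me regroup the nested unions, and Lemma \ref{meas 2} applied to the resulting countable family finishes the proof.
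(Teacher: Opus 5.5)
Your proposal is correct and follows the paper's approach. The paper uses continuity in the parameter and density of a countable parameter set to rewrite each closed convex hull as a countable convex-hull union, then applies Lemma \ref{meas 2}. You also fill in details the paper only sketches for the other three maximal functions: the reparametrization $y=x-z$ that makes the non-tangential parameter domain independent of $x$, and the separability of $\mathcal S_N$ in the $\mathcal S$-topology for the grand maximal function.
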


\begin{proof}
We only give the proof for $M^{\mathcal K}(\vec f,\psi)$
because the proofs for the other three convex body valued maximal functions can be similarly proved
via replacing the corresponding parameter sets by
countable dense subsets.

For each $t\in (0,\infty)$, since $\psi_t*\vec f\in (\mathscr M)^m$, it follows that
the corresponding convex body valued function
$\mathcal K(\psi_t*\vec f(x))$ is measurable as well.
Moreover, by the continuity of $t\mapsto \psi_t$ in $\mathcal S$
and the assumption that $\vec f\in(\mathcal S')^m$, for every $x\in\mathbb R^n$ the map
$t\mapsto \psi_t*\vec f(x)$ is continuous on $(0,\infty)$. Hence,
by the density of $\mathbb{Q}\cap(0,\infty)$ in $(0,\infty)$,
$$
M^{\mathcal K}\left(\vec f,\psi\right)(x)
=
\overline{\operatorname{conv}}
\left(
\bigcup_{t\in\mathbb{Q}\cap(0,\infty)}\mathcal K\left(\psi_t*\vec f(x)\right)
\right),
$$
which, combined with Lemma \ref{meas 2}, further implies that $M^{\mathcal K}(\vec f,\psi)$ is measurable.
This finishes the proof of Lemma \ref{meas M}.
\end{proof}

We also need several truncated matrix-weighted
(and their corresponding convex body valued) maximal functions.
Let $p(\cdot)\in {\mathcal P}_0$ and $W\in {\mathscr A}_{p(\cdot),\infty}$.
Assume that $a,l\in (0,\infty)$, $\epsilon,K \in [0,\infty)$, and $\psi\in{\mathcal S}$.
For any $\vec{f} \in ({\mathcal S}')^m$ and $x\in {\mathbb{R}^n}$,
let
\begin{align*}
\left(M_a^{\ast}\right)_W^{\epsilon,K}\left(\vec{f},\psi\right)(x) :=
\sup_{t\in (0,\epsilon^{-1})}\sup_{y\in B(x,at)} \left| W(x) \psi_t\ast \vec{f}(y) \right|
\left( \frac{t}{t+\epsilon} \right)^K \left( \frac{1}{1+\epsilon |y|} \right)^K
\end{align*}
and
\begin{align*}
\left(M_l^{\ast\ast}\right)_W^{\epsilon,K}\left(\vec{f},\psi\right)(x) :=
\sup_{t\in (0,\epsilon^{-1})}\sup_{y\in{\mathbb{R}^n}} \left| W(x) \psi_t\ast \vec{f}(x-y) \right|
\left( 1 + \frac{|y|}{t} \right)^{-l} \left( \frac{t}{t+\epsilon} \right)^K \left( \frac{1}{1+\epsilon |x-y|} \right)^K,
\end{align*}
where $0^{-1} := \infty$.
Furthermore, we use the \emph{symbol $(M_a^{\ast})^{\epsilon,K,{\mathcal K}}$} to denote their corresponding
convex body valued maximal functions, which are respectively defined by setting,
for any $\vec{f}\in ({\mathcal S}')^m$ and $x\in\mathbb R^n$,
\begin{align*}
\left(M_a^{\ast}\right)^{\epsilon,K,{\mathcal K}}\left(\vec{f},\psi\right)(x) := \overline{{\mathop\mathrm{\,conv\,}}}
\left(\bigcup_{t\in(0,\epsilon^{-1})} \bigcup_{y\in B(x,at)} {\mathcal K}\left(\psi_t\ast \vec{f}\right)(y)
\left( \frac{t}{t+\epsilon} \right)^K \left( \frac{1}{1+\epsilon |y|} \right)^K \right)
\end{align*}
and
\begin{align*}
\left(M_l^{\ast\ast}\right)^{\epsilon,K,{\mathcal K}}\left(\vec{f},\psi\right)(x)
 := \overline{{\mathop\mathrm{\,conv\,}}}
\left(\bigcup_{t\in(0,\epsilon^{-1})} \bigcup_{y\in {\mathbb{R}^n}} {\mathcal K}\left(\psi_t\ast \vec{f}\right)(y)
\left( 1 + \frac{|x-y|}{t} \right)^{-l} \left( \frac{t}{t+\epsilon} \right)^K \left( \frac{1}{1+\epsilon |y|} \right)^K\right).
\end{align*}
Similarly to the proof of Lemma \ref{meas M}, we obtain the measurability of both
$(M_a^{\ast})^{\epsilon,K,{\mathcal K}}(\vec{f},\psi)$ and $(M_l^{\ast\ast})^{\epsilon,K,{\mathcal K}}(\vec{f},\psi)$
as follows.
\begin{lemma}\label{M finite}
Let $p(\cdot) \in {\mathcal P}_0\cap LH$ and $W\in {\mathscr A}_{p(\cdot),\infty}$.
Assume that $a\in (0,\infty)$ and $\psi\in{\mathcal S}$ with $\int_{{\mathbb{R}^n}} \psi(x)\,dx\neq 0$.
For any $\vec{f} \in ({\mathcal S}')^m$,
there exists $\widetilde{K} \in (0,\infty)$ such that, for any $\epsilon \in (0,\infty)$
and $K\in(\widetilde{K},\infty)$,
$\left(M_a^{\ast}\right)_W^{\epsilon,K}\left(\vec{f},\psi\right) \in L^{p(\cdot)}. $
\end{lemma}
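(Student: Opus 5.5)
The plan is to use the standard fact that tempered distributions have finite order, and then absorb the resulting polynomial growth with the truncation factors. Since $\vec f\in(\mathcal S')^m$, there exist $N_0\in\mathbb N$ and $C\in(0,\infty)$, depending on $\vec f$, such that, for any $\phi\in\mathcal S$,
\[
|\langle \vec f,\phi\rangle|\le C\sum_{|\gamma|\le N_0}\sup_{z\in\mathbb R^n}(1+|z|)^{N_0}|D^\gamma\phi(z)|.
\]
Apply this with $\phi=\psi_t(y-\cdot)$. For any $t\in(0,\infty)$ and $y\in\mathbb R^n$, a direct computation gives
\[
\left|\psi_t\ast\vec f(y)\right|\lesssim (1+|y|)^{N_0}\left(1+t^{-1}\right)^{N_0+n}\left(1+t\right)^{N_0}.
\]
Here I use $(1+|z|)\le(1+|y|)(1+|y-z|)$ and the decay of $\psi$.

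Next, fix $\epsilon\in(0,\infty)$ and restrict to $t\in(0,\epsilon^{-1})$. We have $1+t\le 1+\epsilon^{-1}$ and $(1+t^{-1})\le (t+\epsilon)t^{-1}\epsilon^{-1}\cdot(1+\epsilon)$. Hence, if $K\ge N_0+n$, then
\[
\left(\frac{t}{t+\epsilon}\right)^K\left(1+\frac1t\right)^{N_0+n}\lesssim_\epsilon 1.
\]
Also, $(1+|y|)^{N_0}(1+\epsilon|y|)^{-K}\lesssim_\epsilon (1+|y|)^{N_0-K}$. For $y\in B(x,at)$ with $t<\epsilon^{-1}$, we have $|y|\ge |x|-a\epsilon^{-1}$, so $1+|y|\gtrsim_{\epsilon,a}1+|x|$. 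Putting these together, for any $\vec z$-direction,
\[
\left(M_a^{\ast}\right)_W^{\epsilon,K}\left(\vec f,\psi\right)(x)\lesssim_{\epsilon,a,\vec f}\|W(x)\|\,(1+|x|)^{N_0-K}.
\]
It therefore suffices to show that $\|W(\cdot)\|(1+|\cdot|)^{-L}\in L^{p(\cdot)}$ for $L=K-N_0$ large, with the largeness depending only on $W$ and $p(\cdot)$.

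For this last step I decompose $\mathbb R^n$ into the cube $Q_0=[-1,1]^n$ and the annuli $Q_j\setminus Q_{j-1}$, where $Q_j=2^jQ_0$. By Lemma \ref{eq reduc M} with $M=I$, we get $\|\,\|W\|\mathbf 1_{Q_j}\|_{L^{p(\cdot)}}\sim\|A_{Q_j}\|\,\|\mathbf 1_{Q_j}\|_{L^{p(\cdot)}}$. Lemma \ref{QP5} gives $\|A_{Q_j}\|\le\|A_{Q_j}A_{Q_0}^{-1}\|\|A_{Q_0}\|\lesssim 2^{jd_2}$. Lemma \ref{est Q}, applied via Lemma \ref{con f} to handle $p_-<1$, gives $\|\mathbf 1_{Q_j}\|_{L^{p(\cdot)}}\lesssim 2^{jn/p_-}$.

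To sum the pieces I use the $\underline p$-triangle inequality, where $\underline p:=\min\{1,p_-\}$, in the form $\|\sum g_j\|^{\underline p}_{L^{p(\cdot)}}\le\sum\|g_j\|^{\underline p}_{L^{p(\cdot)}}$, which follows from Lemma \ref{con f}. This gives
\[
\left\|\,\|W\|(1+|\cdot|)^{-L}\right\|_{L^{p(\cdot)}}^{\underline p}\lesssim\sum_{j\ge0}2^{-jL\underline p}\,2^{j(d_2+n/p_-)\underline p}<\infty
\]
whenever $L>d_2+n/p_-$. Taking $\widetilde K:=N_0+n+d_2+n/p_-$ finishes the argument.

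The only delicate point is the finite-order estimate for $\psi_t\ast\vec f$ with explicit dependence on $t$ and $y$. The $t\to0$ blow-up must be absorbed by $(t/(t+\epsilon))^K$, and the $t$ large regime is excluded by the truncation $t<\epsilon^{-1}$. The rest is bookkeeping with reducing operators.
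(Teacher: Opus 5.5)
Your proposal is correct and follows essentially the same route as the paper's proof. It uses a finite-order bound for $\psi_t\ast\vec f$, absorbs the $t\to0$ blow-up with $(\frac{t}{t+\epsilon})^K$, and transfers the decay from $y$ to $x$ via $y\in B(x,at)$. It then applies a dyadic annular decomposition controlled through reducing operators, Lemma \ref{eq reduc M}, and Lemma \ref{QP5}. The only difference is cosmetic: the paper factors through $\|W(x)A_{Q(\mathbf 0,\epsilon^{-1})}^{-1}\|$ on cubes $Q(\mathbf 0,2^k\epsilon^{-1})$, whereas you work directly with $\|W(x)\|$ on unit-scale cubes and bound $\|\mathbf 1_{Q_j}\|_{L^{p(\cdot)}}$ absolutely.
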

\begin{proof}
Let $\epsilon \in (0,\infty)$ and $r := \min\{p_-,1\}$. Observe that, for any $x\in{\mathbb{R}^n}$,
\begin{align}\label{W p eq 1}
\left\|W(x)\right\| \leq \left\|W(x) A_{Q(\mathbf{0},\epsilon^{-1})}^{-1}\right\| \left\|A_{Q(\mathbf{0},\epsilon^{-1})}\right\|
\lesssim \left\|W(x) A_{Q(\mathbf{0},\epsilon^{-1})}^{-1}\right\|,
\end{align}
where the implicit positive constant may depend on $\epsilon$.
Using the same argument as in \cite[pp. 64--65]{g14 2} via replacing $f$ by $\vec{f}$,
we find that there exists $L\in{\mathbb{Z}}_+$, depending only on $\psi$ and $\vec{f}$,
such that, for any $t\in (0,\infty)$ and $y\in{\mathbb{R}^n}$,
\begin{align*}
\left| \psi_t\ast \vec{f}(y) \right| \lesssim \left( 1 +\epsilon |y| \right)^L \left(1+t^L\right)\left( t^{-n} + t^{-n-L} \right).
\end{align*}
Combining this with \eqref{W p eq 1},
we conclude that, for any $t\in (0,\epsilon^{-1})$ and $K \in (L + d_2 + \frac{n}{r}, \infty)$,
where $d_2$ is as in Lemma \ref{QP5},
and for $x\in{\mathbb{R}^n}$ and $y\in B(x,at)$,
\begin{align*}
&\left| W(x) \psi_t\ast \vec{f}(y) \right| \left( \frac{t}{t+\epsilon} \right)^K \left( \frac{1}{1+\epsilon |y|} \right)^K \\
&\quad \lesssim \left\|W(x) A_{Q(\mathbf{0},\epsilon^{-1})}^{-1}\right\|
\left(1+t^L\right)\left( t^{K-n} + t^{K-n-L} \right) \left( \frac{1}{t+\epsilon} \right)^K \left( \frac{1}{1+\epsilon |y|} \right)^{K-L}\\
&\quad \lesssim  \left\|W(x) A_{Q(\mathbf{0},\epsilon^{-1})}^{-1}\right\| \left( \frac{1}{1+\epsilon |y|} \right)^{K-L},
\end{align*}
where the implicit positive constant may depend on $\epsilon$.
From this and the obvious inequalities that $1 + \epsilon|x| \leq 1 + \epsilon |y| + \epsilon |x-y| < 1+ \epsilon|y| + \epsilon at < (1+a)(1 + \epsilon |y|)$,
we deduce that
\begin{align}\label{W p eq 2}
\left(M_a^{\ast}\right)_W^{\epsilon,K}\left(\vec{f},\psi\right)(x) \lesssim \left\|W(x) A_{Q(\mathbf{0},\epsilon^{-1})}^{-1} \right\| \left( \frac{1}{1+\epsilon |x|} \right)^{K-L}
= \epsilon^{K-L} \left\|W(x) A_{Q(\mathbf{0},\epsilon^{-1})}^{-1} \right\| \left( \frac{\epsilon^{-1}}{\epsilon^{-1}+|x|} \right)^{K-L}.
\end{align}
Let $Q_k := Q(\mathbf{0},2^{k}\epsilon^{-1})$ for any $k\in{\mathbb{Z}}_+$.
Using Lemma \ref{con f}, we find that
\begin{align}\label{W p eq 3}
&\left\|\,\left\|W(\cdot) A_{Q(\mathbf{0},\epsilon^{-1})}^{-1} \right\| \left( \frac{\epsilon^{-1}}{\epsilon^{-1}+|\cdot|} \right)^{K-L} \right\|_{L^{p(\cdot)}}^r {\nonumber}\\
&\quad  = \left\|\,\left\|W(\cdot) A_{Q(\mathbf{0},\epsilon^{-1})}^{-1} \right\|^r \left( \frac{\epsilon^{-1}}{\epsilon^{-1}+|\cdot|} \right)^{r(K-L)} \right\|_{L^{\frac{p(\cdot)}{r}}} {\nonumber}\\
&\quad  \lesssim \left\|\,\left\|W(\cdot) A_{Q(\mathbf{0},\epsilon^{-1})}^{-1} \right\|^r {\mathbf{1}}_{Q_0} \right\|_{L^{\frac{p(\cdot)}{r}}}
 + \sum_{k = 0}^\infty 2^{-rk(K-L)} \left\|\,\left\|W(\cdot) A_{Q(\mathbf{0},\epsilon^{-1})}^{-1} \right\|^r {\mathbf{1}}_{Q_{k+1} \setminus Q_k} \right\|_{L^{\frac{p(\cdot)}{r}}}{\nonumber}\\
&\quad \leq \sum_{k = 0}^\infty 2^{-rk(K-L)} \left\|\,\left\|W(\cdot) A_{Q(\mathbf{0},\epsilon^{-1})}^{-1} \right\| {\mathbf{1}}_{Q_k} \right\|_{L^{p(\cdot)}}^r.
\end{align}
Note that, for any $k\in{\mathbb{Z}}_+$,
$\frac{\|{\mathbf{1}}_{Q_k}\|_{L^{p(\cdot)}}}{\|{\mathbf{1}}_{Q_0}\|_{L^{p(\cdot)}}} \lesssim 2^{k\frac{n}{r}}$
(see, for instance, \cite[Lemma 3.2]{cfn12} with $w := 1$).
This, together with \eqref{W p eq 3}, Lemmas \ref{est fQ}, \ref{eq reduc M}, and \ref{QP5},
and the assumption $K \in (L + d_2 + \frac{n}{r}, \infty)$,
further implies that
\begin{align*}
&\left\|\,\left\|W(\cdot) A_{Q(\mathbf{0},\epsilon^{-1})}^{-1} \right\| \left( \frac{\epsilon^{-1}}{\epsilon^{-1}+|\cdot|} \right)^{K-L} \right\|_{L^{p(\cdot)}}^r \\
&\quad \lesssim \sum_{k = 0}^\infty 2^{-rk(K-L)} \left[\frac{\|{\mathbf{1}}_{Q_k}\|_{L^{p(\cdot)}}}{\|{\mathbf{1}}_{Q_0}\|_{L^{p(\cdot)}}}\right]^r
\frac{1}{\|{\mathbf{1}}_{Q_k}\|^r_{L^{p(\cdot)}}} \left\|\, \left\|W(\cdot) A_{Q(\mathbf{0},\epsilon^{-1})}^{-1} \right\| {\mathbf{1}}_{Q_k} \right\|_{L^{p(\cdot)}}^r\\
&\quad \lesssim \sum_{k = 0}^\infty 2^{-rk(K-L - \frac{n}{r})} \left\|A_{Q_k} A_{Q(\mathbf{0},\epsilon^{-1})}^{-1} \right\|^r
\lesssim \sum_{k = 0}^\infty 2^{- r k(K-L - \frac{n}{r} - d_2)} < \infty.
\end{align*}
Using this and \eqref{W p eq 2}, we conclude that
\begin{align*}
\left\| \left(M_a^{\ast}\right)_W^{\epsilon,K}\left(\vec{f},\psi\right) \right\|_{L^{p(\cdot)}}
&\lesssim  \left\| \left\|W(\cdot) A_{Q(\mathbf{0},\epsilon^{-1})}^{-1}\right\| \left( \frac{\epsilon^{-1}}{\epsilon^{-1}+|\cdot|} \right)^{K-L} \right\|_{L^{p(\cdot)}}
<\infty
\end{align*}
and hence $ (M_a^\ast)^{\epsilon,K}_W(\vec f,\psi) \in L^{p(\cdot)} $.
This finishes the proof of Lemma \ref{M finite}.
\end{proof}
\begin{lemma}\label{a ast l}
Let $p(\cdot)\in {\mathcal P}_0\cap LH$ and $W\in {\mathscr A}_{p(\cdot),\infty}$.
Assume that $a\in (0,\infty)$, $\epsilon,K \in [0,\infty)$, and $\psi\in{\mathcal S}$ with $\int_{{\mathbb{R}^n}} \psi(x)\,dx\neq 0$.
If $l\in (\frac{n}{\alpha_W},\infty)$, then, for any $\vec{f}\in ({\mathcal S}')^m$,
\begin{align*}
\left\| \left(M_l^{\ast\ast}\right)_W^{\epsilon,K}\left(\vec{f},\psi\right) \right\|_{L^{p(\cdot)}}
\lesssim \left\|\left(M_a^{\ast}\right)_W^{\epsilon,K}\left(\vec{f},\psi\right)\right\|_{L^{p(\cdot)}} ,
\end{align*}
where the implicit positive constant is independent of $\vec{f}$, $\epsilon$, and $K$.
\end{lemma}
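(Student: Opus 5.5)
The approach is the classical pointwise comparison between the Peetre-type maximal function and the non-tangential one, carried out at the level of convex bodies. The matrix $W(x)$ is then applied only at the end, through the convex body maximal operator $\mathcal M^{(\alpha)}_W$ of Lemma \ref{bound max}. Since $l>\frac n{\alpha_W}$, I fix $\alpha\in(0,\alpha_W)$ with $l\alpha>n$ such that $\mathcal M^{(\alpha)}_W$ is bounded from $L^{p(\cdot)}(\mathcal K)$ to $L^{p(\cdot)}$.

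The first step is the geometric covering. Fix $x$, $t\in(0,\epsilon^{-1})$ and $y\in\mathbb R^n$. For every $z\in B(x-y,at)$ we have $y'\in B(z,at)$ with $y'=x-y$, so the vector $\psi_t*\vec f(x-y)\,(\frac t{t+\epsilon})^K(1+\epsilon|x-y|)^{-K}$ lies in the convex body $F(z):=(M_a^\ast)^{\epsilon,K,\mathcal K}(\vec f,\psi)(z)$, and likewise in $\mathcal K$ of it. This gives, for any matrix $A$,
\begin{align*}
\left|A\psi_t*\vec f(x-y)\right|\left(\frac t{t+\epsilon}\right)^K\left(\frac1{1+\epsilon|x-y|}\right)^K\le \left[\fint_{B(x-y,at)}|AF(z)|^\alpha\,dz\right]^{\frac1\alpha}.
\end{align*}
I take $A=W(x)$ and write $W(x)F(z)=W(x)W^{-1}(z)W(z)F(z)$. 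The ball $B(x-y,at)$ lies in $B(x,|y|+at)$, whose measure is comparable to $(1+\frac{|y|}t)^n|B(x-y,at)|$ with constants depending on $a$. Hence
\begin{align*}
\text{LHS}\lesssim \left(1+\frac{|y|}t\right)^{\frac n\alpha}\left[\fint_{B(x,|y|+at)}\left|W(x)W^{-1}(z)G(z)\right|^\alpha dz\right]^{\frac1\alpha}\le\left(1+\frac{|y|}t\right)^{\frac n\alpha}\mathcal M^{(\alpha)}_W(G)(x),
\end{align*}
where $G(z):=W(z)F(z)$ is a convex body valued function with $|G|=(M_a^\ast)^{\epsilon,K}_W(\vec f,\psi)$.

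Multiplying by $(1+\frac{|y|}t)^{-l}$ with $l\ge\frac n\alpha$ and taking suprema over $t$ and $y$ yields the pointwise bound $(M_l^{\ast\ast})^{\epsilon,K}_W(\vec f,\psi)(x)\lesssim\mathcal M^{(\alpha)}_W(G)(x)$. The implicit constant depends only on $n,a,\alpha$, so it is independent of $\epsilon$, $K$ and $\vec f$. Taking $L^{p(\cdot)}$ norms and applying Lemma \ref{bound max} gives $\|\mathcal M^{(\alpha)}_W(G)\|_{L^{p(\cdot)}}\lesssim\|G\|_{L^{p(\cdot)}(\mathcal K)}=\|(M_a^\ast)^{\epsilon,K}_W(\vec f,\psi)\|_{L^{p(\cdot)}}$, which is the claim.

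The main obstacle is the technical admissibility of $G$ in Lemma \ref{bound max}. Measurability of $F$ follows as in Lemma \ref{meas M}, and since $W$ is measurable, $G=WF$ is measurable. Local integrability $G\in L^1_{\rm loc}(\mathcal K)$ is needed; it is available when the right-hand norm is finite because $p_->0$ only gives $L^{p_-}_{\rm loc}$. If that is insufficient, I would instead use the boundedness of $\mathcal M^{(\alpha)}_W$ formulated for $L^\alpha_{\rm loc}(\mathcal K)$, which is all that the definition of $\mathcal M^{(\alpha)}_W$ requires. In any case, if the right-hand side is infinite there is nothing to prove.
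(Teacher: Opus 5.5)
Your proposal is correct and is essentially the paper's proof. Both derive the same pointwise bound $(M_l^{\ast\ast})^{\epsilon,K}_W(\vec f,\psi)(x)\lesssim\mathcal M^{(\alpha)}_W(W(\cdot)(M_a^{\ast})^{\epsilon,K,\mathcal K}(\vec f,\psi))(x)$ with $\alpha\in(\frac nl,\alpha_W)$, by averaging over $z\in B(x-y,at)\subset B(x,|y|+at)$, and then conclude with Lemma \ref{bound max} and Lemma \ref{A F}.

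Your worry about $L^1_{\rm loc}(\mathcal K)$ is unnecessary. $\alpha_W$ in \eqref{def alphaW} is defined by the boundedness of $\mathcal M^{(\alpha)}_W$ from $L^{p(\cdot)}(\mathcal K)$ to $L^{p(\cdot)}$, and that is exactly the property you invoke for $G$.
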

\begin{proof}
Let $\alpha \in (\frac nl, \alpha_W)$.
Using the definition of $(M_a^{\ast})^{\epsilon,K,{\mathcal K}}(\vec{f},\psi)$,
we find that, for any $t\in (0,\epsilon^{-1})$, $x,y\in{\mathbb{R}^n}$, and $z\in B(x-y,at)$,
\begin{align*}
I(t,x,y) := \left| W(x) \psi_t\ast \vec{f}(x-y) \right| \left( \frac{t}{t+\epsilon} \right)^K \left( \frac{1}{1+\epsilon |x-y|} \right)^K
\leq \left|W(x) \left(M_a^{\ast}\right)^{\epsilon,K,{\mathcal K}}\left(\vec{f},\psi\right)(z)\right|.
\end{align*}
Then, taking the average with respect to $z$ over $B(x-y,at)$ on the both sides of the above inequality,
we obtain, for any $t\in (0,\epsilon^{-1})$ and $x,y\in{\mathbb{R}^n}$,
\begin{align*}
\left[ I(t,x,y) \right]^{\alpha} & \leq \fint_{B(x-y,at)} \left|W(x) \left(M_a^{\ast}\right)^{\epsilon,K,{\mathcal K}}\left(\vec{f},\psi\right)(z)\right|^\alpha\,dz \\
& \leq \left(\frac{|y|+at}{at} \right)^n  \fint_{B(x,|y|+at)} \left|W(x) \left(M_a^{\ast}\right)^{\epsilon,K,{\mathcal K}}\left(\vec{f},\psi\right)(z)\right|^\alpha\,dz \\
& \lesssim \left( 1 + \frac{|y|}{at} \right)^{\alpha l} \left[{\mathcal M}^{(\alpha)}_W\left(W(\cdot)\left(M_a^{\ast}\right)^{\epsilon,K,{\mathcal K}}\left(\vec{f},\psi\right)\right)(x)\right]^\alpha,
\end{align*}
which, combined with the definition of $(M_l^{\ast\ast})_W^{\epsilon,K}$,
further implies that, for any $x\in {\mathbb{R}^n}$,
\begin{align*}
\left(M_l^{\ast\ast}\right)_W^{\epsilon,K}\left(\vec{f},\psi\right)(x) \lesssim {\mathcal M}^{(\alpha)}_W\left(W(\cdot)\left(M_a^{\ast}\right)^{\epsilon,K,{\mathcal K}}\left(\vec{f},\psi\right)\right)(x).
\end{align*}
From this and Lemmas \ref{bound max} and \ref{A F},
we infer that
\begin{align*}
\left\| \left(M_l^{\ast\ast}\right)_W^{\epsilon,K}\left(\vec{f},\psi\right) \right\|_{L^{p(\cdot)}}
&\lesssim \left\| {\mathcal M}^{(\alpha)}_W\left(W(\cdot)\left(M_a^{\ast}\right)^{\epsilon,K,{\mathcal K}}\left(\vec{f},\psi\right)\right) \right\|_{L^{p(\cdot)}}
\lesssim \left\|\,\left|W(\cdot)\left(M_a^{\ast}\right)^{\epsilon,K,{\mathcal K}}\left(\vec{f},\psi\right)\right|\, \right\|_{L^{p(\cdot)}} \\
&= \left\|\left(M_a^{\ast}\right)^{\epsilon,K}_W \left(\vec{f},\psi\right) \right\|_{L^{p(\cdot)}}.
\end{align*}
This finishes the proof of Lemma \ref{a ast l}.
\end{proof}

For any $\vec{f} \in (C^{\infty})^m$, let
\begin{align*}
\nabla \vec f:=\left[\begin{matrix}
\partial_1 f_1&\cdots&\partial_nf_1\\
\vdots&\ddots&\vdots\\
\partial_1 f_m&\cdots&\partial_nf_m
\end{matrix}\right].
\end{align*}
Now, let $p(\cdot) \in {\mathcal P}_0$, $W\in {\mathscr A}_{p(\cdot),\infty}$, and
$\psi \in {\mathcal S}$ with $\int_{{\mathbb{R}^n}} \psi(x)\,dx \neq 0$
and let $a\in (0,\infty)$ and $\epsilon,K\in [0,\infty)$.
Then, for any $\vec{f} \in ({\mathcal S}')^m$ and $x\in\mathbb R^n$,
we define
\begin{align*}
\left(U_a^{\ast}\right)_W^{\epsilon,K}\left(\vec{f},\psi\right)(x) :=
\sup_{t\in (0,\epsilon^{-1})}\sup_{y\in B(x,at)} t\left\| W(x) \nabla\left(\psi_t\ast \vec{f}\right)(y) \right\|
\left( \frac{t}{t+\epsilon} \right)^K \left( \frac{1}{1+\epsilon |y|} \right)^K.
\end{align*}
\begin{lemma}\label{UM 1}
Let $p(\cdot) \in {\mathcal P}_0\cap LH$, $W\in {\mathscr A}_{p(\cdot),\infty}$,
$\psi \in {\mathcal S}$ with $\int_{{\mathbb{R}^n}} \psi(x)\,dx \neq 0$, $a,l\in (0,\infty)$,
and $\epsilon,K\in [0,\infty)$.
Then, for any $\vec{f} \in ({\mathcal S}')^m$ and $x \in {\mathbb{R}^n}$,
\begin{align*}
\left(U_a^{\ast}\right)_W^{\epsilon,K}\left(\vec{f},\psi\right)(x)
\lesssim \left(M_l^{\ast\ast}\right)_W^{\epsilon,K}\left(\vec{f},\psi\right)(x),
\end{align*}
where the implicit positive constant is independent of $\vec{f}$ and $\epsilon$.
\end{lemma}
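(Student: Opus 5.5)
The plan is to follow the classical scalar argument (Grafakos, \emph{Modern Fourier Analysis}, proof of Theorem 6.4.4) almost verbatim. The point is that $W(x)$ is a fixed matrix once $x$ is fixed, so it commutes with every convolution and integral in $y$; the matrix weight therefore plays no real role in this lemma.

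The key ingredient is the decomposition lemma for $\psi$ (Grafakos, Lemma 6.4.5). Since $\int_{\mathbb{R}^n}\psi(x)\,dx\neq 0$, for each $j\in\{1,\dots,n\}$ there exist Schwartz functions $\Theta^{(j,s)}$, $s\in(0,1]$, such that
\begin{align*}
\partial_j\psi=\int_0^1 \Theta^{(j,s)}\ast\psi_s\,ds .
\end{align*}
Moreover, for any prescribed $L,M\in(0,\infty)$, these can be arranged so that
\begin{align*}
\int_{\mathbb{R}^n}\left|\Theta^{(j,s)}(z)\right|(1+|z|)^M\,dz\lesssim s^{L}
\end{align*}
uniformly in $s$. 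Scaling then gives
\begin{align*}
t\,\partial_j\left(\psi_t\ast\vec f\right)=\int_0^1\left(\Theta^{(j,s)}\right)_t\ast\psi_{st}\ast\vec f\,ds .
\end{align*}
Hence, for $y\in B(x,at)$,
\begin{align*}
t\left\|W(x)\nabla\left(\psi_t\ast\vec f\right)(y)\right\|\lesssim\sum_{j=1}^n\int_0^1\int_{\mathbb{R}^n}\left|\left(\Theta^{(j,s)}\right)_t(z)\right|\left|W(x)\psi_{st}\ast\vec f(y-z)\right|\,dz\,ds .
\end{align*}
Here I use $\|W(x)\nabla\vec g\|\le\sum_j|W(x)\partial_j\vec g|$ and the linearity of $W(x)$.

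Next I bound each $|W(x)\psi_{st}\ast\vec f(y-z)|$ by $(M_l^{\ast\ast})_W^{\epsilon,K}(\vec f,\psi)(x)$. Since $st<t<\epsilon^{-1}$, the parameter $st$ is admissible. Writing $y-z=x-(x-y+z)$, the definition gives
\begin{align*}
\left|W(x)\psi_{st}\ast\vec f(y-z)\right|\le\left(M_l^{\ast\ast}\right)_W^{\epsilon,K}\left(\vec f,\psi\right)(x)\left(1+\frac{|x-y+z|}{st}\right)^{l}\left(\frac{st+\epsilon}{st}\right)^K\left(1+\epsilon|y-z|\right)^K .
\end{align*}
The weight factors are then controlled as follows.
\begin{itemize}
\item[{\rm (i)}] $\left(1+\frac{|x-y+z|}{st}\right)^l\le s^{-l}(1+a)^l\left(1+\frac{|z|}{t}\right)^l$, using $|x-y|<at$.
\item[{\rm (ii)}] $\left(\frac{t}{t+\epsilon}\right)^K\left(\frac{st+\epsilon}{st}\right)^K\le s^{-K}$.
\item[{\rm (iii)}] $\frac{(1+\epsilon|y-z|)^K}{(1+\epsilon|y|)^K}\le(1+\epsilon|z|)^K\le\left(1+\frac{|z|}{t}\right)^K$, since $\epsilon t<1$.
\end{itemize}
Multiplying by the truncation factors attached to $(U_a^{\ast})_W^{\epsilon,K}$ and changing variables $z\mapsto tz$ in the $z$-integral, I obtain
\begin{align*}
\left(U_a^{\ast}\right)_W^{\epsilon,K}\left(\vec f,\psi\right)(x)\lesssim\left(M_l^{\ast\ast}\right)_W^{\epsilon,K}\left(\vec f,\psi\right)(x)\sum_{j=1}^n\int_0^1 s^{-l-K}\int_{\mathbb{R}^n}\left|\Theta^{(j,s)}(z)\right|(1+|z|)^{l+K}\,dz\,ds .
\end{align*}

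The main obstacle is making this last integral finite. I would choose $L>l+K$ and $M=l+K$ in the decomposition lemma, so the integrand is bounded by $s^{L-l-K}$ and the $s$-integral converges. Two points need care. First, I must check that Grafakos's construction of $\Theta^{(j,s)}$ does give $s^{L}$ decay for arbitrarily large $L$; this comes from the moment and smoothness structure built into the construction, and I expect it to hold because the scalar proof uses exactly this. Second, with this choice the implicit constant depends on $K$, $l$, $a$, $\psi$, and $n$, but it is independent of $\vec f$, $x$, and $\epsilon$, which is exactly what the lemma asserts.
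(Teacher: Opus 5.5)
Your proposal is correct and follows essentially the same route as the paper, which also writes $t\,\partial_j(\psi_t\ast\vec f)=(\partial_j\psi)_t\ast\vec f$, applies Grafakos's decomposition lemma to get $\partial_j\psi=\int_0^1\Theta^{(s)}\ast\psi_s\,ds$ with $\int_{\mathbb{R}^n}(1+|z|)^{L_0}|\Theta^{(s)}(z)|\,dz\lesssim s^{L_0}$, and controls the same three weight factors ($s^{-l}$, $s^{-K}$, and $(1+\epsilon|z|)^K\le(1+|z|/t)^K$). Your worry about arbitrary decay is settled by that lemma itself: it holds for every positive integer $m$, with the weight $(1+|z|)^m$ and the decay $s^m$ tied together, so any integer $m\ge l+K$ works; the paper instead takes $L_0=\lfloor l+1\rfloor+K$ and passes through $(M^{\ast\ast}_{\lfloor l+1\rfloor})^{\epsilon,K}_W\le(M^{\ast\ast}_l)^{\epsilon,K}_W$ rather than working with $l$ directly.
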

\begin{proof}
Observe that, by the linearity of $\nabla$, for any $x,y\in{\mathbb{R}^n}$,
$$W(x) \nabla\left(\psi_t\ast \vec{f}\right)(y) =  \nabla\left[W(x)\psi_t\ast \vec{f}\right](y).$$
Using this and the well-known fact that all norms on a given finite dimensional vector space are equivalent,
we conclude that, for any $t\in (0,\infty)$ and $x,y\in {\mathbb{R}^n}$
\begin{align*}
t\left\| W(x) \nabla\left(\psi_t\ast \vec{f}\right)(y) \right\|
&\sim \sum_{i = 1}^m \sum_{j = 1}^n t \left| \partial_j \left[W(x)\psi_t\ast \vec{f}\right]_i(y) \right|\\
& = \sum_{i = 1}^m \sum_{j = 1}^n \left|  \left[W(x)\left(\partial_j\psi\right)_t \ast \vec{f}(y)\right]_i \right|
\sim \sum_{j = 1}^n \left| W(x)\left(\partial_j\psi\right)_t \ast \vec{f}(y) \right|.
\end{align*}
By \cite[Lemma 2.1.5]{g14 2} with $\Phi$, $\Psi$, and $m$
replaced, respectively,  by $\psi$, $\partial_j \psi$, and $L_0 := \lfloor l+1 \rfloor + K$,
we find that there exist $\{ \Theta^{(s)} \}_{s\in (0,1]} \subset {\mathcal S}$
such that, for any $j\in \{1,\dots,n\}$ and $x\in {\mathbb{R}^n}$
\begin{align*}
\partial_j \psi(x) = \int_{0}^1 \left[ \Theta^{(s)} \ast \psi_s \right](x) \,ds
\end{align*}
and, for any $s \in (0,1]$,
\begin{align}\label{UM eq 2}
\int_{{\mathbb{R}^n}} \left( 1 + |z| \right)^{L_0} \left| \Theta^{(s)}(z) \right|\,dz \lesssim s^{L_0}.
\end{align}
From these and the definition of $(M_l^{\ast\ast})_W^{\epsilon,K}$,
we deduce that, for any $j\in \{1,\dots,n\}$, $t\in (0,\epsilon^{-1})$, $x\in{\mathbb{R}^n}$,
and $y\in B(x,at)$,
\begin{align}\label{UM eq 1}
I(t,j,x,y) :&= \left| W(x) \left( \partial_j \psi\right)_t \ast \vec{f}(y) \right| \left( \frac{t}{t+\epsilon} \right)^K \left(\frac{1}{1 + \epsilon |y|}\right)^K{\nonumber} \\
& = \left| W(x) \int_0^1 \left[\left(\Theta^{(s)}\right)_t \ast \psi_{st} \ast \vec{f}\right](y)\,ds \right| \left( \frac{t}{t+\epsilon} \right)^K \left(\frac{1}{1 + \epsilon |y|}\right)^K{\nonumber}\\
& \leq \left( \frac{t}{t+\epsilon} \right)^K \ \int_0^1 \int_{{\mathbb{R}^n}} \left| t^{-n} \Theta^{(s)}(t^{-1}z) \right| \frac{| W(x) \psi_{st} \ast \vec{f}(y-z)|}{(1 + \epsilon |y|)^K} \,dz\,ds.
\end{align}
Note that, for any $z\in {\mathbb{R}^n}$,
$$\frac{1}{(1 + \epsilon |y|)^K} \leq \frac{(1 + \epsilon |z|)^K}{( 1 +\epsilon |y-z|)^K}.$$
Using this, \eqref{UM eq 1}, and \eqref{UM eq 2} with inserting the factor $1$ written as
\begin{align*}
\left( \frac{ts}{ts + |x - y + z|} \right)^{\lfloor l + 1\rfloor} \left(\frac{ts}{ts+ \epsilon}\right)^K
\left( \frac{ts + |x - y + z|}{ts} \right)^{\lfloor l + 1\rfloor} \left(\frac{ts+ \epsilon}{ts}\right)^K,
\end{align*}
we conclude that
\begin{align*}
I(t,j,x,y) & \lesssim \left(M_{\lfloor l + 1\rfloor}^{\ast\ast}\right)_W^{\epsilon,K}\left(\vec{f},\psi\right)(x)
\int_0^1 s^{-K} \\
&\quad \times \int_{{\mathbb{R}^n}} \left| t^{-n} \Theta^{(s)}(t^{-1}z) \right| (1 + \epsilon |z|)^K \left( \frac{ts + |x - y + z|}{ts} \right)^{\lfloor l + 1\rfloor}
 \,dz \,ds\\
& \leq \left(M_{\lfloor l + 1\rfloor}^{\ast\ast}\right)_W^{\epsilon,K}\left(\vec{f},\psi\right)(x)
\int_0^1 s^{-\lfloor l + 1\rfloor - K} \\
&\quad \times \int_{{\mathbb{R}^n}} \left| t^{-n} \Theta^{(s)}(t^{-1}z) \right| \left(1 + \epsilon t \frac{|z|}{t}\right)^K  \left( 1 + \frac{|x - y|}{t} + \frac{|z|}{t} \right)^{\lfloor l + 1\rfloor}
 \,dz \,ds\\
& \lesssim \left(M_{\lfloor l + 1\rfloor}^{\ast\ast}\right)_W^{\epsilon,K}\left(\vec{f},\psi\right)(x)
\int_0^1 s^{-\lfloor l + 1\rfloor - K} \int_{{\mathbb{R}^n}} \left| \Theta^{(s)}(z) \right| \left(1 + |z|\right)^K  \left( 1 + |z|\right)^{\lfloor l + 1\rfloor}
 \,dz \,ds \\
& \lesssim \left(M_{\lfloor l + 1\rfloor}^{\ast\ast}\right)_W^{\epsilon,K}\left(\vec{f},\psi\right)(x).
\end{align*}
Applying this and the definitions of $(U_a^{\ast})_W^{\epsilon,K}$ and $(M_l^{\ast\ast})_W^{\epsilon,K}$
yields, for any $x\in {\mathbb{R}^n}$,
\begin{align*}
\left(U_a^{\ast}\right)_W^{\epsilon,K}\left( \vec{f},\psi \right)(x)
\lesssim \left(M_{\lfloor l + 1\rfloor}^{\ast\ast}\right)_W^{\epsilon,K}\left( \vec{f},\psi \right)(x)
\leq \left(M_l^{\ast\ast}\right)_W^{\epsilon,K}\left( \vec{f},\psi \right)(x),
\end{align*}
which completes the proof of Lemma \ref{UM 1}.
\end{proof}
The following lemma is exactly \cite[Lemma 2.26]{bcyy24}.
\begin{lemma}\label{r delta}
Let $r,\delta \in (0,\infty)$ and $x,y\in{\mathbb{R}^n}$ with $|x-y| < (1+\delta)r$.
Then the following assertions hold.
\begin{itemize}
\item[{\rm (i)}] There exists $z\in{\mathbb{R}^n}$ such that $B(z,r^\ast) \subset [B(x,r)\cap B(y,\delta r)]$,
where
\begin{align*}
r^\ast := \frac{(1+\delta)r - \max\{|x-y|,|1-\delta|r\}}{2}.
\end{align*}
\item[{\rm (ii)}] If we further assume $y\in B(x,r)$,
then
\begin{align*}
B(z,C_\delta r)\subset \left[B(x,r)\cap B(y,\delta r)\right]
\ \ \text{and}\ \ \left|B(x,r)\cap B(y,\delta r)\right| \geq \left(C_\delta\right)^n \left|B(x,r)\right|,
\end{align*}
where $C_\delta := \min\{\frac{\delta}{2},1\}$.
\end{itemize}
\end{lemma}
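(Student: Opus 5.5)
The proof is elementary Euclidean geometry: find the largest ball inside $B(x,r)\cap B(y,\delta r)$ by working on the line through $x$ and $y$. Put $d:=|x-y|$. If $d>0$, let $\vec e:=(y-x)/d$; if $x=y$, let $\vec e$ be any unit vector. Parametrize the line by $x+s\vec e$ with $s\in\mathbb R$. Along this line, $B(x,r)$ occupies the open interval $(-r,r)$ and $B(y,\delta r)$ occupies $(d-\delta r,d+\delta r)$. Their intersection is $(s_1,s_2)$, where
\begin{align*}
s_1:=\max\{-r,d-\delta r\}\quad\text{and}\quad s_2:=\min\{r,d+\delta r\}.
\end{align*}
I will take $z:=x+\frac{s_1+s_2}{2}\vec e$ and aim to show that the radius $\frac{s_2-s_1}{2}$ equals $r^\ast$.

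For (i), the first step is the identity $s_2-s_1=(1+\delta)r-\max\{d,|1-\delta|r\}$, which I would check by cases.
\begin{itemize}
\item If $d\le|1-\delta|r$ and $\delta\ge1$, then $d+\delta r\ge r$ and $d-\delta r\le -r$. Hence $s_2-s_1=2r=(1+\delta)r-(\delta-1)r$.
\item If $d\le|1-\delta|r$ and $\delta<1$, then $d+\delta r\le r$ and $d-\delta r\ge -r$. Hence $s_2-s_1=2\delta r=(1+\delta)r-(1-\delta)r$.
\item If $d>|1-\delta|r$, then $d+\delta r>r$ and $d-\delta r>-r$. Hence $s_2-s_1=r-d+\delta r=(1+\delta)r-d$.
\end{itemize}
Because $d<(1+\delta)r$, the length $s_2-s_1$ is positive, so $r^\ast>0$.

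The inclusion then follows from the triangle inequality. Since $-r\le s_1<s_2\le r$, we have $|z-x|+r^\ast=\max\{|s_1|,|s_2|\}\le r$. Since $d-\delta r\le s_1<s_2\le d+\delta r$, we have $|z-y|+r^\ast=\max\{|s_1-d|,|s_2-d|\}\le\delta r$. Therefore $B(z,r^\ast)\subset B(x,r)\cap B(y,\delta r)$.

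For (ii), assume additionally $d<r$. This gives $\max\{d,|1-\delta|r\}\le\max\{1,|1-\delta|\}r$, so
\begin{align*}
r^\ast\ge\frac{(1+\delta)-\max\{1,|1-\delta|\}}{2}\,r .
\end{align*}
\begin{itemize}
\item If $\delta\le2$, then $|1-\delta|\le1$, the maximum equals $1$, and the bound is $\frac{\delta}{2}r$.
\item If $\delta>2$, the maximum equals $\delta-1$, and the bound is $r$.
\end{itemize}
In both cases $r^\ast\ge C_\delta r$ with $C_\delta=\min\{\frac\delta2,1\}$. Hence $B(z,C_\delta r)\subset B(z,r^\ast)\subset B(x,r)\cap B(y,\delta r)$.

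The volume estimate follows by comparing measures: $|B(x,r)\cap B(y,\delta r)|\ge|B(z,C_\delta r)|=(C_\delta)^n|B(x,r)|$.

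There is no real obstacle here. The only points needing care are the case bookkeeping for $s_2-s_1$ and the degenerate case $x=y$, which the arbitrary choice of $\vec e$ handles.
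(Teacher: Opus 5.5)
Your proof is correct and complete. The three-case computation of $s_2-s_1$ checks out, and the identity $|z-x|+r^\ast=\max\{|s_1|,|s_2|\}$, together with its analogue for $y$, gives the inclusion. The split at $\delta=2$ then yields $r^\ast\ge C_\delta r$ once $|x-y|<r$.

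The paper gives no proof of this lemma; it simply quotes it from \cite[Lemma 2.26]{bcyy24}. Your midpoint-of-the-chord construction on the line through $x$ and $y$ is exactly the geometry encoded in the formula for $r^\ast$, so it serves as a self-contained verification of the cited statement.
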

The equivalences of $L^{p(\cdot)}$ norms of various matrix-weighted maximal functions are stated as follows.
\begin{theorem}\label{M equal}
Let $p(\cdot) \in {\mathcal P}_0\cap LH$, $W\in {\mathscr A}_{p(\cdot),\infty}$,
and $\psi \in {\mathcal S}$ with $\int_{{\mathbb{R}^n}} \psi(x)\,dx \neq 0$.
Assume that $N\in{\mathbb{N}}$, and $a,l\in (0,\infty)$.
Then the following statements hold.
\begin{itemize}
\item[{\rm (i)}] For any $\vec{f} \in ({\mathcal S}')^m$ and $x\in{\mathbb{R}^n}$,
\begin{align*}
M_W\left(\vec{f},\psi\right)(x) \leq \left(M_a^{\ast}\right)_W\left(\vec{f},\psi\right)(x) \leq (1+a)^l \left( M_l^{\ast\ast}\right)_W \left(\vec{f},\psi\right)(x),
\end{align*}
\begin{align*}
\left(M_N\right)_W\left(\vec{f}\right)(x) \leq \left(M_{a,N}^{\ast}\right)_W\left(\vec{f}\right)(x) \leq (1+a)^l \left( M_{l,N}^{\ast\ast}\right)_W \left(\vec{f}\right)(x),
\end{align*}
\begin{align*}
M_W\left(\vec{f},\psi\right)(x) \leq \left\|\psi\right\|_{{\mathcal S}_N} \left(M_N\right)_W\left(\vec{f}\right)(x),\ \
\left( M_{a}^{\ast} \right)_W \left(\vec{f},\psi\right)(x) \leq \left\|\psi\right\|_{{\mathcal S}_N} \left( M_{a,N}^{\ast}\right)_W \left(\vec{f}\right)(x),
\end{align*}
and
\begin{align*}
\left(M_l^{\ast\ast}\right)_W\left(\vec{f},\psi\right)(x) \leq \left\|\psi\right\|_{{\mathcal S}_N} \left( M_{l,N}^{\ast\ast}\right)_W \left(\vec{f}\right)(x).
\end{align*}
Moreover, for any $\widetilde{N} \in {\mathbb{N}}$ with $N \leq \widetilde{N}$,
any $\vec{f} \in ({\mathcal S}')^m$, and any $x\in{\mathbb{R}^n}$,
$$ \left( M_{l,N}^{\ast\ast}\right)_W \left(\vec{f}\right)(x) \leq \left( M_{l,\widetilde{N}}^{\ast\ast}\right)_W \left(\vec{f}\right)(x) . $$
\item[{\rm (ii)}] If $l \in (\frac{n}{\alpha_W},\infty)$, then, for any
$\vec{f} \in ({\mathcal S}')^m$,
\begin{align*}
 \left\| \left(M_l^{\ast\ast}\right)_W\left(\vec{f},\psi\right) \right\|_{L^{p(\cdot)}}
\lesssim \left\|\left(M_a^{\ast}\right)_W\left(\vec{f},\psi\right)\right\|_{L^{p(\cdot)}},
\end{align*}
where the implicit positive constant is independent of $\vec{f}$.
\item[{\rm (iii)}] If $N>l$, then, for any $\vec{f}\in ({\mathcal S}')^m$ and $x\in{\mathbb{R}^n}$,
\begin{align*}
\left( M_{l,N}^{\ast\ast}\right)_W \left(\vec{f}\right)(x)\lesssim \left(M_l^{\ast\ast}\right)_W\left(\vec{f},\psi\right)(x),
\end{align*}
where the implicit positive constant is independent of $\vec{f}$.
\item[{\rm (iv)}] For any $\vec{f} \in ({\mathcal S}')^m$,
\begin{align*}
\left\| \left(M_a^{\ast}\right)_W\left(\vec{f},\psi\right) \right\|_{L^{p(\cdot)}}
\lesssim \left\|M_W\left(\vec{f},\psi\right)\right\|_{L^{p(\cdot)}},
\end{align*}
where the implicit positive constant is independent of $\vec{f}$.
\end{itemize}
\end{theorem}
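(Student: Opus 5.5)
The four parts call for quite different arguments, and I will treat them in order.

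For (i), everything is pointwise and comes straight from the definitions. Taking $y=x$ in the supremum over $B(x,at)$ gives $M_W\le (M_a^\ast)_W$. For $y\in B(x,at)$, write $y=x-z$ with $|z|<at$. Then $(1+|z|/t)^{-l}\ge(1+a)^{-l}$, so
\[
|W(x)\psi_t*\vec f(y)|\le(1+a)^l|W(x)\psi_t*\vec f(x-z)|(1+|z|/t)^{-l},
\]
which gives $(M_a^\ast)_W\le (1+a)^l (M_l^{\ast\ast})_W$. The grand versions are handled identically, with an extra supremum over $\phi\in\mathcal S_N$. Since $\psi/\|\psi\|_{\mathcal S_N}\in\mathcal S_N$, scaling gives the three inequalities with the factor $\|\psi\|_{\mathcal S_N}$. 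Finally, $\mathcal S_N\subset\mathcal S_{\widetilde N}$ when $N\le\widetilde N$, because the $\mathcal S_N$-seminorms increase in $N$, and this yields the monotonicity in $N$.

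For (ii), I will pass to the limit in Lemma \ref{a ast l}. Set $\epsilon=0$ and $K=0$ there; the lemma is stated for $\epsilon,K\in[0,\infty)$ with $0^{-1}=\infty$. Then the truncated functions are exactly $(M_l^{\ast\ast})_W(\vec f,\psi)$ and $(M_a^\ast)_W(\vec f,\psi)$, and the claim follows directly. If the lemma is only trusted for $\epsilon>0$, I will instead let $\epsilon\to0^+$ with $K$ fixed. This requires the monotone convergence $(M_l^{\ast\ast})^{\epsilon,K}_W\uparrow(M_l^{\ast\ast})_W$ together with the Fatou property of $L^{p(\cdot)}$. The limit is legitimate because the implicit constant in Lemma \ref{a ast l} is independent of $\epsilon$ and $K$.

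For (iii), I will follow the scalar argument of \cite[Theorem 2.1.4]{g14 2}. Since $\int\psi\neq0$, every $\phi\in\mathcal S_N$ admits a representation $\phi=\int_0^1\Theta^{(s)}*\psi_s\,ds$ with $\int(1+|z|)^N|\Theta^{(s)}(z)|\,dz\lesssim s^{N}\|\phi\|_{\mathcal S_N}$; this is \cite[Lemma 2.1.5]{g14 2}, used in the same way as in the proof of Lemma \ref{UM 1}. Then
\[
|W(x)\phi_t*\vec f(x-y)|\le\int_0^1\!\!\int|t^{-n}\Theta^{(s)}(z/t)|\,|W(x)\psi_{st}*\vec f(x-y-z)|\,dz\,ds.
\]
The second factor is bounded by $(M_l^{\ast\ast})_W(\vec f,\psi)(x)\,(1+|y+z|/(st))^l$. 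Using $(1+|y+z|/(st))^l\le s^{-l}(1+|y|/t)^l(1+|z|/t)^l$ and $N>l$, the $s$-integral $\int_0^1 s^{N-l}\,ds$ converges. The factor $(1+|y|/t)^l$ then cancels against the weight in $(M_{l,N}^{\ast\ast})_W$. The matrix $W(x)$ is frozen at $x$ throughout, so it passes through linearly and no property of the weight is needed here.

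For (iv), which is the main obstacle, I will adapt the classical good-$\lambda$-free argument of Fefferman--Stein, as in \cite[Theorem 2.1.4]{g14 2}, using the truncated maximal functions. First, fix $\epsilon>0$ and $K>\widetilde K$, so that by Lemma \ref{M finite} the quantity $(M_a^\ast)^{\epsilon,K}_W(\vec f,\psi)$ lies in $L^{p(\cdot)}$ and is finite; here I take $a=1$ first and handle general $a$ afterwards. Second, combine Lemmas \ref{UM 1} and \ref{a ast l} to get
\[
\|(U^\ast)^{\epsilon,K}_W\|_{L^{p(\cdot)}}\le C_0\|(M_a^\ast)^{\epsilon,K}_W\|_{L^{p(\cdot)}}.
\]
Third, define $E:=\{x:(U^\ast)^{\epsilon,K}_W\le B\,(M_a^\ast)^{\epsilon,K}_W\}$ with $B$ large. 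On $E^c$, the $L^{p(\cdot)}$ norm of $(M_a^\ast)^{\epsilon,K}_W$ is at most $(C_0/B)\|(M_a^\ast)^{\epsilon,K}_W\|_{L^{p(\cdot)}}$, and this can be absorbed; the absorption is where finiteness is needed. Fourth, on $E$ I will show the pointwise bound $(M_a^\ast)^{\epsilon,K}_W(x)\lesssim \mathcal M^{(\alpha)}_W\big(W(\cdot)M^{\mathcal K}(\vec f,\psi)\big)(x)$ for the radial convex body function. To do this, I pick $(t,y)$ nearly attaining the supremum. The gradient bound at $x$, combined with the mean value theorem applied to $W(x)\psi_t*\vec f$, shows that $|W(x)\psi_t*\vec f(z)|\gtrsim$ the supremum for all $z$ in a ball $B(y,\gamma t)$. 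Lemma \ref{r delta} guarantees that this ball meets $B(x,t)$ in a portion comparable to $B(x,t)$. Averaging $\alpha$-th powers over that portion and dominating by the convex body $M^{\mathcal K}(\vec f,\psi)(z)$ then gives the bound.

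The delicate point is that the lower bound must be expressed through $W(x)$ acting on $\psi_t*\vec f(z)$, and not through $W(z)$. This is exactly why the Christ--Goldberg operator $\mathcal M^{(\alpha)}_W$, applied to $W(\cdot)M^{\mathcal K}$, appears: it reproduces $W(x)W^{-1}(z)W(z)$. Lemma \ref{bound max} then yields
\[
\|(M_a^\ast)^{\epsilon,K}_W\|_{L^{p(\cdot)}}\lesssim\|M_W(\vec f,\psi)\|_{L^{p(\cdot)}}
\]
uniformly in $\epsilon$. Letting $\epsilon\to0$ via the Fatou property gives the case $a=1$. For general $a$, I reduce to this case by the dilation trick, writing $\psi_t$ in terms of $(\psi_{a})_{t/a}$, or alternatively by going through (i)--(ii) with the $M^{\ast\ast}_l$ comparison.
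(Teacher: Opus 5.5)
Your four arguments follow the paper's proof. For (i) you use the definitions, for (ii) Lemma \ref{a ast l} at $\epsilon=K=0$, and for (iii) the representation of \cite[Lemma 2.1.5]{g14 2}. For (iv) you use the truncation, the level set comparing $(U_a^\ast)_W^{\epsilon,K}$ with $(M_a^\ast)_W^{\epsilon,K}$, the mean value theorem with Lemma \ref{r delta}, and Lemma \ref{bound max} applied to $W(\cdot)M^{\mathcal K}(\vec f,\psi)$. However, the end of (iv) has a genuine gap.

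Your absorbed bound $\|(M_a^\ast)^{\epsilon,K}_W(\vec f,\psi)\|_{L^{p(\cdot)}}\le C\|M_W(\vec f,\psi)\|_{L^{p(\cdot)}}$ is uniform in $\epsilon$ but not in $K$. The constant of Lemma \ref{UM 1} depends on $K$. Moreover, stripping the truncation factors in the gradient comparison costs $\bigl(\frac{1+\epsilon|\xi|}{1+\epsilon|y|}\bigr)^K\le(1+2a)^K$. So the radius of the ball about $y$ on which $|W(x)\psi_t\ast\vec f|$ stays comparable to the supremum depends on $K$, and so does the measure fraction supplied by Lemma \ref{r delta}. Lemma \ref{M finite} forces $K>\widetilde K$, and $\widetilde K$ depends on the order of the distribution $\vec f$. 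Hence letting $\epsilon\to0^+$ via Fatou only gives $\|(M_a^\ast)_W(\vec f,\psi)\|_{L^{p(\cdot)}}\le C(K)\|M_W(\vec f,\psi)\|_{L^{p(\cdot)}}$ with a constant depending on $\vec f$, which is not the claim.

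The missing step is the bootstrap the paper performs. Use this bound only to learn that $(M_a^\ast)_W(\vec f,\psi)\in L^{p(\cdot)}$. Then rerun the absorption with $\epsilon=K=0$: this is now legitimate because the absorbed quantity is finite, and there every constant is independent of $\vec f$. Two smaller points here. The reduction to $a=1$ is unnecessary, since the argument works for any $a$. Also, the dilation that achieves it is $\psi_t=(\psi_{1/a})_{at}$; writing $\psi_t=(\psi_a)_{t/a}$ produces aperture $a^2$.

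In (i), your justification of monotonicity in $N$ gives the opposite inclusion to the one you assert. If $\|\cdot\|_{\mathcal S_N}\le\|\cdot\|_{\mathcal S_{\widetilde N}}$ for $N\le\widetilde N$, the unit balls satisfy $\mathcal S_{\widetilde N}\subset\mathcal S_N$. That yields $(M^{\ast\ast}_{l,\widetilde N})_W(\vec f)\le(M^{\ast\ast}_{l,N})_W(\vec f)$. The paper's normalization is implicit in the proof of Lemma \ref{W f var}: membership in $\mathcal S_N$ means $\sup_{|\alpha|\le N+1}\sup_{y}(1+|y|)^{N+n+1}|\partial^\alpha\phi(y)|\le1$. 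With it, the reversed inequality is what the definitions give. The printed direction appears to be a slip in the statement and cannot be obtained from an inclusion of test classes. What you can prove in the printed direction is a bound with a constant when $N>l$: $(M^{\ast\ast}_{l,N})_W(\vec f)\lesssim(M^{\ast\ast}_l)_W(\vec f,\psi)\le\|\psi\|_{\mathcal S_{\widetilde N}}(M^{\ast\ast}_{l,\widetilde N})_W(\vec f)$, by (iii) and the preceding inequality in (i).
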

\begin{proof}
The assertion {\rm (i)} follows immediately from Definition \ref{def all M}
and, observing that the implicit positive constant in Lemma \ref{a ast l} is independent of $\epsilon$ and $K$,
we conclude the assertion {\rm (ii)} by taking $\epsilon$ and $K$ equal to $0$.

Next, we show {\rm (iii)}. From \cite[Lemma 2.1.5]{g14 2} with $\Psi$, $\Phi$, and $m$ replaced, respectively,
by $\phi$, $\psi$, and $N$,
it follows that, for any $\phi \in {\mathcal S}_N$,
there exist $\{\Theta^{(s)}\}_{s\in (0,1]} \subset {\mathcal S}$ such that, for any $x\in {\mathbb{R}^n}$,
\begin{align*}
\phi(x) = \int_0^1 \left[ \Theta^{(s)}\ast \psi_s \right](x)\,ds
\ \ \text{and}\ \
\int_{{\mathbb{R}^n}} \left( 1 + |z|\right)^N\left| \Theta^{(s)}(z) \right|\,dz \lesssim s^N.
\end{align*}
Combining these with the definition of $( M_{l}^{\ast\ast})_W$,
we conclude that, for any $t\in (0,\infty)$ and $x,y\in{\mathbb{R}^n}$,
\begin{align*}
\left|W(x) \phi_t\ast \vec{f}(x-y) \right|
& \leq   \int_0^1 \int_{{\mathbb{R}^n}} \left|\left[\Theta^{(s)}\right]_t(z)\right| \left| W(x)\psi_{st} \ast \vec{f}(x-y-z)\right|\,dz\,ds \\
& \leq \left( M_{l}^{\ast\ast}\right)_W\left(\vec{f},\psi\right)(x) \int_0^1 \int_{{\mathbb{R}^n}} \left|  \left[\Theta^{(s)}\right]_t(z)\right| \left( 1 + \frac{|y+z|}{st} \right)^l \,dz\,ds \\
& \leq \left( M_{l}^{\ast\ast}\right)_W\left(\vec{f},\psi\right)(x) \left( 1 +\frac{|y|}{t} \right)^l
\int_0^1 s^{-l} \int_{{\mathbb{R}^n}} \left| \Theta^{(s)}(z)\right| \left( 1 + |z| \right)^N \,dz\,ds \\
&\lesssim \left( M_{l}^{\ast\ast}\right)_W\left(\vec{f},\psi\right)(x) \left( 1 +\frac{|y|}{t} \right)^l,
\end{align*}
which, together with the definition of $( M_{l,N}^{\ast\ast})_W$,
further implies that, for any $x\in{\mathbb{R}^n}$,
\begin{align*}
\left( M_{l,N}^{\ast\ast}\right)_W\left(\vec{f}\right)(x) \lesssim \left( M_{l}^{\ast\ast}\right)_W\left(\vec{f},\psi\right)(x).
\end{align*}
This finishes the proof of {\rm (iii)}.

Finally, we give the proof of {\rm (iv)}.
Observe that, if $M_W(\vec{f},\psi) \notin L^{p(\cdot)}$, then {\rm (iv)} automatically holds.
Hence, we consider the case where $ M_W(\vec{f},\psi) \in L^{p(\cdot)} $.
Let $\epsilon, K \in[0,\infty)$ and $\vec{f} \in ({\mathcal S}')^m$.
By Lemmas \ref{a ast l} and \ref{UM 1},
we find that there exists a positive constant $C_1$,
independent of $\epsilon \in [0,\infty)$ and $\vec{f} \in ({\mathcal S}')^m$,
such that
\begin{align}\label{M equal eq 1}
\left\| \left(U_a^{\ast}\right)_W^{\epsilon,K}\left(\vec{f},\psi\right) \right\|_{L^{p(\cdot)}}
\leq C_1 \left\|\left(M_a^{\ast}\right)_W^{\epsilon,K}\left(\vec{f},\psi\right)\right\|_{L^{p(\cdot)}}.
\end{align}
Now, let
\begin{align*}
E_\epsilon := \left\{ x\in{\mathbb{R}^n}:\ \left(U_a^{\ast}\right)_W^{\epsilon,K}\left(\vec{f},\psi\right)(x)
\leq 2 C_1C_2 \left(M_a^{\ast}\right)_W^{\epsilon,K}\left(\vec{f},\psi\right)(x) \right\},
\end{align*}
where $C_2$ is the quasi-triangle constant of the quasi-norm of $L^{p(\cdot)}$.
Combining this with \eqref{M equal eq 1},
we conclude that
\begin{align}\label{M equal eq 6}
\left\|\left(M_a^{\ast}\right)_W^{\epsilon,K}\left(\vec{f},\psi\right) {\mathbf{1}}_{E_\epsilon^{\complement}}\right\|_{L^{p(\cdot)}}
\leq \frac1{2 C_1C_2} \left\| \left(U_a^{\ast}\right)_W^{\epsilon,K}\left(\vec{f},\psi\right) {\mathbf{1}}_{E_\epsilon^{\complement}} \right\|_{L^{p(\cdot)}}
\leq \frac1{2C_2}\left\|\left(M_a^{\ast}\right)_W^{\epsilon,K}
\left(\vec{f},\psi\right)\right\|_{L^{p(\cdot)}}.
\end{align}
Next, we prove that
\begin{align}\label{M equal eq 4}
\left\|\left(M_a^{\ast}\right)_W^{\epsilon,K}\left(\vec{f},\psi\right) {\mathbf{1}}_{E_\epsilon}\right\|_{L^{p(\cdot)}}
\lesssim \left\|M_W\left(\vec{f},\psi\right)\right\|_{L^{p(\cdot)}}.
\end{align}
To this end, let $x\in E_\epsilon$.
Then, by the definition of $(M_a^{\ast})_W^{\epsilon, K}$,
we find that there exist $t_x \in (0,\epsilon^{-1})$ and $y_x \in B(x,at_x)$
such that
\begin{align}\label{M equal eq 2}
\frac12\left(M_a^{\ast}\right)_W^{\epsilon, K}\left( \vec{f},\psi \right)(x)
\leq \left|W(x) \psi_{t_x}\ast \vec{f}(y_x)\right| \left( \frac{t_x}{t_x + \epsilon} \right)^K \left( \frac{1}{1 + \epsilon |y_x|} \right)^K.
\end{align}
Let $J(\xi) := t_x\|W(x) \nabla(\psi_{t_x}\ast \vec{f})(\xi)\|$.
Using the definitions of $E_\epsilon$ and $ (U_a^{\ast})_W^{\epsilon,K} $ yields,
for any $\xi \in B(x,at_x)$,
\begin{align*}
J(\xi) \left( \frac{t_x}{t_x + \epsilon} \right)^K \left( \frac{1}{1 + \epsilon |\xi|} \right)^K
\leq \left(U_a^{\ast}\right)_W^{\epsilon,K} \left(\vec{f},\psi\right)(x)
\leq 2 C_1 \left(M_a^{\ast}\right)_W^{\epsilon, K}\left( \vec{f},\psi \right)(x),
\end{align*}
which, together with \eqref{M equal eq 2}, further implies that
\begin{align*}
J(\xi) \leq 4 C_1 \left|W(x) \psi_{t_x}\ast \vec{f}(y_x)\right|  \left( \frac{1 + \epsilon|\xi|}{1 + \epsilon |y_x|} \right)^K.
\end{align*}
By this and the inequalities that $\frac{1 + \epsilon|\xi|}{1 + \epsilon |y_x|} \leq 1+\epsilon|\xi-y_x|<1+2\epsilon at_x < 1+2a$,
we obtain
\begin{align*}
J(\xi) \leq 4 C_1 (1+2a)^K \left|W(x) \psi_{t_x}\ast \vec{f}(y_x)\right|.
\end{align*}
From this, the mean value theorem, and the Cauchy--Schwartz inequality,
we infer that, for any $y\in B(x,at_x)$,
\begin{align}\label{M equal eq 3}
&\left|\, \left| W(x) \psi_{t_x}\ast \vec{f}(y) \right| - \left| W(x) \psi_{t_x}\ast \vec{f}(y_x) \right|\, \right|\nonumber\\
& \quad\leq \left| W(x) \psi_{t_x}\ast \vec{f}(y)  -  W(x) \psi_{t_x}\ast \vec{f}(y_x) \right| {\nonumber}\\
& \quad\sim \sum_{i = 1}^m \left| \left[W(x) \psi_{t_x}\ast \vec{f}(y)  -  W(x) \psi_{t_x}\ast \vec{f}(y_x)\right]_i \right|{\nonumber}\\
& \quad\leq \sum_{i = 1}^m \left| \nabla \left(\left[W(x) \psi_{t_x}\ast \vec{f}\right]_i\right)(\xi_i) \right|\left| y-y_x \right|{\nonumber}\\
& \quad\leq \sum_{i = 1}^m \left| W(x) \nabla \left(\psi_{t_x}\ast \vec{f}\right)(\xi_i) \right|\left| y-y_x \right|
\lesssim \left|W(x) \psi_{t_x}\ast \vec{f}(y_x)\right| \frac{|y-y_x|}{t_x},
\end{align}
where the implicit positive constant is independent of $\epsilon$ and $\vec{f}$
and, for any $i\in\{1,\dots,m\}$, $\xi_i = \theta_i y+ (1-\theta_i)y_x$ for some $\theta_i \in [0,1]$.
Let $C_3$ be the implicit positive constant in \eqref{M equal eq 3}.
Then, for any $y \in B(x,at_x) \cap B(y_x, (2C_3)^{-1}t_x)$,
\begin{align*}
\left|\, \left| W(x) \psi_{t_x}\ast \vec{f}(y) \right| - \left| W(x) \psi_{t_x}\ast \vec{f}(y_x) \right|\, \right|
\leq \frac12 \left|W(x) \psi_{t_x}\ast \vec{f}(y_x)\right|,
\end{align*}
which further implies that
$ | W(x) \psi_{t_x}\ast \vec{f}(y) | \geq \frac12 | W(x) \psi_{t_x}\ast \vec{f}(y_x) |$.
Applying this and \eqref{M equal eq 2} yields
$| W(x) \psi_{t_x}\ast \vec{f}(y)| \geq \frac14 (M_a^{\ast})_W^{\epsilon, K}( \vec{f},\psi )(x)$
and hence
\begin{align*}
\left| W(x) M^{{\mathcal K}}\left(\vec{f},\psi\right)(y) \right| \geq \frac14 \left(M_a^{\ast}\right)_W^{\epsilon, K}\left( \vec{f},\psi \right)(x).
\end{align*}
From this, Lemma \ref{r delta}{\rm (ii)},
and the definitions of $M_W$ and ${\mathcal M}^{(\alpha)}_{W}$,
it follows that, for any $\alpha \in (0, \alpha_W)$,
\begin{align*}
&{\mathcal M}^{(\alpha)}_W \left( W(\cdot) M^{{\mathcal K}}\left(\vec{f},\psi\right) \right)(x)\\
&\quad\geq \left\{\frac{1}{|B(x,at_x)|} \int_{B(x,at_x) \cap B(y_x, \frac{t_x}{2C_3})} \left| W(x) M^{{\mathcal K}}\left(\vec{f},\psi\right)(y) \right|^{\alpha}\,dy\right\}^{\frac1\alpha}\\
&\quad\gtrsim \left\{\frac{1}{|B(x,at_x)|} \int_{B(x,at_x) \cap B(y_x, \frac{t_x}{2C_3})} 
\left[ \left(M_a^{\ast}\right)_W^{\epsilon, K}\left( \vec{f},\psi \right)(x) \right]^{\alpha}\,dy\right\}^{\frac1\alpha}\\
& \quad\gtrsim \left(M_a^{\ast}\right)_W^{\epsilon, K}\left( \vec{f},\psi \right)(x).
\end{align*}
Using this and Lemma \ref{bound max},
we conclude that
\begin{align*}
\left\|\left(M_a^{\ast}\right)_W^{\epsilon, K}\left( \vec{f},\psi \right){\mathbf{1}}_{E_\epsilon}\right\|_{L^{p(\cdot)}}
\lesssim \left\|{\mathcal M}^{(\alpha)}_W \left( W(\cdot) M^{{\mathcal K}}\left(\vec{f},\psi\right) \right) \right\|_{L^{p(\cdot)}}
\lesssim \left\| M_W\left( \vec{f},\psi \right) \right\|_{L^{p(\cdot)}}.
\end{align*}
This finishes the proof of \eqref{M equal eq 4}.

Now, let $C_4$ be the implicit positive constant in \eqref{M equal eq 4},
which is independent of $\vec{f}$ and $\epsilon$.
Then, using \eqref{M equal eq 6} and \eqref{M equal eq 4},
we obtain
\begin{align}\label{M equal eq 8}
\left\| \left(M^{\ast}_a\right)_W^{\epsilon,K}\left( \vec{f},\psi \right) \right\|_{L^{p(\cdot)}}
\leq C_2C_4 \left\|M_W\left(\vec{f},\psi\right)\right\|_{L^{p(\cdot)}} + \frac{1}{2} \left\| \left(M^{\ast}_a\right)_W^{\epsilon,K}\left( \vec{f},\psi \right) \right\|_{L^{p(\cdot)}}.
\end{align}
From this and Lemma \ref{M finite}, we deduce that there exists $\widetilde{K} \in (0,\infty)$,
depending on $\vec{f}$, such that, for any $\epsilon\in(0,\infty)$ and $K\in (\widetilde{K},\infty)$,
$(M^{\ast}_a)_W^{\epsilon,K}( \vec{f},\psi ) \in L^{p(\cdot)}$ and hence
\begin{align}\label{M equal eq 7}
\left\| \left(M^{\ast}_a\right)_W^{\epsilon,K}\left( \vec{f},\psi \right) \right\|_{L^{p(\cdot)}} \leq 2C_4 \left\|M_W\left(\vec{f},\psi\right)\right\|_{L^{p(\cdot)}}.
\end{align}
Note that, for any $t\in (0,\epsilon^{-1})$ and $y\in B(x,at)$,
$$ \frac{1+\epsilon |x|}{1+\epsilon |y|} \geq \frac{1}{1 + \epsilon |x-y|} > \frac{1}{1+\epsilon at} > \frac{1}{1+a}. $$
By this, \eqref{M equal eq 7}, and the definition of $(M^{\ast}_a)_W^{\epsilon,K}$,
we find that, for any $x\in{\mathbb{R}^n}$,
\begin{align*}
\left(M^{\ast}_a\right)_W^{\epsilon,K}\left( \vec{f},\psi \right) (x)
\geq \sup_{t\in (0,\epsilon^{-1})} \sup_{y\in B(x,at)} \left| W(x)\psi_t\ast \vec{f}(y) \right| \left( \frac{t}{t+\epsilon} \right)^K \frac{(1+a)^{-K}}{(1+\epsilon|x|)^K},
\end{align*}
which further implies that, for any $x\in{\mathbb{R}^n}$,
\begin{align*}
\liminf_{\epsilon\to 0^+} \left(M^{\ast}_a\right)_W^{\epsilon,K}\left( \vec{f},\psi \right) (x)
\geq (1+a)^{-K} \left(M_a^{\ast}\right)_W \left( \vec{f},\psi \right) (x).
\end{align*}
Applying this and Fatou's lemma in the setting of variable Lebesgue spaces
(see, for instance, \cite[Theorem 2.59]{cf13}) yields, for any $K\in (\widetilde{K},\infty)$,
\begin{align*}
\left\| \left(M^{\ast}_a\right)_W\left( \vec{f},\psi \right) \right\|_{L^{p(\cdot)}} \leq (1+a)^K2C_4 \left\|M_W\left(\vec{f},\psi\right)\right\|_{L^{p(\cdot)}},
\end{align*}
which further implies that $(M^{\ast}_a)_W( \vec{f},\psi) \in L^{p(\cdot)}$.
Using this and \eqref{M equal eq 8} with $\epsilon = 0$ and $K=0$,
we conclude that
\begin{align*}
\left\| \left(M^{\ast}_a\right)_W\left( \vec{f},\psi \right) \right\|_{L^{p(\cdot)}}
\leq C_2C_4 \left\|M_W\left(\vec{f},\psi\right)\right\|_{L^{p(\cdot)}} + \frac{1}{2} \left\| \left(M^{\ast}_a\right)_W\left( \vec{f},\psi \right) \right\|_{L^{p(\cdot)}}
\end{align*}
and hence
\begin{align*}
\left\| \left(M^{\ast}_a\right)_W\left( \vec{f},\psi \right) \right\|_{L^{p(\cdot)}} \leq 2C_2C_4 \left\|M_W\left(\vec{f},\psi\right)\right\|_{L^{p(\cdot)}}.
\end{align*}
This finishes the proof of {\rm (iv)} and hence Theorem \ref{M equal}.
\end{proof}
Finally, we give the proof of Theorem \ref{H equal}.

\begin{proof}[Proof of Theorem \ref{H equal}]
It follows from Theorem \ref{M equal}{\rm (i)} that $ M_W(\vec{f},\psi) $
and $(M^{\ast\ast}_{l,N})_W ( \vec{f})$ are, respectively, the minimum and the maximum
of these maximal functions.
On the other hand, by {\rm (ii)} through {\rm (iv)} of Theorem \ref{M equal},
we find that
\begin{align*}
\left\| \left(M^{\ast\ast}_{l,N}\right)_W \left( \vec{f}\right) \right\|_{L^{p(\cdot)}}
&\lesssim \left\| \left(M^{\ast\ast}_l\right)_W \left( \vec{f},\psi \right) \right\|_{L^{p(\cdot)}}
\lesssim \left\| \left(M_a^\ast\right)_W \left( \vec{f},\psi \right) \right\|_{L^{p(\cdot)}}
\lesssim \left\| M_W \left( \vec{f},\psi \right) \right\|_{L^{p(\cdot)}},
\end{align*}
which, combined with the conclusion inferred above, implies the desired equivalences, and hence completes the proof of Theorem \ref{H equal}.
\end{proof}
Next, we present the embedding property about $H^{p(\cdot)}_W$
(see \cite[Proposition 2.33]{bcyy24} for the corresponding one of matrix-weighted Hardy spaces).
\begin{proposition}\label{H embed}
Let $p(\cdot)\in {\mathcal P}_0\cap LH$ and $W\in {\mathscr A}_{p(\cdot),\infty}$.
Then $H^{p(\cdot)}_W \subset ({\mathcal S}')^m$ and, moreover,
for any $\phi \in {\mathcal S}$ and $\vec{f} \in ({\mathcal S}')^m$,
\begin{align*}
\left|\left\langle \vec{f},\phi \right\rangle\right| \lesssim \|\phi\|_{{\mathcal S}_N} \left\| \vec{f} \right\|_{H^{p(\cdot)}_W},
\end{align*}
where $N \in (\frac{n}{\alpha_W},\infty)\cap\mathbb N$ and the implicit positive constant is independent of $\vec{f}$.
\end{proposition}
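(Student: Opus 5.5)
The plan is to follow the classical argument: $|\langle \vec f,\phi\rangle| = |\tilde\phi*\vec f(x)|$ evaluated at a point, and then this value is controlled by the grand maximal function on a whole ball. Set $\widetilde\phi(z):=\phi(-z)$, so that $\langle \vec f,\phi\rangle = \widetilde\phi\ast\vec f(\mathbf 0)$. For any $x\in B(\mathbf 0,1)$ write $\widetilde\phi\ast\vec f(\mathbf 0)=\phi^{(x)}\ast\vec f(x)$ with $t=1$, where $\phi^{(x)}(z):=\widetilde\phi(z-x)$. Since $|x|<1$, we have $\|\phi^{(x)}\|_{{\mathcal S}_N}\lesssim\|\phi\|_{{\mathcal S}_N}$ uniformly in $x\in B(\mathbf 0,1)$. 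Hence, for any $x\in B(\mathbf 0,1)$,
\begin{align*}
\left|W(x)\left\langle \vec f,\phi\right\rangle\right|\lesssim \|\phi\|_{{\mathcal S}_N}\left(M_N\right)_W\left(\vec f\right)(x).
\end{align*}
(Alternatively, use $(M^{\ast}_{1,N})_W$ together with Theorem~\ref{M equal}(i).)

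The key issue is removing $W(x)$. Let $Q:=Q(\mathbf 0,2)\supset B(\mathbf 0,1)$ and let $A_Q$ be its reducing operator of order $p(\cdot)$. Write $\vec v:=\langle\vec f,\phi\rangle\in\mathbb C^m$. By Definition~\ref{def reducing operator},
\begin{align*}
|A_Q\vec v|\sim \frac{1}{\|\mathbf 1_Q\|_{L^{p(\cdot)}}}\left\|\,|W(\cdot)\vec v|\mathbf 1_Q\right\|_{L^{p(\cdot)}}.
\end{align*}
The main obstacle is that the pointwise bound only holds on $B(\mathbf 0,1)$, not on all of $Q$. There are two ways to handle this. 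The simplest is to choose $Q$ inside the ball instead, say $Q:=Q(\mathbf 0,1)\subset B(\mathbf 0,1)$ with side length $1$. Then
\begin{align*}
|A_Q\vec v|\lesssim\|\phi\|_{{\mathcal S}_N}\frac{\|(M_N)_W(\vec f)\mathbf 1_Q\|_{L^{p(\cdot)}}}{\|\mathbf 1_Q\|_{L^{p(\cdot)}}}\lesssim\|\phi\|_{{\mathcal S}_N}\left\|\vec f\right\|_{H^{p(\cdot)}_W},
\end{align*}
since $\|\mathbf 1_Q\|_{L^{p(\cdot)}}$ is a fixed positive constant by Lemma~\ref{est Q}. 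Finally, $|\vec v|\le\|A_Q^{-1}\||A_Q\vec v|$, and $\|A_Q^{-1}\|$ is a finite constant because $A_Q$ is positive definite and $Q$ is fixed. This yields the claimed inequality, with the implicit constant depending on $W$ only through the fixed cube.

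The choice $N\in(\frac n{\alpha_W},\infty)\cap\mathbb N$ is needed only so that $H^{p(\cdot)}_W=H^{p(\cdot)}_{W,N}$, by Theorem~\ref{H equal}. The inclusion $H^{p(\cdot)}_W\subset({\mathcal S}')^m$ holds by definition. The inequality then shows that convergence in $H^{p(\cdot)}_W$ implies convergence in $({\mathcal S}')^m$, which is the intended meaning of the embedding.
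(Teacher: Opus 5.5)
Your proof is correct. It takes a slightly different route from the paper's.

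The paper keeps the evaluation point at the origin. For $x\in Q(\mathbf 0,1)$ it bounds $|W(x)\,\widetilde\phi\ast\vec f(\mathbf 0)|$ by the grand non-tangential maximal function $(M^{\ast}_{a,N})_W(\vec f)(x)$, using $t=1$ and $\mathbf 0\in B(x,a)$. It then passes to the reducing operator $A_{Q(\mathbf 0,1)}$ exactly as you do. Finally it needs Theorem \ref{H equal} (through Theorem \ref{M equal}) to control $\|(M^{\ast}_{a,N})_W(\vec f)\|_{L^{p(\cdot)}}$ by $\|\vec f\|_{H^{p(\cdot)}_W}$.

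You instead absorb the shift into the test function. Translating by $|x|<1$ costs at most a factor $2^{N+n+1}$ in $\|\cdot\|_{{\mathcal S}_N}$, which is the same mechanism as in Lemma \ref{W f var}. So the radial grand maximal function $(M_N)_W$ already dominates $|W(x)\langle\vec f,\phi\rangle|$ on the whole ball. This gives $|\langle\vec f,\phi\rangle|\lesssim\|\phi\|_{{\mathcal S}_N}\|\vec f\|_{H^{p(\cdot)}_{W,N}}$ directly from the definition, for every $N\in\mathbb N$. It does not use the boundedness of $\mathcal M^{(\alpha)}_W$ on which Theorem \ref{H equal} rests. That theorem is then needed only to make the statement independent of which admissible $N$ defines $H^{p(\cdot)}_W$, as you note. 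The paper's version is shorter once Theorem \ref{H equal} is available; yours is more elementary and slightly more general.

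There is one small slip. A cube of side length $1$ centred at $\mathbf 0$ has half-diagonal $\sqrt n/2$, so $Q(\mathbf 0,1)\subset B(\mathbf 0,1)$ fails once $n\ge 5$. This is harmless to fix in either of two ways:
\begin{itemize}
\item take a cube of side length less than $2/\sqrt n$; or
\item run the translation step for all $|x|<\sqrt n$, which only changes the constant to $(1+\sqrt n)^{N+n+1}$.
\end{itemize}
Also, Lemma \ref{est Q} is stated only for $p(\cdot)\in\mathcal P\cap LH$, so it does not cover $p(\cdot)\in\mathcal P_0$. You do not need it: for a fixed cube, $\|\mathbf 1_Q\|_{L^{p(\cdot)}}\in(0,\infty)$ follows directly from the definition of the modular, since $p_->0$.
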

\begin{proof}
Observe that
\begin{align}\label{H embed eq 1}
\left|\left\langle \vec{f},\phi \right\rangle\right|
&= \left| \widetilde{\phi}\ast \vec{f}(\mathbf{0}) \right|
\leq \left\|A_{Q(\mathbf{0},1)}^{-1}\right\|  \left| A_{Q(\mathbf{0},1)} \left(\widetilde{\phi}\ast \vec{f}\right)(\mathbf{0}) \right|,
\end{align}
where $\widetilde{\phi} := \phi(-\cdot)$.
From the definition of $(M_{1,N}^\ast)_W$, we deduce that, for any $x\in Q(\mathbf{0},1)$,
$$ \left|W(x)\left(\widetilde{\phi}\ast\vec{f}\right)\left(\mathbf{0}\right)\right|\leq  \|\phi\|_{{\mathcal S}_N}\left(M_{\sqrt{2},N}^\ast\right)_W(\vec{f})(x). $$
Combining this with \eqref{eq redu}, \eqref{H embed eq 1}, and Theorem \ref{H equal},
we conclude that
\begin{align*}
\left|\left\langle \vec{f},\phi \right\rangle\right|
&\lesssim \left| A_{Q(\mathbf{0},1)} \widetilde{\phi}\ast \vec{f}(\mathbf{0}) \right|
\sim \frac{1}{\|{\mathbf{1}}_{Q(\mathbf{0},1)}\|_{L^{p(\cdot)}}}
\left\|\,\left| W(\cdot) \left(\widetilde{\phi}\ast \vec{f}\right)(\mathbf{0}) \right|{\mathbf{1}}_{Q(\mathbf{0},1)} \right\|_{L^{p(\cdot)}} \\
&\leq \frac{\|\phi\|_{{\mathcal S}_N}}{\|{\mathbf{1}}_{Q(\mathbf{0},1)}\|_{L^{p(\cdot)}}}
\left\| \left(M_{\sqrt{2},N}^\ast\right)_W\left(\vec{f}\right){\mathbf{1}}_{Q(\mathbf{0},1)} \right\|_{L^{p(\cdot)}}
\lesssim \left\|\phi\right\|_{{\mathcal S}_N} \left\|\vec{f}\right\|_{H^{p(\cdot)}_W}.
\end{align*}
This finishes the proof of Proposition \ref{H embed}.
\end{proof}
The completeness of $H^{p(\cdot)}_W$ is as follows
(see \cite[Proposition 2.33]{bcyy24} for the completeness of matrix-weighted Hardy spaces).
\begin{proposition}\label{H comp}
Let $p(\cdot)\in {\mathcal P}_0\cap LH$ and $W\in {\mathscr A}_{p(\cdot),\infty}$.
Then $H^{p(\cdot)}_W$ is complete.
\end{proposition}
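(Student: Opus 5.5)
The plan is the standard one for quasi-Banach spaces. First I record that $\|\cdot\|_{H^{p(\cdot)}_W}$ is a quasi-norm satisfying a uniform $r$-triangle inequality with $r:=\min\{p_-,1\}$. Indeed, for $\vec f,\vec g\in(\mathcal S')^m$ we have pointwise $(M_N)_W(\vec f+\vec g)\le (M_N)_W(\vec f)+(M_N)_W(\vec g)$. Combining this with Lemma \ref{con f}, since $L^{p(\cdot)/r}$ is a Banach function space (normed, as $p_-/r\ge1$), we get
\begin{align*}
\Bigl\|\sum_k \vec f_k\Bigr\|_{H^{p(\cdot)}_W}^r\le \sum_k\bigl\|\vec f_k\bigr\|_{H^{p(\cdot)}_W}^r .
\end{align*}
This holds for countable sums as well, once one knows the series converges in $(\mathcal S')^m$, because the convolution $\phi_t*\sum_k\vec f_k(x)=\sum_k\phi_t*\vec f_k(x)$ whenever the sum converges in $(\mathcal S')^m$. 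By the usual argument, it then suffices to show that every series $\sum_k\vec f_k$ with $\sum_k\|\vec f_k\|^r_{H^{p(\cdot)}_W}<\infty$ converges in $H^{p(\cdot)}_W$.

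Step one is to obtain a candidate limit. By Proposition \ref{H embed}, for every $\phi\in\mathcal S$ we have $|\langle \vec f_k,\phi\rangle|\lesssim\|\phi\|_{\mathcal S_N}\|\vec f_k\|_{H^{p(\cdot)}_W}$. Since $\sum_k\|\vec f_k\|_{H^{p(\cdot)}_W}\le(\sum_k\|\vec f_k\|^r_{H^{p(\cdot)}_W})^{1/r}<\infty$ (as $r\le1$), the partial sums $\vec S_j:=\sum_{k\le j}\vec f_k$ are Cauchy in $(\mathcal S')^m$ in the weak-$*$ sense. So they converge to some $\vec f\in(\mathcal S')^m$, with $\langle\vec f,\phi\rangle=\sum_k\langle\vec f_k,\phi\rangle$.

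Step two is the norm estimate. Fix $\phi\in\mathcal S_N$, $t>0$ and $x$. Applying the convergence with test function $\phi_t(x-\cdot)$ gives $W(x)\phi_t*(\vec f-\vec S_j)(x)=\sum_{k>j}W(x)\phi_t*\vec f_k(x)$. Hence
\begin{align*}
(M_N)_W(\vec f-\vec S_j)(x)\le\sum_{k>j}(M_N)_W(\vec f_k)(x)
\end{align*}
pointwise. Taking $L^{p(\cdot)}$ quasi-norms and using Lemma \ref{con f} together with the triangle inequality in $L^{p(\cdot)/r}$, which extends to countable sums by the Fatou property of $L^{p(\cdot)/r}$ (\cite[Theorem 2.59]{cf13}), we obtain
\begin{align*}
\bigl\|\vec f-\vec S_j\bigr\|_{H^{p(\cdot)}_W}^r\le\sum_{k>j}\bigl\|\vec f_k\bigr\|_{H^{p(\cdot)}_W}^r\to0 .
\end{align*}
Applying the same estimate to $\vec f$ itself, via $\vec f=\vec S_j+(\vec f-\vec S_j)$, gives $\vec f\in H^{p(\cdot)}_W$, and therefore $\vec S_j\to\vec f$ in $H^{p(\cdot)}_W$.

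The only delicate point is the pointwise interchange of the sum with the supremum inside $(M_N)_W$, which needs the series for $\phi_t*\vec f(x)$ to converge for each fixed $\phi,t,x$. This is guaranteed because the embedding constant in Proposition \ref{H embed} depends only on $\|\phi_t(x-\cdot)\|_{\mathcal S_N}$, which is finite for each fixed $\phi$, $t$, $x$; uniformity in these parameters is not needed, since the supremum is taken only after the pointwise bound.

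Finally, I would check that the $H^{p(\cdot)}_W$ quasi-norm is independent of the choice of admissible $N$, which follows from Theorem \ref{H equal}. This ensures that using a fixed $N$ throughout the argument is harmless.
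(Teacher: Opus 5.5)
Your proof is correct, but it is organized differently from the paper's.

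The paper works with a Cauchy sequence $\{\vec f_k\}$ directly. Proposition \ref{H embed} gives a limit $\vec f$ in $(\mathcal S')^m$. For each fixed $t$ and $x$, letting $l\to\infty$ in $|W(x)\psi_t*(\vec f_{k+l}-\vec f_k)(x)|\le M_W(\vec f_{k+l}-\vec f_k,\psi)(x)$ gives the pointwise bound $M_W(\vec f-\vec f_k,\psi)\le\liminf_{l}M_W(\vec f_{k+l}-\vec f_k,\psi)$. The paper then applies the Fatou lemma in $L^{p(\cdot)}$ and uses the maximal function equivalences of Theorem \ref{H equal} to return from the radial maximal function to the $H^{p(\cdot)}_W$ quasi-norm.

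You instead first prove the exact $r$-triangle inequality for $\|\cdot\|_{H^{p(\cdot)}_W}$, using $(a+b)^r\le a^r+b^r$ and the fact that $L^{p(\cdot)/r}$ is normed. This lets you reduce completeness to the convergence of $r$-absolutely summable series. You then bound the tail $(M_N)_W(\vec f-\vec S_j)$ pointwise by $\sum_{k>j}(M_N)_W(\vec f_k)$. The Fatou property is needed only in its monotone form, to extend the triangle inequality in $L^{p(\cdot)/r}$ to countable sums.

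What your route buys:
\begin{itemize}
\item It works throughout with the defining grand maximal function, so Theorem \ref{H equal} enters only through Proposition \ref{H embed}.
\item It makes the $r$-norm structure of $H^{p(\cdot)}_W$ explicit.
\end{itemize}
The paper's route is shorter and uses only the quasi-triangle inequality, at the cost of a second appeal to the maximal function equivalences.

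Two small remarks. First, the ``delicate point'' you flag is not delicate. Weak-$*$ convergence $\vec S_J\to\vec f$ in $(\mathcal S')^m$ already gives $\phi_t*\vec S_J(x)\to\phi_t*\vec f(x)$ for every fixed $\phi$, $t$, $x$. Taking a limit then yields your pointwise bound without any absolute convergence. Second, your final paragraph on independence of $N$ is unnecessary, since you never change $N$.
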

\begin{proof}
Suppose that $\{\vec{f}_k\}_{k\in{\mathbb{N}}}$ is a Cauchy sequence in $H^{p(\cdot)}_W$.
By this and Proposition \ref{H embed},
we obtain $\{\vec{f}_k\}_{k\in{\mathbb{N}}}$ is also a Cauchy sequence in $({\mathcal S}')^m$.
From this and the completeness of $({\mathcal S}')^m$,
it follows that there exists $\vec{f} \in ({\mathcal S}')^m$ such that
$\vec{f} = \lim_{k\to \infty} \vec{f}_k$ in $({\mathcal S}')^m$.
Assume that $\psi\in {\mathcal S}$ with $\int_{{\mathbb{R}^n}} \psi(x)\,dx \neq 0$.
Using the definition of $M_W$,
we find that, for any $k,l\in{\mathbb{N}}$, $t\in (0,\infty)$, and $x\in{\mathbb{R}^n}$,
\begin{align*}
\left|W(x) \left\langle \vec{f}_{k+l} - \vec{f}_k, \psi_t(\cdot - x) \right\rangle\right|
\leq M_W\left( \vec{f}_{k+l} - \vec{f}_k, \psi \right)(x),
\end{align*}
which, together with letting $l \to \infty$,
further implies that, for any $k\in{\mathbb{N}}$, $t\in(0,\infty)$, and $x\in{\mathbb{R}^n}$,
\begin{align*}
\left|W(x) \left\langle \vec{f} - \vec{f}_k, \psi_t(\cdot - x) \right\rangle\right|
\leq \liminf_{l\to \infty} M_W\left( \vec{f}_{k+l} - \vec{f}_k, \psi \right)(x)
\end{align*}
and hence
\begin{align*}
M_W\left( \vec{f} - \vec{f}_k, \psi \right)(x) \leq \liminf_{l\to \infty} M_W\left( \vec{f}_{k+l} - \vec{f}_k, \psi \right)(x).
\end{align*}
From this and Fatou's lemma in the setting of variable Lebesgue spaces
(see, for instance, \cite[Theorem 2.59]{cf13}),
we infer that
\begin{align*}
\left\| M_W\left( \vec{f} - \vec{f}_k, \psi \right) \right\|_{L^{p(\cdot)}}
\leq \liminf_{l\to \infty} \left\| M_W\left( \vec{f}_{k+l} - \vec{f}_k, \psi \right)\right\|_{L^{p(\cdot)}} \to 0
\end{align*}
as $k\to \infty$,
which, combined with the sublinearity of the quasi-norm of $H^{p(\cdot)}_W$,
further implies that $\vec{f} \in H^{p(\cdot)}_W$ and
$\lim_{k\to \infty}\| \vec{f} - \vec{f}_k \|_{H^{p(\cdot)}_W} = 0$.
This finishes the proof of Proposition \ref{H comp}.
\end{proof}

The following lemma is precisely \cite[Lemma 3.12]{bcyy24}.
\begin{lemma}\label{f to}
Let $\psi\in{\mathcal S}$ satisfy ${\mathop\mathrm{\,supp\,}} \psi \subset B(\mathbf{0},1)$ and $\int_{{\mathbb{R}^n}} \psi(x)\,dx = 1$.
Assume that $\vec{f} \in (L^1_{\rm loc})^m$ and $H : {\mathbb{R}^n} \to M_m({\mathbb{C}})$ is a matrix-valued function.
Then, for almost every $x\in {\mathbb{R}^n}$,
$$ \lim\limits_{t\to 0^+} H(x)\psi_t \ast \vec{f}(x) = H(x) \vec{f}(x).  $$
\end{lemma}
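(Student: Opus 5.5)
Since $H(x)$ is a fixed matrix at each point and acts linearly, the statement reduces to the scalar Lebesgue differentiation theorem applied componentwise. The plan is to prove $\lim_{t\to0^+}\psi_t\ast\vec f(x)=\vec f(x)$ for almost every $x$, and then multiply by $H(x)$.

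\emph{Step 1 (componentwise reduction).} Write $\vec f=(f_1,\dots,f_m)^{\rm T}$ with each $f_i\in L^1_{\rm loc}$. Then $\psi_t\ast\vec f=(\psi_t\ast f_1,\dots,\psi_t\ast f_m)^{\rm T}$, so it suffices to show, for each $i$ and almost every $x$, that $\psi_t\ast f_i(x)\to f_i(x)$ as $t\to0^+$.

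\emph{Step 2 (scalar convergence at Lebesgue points).} Let $E_i$ be the set of Lebesgue points of $f_i$; its complement has measure zero by the Lebesgue differentiation theorem applied to the locally integrable $f_i$. Fix $x\in E_i$. Since $\int_{\mathbb{R}^n}\psi=1$ and $\operatorname{supp}\psi_t\subset B(\mathbf{0},t)$, we have
$$
|\psi_t\ast f_i(x)-f_i(x)|=\left|\int_{B(\mathbf{0},t)}\psi_t(y)\,[f_i(x-y)-f_i(x)]\,dy\right|
\le \|\psi\|_{L^\infty}\,t^{-n}\int_{B(x,t)}|f_i(z)-f_i(x)|\,dz .
$$
The right-hand side is a constant multiple of $\fint_{B(x,t)}|f_i-f_i(x)|$, which tends to $0$ because $x$ is a Lebesgue point. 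Hence the scalar convergence holds on $E_i$.

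\emph{Step 3 (multiplying by $H(x)$).} Let $E:=\bigcap_{i=1}^m E_i$, whose complement is null. For $x\in E$,
$$
|H(x)\psi_t\ast\vec f(x)-H(x)\vec f(x)|\le\|H(x)\|\,|\psi_t\ast\vec f(x)-\vec f(x)|\to0,
$$
since $\|H(x)\|<\infty$ for each fixed $x$. No measurability or integrability of $H$ is needed, because the limit is taken pointwise in $t$ with $x$ fixed.

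The only point needing care is Step 2: we must use the Lebesgue-point (strong) form of differentiation, not merely convergence of averages of $f_i$, so that the bounded compactly supported kernel is dominated. This requires only the boundedness and support assumptions on $\psi$.
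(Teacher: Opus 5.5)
Your proof is correct: the paper gives no argument of its own here and simply cites \cite[Lemma 3.12]{bcyy24}, and your route (componentwise reduction, the Lebesgue-point estimate using the boundedness and support of $\psi$, then multiplying by the fixed matrix $H(x)$ with the exceptional null set independent of $H$) is the standard proof of that fact. Nothing is missing.
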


We have the following coincidence of $H^{p(\cdot)}_W$ with $L^{p(\cdot)}_W$
if $p(\cdot) \in {\mathcal P}\cap LH$ with $p_- > 1$ and if $W\in \mathscr{A}_{p(\cdot)}$.
\begin{theorem}\label{H p W}
Let $p(\cdot)\in {\mathcal P}\cap LH$ with $p_- \in (1,\infty)$ and let $W\in {\mathscr A}_{p(\cdot)}$.
Then $H^{p(\cdot)}_W = L^{p(\cdot)}_W$ with equivalent norms.
\end{theorem}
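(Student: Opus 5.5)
We prove the two inclusions $L^{p(\cdot)}_W\subset H^{p(\cdot)}_W$ and $H^{p(\cdot)}_W\subset L^{p(\cdot)}_W$ separately, each with a norm estimate.

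\emph{Step 1: $L^{p(\cdot)}_W\subset H^{p(\cdot)}_W$.} Let $\vec f\in L^{p(\cdot)}_W$. We first check that $\vec f\in(L^1_{\rm loc})^m\cap(\mathcal S')^m$. Since $W\in\mathscr A_{p(\cdot)}$, Remark~\ref{rem Ap} gives $W^{-1}\in\mathscr A_{p'(\cdot)}$. For any cube $Q$, write $|\vec f|\le\|W^{-1}\|\,|W\vec f|$ and apply H\"older's inequality (Lemma~\ref{Holder}); the factor $\|\,\|W^{-1}\|\mathbf 1_Q\|_{L^{p'(\cdot)}}$ is finite, so $\vec f\in (L^1_{\rm loc})^m$. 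A dyadic annuli argument using Lemma~\ref{QP5} (or its $\mathscr A_{p'(\cdot)}$ analogue for $W^{-1}$) controls the growth at infinity, which yields $\vec f\in(\mathcal S')^m$.

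Next fix $\psi\in C^\infty_{\rm c}$ supported in $B(\mathbf 0,1)$ with $\int\psi=1$. Then pointwise
\[
M_W(\vec f,\psi)(x)\lesssim \mathcal M_W\big(\mathcal K(W\vec f)\big)(x),
\]
the Christ--Goldberg convex-body maximal operator applied to $F:=\mathcal K(W(\cdot)\vec f(\cdot))$. Indeed,
\[
|W(x)\psi_t*\vec f(x)|\le\|\psi\|_\infty\fint_{B(x,t)}|W(x)W^{-1}(y)W(y)\vec f(y)|\,dy .
\]
Because $\mathscr A_{p(\cdot)}\subset\mathscr A_{p(\cdot),\infty}$ and $p_->1$, we need boundedness of $\mathcal M_W=\mathcal M_W^{(1)}$ (with $\alpha=1$) on $L^{p(\cdot)}(\mathcal K)$. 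This is the matrix $\mathscr A_{p(\cdot)}$ Christ--Goldberg theorem of Cruz-Uribe--Penrod \cite{cp23}, extended to convex-body valued functions as in Lemma~\ref{bound max} and \cite{bc22}. Together with Theorem~\ref{H equal}, it gives $\|\vec f\|_{H^{p(\cdot)}_W}\sim\|M_W(\vec f,\psi)\|_{L^{p(\cdot)}}\lesssim\|\vec f\|_{L^{p(\cdot)}_W}$.

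\emph{Step 2: $H^{p(\cdot)}_W\subset L^{p(\cdot)}_W$.} This is the main obstacle, since we must produce a genuine function from a distribution. Let $\vec f\in H^{p(\cdot)}_W$ and take $\psi$ as above. For a cube $Q$, let $A_Q$ be the reducing operator of order $p(\cdot)$ for $W$ and $A'_Q$ the one of order $p'(\cdot)$ for $W^{-1}$. The matrix $\mathscr A_{p(\cdot)}$ condition gives $\|A_QA'_Q\|\lesssim1$.

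We use this to bound the family $\{\psi_t*\vec f\}_{t\in(0,1)}$ in $L^{p(\cdot)}(Q)$. For $x\in Q$,
\[
|\psi_t*\vec f(x)|\le\|W^{-1}(x)\|\,M_W(\vec f,\psi)(x).
\]
To obtain a uniform weak compactness in $L^{s}(Q)$, $s>1$, I would instead bound $|\psi_t*\vec f|\,\mathbf 1_Q$ by $\|W^{-1}(\cdot)A_Q\|\,|A_Q^{-1}W(\cdot)\,\cdot|$; if that becomes awkward, I would simply test against $\vec g\in (L^\infty_{\rm c})^m$ and use H\"older in $L^{p(\cdot)}\times L^{p'(\cdot)}$. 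The estimate
\[
\Big|\int\psi_t*\vec f\cdot\vec g\Big|\le\int M_W(\vec f,\psi)\,|W^{-1}\vec g|\lesssim\|\vec f\|_{H^{p(\cdot)}_W}\|W^{-1}\vec g\|_{L^{p'(\cdot)}}
\]
shows that $\vec f$ is a bounded functional on $L^{p'(\cdot)}_{W^{-1}}$. By the duality $(L^{p'(\cdot)}_{W^{-1}})^*=L^{p(\cdot)}_W$ (via Lemma~\ref{fg Lp} applied componentwise after the substitution $\vec g=W\vec h$), there is $\vec h\in L^{p(\cdot)}_W$ with $\langle\vec f,\vec g\rangle=\int\vec h\cdot\vec g$ for all test $\vec g$, and $\|\vec h\|_{L^{p(\cdot)}_W}\lesssim\|\vec f\|_{H^{p(\cdot)}_W}$. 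The step $\psi_t*\vec f\to\vec f$ in $\mathcal S'$ justifies passing to the limit $t\to0^+$. Hence $\vec f=\vec h$ is a function.

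Alternatively, once $\vec f\in (L^1_{\rm loc})^m$ is known, Lemma~\ref{f to} gives $W(x)\vec f(x)=\lim_{t\to0^+}W(x)\psi_t*\vec f(x)$ almost everywhere, so $|W\vec f|\le M_W(\vec f,\psi)$ pointwise, which yields the norm bound directly. The delicate point is identifying the distribution with a locally integrable function, which I would handle by the duality argument above.
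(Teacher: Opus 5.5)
Your proof is correct. Step 1 follows the paper's argument: the paper also dominates $M_W(\vec f,\psi)$ pointwise by $\mathcal M_W(W\vec f)$ and uses the boundedness of $\mathcal M_W$ on $L^{p(\cdot)}$ for $W\in\mathscr A_{p(\cdot)}$. The only difference is how you check $\vec f\in(\mathcal S')^m$.
\begin{itemize}
\item The paper uses that $\|W^{-1}\|$ is a scalar $\mathscr A_{p'(\cdot)}$ weight, together with the weighted boundedness of $\mathcal M$ applied to $\mathcal M(\mathbf 1_{Q(\mathbf 0,1)})\gtrsim(1+|\cdot|)^{-n}$.
\item You get polynomial growth of $\|\,\|W^{-1}\|\mathbf 1_{Q(\mathbf 0,2^k)}\|_{L^{p'(\cdot)}}$ from Lemma \ref{QP5}, applied to $W^{-1}\in\mathscr A_{p'(\cdot)}\subset\mathscr A_{p'(\cdot),\infty}$.
\end{itemize}
Either works.

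Step 2 is a genuinely different route. The paper first shows $\sup_t|\psi_t*\vec f|\in L^1_{\rm loc}$, by H\"older with $\|W^{-1}\|\mathbf 1_Q\in L^{p'(\cdot)}$. It then invokes \cite[Lemma 7]{art05}, a Dunford--Pettis type result, to produce $\vec f_0\in(L^1_{\rm loc})^m$ with $\vec f=\vec f_0$ in $[(C^\infty_{\rm c})']^m$. It upgrades this equality to $(\mathcal S')^m$, and only then gets the norm bound from the a.e.\ convergence in Lemma \ref{f to}.

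You instead use the bound $|\int\psi_t*\vec f\cdot\vec g|\le\int M_W(\vec f,\psi)\,|W^{-1}\vec g|$, which is uniform in $t$, and let $t\to0^+$ in $\mathcal S'$. You then represent the resulting functional through the isometry $\vec u\mapsto W\vec u$ of $(L^{p'(\cdot)})^m$ onto $L^{p'(\cdot)}_{W^{-1}}$. This identifies $\vec f$ with a function and gives $\|\vec f\|_{L^{p(\cdot)}_W}\lesssim\|\vec f\|_{H^{p(\cdot)}_W}$ in one stroke, with no need for \cite{art05} or Lemma \ref{f to}. The price is reliance on the Riesz-type duality $(L^{p'(\cdot)})^*=L^{p(\cdot)}$, which is available exactly because $p'_+<\infty$, that is, $p_->1$.

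Three points need tightening.
\begin{itemize}
\item Lemma \ref{fg Lp} is only the norm-conjugate formula. For the \emph{existence} of the representing function you must cite the actual duality theorem for variable Lebesgue spaces with bounded exponent (see \cite{cf13}). Lemma \ref{fg Lp} then supplies the norm estimate for the representative.
\item Your functional is a priori defined only on test functions, so extend it first. Hahn--Banach suffices, or density of $(C^\infty_{\rm c})^m$ in $L^{p'(\cdot)}_{W^{-1}}$, which holds since $\|W^{-1}\|\in L^{p'(\cdot)}_{\rm loc}$ and $p'_+<\infty$.
\item Equality in $(\mathcal S')^m$ needs one more sentence: $\vec h\in L^{p(\cdot)}_W\subset(\mathcal S')^m$ by Step 1, and $C^\infty_{\rm c}$ is dense in $\mathcal S$.
\end{itemize}
Your opening remarks on $\|A_QA'_Q\|\lesssim1$ and on weak compactness are never used and can be deleted.
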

\begin{proof}
Let $\psi\in {\mathcal S}$ with ${\mathop\mathrm{\,supp\,}} \psi \subset B(\mathbf{0},1)$ and $\int_{{\mathbb{R}^n}} \psi(x)\,dx = 1$.
We first show $L^{p(\cdot)}_W \subset H^{p(\cdot)}_W$.
Using Lemma \ref{Holder}, we find that, for any $\vec{f} \in L^{p(\cdot)}_W$ and $\phi \in{\mathcal S}$,
\begin{align*}
\int_{{\mathbb{R}^n}} \left| \vec{f}(x) \phi(x) \right|\,dx
\lesssim \left\|\, \left| W(\cdot) \vec{f} \right| \,\right\|_{L^{p(\cdot)}}
\left\| \,\left\| W(\cdot)^{-1} \phi \right\| \,\right\|_{L^{p'(\cdot)}}.
\end{align*}
Note that, for any $\phi \in {\mathcal S}$ and $x\in{\mathbb{R}^n}$,
$$|\phi(x)| \lesssim (1 + |x|)^{-n} \leq {\mathcal M}({\mathbf{1}}_{Q(\mathbf{0},1)})(x),$$
where the implicit positive constant is independent of $x$,
and, moreover, by Remark \ref{rem Ap} and \cite[Proposition 4.8]{cp23},
we have $\|W^{-1}\|$ is a scalar-valued $\mathscr{A}_{p'(\cdot)}$ weight.
Using these and the boundedness of the Hardy--Littlewood maximal operator
in $L^{p'(\cdot)}_{\|W^{-1}\|}$ (see, for instance, \cite[Theorem 1.3]{cdh11}), we conclude that
\begin{align}\label{H p W eq 1}
\left\|\, \left\|W^{-1}(\cdot)\right\| \phi \right\|_{L^{p'(\cdot)}}
\lesssim \left\|\, \left\|W^{-1}(\cdot)\right\|{\mathcal M}\left({\mathbf{1}}_{Q(\mathbf{0},1)}\right)\right\|_{L^{p'(\cdot)}}
\lesssim \left\|\, \left\|W^{-1}(\cdot)\right\|{\mathbf{1}}_{Q(\mathbf{0},1)}\right\|_{L^{p'(\cdot)}} < \infty
\end{align}
and hence $\vec{f} \in ({\mathcal S}')^m$.
Observe that, by \cite[Corollary 2.1.12]{g14} and $\psi\in\mathcal S$,
for any $\vec{f} \in L^{p(\cdot)}_W$ and $x\in {\mathbb{R}^n}$,
$$ M_{W} \left( \vec{f}, \psi \right)(x) \lesssim {\mathcal M}_W \left( W\vec{f} \right)(x). $$
Applying this and the boundedness of ${\mathcal M}_W$ in $L^{p(\cdot)}$,
we conclude that
\begin{align*}
\left\| \vec{f} \right\|_{H^{p(\cdot)}_W} \lesssim \left\| {\mathcal M}_W \left( W\vec{f} \right) \right\|_{L^{p(\cdot)}}
\lesssim \left\| W(\cdot) \vec{f} \right\|_{L^{p(\cdot)}} = \left\|\vec{f}\right\|_{L^{p(\cdot)}_W}
\end{align*}
and hence $L^{p(\cdot)}_W \subset H^{p(\cdot)}_W$.

Now, we prove $H^{p(\cdot)}_W\subset L^{p(\cdot)}_W$.
From the previously obtained result that $\|W^{-1}\|$ is a scalar-valued $\mathscr{A}_{p'(\cdot)}$ weight,
we deduce that, for any cube $Q\subset \mathbb R^n$,
$\|\,\|W^{-1}(\cdot)\|{\mathbf{1}}_Q \|_{L^{p'(\cdot)}}<\infty.$
By this, Lemma \ref{Holder}, and Theorem \ref{H equal}, we find that, for any $\vec{f} \in H^{p(\cdot)}_W$ and any cube $Q\subset{\mathbb{R}^n}$,
\begin{align*}
\int_{Q} \sup_{t\in (0,\infty)} \left| \psi_t \ast \vec{f} (x) \right|\,dx
\leq \left\|\,\left\|W^{-1}(\cdot)\right\|{\mathbf{1}}_Q \right\|_{L^{p'(\cdot)}} \left\|M_W \left( \vec{f},\psi \right) \right\|_{L^{p(\cdot)}}
\lesssim \left\|\vec{f}\right\|_{H^{p(\cdot)}_W} < \infty,
\end{align*}
which further implies that $\sup_{t\in (0,\infty)} |\psi_t \ast \vec{f}| \in L^{1}_{\rm loc}$.
Applying this and \cite[Lemma 7]{art05} in the case $\Omega := {\mathbb{R}^n}$ and $ B_x(\Omega):= \{\psi_t:\ t\in(0,\infty)\} $,
we conclude that there exists $\vec{f}_0 \in (L^1_{\rm loc})^m$
such that $\vec{f} = \vec{f}_0$ in $[(C^\infty_{\rm c})']^m$.
From this, Lemmas \ref{Holder} and \ref{f to}, \eqref{H p W eq 1}, and Theorem \ref{H equal},
we infer that, for any $\phi\in{\mathcal S}$,
\begin{align*}
\int_{{\mathbb{R}^n}} \left| \vec{f}_0(x) \phi(x) \right|\,dx
&\leq \left\|\, \left| W(\cdot) \vec{f}_0 \right| \,\right\|_{L^{p(\cdot)}}  \left\|\, \left| W(\cdot)^{-1} \phi \right|\, \right\|_{L^{p'(\cdot)}}\\
&\lesssim \left\|  \sup_{t\in (0,\infty)}\left| W(\cdot) \psi_t \ast \vec{f} (\cdot) \right| \right\|_{L^{p(\cdot)}}
\sim \left\|\vec{f}\right\|_{H^{p(\cdot)}_W} < \infty
\end{align*}
and hence $\vec{f}_0 \in ({\mathcal S}')^m$.
Using this with the well-known fact that $C^{\infty}_{\rm c}$ is dense in ${\mathcal S}$,
we conclude that $\vec{f} = \vec{f}_0$ in $({\mathcal S}')^m$.
This, together with Lemma \ref{f to} and Theorem \ref{H equal}, further implies that
\begin{align*}
\left\|\vec{f}_0\right\|_{L^{p(\cdot)}_W} \leq \left\| M_W\left(\vec{f}_0,\psi\right) \right\|_{L^{p(\cdot)}}
\sim \left\|\vec{f}_0\right\|_{H^{p(\cdot)}_W} = \left\|\vec{f}\right\|_{H^{p(\cdot)}_W},
\end{align*}
and consequently $H^{p(\cdot)}_W \subset L^{p(\cdot)}_W$, which hence completes the proof of Theorem \ref{H p W}.
\end{proof}

\section{Atomic Characterization}\label{sec atom}
In this section, we establish the atomic characterization of matrix-weighted variable Hardy spaces.
We first introduce the concept of $(p(\cdot),q,s)_W$-atoms
(see \cite[Definition 3.1]{bcyy24} for the case when $p$ is a constant exponent).
\begin{definition}\label{def atom}
Let $p(\cdot) \in {\mathcal P}_0$, $q\in [1,\infty]$, $s\in {\mathbb{Z}}_+$,
and $W\in{\mathscr A}_{p(\cdot),\infty}$.
A function $\vec{a}$ is called a \emph{$(p(\cdot),q,s)_W$-atom} supported in a cube $Q$ if
\begin{itemize}
\item[{\rm (i)}] ${\mathop\mathrm{\,supp\,}} \vec{a} \subset Q$,
\item[{\rm (ii)}] $\{\int_{Q} \|\, |W(\cdot) \vec{a}(x)|{\mathbf{1}}_Q\|_{L^{p(\cdot)}}^q\,dx\}^\frac1q \leq |Q|^{\frac1q}$
with the usual modification when $q = \infty$,
\item[{\rm (iii)}] for any $\gamma\in {\mathbb{Z}}_+^n$
with $|\gamma|\leq s$,
$\int_{{\mathbb{R}^n}} x^\gamma \vec{a}(x)\,dx = \vec{0}$.
\end{itemize}
Moreover, let $\{A_Q\}_{{\rm cube}\ Q}$ be a family of reducing operators of order $p(\cdot)$ for $W$.
A function $\vec{a}$ is called a \emph{$(p(\cdot),q,s)_{{\mathbb{A}}}$-atom} supported in a cube $Q$
if $\vec{a}$ satisfies {\rm (i)}, {\rm (iii)}, and
\begin{itemize}
\item[{\rm (iv)}] $[\int_{Q} |A_Q \vec{a}(x)|^q\,dx]^\frac1q \leq \frac{|Q|^{\frac1q}}{\|{\mathbf{1}}_Q\|_{L^{p(\cdot)}}}$.
\end{itemize}
\end{definition}
\begin{remark}
\begin{itemize}
\item[{\rm (i)}] From Lemma \ref{eq reduc M}, we deduce that, for any $(p(\cdot),q,s)_{{\mathbb{A}}}$-atom $\vec{a}$,
$\vec{a}$ is also a harmless constant multiple of a $ (p(\cdot),q,s)_W$-atom.
\item[{\rm (ii)}] By Definition \ref{def atom} and H\"older's inequality, we immediately find that,
for any $q_1,q_2\in [1,\infty]$ with $q_1 \geq q_2$ and for any $(p(\cdot),q_1,s)_W$-atom $\vec{a}$,
$\vec{a}$ is also a $(p(\cdot),q_2,s)_W$-atom.
\item [{\rm (iii)}] In Definition \ref{def atom}, if $m = 1$ and $W$ is a scalar weight, then Definition \ref{def atom}{\rm (ii)}
coincides with $\| a \|_{L^q} \leq \frac{|Q|^{\frac1q}}{\|w{\mathbf{1}}_{Q}\|_{L^{p(\cdot)}}}$, and hence
the $(p(\cdot),q,s)_W$-atom in this case coincides with the atom of weighted variable Hardy spaces
(see, for instance, \cite[Definition 5.2]{h17}).
\end{itemize}
\end{remark}
We now state the main theorem of this section.
\begin{theorem}\label{atom con}
Let $p(\cdot) \in {\mathcal P}_0\cap LH$, $W\in {\mathscr A}_{p(\cdot),\infty}$, $r := \min\{1,p_-\}$,
and $s\in  [\lfloor d^{\rm upper}_{p(\cdot),\infty}(W) + n(\frac{1}{r} - 1)\rfloor,\infty)\cap{\mathbb{Z}}_+ $.
Then the following statements hold:
\begin{itemize}
\item[{\rm (i)}] For any sequence $\{\lambda_k\}_{k\in{\mathbb{Z}}} \subset {\mathbb{C}}$ satisfying
$$ \left\| \left\{\sum_{k\in{\mathbb{Z}}} \left[\frac{|\lambda_k| }{\|{\mathbf{1}}_{Q_k}\|_{L^{p(\cdot)}}}\right]^{r} {\mathbf{1}}_{Q_k} \right\}^{\frac1r} \right\|_{L^{p(\cdot)}} < \infty$$
and any sequence of $(p(\cdot),q,s)_W$-atoms $\{\vec{a}_k\}_{k\in{\mathbb{Z}}}$
supported, respectively, in cubes $\{Q_k\}_{k\in{\mathbb{Z}}}$
with $q\in (\max\{1, \frac{r_W p_+}{r_W - 1}\}, \infty]$,
there exists $\vec{f} \in H^{p(\cdot)}_W$ such that $\vec{f} = \sum_{k\in{\mathbb{Z}}} \lambda_k \vec{a}_k$
in both $H^{p(\cdot)}_W$ and $({\mathcal S}')^m$
and, moreover,
\begin{align*}
\left\| \vec{f} \right\|_{H^{p(\cdot)}_W}
\lesssim \left\| \left\{\sum_{k\in{\mathbb{Z}}} \left[\frac{|\lambda_k| }{\|{\mathbf{1}}_{Q_k}\|_{L^{p(\cdot)}}}\right]^{r} {\mathbf{1}}_{Q_k} \right\}^{\frac1r} \right\|_{L^{p(\cdot)}},
\end{align*}
where the implicit positive constant is independent of
$\{\lambda_k\}_{k\in{\mathbb{Z}}}$ and $\{\vec{a}_k\}_{k\in{\mathbb{Z}}}$.
\item[{\rm (ii)}]
For any $\vec{f} \in H^{p(\cdot)}_W$,
there exist a sequence $\{ \lambda_k \}_{k\in{\mathbb{Z}}}\subset [0,\infty)$
and a sequence of $(p(\cdot),\infty,s)_W$-atoms $\{\vec{a}_k\}_{k\in{\mathbb{Z}}}$ supported,
respectively, in cubes $\{Q_k\}_{k\in{\mathbb{Z}}}$
such that $\vec{f} = \sum_{k\in{\mathbb{Z}}} \lambda_k \vec{a}_k$ in both $H^{p(\cdot)}_W$ and $(\mathcal S')^m$
and
\begin{align*}
\left\|\left\{\sum_{k\in{\mathbb{Z}}} \left[\frac{|\lambda_{k}|}{\|  {\mathbf{1}}_{Q_k} \|_{L^{p(\cdot)}} } \right]^r {\mathbf{1}}_{Q_k} \right\}^\frac1r\right\|_{L^{p(\cdot)}}
\lesssim \left\| \vec{f} \right\|_{H^{p(\cdot)}_W},
\end{align*}
where the implicit positive constant is independent of $\vec{f}$.
\end{itemize}
\end{theorem}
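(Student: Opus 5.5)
For (i), I will estimate $(M_N)_W(\lambda_k\vec a_k)$ pointwise, splitting into the local part on $Q_k^\ast:=2\sqrt n Q_k$ and the tail off $Q_k^\ast$. This is the substitute for the Fefferman--Stein type vector-valued inequality referred to in the introduction as Lemma \ref{a atom eq le}.

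On $Q_k^\ast$ the pointwise bound is $(M_N)_W(\vec a_k)(x)\lesssim \mathcal M_W(W\vec a_k)(x)$. Writing $|W(x)\vec a_k(y)|\le\|W(x)A_{Q_k}^{-1}\|\,|A_{Q_k}\vec a_k(y)|$ and using Lemma \ref{eq reduc M} with the atom size condition, the local contribution is dominated by $\|W(\cdot)A_{Q_k}^{-1}\|$ times an $L^q$-type maximal average of $|A_{Q_k}\vec a_k|$. I will then use duality (Lemma \ref{fg Lp}) applied to $(\sum_k|\cdot|^r)^{1/r}$ in $L^{p(\cdot)/r}$, together with the reverse H\"older inequality (Lemma \ref{reverse Holder}) at exponent $r_Wp(\cdot)$. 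H\"older with $q>\frac{r_Wp_+}{r_W-1}$ then converts $\|\,\|W A_{Q_k}^{-1}\|\mathbf 1_{Q_k^\ast}\|_{L^{r_Wp(\cdot)}}$ into $\|\mathbf 1_{Q_k}\|$-normalized bounds, using Lemma \ref{QP5} to pass from $Q_k^\ast$ to $Q_k$. The outcome should be
\[
\Bigl\|\Bigl(\sum_k|\lambda_k|^r[(M_N)_W\vec a_k]^r\mathbf 1_{Q_k^\ast}\Bigr)^{1/r}\Bigr\|_{L^{p(\cdot)}}\lesssim \Bigl\|\Bigl\{\sum_k\bigl[|\lambda_k|/\|\mathbf 1_{Q_k}\|_{L^{p(\cdot)}}\bigr]^r\mathbf 1_{Q_k}\Bigr\}^{1/r}\Bigr\|_{L^{p(\cdot)}},
\]
where the enlargement from $\mathbf 1_{Q_k^\ast}$ to $\mathbf 1_{Q_k}$ is handled by $\mathbf 1_{Q_k^\ast}\lesssim[\mathcal M\mathbf 1_{Q_k}]^{1/\theta}$ and the scalar Fefferman--Stein inequality in $L^{p(\cdot)/\theta}$.

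Off $Q_k^\ast$ I will use the vanishing moments of order $s$ and a Taylor expansion of $\phi_t$. This gives
\[
|W(x)\phi_t\ast\vec a_k(x)|\lesssim\|W(x)A_{Q_k}^{-1}\|\,\|\mathbf 1_{Q_k}\|_{L^{p(\cdot)}}^{-1}\bigl(l(Q_k)/|x-c_{Q_k}|\bigr)^{n+s+1}.
\]
On the dyadic annulus $2^jQ_k^\ast$ I insert $A_{2^jQ_k}$: Lemma \ref{QP5} gives $\|A_{2^jQ_k}A_{Q_k}^{-1}\|\lesssim2^{jd_2}$, and Lemma \ref{eq reduc M} bounds the normalized $L^{p(\cdot)}$ norm of $\|W A_{2^jQ_k}^{-1}\|$ on $2^jQ_k$. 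Because $s+1>d^{\rm upper}_{p(\cdot),\infty}(W)+n(1/r-1)$, the decay beats both $2^{jd_2}$ and the growth $\|\mathbf 1_{2^jQ_k}\|/\|\mathbf 1_{Q_k}\|\lesssim2^{jn/r}$. Summing over $j$ and $k$ with the $r$-triangle inequality then yields the bound. Convergence in $(\mathcal S')^m$ follows from Proposition \ref{H embed}, and convergence in $H^{p(\cdot)}_W$ from the same estimate applied to tails of the series.

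For (ii), I first reduce by density (Proposition \ref{H dense O}) to nice $\vec f$, then extend by completeness (Proposition \ref{H comp}). The construction uses the convex body valued grand maximal function $F:=M_N^{\mathcal K}(\vec f)$ and the convex-body reducing operator from Lemma \ref{M u exist}: for each dyadic $Q$ this gives a matrix $\mathbb M_Q$ representing the $L^{p(\cdot)}$-average of $|W F|$ on $Q$. I will define level sets $E_Q=\{x\in Q:\ F(x)\not\subset C\,\mathbb M_Q^{-1}\text{-scaled unit ball}\}$, that is, points where $|A_Q\mathbb M_Q^{-1}\cdot|$ applied to $F$ exceeds a large threshold. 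A stopping-time selection via Lemma \ref{stop coll} (refined Whitney decomposition) produces a sparse-like family of cubes with $|E_Q|\le\frac12|Q|$ and bounded overlap of the dilates. Calder\'on--Zygmund decompositions relative to these cubes then give a telescoping $\vec f=\sum_Q \vec b_Q$. Each piece is supported on a dilate of $Q$, has vanishing moments up to $s$ through projection onto polynomials, and satisfies $|A_Q\vec b_Q|\lesssim\|\mathbf 1_Q\|_{L^{p(\cdot)}}^{-1}\lambda_Q$, where $\lambda_Q$ is comparable to the reducing norm of $F$ on $Q$. Dividing, $\vec a_Q:=\vec b_Q/\lambda_Q$ is a $(p(\cdot),\infty,s)_{\mathbb A}$-atom, hence a multiple of a $W$-atom. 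The coefficient estimate then comes from Lemma \ref{f dec eq le}, which bounds $\|(\sum_Q[\lambda_Q/\|\mathbf 1_Q\|]^r\mathbf 1_Q)^{1/r}\|_{L^{p(\cdot)}}$ by $\|\,|WF|\,\|_{L^{p(\cdot)}}=\|\vec f\|_{H^{p(\cdot)}_W}$, using sparseness to replace $\mathbf 1_Q$ by $\mathbf 1_{Q\setminus E_Q}$.

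I expect the main obstacle to be the $L^\infty$ control of the atoms. The pieces $\vec b_Q$ must be bounded in the direction-sensitive norm $|A_Q\cdot|$, not just in size. This requires showing that off $E_Q$ the grand maximal function is controlled by $\mathbb M_Q$ uniformly, and that the polynomial projections and good parts inherit this control through Lemma \ref{QP5} comparisons between neighboring cubes in the Whitney family.
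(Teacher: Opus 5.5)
\textbf{Part (ii): the coefficient estimate fails as planned.} The decisive gap is in (ii), and it comes from your choice of average. Lemma \ref{f dec eq le} cannot give $\|\{\sum_Q[\lambda_Q/\|\mathbf 1_Q\|_{L^{p(\cdot)}}]^r\mathbf 1_Q\}^{1/r}\|_{L^{p(\cdot)}}\lesssim\|\,|WF|\,\|_{L^{p(\cdot)}}$, because it runs in the opposite direction: it bounds sums of weighted tails $\sum_k\lambda_k\|W(\cdot)A_{Q_k}^{-1}\|(\cdots)^L$ \emph{by} the coefficient functional.

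What (ii) actually needs is a pointwise bound $\lambda_Q/\|\mathbf 1_Q\|_{L^{p(\cdot)}}\lesssim\mathcal N(WF)$ on a non-exceptional part of $Q$, where $\mathcal N$ is bounded on $L^{p(\cdot)}$. Duality and bounded overlap of those parts then finish the estimate. If, as you propose, $\mathbb M_Q$ and $\lambda_Q$ encode the $L^{p(\cdot)}$-average of $|WF|$ on $Q$, then $\mathcal N=\mathcal M_{p(\cdot)}$. That operator is not bounded on $L^{p(\cdot)}$: already $\mathcal M_p$ fails on $L^p$, and Lemma \ref{Mq bound} needs a ratio exponent with $r_->1$. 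Sparseness alone does not repair this.

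Lemma \ref{M u exist} gives something different, and the difference is essential: an \emph{unweighted} $L^u$-average of $F$. The paper's construction is as follows.
\begin{itemize}
\item It uses $M^{(u)}_{3Q}$, the $u$-th convex-body reducing operator of $\widetilde M^{\mathcal K}_{N,\epsilon}(\vec f)$. The $\epsilon$-perturbation makes $F(x)\in\mathcal K_{\rm abcs}$, as Lemma \ref{M u exist} requires.
\item It takes $u<p_-/2$ small enough that Lemma \ref{Apinfty u} gives $\sup_Q\fint_Q\|W^{-1}A_Q\|^{2u}<\infty$.
\item The level set is $E_Q=\{x\in(3Q)^\circ:\ |[M^{(u)}_{3Q}]^{-1}M_N^{\mathcal K}(\vec f)(x)|>\widetilde C\}$. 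It contains no weight, so its smallness is pure Chebyshev.
\item The coefficients are $\lambda_{k,Q}=\|\mathbf 1_{3Q}\|_{L^{p(\cdot)}}\|A_{3Q}M^{(u)}_{3Q}\|$.
\end{itemize}
Then on $3Q$ one has $\|A_{3Q}M^{(u)}_{3Q}\|\sim(\fint_{3Q}|A_{3Q}\widetilde M^{\mathcal K}_{N,\epsilon}(\vec f)|^u)^{1/u}\le\widetilde{\mathcal M}^{(u)}_W(W\widetilde M^{\mathcal K}_{N,\epsilon}(\vec f))$. Lemma \ref{bound max wid}, together with the bounded overlap of the sets $3Q\setminus E_{k+1}$, closes the estimate. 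This is exactly the point where only $\mathscr A_{p(\cdot),\infty}$ is needed.

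Your level set as written (``$|A_Q\mathbb M_Q^{-1}\cdot|$ applied to $F$'') has no $u$-average of order one. So the claimed $|E_Q|\le\frac12|Q|$ is also unjustified.

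\textbf{Part (ii): the $L^\infty$ control of the atoms.} The obstacle you flag is not resolved through Lemma \ref{QP5}. For an arbitrary $F$ there is no comparison between $M^{(u)}_{3L}$ and $M^{(u)}_{3Q}$. The paper instead uses property (v) of the refined Whitney decomposition. This requires the stronger smallness $|E_Q|<\widetilde C_\cap^{-1}2^{-12n}|Q|$, hence $|3Q\cap E_{k+1}|<2^{-4n}|Q|$. Property (v) says that every next-generation $L$ with $L^\ast\cap Q^\ast\neq\emptyset$ satisfies $32L\subset 3Q$ and contains a point $y_L\in 3Q\setminus E_Q$. Lemma \ref{stop P} then places the coefficients of $P_L(\vec f)$ and of $\vec c_{Q,L}$ in $CM_N^{\mathcal K}(\vec f)(y_L)$, where $|[M^{(u)}_{3Q}]^{-1}M_N^{\mathcal K}(\vec f)(y_L)|\le\widetilde C$. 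Combined with $\vec f(x)\in CM_N^{\mathcal K}(\vec f)(x)$ a.e., this gives $\sup_{3Q}|[M^{(u)}_{3Q}]^{-1}\vec A_{k,Q}|\lesssim1$.

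\textbf{Part (i): the tail estimate.} For variable $p(\cdot)$, the tail cannot be summed over $k$ by the $r$-triangle inequality. That route only yields $\sum_k|\lambda_k|^r$, which dominates, rather than is dominated by, $\|\sum_k[|\lambda_k|/\|\mathbf 1_{Q_k}\|_{L^{p(\cdot)}}]^r\mathbf 1_{Q_k}\|_{L^{p(\cdot)/r}}$. The tail needs the vector-valued Lemma \ref{f dec eq le} with $L=n+s+1$; that is where your citation of that lemma belongs. Separately, Taylor-expanding $\phi\in\mathcal S_N$ to order $s$ requires $N\ge s$. It is better to use a fixed compactly supported $\psi$ together with Theorem \ref{H equal}.
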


\begin{remark}
If $p(\cdot)\equiv p$ with $p\in(0,1]$ is a constant exponent, then,
in this case, Theorem \ref{atom con}{\rm (i)} coincides with
\cite[Theorem 3.5(i)]{bcyy24} and
Theorem \ref{atom con}{\rm (ii)} extends
\cite[Theorem 3.5(ii)]{bcyy24} from matrix $A_p$ weights
(see \cite[p.\,490]{fr21} for their definitions)
to the weaker class of matrix $A_{p,\infty}$ weights.
Moreover, in the scalar-valued unweighted
case, namely when $m=1$ and $W\equiv1$, Theorem \ref{atom con} in this case
coincides with
\cite[Theorems 4.5 and 4.6]{ns12}.
On the other hand, for more general
$p(\cdot) \in {\mathcal P}_0\cap LH$ and matrix
$\mathscr A_{p(\cdot),\infty}$ weights $W$, Theorem \ref{atom con} is new.
\end{remark}

To prove Theorem \ref{atom con}, we need more tools.
For any variable exponent $q(\cdot)$,
the \emph{variable maximal operator $ \mathcal{M}_{q(\cdot)} $}
is defined by setting, for any $f \in L^{q(\cdot)}_{\rm loc}$ and $x \in \mathbb{R}^n$,
\begin{align*}
\mathcal{M}_{q(\cdot)}(f)(x) := \sup_{x\in Q} \frac{1}{\|\mathbf{1}_Q\|_{L^{q(\cdot)}}} \left\|f\mathbf{1}_Q\right\|_{L^{q(\cdot)}},
\end{align*}
where the supremum is taken over all cubes $Q$ containing $x$.
The following is on the boundedness of ${\mathcal M}_{q(\cdot)}$ in variable Lebesgue spaces,
which is exactly \cite[Theorem 7.3.27]{dhr17}.
\begin{lemma}\label{Mq bound}
Let $p(\cdot),q(\cdot),r(\cdot)\in \mathcal{P}\cap LH$
such that $p(\cdot) = r(\cdot)q(\cdot)$ and $r_- \in (1,\infty)$.
Then, for any $f\in L^{p(\cdot)}$,
\begin{align*}
\left\|\mathcal{M}_{q(\cdot)}(f)\right\|_{L^{p(\cdot)}} \lesssim \left\|f\right\|_{L^{p(\cdot)}},
\end{align*}
where the implicit positive constant depends only on $p(\cdot)$, $q(\cdot)$, $r(\cdot)$, and $n$.
\end{lemma}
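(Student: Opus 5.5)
The plan is to reduce the boundedness of $\mathcal{M}_{q(\cdot)}$ on $L^{p(\cdot)}$ to the boundedness of the ordinary Hardy--Littlewood maximal operator $\mathcal{M}$ on $L^{r(\cdot)}$. That classical fact holds because $r(\cdot)\in\mathcal P\cap LH$ and $r_->1$. The model case is a constant exponent $q$, where
$$\mathcal{M}_{q}(f)=\left[\mathcal{M}\left(|f|^q\right)\right]^{\frac1q}$$
and Lemma \ref{con f} gives
$$\|\mathcal M_q f\|_{L^{p(\cdot)}}=\left\|\mathcal M(|f|^q)\right\|_{L^{r(\cdot)}}^{\frac1q}\lesssim \left\||f|^q\right\|_{L^{r(\cdot)}}^{\frac1q}=\|f\|_{L^{p(\cdot)}}.$$
For variable $q(\cdot)$ I will mimic this through a pointwise "key estimate" that replaces the norm average by a modular average.

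By homogeneity I assume $\|f\|_{L^{p(\cdot)}}\le 1$, so $\rho_{L^{p(\cdot)}}(f)\le1$; I also assume $p_+<\infty$, which holds since $p\in LH$ forces boundedness. I split $f=f\mathbf 1_{\{|f|>1\}}+f\mathbf 1_{\{|f|\le1\}}$ and treat the two parts separately. For a cube $Q$ set $\lambda:=\|f\mathbf 1_Q\|_{L^{q(\cdot)}}/\|\mathbf 1_Q\|_{L^{q(\cdot)}}$. The first goal is the estimate
$$\lambda^{q(x)}\lesssim \fint_Q |f(y)|^{q(y)}\,dy + h(x)+\fint_Q h(y)\,dy \qquad (x\in Q),$$
with $h(y):=(e+|y|)^{-N}$ for a large $N$. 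By Lemma \ref{est Q}, $\|\mathbf 1_Q\|_{L^{q(\cdot)}}\sim|Q|^{1/q_Q}$. Unwinding the definition of the norm, $\lambda$ is comparable to the number $\mu$ with $\fint_Q |f/\mu|^{q(y)}\,dy\sim 1$. Local log-H\"older continuity bounds $|Q|^{q_-(Q)-q_+(Q)}$ for small cubes, so that exponents $q(y)$ and $q(x)$ can be interchanged on $\{|f|\ge1\}$ at bounded cost. That part of $f$ has $\int|f|^{p(\cdot)}\le1$ and hence controlled averages, which gives the bound $\lambda\lesssim 1+|Q|^{-1}$ needed in the interchange. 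Large cubes are handled in the same way using the normalization.

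On $\{|f|\le 1\}$ I instead use $LH_\infty$ to compare $q(y)$ with $q_\infty$ via the standard inequality
$$t^{q(y)}\lesssim t^{q_\infty}+h(y)\quad\text{for } t\in[0,1],$$
and its reverse. This produces the error terms $h$.

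Given the key estimate, take the supremum over cubes containing $x$. This gives
$$\mathcal M_{q(\cdot)}(f)(x)^{q(x)}\lesssim \mathcal M\left(|f|^{q(\cdot)}\right)(x)+\mathcal M h(x)+h(x).$$
Raise both sides to the power $r(x)$ and integrate. Since $|f|^{q(\cdot)}$ has $r(\cdot)$-modular at most $\rho_{L^{p(\cdot)}}(f)\le1$, and $\mathcal M$ is bounded on $L^{r(\cdot)}$, the first term has bounded $r(\cdot)$-modular. The same holds for the $h$ terms because $h\in L^{r(\cdot)}\cap L^{\infty}$ once $N$ is large. This yields $\rho_{L^{p(\cdot)}}(\mathcal M_{q(\cdot)}f)\lesssim1$, hence the norm bound.

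The main obstacle is the key pointwise estimate. In particular, it requires the uniform control of $\|f\mathbf 1_Q\|_{L^{q(\cdot)}}$ in terms of $|Q|$ on the large part of $f$, and the bookkeeping of the decaying error terms on the small part. I would first try to import this step from the corresponding argument in \cite[Theorem 4.2.4]{dhr17}, adapted to norm averages, and then verify that the constants depend only on $p(\cdot)$, $q(\cdot)$, $r(\cdot)$, and $n$.
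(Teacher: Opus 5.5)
Your final step is sound. Suppose that for every cube $Q\ni x$ one has $\lambda^{q(x)}\lesssim \fint_Q|f(y)|^{q(y)}\,dy+h(x)+\fint_Q h(y)\,dy$. Then $\rho_{L^{r(\cdot)}}(|f|^{q(\cdot)})=\rho_{L^{p(\cdot)}}(f)\le 1$, the boundedness of $\mathcal M$ on $L^{r(\cdot)}$, and $h\in L^{r(\cdot)}$ give the lemma.

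The gap is that this pointwise estimate is the whole content of the lemma, and you leave it unproved. (The paper does not prove it either; it quotes \cite[Theorem 7.3.27]{dhr17}.) It also cannot simply be imported from \cite[Theorem 4.2.4]{dhr17}. That proof runs on Jensen's inequality and on $\fint_Q|f|\mathbf{1}_{\{|f|>1\}}\le |Q|^{-1}$ for \emph{linear} averages. Neither has a direct analogue for the norm average $\|f\mathbf{1}_Q\|_{L^{q(\cdot)}}/\|\mathbf{1}_Q\|_{L^{q(\cdot)}}$. Moreover, two steps of your sketch fail as stated.

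First, the equivalence $\lambda\sim\mu$, where $\fint_Q|f/\mu|^{q(y)}\,dy=1$, holds for $|Q|\le 1$ but not in general for large cubes. For small cubes $\|\mathbf{1}_Q\|_{L^{q(\cdot)}}^{q(y)}\sim|Q|$ for $y\in Q$, but for large cubes $\|\mathbf{1}_Q\|_{L^{q(\cdot)}}\sim|Q|^{1/q_\infty}$. Take $q\equiv q_0\neq q_\infty$ near $\mathbf{0}$, $f:=\mathbf{1}_{B(\mathbf{0},1)}$, and $Q:=Q(\mathbf{0},R)$. Then $\lambda\sim R^{-n/q_\infty}$ while $\mu\sim R^{-n/q_0}$. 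So large cubes are not handled ``in the same way''. There you must:
use $\lambda\lesssim 1$;
pass to $\lambda^{q_\infty}$ via $t^{q(x)}\lesssim t^{q_\infty}+h(x)$ for $t\in[0,1]$;
bound $\|f\mathbf{1}_Q\|_{L^{q(\cdot)}}^{q_\infty}$ by $\int_Q|f|^{q(y)}\,dy+\int_Q h$, using $\sup_Q h\lesssim\int_Q h$ when $l(Q)\ge 1$.

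Second, on $\{|f|>1\}$ the bound $\lambda\lesssim 1+|Q|^{-1}$ controls $\lambda^{q(x)-q(y)}$ only when $\lambda\ge 1$. For $\lambda<1$ you would need a lower bound on $\lambda$, and the error-free interchange you claim is false. Fix $Q$ with $|Q|\le 1$ and $q(\cdot)$ with $q\equiv q_1$ on a subcube $A\subset Q$ and $q(x)=q_0<q_1$ at some $x\in Q$. For $B\subset A$ and $f:=2\mathbf{1}_B$, one has $\lambda^{q(x)}=c_Q|B|^{q_0/q_1}$ while $\fint_Q|f|^{q(y)}\,dy=2^{q_1}|B|/|Q|$. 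The ratio is unbounded as $|B|\to 0$.

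So the decaying error terms are needed on the large part too, not only on $\{|f|\le 1\}$. The repair is to use $LH_\infty$ there as well: for $\lambda<1$ and $x,y$ in a cube of measure at most $1$, $\lambda^{q(x)}\lesssim\lambda^{q(y)}+h(x)$. Inserted into $\fint_Q|f(y)|^{q(y)}\lambda^{-q(y)}\,dy\sim 1$, this gives $\lambda^{q(x)}\lesssim\fint_Q|f|^{q(y)}\,dy+h(x)$.

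With these repairs your architecture goes through. As written, however, the proposal only reduces the lemma to an unproved norm-average version of the key estimate.
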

We have the following two substitutes
for the Fefferman--Stein type vector-valued inequality
in matrix-weighted variable Hardy spaces.
\begin{lemma}\label{f dec eq le}
Let $p(\cdot) \in {\mathcal P}_0 \cap LH$, $r := \min\{1,p_-\}$, $W\in {\mathscr A}_{p(\cdot),\infty}$,
and $\{A_Q\}$ be a family of reducing operators of order $p(\cdot)$ for $W$.
Assume that $\{\lambda_{k}\}_{k\in{\mathbb{Z}}}\subset [0,\infty)$ and $\{Q_k\}_{k\in{\mathbb{Z}}}$
with $Q_k := Q(c_k,l_k)$ is a sequence of cubes in ${\mathbb{R}^n}$.
Then, for any $L\in (d^{\rm upper}_{p(\cdot),\infty}(W) + \frac{n}{r},\infty)$,
\begin{align*}
\left\| \sum_{k \in {\mathbb{Z}}} \lambda_k \left\|W(\cdot) A_{Q_k}^{-1}\right\| \left(\frac{l_k}{l_k + |\cdot - c_k| }\right)^L \right\|_{L^{p(\cdot)}}
\lesssim \left\|\left(\sum_{k\in{\mathbb{Z}}} \lambda_k^r {\mathbf{1}}_{Q_k}\right)^\frac1r\right\|_{L^{p(\cdot)}},
\end{align*}
where the implicit positive constant is independent of $\{\lambda_{k}\}_{k\in{\mathbb{Z}}}$ and $\{Q_k\}_{k\in{\mathbb{Z}}}$.
\end{lemma}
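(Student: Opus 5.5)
The idea is to reduce the weighted, matrix-valued sum to a scalar Fefferman--Stein-type estimate by decomposing each kernel $(\frac{l_k}{l_k+|\cdot-c_k|})^L$ dyadically in space. This converts $\|W(\cdot)A_{Q_k}^{-1}\|$ on dilates of $Q_k$ into reducing-operator quantities, which Lemma \ref{QP5} controls. For each $k$ and $j\in\mathbb Z_+$, let $Q_k^{(j)}:=2^jQ_k$. On $Q_k^{(0)}$ the kernel is at most $1$, and on $Q_k^{(j)}\setminus Q_k^{(j-1)}$ it is at most $2^{-jL}$. Hence
\begin{align*}
\sum_{k}\lambda_k\|W(\cdot)A_{Q_k}^{-1}\|\Big(\frac{l_k}{l_k+|\cdot-c_k|}\Big)^L\lesssim\sum_{j=0}^\infty 2^{-jL}\sum_k\lambda_k\|W(\cdot)A_{Q_k}^{-1}\|\mathbf 1_{Q_k^{(j)}}.
\end{align*}
Since $r\le1$, we have $\|\sum_j g_j\|_{L^{p(\cdot)}}^r\le\sum_j\|g_j\|_{L^{p(\cdot)}}^r$ for nonnegative $g_j$. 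This follows from Lemma \ref{con f} together with the triangle inequality in $L^{p(\cdot)/r}$, applied after the pointwise bound $(\sum g_j)^r\le\sum g_j^r$. It therefore suffices to show, for each fixed $j$,
\begin{align*}
\Big\|\sum_k\lambda_k\|W(\cdot)A_{Q_k}^{-1}\|\mathbf 1_{Q_k^{(j)}}\Big\|_{L^{p(\cdot)}}\lesssim 2^{j(d_2+\frac nr+\varepsilon')}\Big\|\Big(\sum_k\lambda_k^r\mathbf 1_{Q_k}\Big)^{\frac1r}\Big\|_{L^{p(\cdot)}}
\end{align*}
with $d_2\in(d^{\rm upper}_{p(\cdot),\infty}(W),L-\frac nr)$ and $\varepsilon'$ small. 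Summing the geometric series in $j$ then finishes the proof.

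For fixed $j$, we bound the sum pointwise by $(\sum_k\lambda_k^r\|W(\cdot)A_{Q_k}^{-1}\|^r\mathbf 1_{Q_k^{(j)}})^{1/r}$, again using $r\le1$. Then we pass to $L^{p(\cdot)/r}$ and use duality (Lemma \ref{fg Lp}, applicable once $p(\cdot)/r\ge1$). The issue is that $\|W(x)A_{Q_k}^{-1}\|$ is not constant on $Q_k^{(j)}$; we want to replace it by its $L^{p(\cdot)}$-average, which by Lemma \ref{eq reduc M} is $\sim\|A_{Q_k^{(j)}}A_{Q_k}^{-1}\|\lesssim 2^{jd_2}$ (Lemma \ref{QP5}). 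To make this replacement inside a norm of a sum, I would use the reverse Hölder inequality, Lemma \ref{reverse Holder}. Pair $\sum_k\lambda_k^r\|W A_{Q_k}^{-1}\|^r\mathbf 1_{Q_k^{(j)}}$ against $g\in L^{(p/r)'(\cdot)}$ and apply Hölder on each $Q_k^{(j)}$ with the exponent $r_Wp(\cdot)/r$ and its conjugate. Lemma \ref{reverse Holder} then bounds the weight factor by $\|\mathbf 1_{Q_k^{(j)}}\|_{L^{r_Wp/r}}\,2^{jd_2r}$, and Lemma \ref{est Q} converts the remaining factor into an average of $g$ raised to the conjugate power. This average is at most $\inf_{Q_k}\mathcal M_{q(\cdot)}(g)$ times $2^{jn}$, where $\mathcal M_{q(\cdot)}$ is a variable maximal operator with $q(\cdot)$ strictly below $(p/r)'(\cdot)$. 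Consequently, the pairing is bounded by
\begin{align*}
2^{j(d_2r+n)}\int\sum_k\lambda_k^r\mathbf 1_{Q_k}\,\mathcal M_{q(\cdot)}(g),
\end{align*}
and Lemma \ref{Mq bound} gives boundedness of $\mathcal M_{q(\cdot)}$ on $L^{(p/r)'(\cdot)}$. Taking the $r$-th root yields the factor $2^{j(d_2+n/r)}$, which is summable against $2^{-jL}$.

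The main obstacle is this duality step when $p_-<1$ or when $p(\cdot)/r$ touches $1$ (so that $(p/r)'$ is unbounded). In that case I would instead pass to a slightly smaller convexification $r_0<r$, with $p(\cdot)/r_0$ satisfying $(p/r_0)_->1$. This costs only $\varepsilon'$ in the exponent $\frac n{r_0}$ and is absorbed by the strict inequality $L>d^{\rm upper}_{p(\cdot),\infty}(W)+\frac nr$. The final comparison of $\|(\sum\lambda_k^{r_0}\mathbf 1_{Q_k})^{1/r_0}\|$ with the $r$-version goes the wrong way, though. So the better course is to keep $r$ and prove the bound $\int\sum_k\lambda_k^r\mathbf 1_{Q_k^{(j)}}h\lesssim2^{jn(1+\varepsilon)}\int\sum_k\lambda_k^r\mathbf 1_{Q_k}\mathcal M_{q}h$ directly; this makes the log-Hölder averaging (Lemma \ref{est Q}) on dilated cubes the delicate technical point.
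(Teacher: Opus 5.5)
Your argument in the first three paragraphs is correct and is essentially the paper's proof. The only difference is the order of steps: the paper dualizes in $L^{p(\cdot)/r}$ first and then inserts $(\frac{l_k}{l_k+|x-c_k|})^{rL}\lesssim\sum_{j\in\mathbb Z_+}2^{-rjL}\mathbf 1_{2^jQ_k}(x)$ under the integral, instead of applying the $r$-triangle inequality over $j$. From there the steps coincide with yours: H\"older with $r_Wp(\cdot)/r$, Lemma \ref{reverse Holder}, Lemmas \ref{eq reduc M} and \ref{QP5} giving $2^{jd_2r}$, and the maximal operator $\mathcal M_{(r_Wp(\cdot)/r)'}$.

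The concern in your last paragraph is not an obstacle. Lemma \ref{fg Lp} holds for every exponent in $\mathcal P$, so duality in $L^{p(\cdot)/r}$ is fine even when $(p(\cdot)/r)_-=1$. The paper then applies Lemma \ref{Mq bound} directly on $L^{(p(\cdot)/r)'}$, writing $(p(\cdot)/r)'=s(\cdot)\,(r_Wp(\cdot)/r)'$ with $s_->1$; this uses only $p_+<\infty$ and $r_W>1$. So neither the detour through $r_0<r$ nor a separate direct estimate is needed.
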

\begin{proof}
Using Lemmas \ref{fg Lp} and \ref{con f} and the assumption $r\in (0,1]$,
we obtain
\begin{align}\label{f dec eq 20}
&\left\| \sum_{k\in{\mathbb{Z}}} \lambda_k\left\|W(\cdot) A_{Q_k}^{-1} \right\| \left(\frac{l_k}{l_k +|\cdot-c_k|}\right)^L \right\|^r_{L^{p(\cdot)}}{\nonumber}\\
&\quad = \left\| \left[ \sum_{k\in{\mathbb{Z}}} \lambda_k\left\|W(\cdot) A_{Q_k}^{-1} \right\| \left(\frac{l_k}{l_k +|\cdot-c_k|}\right)^L \right]^r\right\|_{L^{\frac{p(\cdot)}{r}}}{\nonumber}\\
&\quad \sim \sup_{\|g\|_{L^{(\frac{p(\cdot)}{r})'}}\leq 1}
\int_{{\mathbb{R}^n}}\left[ \sum_{k\in{\mathbb{Z}}} \lambda_k \left\|W(x) A_{Q_k}^{-1} \right\| \left(\frac{l_k}{l_k +|x-c_k|}\right)^L\right]^{r} |g(x)|\,dx {\nonumber} \\
&\quad \leq \sup_{\|g\|_{L^{(\frac{p(\cdot)}{r})'}}\leq 1}
\int_{{\mathbb{R}^n}} \sum_{k\in{\mathbb{Z}}} \lambda_k^r \left\|W(x) A_{Q_k}^{-1} \right\|^{r} \left(\frac{l_k}{l_k +|x-c_k|}\right)^{rL}  |g(x)|\,dx.
\end{align}
Now, fix $g\in L^{(\frac{p(\cdot)}{r})'}$.
Note that, for any $x\in {\mathbb{R}^n}$, we have
$$\left(\frac{l_k}{l_k + |x-c_k|}\right)^{rL}
\lesssim {\mathbf{1}}_{Q_k}(x) +  \sum_{j\in{\mathbb{N}}} 2^{-rjL} {\mathbf{1}}_{(2^jQ_k) \setminus (2^{j-1}Q_k)}(x)
\le\sum_{j\in{\mathbb{Z}}_+} 2^{-rjL} {\mathbf{1}}_{2^jQ_k}(x), $$
which, combined with Tonelli's theorem, further implies that
\begin{align}\label{f dec eq 19}
&\int_{{\mathbb{R}^n}} \sum_{k\in{\mathbb{Z}}}\lambda_k^r \left\|W(x) A_{Q_k}^{-1} \right\|^{r} \left(\frac{l_k}{l_k + |x-c_k|}\right)^{rL} |g(x)|\,dx{\nonumber}\\
&\quad \lesssim \int_{{\mathbb{R}^n}} \sum_{k\in{\mathbb{Z}}} \sum_{j\in{\mathbb{Z}}_+} 2^{-rjL} \lambda_k^r \left\|W(x) A_{Q_k}^{-1} \right\|^{r} |g(x)| {\mathbf{1}}_{2^jQ_k}(x)\,dx{\nonumber}\\
&\quad = \sum_{k\in{\mathbb{Z}}} \lambda_k^r \sum_{j\in{\mathbb{Z}}_+} 2^{-rjL} \int_{2^jQ_k}  \left\|W(x) A_{Q_k}^{-1} \right\|^{r} |g(x)|\,dx{\nonumber}\\
&\quad \leq \sum_{k\in{\mathbb{Z}}} \lambda_k^r \sum_{j\in{\mathbb{Z}}_+} 2^{-j(rL-n)} |Q_k| \fint_{2^jQ_k}  \left\|W(x) A_{Q_k}^{-1} \right\|^{r} |g(x)|\,dx.
\end{align}
Let $r_W$ be the same as in \eqref{rW}.
Then, using Lemmas \ref{Holder} and \ref{est Q} with $p(\cdot)$ therein replaced by $\frac{r_W p(\cdot)}{r}$
and using Lemma \ref{con f}, we obtain, for any $k\in{\mathbb{Z}}$ and $j\in{\mathbb{Z}}_+$,
\begin{align*}
&\fint_{2^jQ_k}  \left\|W(x) A_{Q_k}^{-1} \right\|^{r} |g(x)|\,dx \\
&\quad \lesssim \frac{1}{\|{\mathbf{1}}_{2^{j}Q_k}\|_{L^{\frac{r_Wp(\cdot)}{r}}}} \left\|\, \left\|W(x) A_{Q_k}^{-1} \right\|^{r} {\mathbf{1}}_{2^jQ_k} \right\|_{L^{\frac{r_Wp(\cdot)}{r}}}
 \frac{1}{\|{\mathbf{1}}_{2^{j}Q_k}\|_{L^{(\frac{r_Wp(\cdot)}{r})'}}} \left\| g{\mathbf{1}}_{2^jQ_k} \right\|_{L^{(\frac{r_Wp(\cdot)}{r})'}}\\
&\quad = \left[\frac{1}{\|{\mathbf{1}}_{2^{j}Q_k}\|_{L^{r_Wp(\cdot)}}} \left\|\, \left\|W(x) A_{Q_k}^{-1} \right\| {\mathbf{1}}_{2^jQ_k} \right\|_{L^{r_Wp(\cdot)}}\right]^{r}
\frac{1}{\|{\mathbf{1}}_{2^{j}Q_k}\|_{L^{(\frac{r_Wp(\cdot)}{r})'}}} \left\| g{\mathbf{1}}_{2^jQ_k} \right\|_{L^{(\frac{r_Wp(\cdot)}{r})'}},
\end{align*}
which, together with Lemma \ref{reverse Holder}, further implies that
\begin{align*}
&\fint_{2^jQ_k}  \left\|W(x) A_{Q_k}^{-1} \right\|^{r} |g(x)|\,dx \\
&\quad \lesssim \left[\frac{1}{\|{\mathbf{1}}_{2^{j}Q_k}\|_{L^{p(\cdot)}}} \left\|\, \left\|W(x) A_{Q_k}^{-1} \right\| {\mathbf{1}}_{2^jQ_k} \right\|_{L^{p(\cdot)}}\right]^{r}
 \frac{1}{\|{\mathbf{1}}_{2^{j}Q_k}\|_{L^{(\frac{r_Wp(\cdot)}{r})'}}} \left\| g{\mathbf{1}}_{2^jQ_k} \right\|_{L^{(\frac{r_Wp(\cdot)}{r})'}}.
\end{align*}
From this and Lemma \ref{eq reduc M} with $M := A_{Q_k}^{-1}$ and from Lemma \ref{QP5},
we infer that
\begin{align*}
\fint_{2^jQ_k}  \left\|W(x) A_{Q_k}^{-1} \right\|^{r} |g(x)|\,dx
& \lesssim \left\| A_{2^jQ_k} A_{Q_k}^{-1} \right\|^{r}
 \frac{1}{\|{\mathbf{1}}_{2^{j}Q_k}\|_{L^{(\frac{r_Wp(\cdot)}{r})'}}} \left\| g{\mathbf{1}}_{2^jQ_k} \right\|_{L^{(\frac{r_Wp(\cdot)}{r})'}}\\
& \lesssim 2^{d_2rj} \frac{1}{\|{\mathbf{1}}_{2^{j}Q_k}\|_{L^{(\frac{r_Wp(\cdot)}{r})'}}} \left\| g{\mathbf{1}}_{2^jQ_k} \right\|_{L^{(\frac{r_Wp(\cdot)}{r})'}},
\end{align*}
where $d_2$ is as in Lemma \ref{QP5} satisfying $ d_2 + \frac{n}{r} < L$.
Applying this, \eqref{f dec eq 19}, and Tonelli's theorem,
we conclude that
\begin{align*}
&\int_{{\mathbb{R}^n}} \sum_{k\in{\mathbb{Z}}} \lambda_k^r \left\|W(x) A_{Q_k}^{-1} \right\|^{r} \left(\frac{l_k}{l_k + |x-c_k|}\right)^{rL} |g(x)|\,dx{\nonumber}\\
&\quad \lesssim \sum_{k\in{\mathbb{Z}}} \lambda_k^r \sum_{j\in{\mathbb{Z}}_+} 2^{-rj(L-d_2-\frac{n}{r})}
\frac{1}{\|{\mathbf{1}}_{2^{j}Q_k}\|_{L^{(\frac{r_Wp(\cdot)}{r})'}}} \left\| g{\mathbf{1}}_{2^jQ_k} \right\|_{L^{(\frac{r_Wp(\cdot)}{r})'}} \left|Q_k\right|\\
&\quad \leq \sum_{j\in{\mathbb{Z}}_+} 2^{-rj(L-d_2-\frac{n}{r})} \sum_{k\in{\mathbb{Z}}} \int_{Q_k} \lambda_k^r {\mathcal M}_{(\frac{r_Wp(\cdot)}{r})'} \left( g \right)(y) \,dy\\
&\quad = \sum_{j\in{\mathbb{Z}}_+} 2^{-rj(L-d_2-\frac{n}{r})} \int_{{\mathbb{R}^n}} \sum_{k\in{\mathbb{Z}}} \lambda_k^r {\mathbf{1}}_{Q_k}(y) {\mathcal M}_{(\frac{r_Wp(\cdot)}{r})'} \left( g \right)(y) \,dy.
\end{align*}
This, together with $L \in (d_2 + \frac{n}{r},\infty)$
and Lemmas \ref{Holder} and \ref{Mq bound},
further implies that
\begin{align*}
&\int_{{\mathbb{R}^n}} \sum_{k\in{\mathbb{Z}}} \lambda_k^r \left\|W(x) A_{Q_k}^{-1} \right\|^{r} \left(\frac{l_k}{l_k + |x-c_k|}\right)^{rL} |g(x)|\,dx{\nonumber}\\
&\quad \lesssim \left\|\sum_{k\in{\mathbb{Z}}} \lambda_k^r {\mathbf{1}}_{Q_k}\right\|_{L^{\frac{p(\cdot)}{r}}}
 \left\|{\mathcal M}_{(\frac{r_Wp(\cdot)}{r})'} \left( g \right)\right\|_{L^{(\frac{p(\cdot)}{r})'}}
\lesssim \left\|\sum_{k\in{\mathbb{Z}}} \lambda_k^r {\mathbf{1}}_{Q_k}\right\|_{L^{\frac{p(\cdot)}{r}}} \left\| g\right\|_{L^{(\frac{p(\cdot)}{r})'}}.
\end{align*}
Combining this with \eqref{f dec eq 20} yields
\begin{align*}
&\left\| \left[ \sum_{k\in{\mathbb{Z}}} \lambda_k\left\|W(\cdot) A_{Q_k}^{-1} \right\| \left(\frac{l_k}{l_k + |\cdot-c_k|}\right)^L \right]^{r}\right\|_{L^{\frac{p(\cdot)}{r}}}\\
&\quad \lesssim \sup_{\|g\|_{L^{(\frac{p(\cdot)}{r})'}}\leq 1} \left\|\sum_{k\in{\mathbb{Z}}} \lambda_k^r {\mathbf{1}}_{Q_k}\right\|_{L^{\frac{p(\cdot)}{r}}}
\left\| g\right\|_{L^{(\frac{p(\cdot)}{r})'}}
\lesssim \left\|\sum_{k\in{\mathbb{Z}}} \lambda_k^r {\mathbf{1}}_{Q_k}\right\|_{L^{\frac{p(\cdot)}{r}}},
\end{align*}
which, together with Lemma \ref{con f}, further implies that
\begin{align*}
\left\|\sum_{k\in{\mathbb{Z}}} \lambda_k \left\|W(\cdot) A_{Q_k}^{-1} \right\| \left(\frac{l_k}{l_k + |\cdot-c_k|}\right)^L \right\|_{L^{p(\cdot)}}
\lesssim \left\|\left\{\sum_{k\in{\mathbb{Z}}} \lambda_k^r {\mathbf{1}}_{Q_k}\right\}^\frac1r\right\|_{L^{p(\cdot)}}.
\end{align*}
 This finishes the proof of Lemma \ref{f dec eq le}.
\end{proof}

\begin{lemma}\label{a atom eq le}
Let $p(\cdot) \in {\mathcal P}_0 \cap LH$, $r := \min\{1,p_-\}$, and $W\in {\mathscr A}_{p(\cdot),\infty}$.
Assume that $\{\lambda_{k}\}_{k\in{\mathbb{Z}}}\subset [0,\infty)$ and $\{Q_k\}_{k\in{\mathbb{Z}}}$
is a sequence of cubes in ${\mathbb{R}^n}$.
Let $T$ be a bounded operator on $L^q$ with some $q \in (\frac{r_Wp_+}{r_W - 1},\infty)$
and let $\{\vec{a}_k\}_{k\in{\mathbb{Z}}}$ be a sequence of measurable functions.
Then
\begin{align*}
\left\| \sum_{k \in{\mathbb{Z}}} \lambda_k  \left\| W(\cdot) A_{Q_k}^{-1} \right\| \left|T\vec{a}_k\right| {\mathbf{1}}_{2\sqrt{n} Q_k} \right\|_{L^{p(\cdot)}}
\lesssim \left\|\left[\sum_{k \in{\mathbb{Z}}} \left(|Q_k|^{-\frac1q}\lambda_k\left\|\vec{a}_k\right\|_{L^q} \right)^r {\mathbf{1}}_{Q_k} \right]^\frac1r\right\|_{L^{p(\cdot)}},
\end{align*}
where the implicit positive constant is independent of $\{\lambda_{k}\}_{k\in{\mathbb{Z}}}$, $\{Q_k\}_{k\in{\mathbb{Z}}}$, and $\left\{\vec{a}_k\right\}_{k\in{\mathbb{Z}}}$.
\end{lemma}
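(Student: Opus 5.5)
The plan is to follow the duality scheme of Lemma \ref{f dec eq le}, with H\"older's inequality in $x$ doing the work that the decay factor did there. Set $\widetilde Q_k := 2\sqrt n Q_k$.

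\textbf{Step 1 (reduction to a dual pairing).} Since $r\le 1$, the $r$-th power of the left-hand side can be estimated by Lemmas \ref{con f} and \ref{fg Lp} and the inequality $(\sum b_k)^r\le\sum b_k^r$:
\begin{align*}
\mathrm{LHS}^r\lesssim\sup_{\|g\|_{L^{(p(\cdot)/r)'}}\le1}\sum_{k}\lambda_k^r\int_{\widetilde Q_k}\left\|W(x)A_{Q_k}^{-1}\right\|^r\left|T\vec a_k(x)\right|^r|g(x)|\,dx .
\end{align*}

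\textbf{Step 2 (local estimate of each term).} Fix $k$ and apply H\"older's inequality on $\widetilde Q_k$ with three exponents:
\begin{itemize}
\item $q/r$ on $|T\vec a_k|^r$;
\item $r_Wp(\cdot)/r$ (in the variable sense) on $\|WA_{Q_k}^{-1}\|^r$;
\item the remaining exponent $u(\cdot)$ on $g$, defined by $\frac1{u(\cdot)}=1-\frac rq-\frac r{r_Wp(\cdot)}$.
\end{itemize}
The hypothesis $q>\frac{r_Wp_+}{r_W-1}$ is exactly what makes $u(\cdot)>1$ and gives $u(\cdot)<(p(\cdot)/r)'$ with $\inf[(p/r)'/u]>1$.

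\emph{First factor.} The $L^q$-boundedness of $T$ gives $(\int|T\vec a_k|^q)^{r/q}\lesssim\|\vec a_k\|_{L^q}^r$.

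\emph{Second factor.} Apply Lemma \ref{reverse Holder}, then Lemmas \ref{eq reduc M} and \ref{QP5}. Since $\widetilde Q_k$ and $Q_k$ are comparable, $\|A_{\widetilde Q_k}A_{Q_k}^{-1}\|\lesssim1$, and the factor is bounded, after normalizing by $\|\mathbf 1_{\widetilde Q_k}\|_{L^{r_Wp(\cdot)/r}}$, by a constant.

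Collecting powers of the measure with Lemma \ref{est Q} is the step I expect to be the main obstacle. I would write
\begin{align*}
\int_{\widetilde Q_k}\cdots\lesssim |Q_k|\left(|Q_k|^{-\frac1q}\|\vec a_k\|_{L^q}\right)^r\frac{\|g\mathbf 1_{\widetilde Q_k}\|_{L^{u(\cdot)}}}{\|\mathbf 1_{\widetilde Q_k}\|_{L^{u(\cdot)}}} .
\end{align*}
Justifying this needs a variable-exponent three-factor H\"older inequality, applied to the normalized averages. The cleanest route is a generalized H\"older inequality with $\frac1{s(\cdot)}=\frac r{r_Wp(\cdot)}+\frac1{u(\cdot)}$ together with $\|\mathbf 1_Q\|_{L^{a}}\|\mathbf 1_Q\|_{L^{b}}\sim\|\mathbf 1_Q\|_{L^{c}}$ for $\frac1c=\frac1a+\frac1b$. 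For the latter I would use $\|\mathbf 1_Q\|_{L^{p(\cdot)}}\sim|Q|^{1/p_Q}$ from Lemma \ref{est Q}, which is valid for $\mathcal P_0\cap LH$ via Lemma \ref{con f}. Summing the exponents gives a total power of $|Q_k|$ equal to $1$. All exponents involved are log-H\"older, since they are built rationally from $p$.

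\textbf{Step 3 (summation).} The average of $g$ is at most $\mathcal M_{u(\cdot)}(g)(y)$ for every $y\in Q_k$, so each term is bounded by $\int_{Q_k}(|Q_k|^{-1/q}\lambda_k\|\vec a_k\|_{L^q})^r\mathcal M_{u(\cdot)}(g)(y)\,dy$. Summing in $k$ and applying Lemma \ref{Holder} with exponents $p/r$ and $(p/r)'$, it remains to control $\|\mathcal M_{u(\cdot)}g\|_{L^{(p/r)'}}$. Lemma \ref{Mq bound} does this with $(p/r)'=[(p/r)'/u]\,u$, since $\inf[(p/r)'/u]>1$ by Step 2; it gives $\|\mathcal M_{u(\cdot)}g\|_{L^{(p/r)'}}\lesssim\|g\|_{L^{(p/r)'}}\le1$. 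Taking the supremum over $g$ and undoing the $r$-convexification via Lemma \ref{con f} then yields the claimed inequality.
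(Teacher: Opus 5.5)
Your proposal is correct and follows the paper's proof essentially step for step. It uses the same duality reduction via Lemmas \ref{con f} and \ref{fg Lp} and the same auxiliary exponent (your $u(\cdot)$ is the paper's $d(\cdot)$). It also bounds the weight factor by the same combination of the reverse H\"older inequality with Lemmas \ref{eq reduc M} and \ref{QP5}, and handles the summation with Lemma \ref{Mq bound} applied to $\mathcal{M}_{u(\cdot)}$.

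The only difference is organizational: the paper first applies H\"older with the constant exponents $q/r$ and $(q/r)'$ and then a variable H\"older inside the normalized average, which together amount to your three-factor H\"older. One minor correction: $u(\cdot)>1$ holds automatically, and the hypothesis on $q$ is what gives $u(\cdot)<(p(\cdot)/r)'$.
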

\begin{proof}
Using Lemmas \ref{con f} and \ref{fg Lp},
we find that
\begin{align}\label{a atom eq 10}
&\left\| \sum_{k\in{\mathbb{Z}}} \lambda_k \left\| W(\cdot) A_{Q_k}^{-1} \right\|  \left|T\vec{a}_k\right|{\mathbf{1}}_{2\sqrt{n} Q_k} \right\|^r_{L^{p(\cdot)}}{\nonumber} \\
& \quad = \left\| \left[\sum_{k\in{\mathbb{Z}}} \lambda_k \left\| W(\cdot) A_{Q_k}^{-1} \right\|  \left|T\vec{a}_k\right|{\mathbf{1}}_{2\sqrt{n} Q_k}\right]^r \right\|_{L^{\frac{p(\cdot)}{r}}}{\nonumber}\\
& \quad \sim \sup_{\|g\|_{L^{(\frac{p(\cdot)}{r})'}}\leq 1} \int_{{\mathbb{R}^n}} \left[\sum_{k \in{\mathbb{Z}}} \lambda_k \left\| W(x) A_{Q_k}^{-1} \right\|  \left|T\vec{a}_k(x)\right|{\mathbf{1}}_{2\sqrt{n} Q_k}(x)\right]^r |g(x)|\,dx.
\end{align}
Now, fix $g\in L^{(\frac{p(\cdot)}{r})'}$.
Then, by $r\in(0,1]$
and Tonelli's theorem,
we obtain
\begin{align}\label{a atom eq 9}
&\int_{{\mathbb{R}^n}} \left[\sum_{k \in{\mathbb{Z}}} \lambda_k \left\| W(x) A_{Q_k}^{-1} \right\|  \left|T\vec{a}_k(x)\right| {\mathbf{1}}_{2\sqrt{n} Q_k}(x)\right]^r |g(x)|\,dx{\nonumber}\\
&\quad \leq \int_{{\mathbb{R}^n}} \sum_{k \in{\mathbb{Z}}} \lambda_k^r \left\| W(x) A_{Q_k}^{-1} \right\|^r  \left|T\vec{a}_k(x)\right|^r {\mathbf{1}}_{2\sqrt{n} Q_k}(x) |g(x)|\,dx{\nonumber}\\
&\quad = \sum_{k \in{\mathbb{Z}}} \lambda_k^r \int_{2\sqrt{n} Q_k}  \left\| W(x) A_{Q_k}^{-1} \right\|^r  \left|T\vec{a}_k(x)\right|^r  |g(x)|\,dx.
\end{align}
Using H\"older's inequality and the boundedness of $T$ on $L^q$,
we further conclude that, for any $k\in{\mathbb{Z}}$,
\begin{align}\label{a atom eq 8}
&\int_{{\mathbb{R}^n}} \left\| W(x) A_{Q_k}^{-1} \right\|^r  \left|T\vec{a}_k(x)\right|^r {\mathbf{1}}_{2\sqrt{n} Q_k}(x) |g(x)|\,dx{\nonumber}\\
&\quad \leq \left[\int_{2\sqrt{n} Q_k} \left\| W(x) A_{Q_k}^{-1} \right\|^{r(\frac{q}{r})'} |g(x)|^{(\frac{q}{r})'}\,dx \right]^{1 - \frac{r}{q}}
\left\{\int_{{\mathbb{R}^n}} \left|T\vec{a}_k(x)\right|^{q} \,dx\right\}^{\frac{r}{q}}{\nonumber}\\
&\quad \lesssim \left[\int_{2\sqrt{n} Q_k} \left\| W(x) A_{Q_k}^{-1} \right\|^{r(\frac{q}{r})'} |g(x)|^{(\frac{q}{r})'}\,dx \right]^{1 - \frac{r}{q}}
\left\|\vec{a}_k\right\|_{L^q}^r {\nonumber}\\
&\quad \lesssim \left[\fint_{2\sqrt{n} Q_k} \left\| W(x) A_{Q_k}^{-1} \right\|^{r(\frac{q}{r})'} |g(x)|^{(\frac{q}{r})'}\,dx \right]^{1 - \frac{r}{q}}
|Q_k|^{1 - \frac{r}{q} }\left\|\vec{a}_k\right\|_{L^q}^r.
\end{align}
Now, let $d(\cdot)\in{\mathcal P}_0$ be such that $\frac{1}{d(\cdot)} := 1 - \frac{r}{q} - \frac{r}{r_W p(\cdot)}$.
Since $q\in (r_W'p_+,\infty)$, it follows that $d(\cdot) < [\frac{p(\cdot)}{r}]' $.
This, together with Lemmas \ref{est Q}, \ref{Holder}, and \ref{con f}, further implies that
\begin{align*}
\left[\fint_{2\sqrt{n} Q_k} \left\| W(x) A_{Q_k}^{-1} \right\|^{r(\frac{q}{r})'} |g(x)|^{(\frac{q}{r})'}\,dx \right]^{1 - \frac{r}{q}}
& \lesssim \left[\frac{\|\, \| W(\cdot) A_{Q_k}^{-1}\|{\mathbf{1}}_{2\sqrt{n} Q_k} \|_{L^{r_Wp(\cdot)}}}{\|{\mathbf{1}}_{2\sqrt{n} Q_k}\|_{L^{r_W p(\cdot)}}}\right]^r
\frac{\| g {\mathbf{1}}_{2\sqrt{n} Q_k} \|_{L^{d(\cdot)}}}{\|{\mathbf{1}}_{2\sqrt{n} Q_k}\|_{L^{d(\cdot)}}} .
\end{align*}
Applying this and Lemmas \ref{reverse Holder}, \ref{eq reduc M}, and \ref{QP5} yields
\begin{align*}
&\left[\fint_{2\sqrt{n} Q_k} \left\| W(x) A_{Q_k}^{-1} \right\|^{r(\frac{q}{r})'} |g(x)|^{(\frac{q}{r})'}\,dx \right]^{1 - \frac{r}{q}} \\
&\quad \lesssim \left[\frac{\|\, \| W(\cdot) A_{Q_k}^{-1}\|{\mathbf{1}}_{2\sqrt{n} Q_k} \|_{L^{p(\cdot)}}}{\|{\mathbf{1}}_{2\sqrt{n} Q_k}\|_{L^{p(\cdot)}}}\right]^r
\frac{\| g {\mathbf{1}}_{2\sqrt{n} Q_k} \|_{L^{d(\cdot)}}}{\|{\mathbf{1}}_{2\sqrt{n} Q_k}\|_{L^{d(\cdot)}}}\\
&\quad \lesssim \left\| A_{2\sqrt{n} Q_k} A_{Q_k}^{-1} \right\|^r \frac{\| g {\mathbf{1}}_{2\sqrt{n} Q_k} \|_{L^{d(\cdot)}}}{\|{\mathbf{1}}_{2\sqrt{n} Q_k}\|_{L^{d(\cdot)}}}
\lesssim \frac{\| g {\mathbf{1}}_{2\sqrt{n} Q_k} \|_{L^{d(\cdot)}}}{\|{\mathbf{1}}_{2\sqrt{n} Q_k}\|_{L^{d(\cdot)}}},
\end{align*}
which, combined with \eqref{a atom eq 8}, further implies that
\begin{align*}
&\int_{{\mathbb{R}^n}} \left\| W(x) A_{Q_k}^{-1} \right\|^r  \left|T\vec{a}_k(x)\right|^r {\mathbf{1}}_{2\sqrt{n} Q_k}(x) |g(x)|\,dx\\
&\quad \lesssim |Q_k|^{1 - \frac{r}{q} }\left\|\vec{a}_k\right\|_{L^q}^r \frac{\| g {\mathbf{1}}_{2\sqrt{n} Q_k} \|_{L^{d(\cdot)}}}{\|{\mathbf{1}}_{2\sqrt{n} Q_k}\|_{L^{d(\cdot)}}}
\leq \int_{Q_k} |Q_k|^{ - \frac{r}{q} }\left\|\vec{a}_k\right\|_{L^q}^r {\mathcal M}_{d(\cdot)} \left( g \right)(x)\,dx.
\end{align*}
From this, \eqref{a atom eq 9}, Tonelli's theorem,
Lemmas \ref{con f}, \ref{Holder}, and \ref{Mq bound},
and $d(\cdot) < [\frac{p(\cdot)}{r}]' $, we infer that
\begin{align*}
&\int_{{\mathbb{R}^n}} \left[\sum_{k \in{\mathbb{Z}}}\lambda_k \left\| W(x) A_{Q_k}^{-1} \right\|  \left|T \vec{a}_k(x)\right|{\mathbf{1}}_{2\sqrt{n} Q_k}(x)\right]^r |g(x)|\,dx\\
&\quad \lesssim \sum_{k \in{\mathbb{Z}}} \lambda_k^r \int_{Q_k} |Q_k|^{ - \frac{r}{q} }\left\|\vec{a}_k\right\|_{L^q}^r {\mathcal M}_{d(\cdot)} \left( g \right)(x)\,dx \\
&\quad  = \int_{{\mathbb{R}^n}} \sum_{k \in{\mathbb{Z}}} \left(|Q_k|^{-\frac1q}\lambda_k\left\|\vec{a}_k\right\|_{L^q} \right)^r {\mathbf{1}}_{Q_k} {\mathcal M}_{d(\cdot)} \left( g \right)(x)\,dx\\
&\quad \lesssim \left\|\sum_{k \in{\mathbb{Z}}} \left(|Q_k|^{-\frac1q}\lambda_k\left\|\vec{a}_k\right\|_{L^q} \right)^r {\mathbf{1}}_{Q_k} \right\|_{L^{\frac{p(\cdot)}{r}}} \left\|{\mathcal M}_{d(\cdot)} \left( g \right)\right\|_{L^{(\frac{p(\cdot)}{r})'}}\\
&\quad \lesssim \left\|\sum_{k \in{\mathbb{Z}}} \left(|Q_k|^{-\frac1q} \lambda_k\left\|\vec{a}_k\right\|_{L^q} \right)^r {\mathbf{1}}_{Q_k} \right\|_{L^{\frac{p(\cdot)}{r}}} \left\|g\right\|_{L^{(\frac{p(\cdot)}{r})'}},
\end{align*}
which, together with \eqref{a atom eq 10} and Lemma \ref{con f}, further implies that
\begin{align*}
\left\| \sum_{k \in\mathbb Z} \lambda_k \left\| W(\cdot) A_{Q_k}^{-1} \right\| \left| T\vec{a}_k\right|{\mathbf{1}}_{2\sqrt{n} Q_k} \right\|_{L^{p(\cdot)}}
& \lesssim \left\|\left[\sum_{k \in{\mathbb{Z}}} \left(|Q_k|^{-\frac1q}\lambda_k\left\|\vec{a}_k\right\|_{L^q} \right)^r {\mathbf{1}}_{Q_k} \right]^\frac1r\right\|_{L^{p(\cdot)}}.
\end{align*}
This finishes the proof of Lemma \ref{a atom eq le}.
\end{proof}

\begin{lemma}\label{W f var}
Let $p(\cdot) \in {\mathcal P}_0\cap LH$ and $N\in{\mathbb{Z}}_+$.
If $\varphi \in {\mathcal S}$ satisfies that ${\mathop\mathrm{\,supp\,}} \varphi \subset B(x_0,t)$
for some $x_0 \in {\mathbb{R}^n}$ and some $t\in (0,\infty)$
and, for any $\alpha \in {\mathbb{Z}}_+^n$ with $|\alpha| \leq N+1$,
\begin{align}\label{W f var eq 2}
\sup_{x\in{\mathbb{R}^n}} \left|\partial^\alpha \varphi(x)\right| \leq t^{-(n + |\alpha|)},
\end{align}
then there exists a positive constant $C$ such that,
for any $\vec{f} \in ({\mathcal S}')^m$ and $x\in{\mathbb{R}^n}$,
\begin{align}\label{W f var eq 1}
{\mathcal K}\left(\left\langle\vec{f},\varphi\right\rangle\right) \subset C \left( 2 + \frac{|x-x_0|}{t} \right)^{N+n+1} M_N^{{\mathcal K}} \left(\vec{f}\right)(x).
\end{align}
\end{lemma}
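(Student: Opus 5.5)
The plan is to show directly that the single vector $\langle\vec f,\varphi\rangle$ is, up to a controlled scalar, one of the vectors $\phi_s\ast\vec f(x)$ whose discs $\mathcal K(\cdot)$ make up the union defining $M_N^{\mathcal K}(\vec f)(x)$ in Definition \ref{def convex max}. I assume the standard normalization that $\mathcal S_N$ is the set of $\phi\in\mathcal S$ with $\|\phi\|_{\mathcal S_N}\le 1$, where $\|\phi\|_{\mathcal S_N}:=\sup_{|\alpha|\le N+1}\sup_{z}(1+|z|)^{N}|\partial^\alpha\phi(z)|$. If the paper's seminorm uses a larger polynomial weight, only the constant $C$ changes, since the rescaled function constructed below is supported in the unit ball.

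Fix $x$ and set $s:=t+|x-x_0|$. Define
$$\psi(z):=s^{n}\varphi(x-sz),$$
so that $\psi_s(x-y)=s^{-n}\psi(\frac{x-y}{s})=\varphi(y)$. Hence
$$\langle\vec f,\varphi\rangle=\langle\vec f,\psi_s(x-\cdot)\rangle=\psi_s\ast\vec f(x).$$

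Next I estimate $\|\psi\|_{\mathcal S_N}$.
\begin{itemize}
\item[(a)] \emph{Support.} If $\psi(z)\neq0$, then $x-sz\in B(x_0,t)$. Therefore $|sz|\le|x-x_0|+t=s$, i.e.\ $|z|\le1$, and so $(1+|z|)^N\le 2^N$.
\item[(b)] \emph{Derivatives.} By the chain rule and \eqref{W f var eq 2}, for $|\alpha|\le N+1$,
$$|\partial^\alpha\psi(z)|=s^{n+|\alpha|}|(\partial^\alpha\varphi)(x-sz)|\le (s/t)^{n+|\alpha|}\le (s/t)^{n+N+1}.$$
The last step uses $s/t\ge1$.
\end{itemize}
Combining (a) and (b) gives $\|\psi\|_{\mathcal S_N}\le 2^N(1+\frac{|x-x_0|}{t})^{N+n+1}=:\lambda$. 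Thus $\phi:=\psi/\lambda\in\mathcal S_N$ and $\langle\vec f,\varphi\rangle=\lambda\,\phi_s\ast\vec f(x)$.

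Since $\mathcal K(\mu\vec v)=\mu\mathcal K(\vec v)$ for $\mu>0$, we obtain
$$\mathcal K(\langle\vec f,\varphi\rangle)=\lambda\,\mathcal K(\phi_s\ast\vec f(x))\subset\lambda\,M_N^{\mathcal K}(\vec f)(x).$$
The inclusion holds because $\mathcal K(\phi_s\ast\vec f(x))$ is one of the sets in the union defining $M_N^{\mathcal K}(\vec f)(x)$. Finally, $\lambda\le 2^N(2+\frac{|x-x_0|}{t})^{N+n+1}$, which gives \eqref{W f var eq 1} with $C=2^N$.

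The only delicate point is matching the paper's exact definition of the $\mathcal S_N$ seminorm, i.e.\ the power of $(1+|z|)$ and the number of derivatives. The compact support of $\psi$ in $\overline{B(\mathbf 0,1)}$ makes any such weight harmless, and the number of derivatives, $N+1$, matches hypothesis \eqref{W f var eq 2}.
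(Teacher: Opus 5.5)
Your proof is correct and is essentially the paper's argument: both write $\langle\vec f,\varphi\rangle$ as an explicit multiple of $\phi_\tau\ast\vec f(x)$ with $\phi\in\mathcal S_N$. The only difference is the dilation. The paper dilates by $\tau=t$ and lets the weight $(1+|y|)^{N+n+1}$ absorb the displacement $|x-x_0|/t$. You dilate by $\tau=t+|x-x_0|$, so the rescaled function sits in the unit ball, the factor $(1+|x-x_0|/t)^{N+n+1}$ comes from the derivative scaling $(\tau/t)^{n+|\alpha|}$, and the weight exponent in $\|\cdot\|_{\mathcal S_N}$ no longer matters.

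Your constant ($2^N$, or $2^{N+n+1}$ with the weight used in the paper) is visibly uniform in $\varphi$, $x_0$, $t$, and $x$, which Lemmas \ref{stop P} and \ref{stop f} need. By contrast, the factor $\|\varphi\|_{\mathcal S_N}$ in the paper's normalizing constant $C_{(x)}$ is superfluous and is in fact silently dropped in its final estimate.
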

\begin{proof}
Let $C_{(x)} := ( 2 + \frac{|x-x_0|}{t} )^{N+n+1} \|\varphi\|_{{\mathcal S}_N} $ and $\phi^{(x)}(\cdot) := \frac{t^n}{C_{(x)}} \varphi(x - t\cdot)$.
Then it is obvious that, for any $x\in{\mathbb{R}^n}$,
$$\left\langle\vec{f},\varphi\right\rangle = C_{(x)}\phi^{(x)}_t \ast \vec{f}(x).$$
Therefore, using the definition of $M_N^{{\mathcal K}}$, we find that, to show \eqref{W f var eq 1},
it suffices to prove $\phi^{(x)} \in {\mathcal S}_N$.
By the assumption ${\mathop\mathrm{\,supp\,}} \varphi \subset B(x_0,t)$ and \eqref{W f var eq 2},
we conclude that, for any $\alpha\in{\mathbb{Z}}_+^n$ with $|\alpha| \leq N+1$,
\begin{align*}
&\sup_{y\in{\mathbb{R}^n}} \left(1+|y|\right)^{N+n+1} \left|\partial^\alpha \phi^{(x)}(y)\right|\\
&\quad= \frac{t^{n+|\alpha|}}{C_{(x)}} \sup_{y\in B(x_0,t)} \left( 1 + \frac{|x-y|}{t} \right)^{N+n+1} \left|\partial^\alpha \varphi(y)\right|\\
&\quad\leq \left( 2 + \frac{|x-x_0|}{t} \right)^{-(N+n+1)} \sup_{y\in B(x_0,t)} \left( 1+ \frac{|x-x_0|+|x_0-y|}{t} \right)^{N+n+1} \leq 1,
\end{align*}
which further implies that $\phi^{(x)} \in {\mathcal S}_N$ and hence completes the proof of Lemma \ref{W f var}.
\end{proof}

Now, let $t \in \{0,\frac13\}^n$.
Then the dyadic grid $\mathscr{Q}^t$ is defined by setting
$$ \mathscr{Q}^t := \left\{ 2^k\left([0,1)^n + m + (-1)^kt\right):\ k\in\mathbb{Z},
\ m\in\mathbb{Z}^n \right\}. $$
The following is known as the ``$\frac13$''-trick
(see, for instance, \cite[Lemma 4.3]{np25}).
\begin{lemma}\label{Q Qt}
For any cube $Q$ in $\mathbb{R}^n$,
there exist $t \in \{0,\frac13\}^n$ and $Q_t \in \mathscr{Q}^t$
such that $Q\subset Q_t$ and $l(Q_t) \leq 6l(Q)$.
\end{lemma}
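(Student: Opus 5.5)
We need to prove the one-third trick: for any cube $Q$ there exist $t\in\{0,\frac13\}^n$ and $Q_t\in\mathscr Q^t$ with $Q\subset Q_t$ and $l(Q_t)\le 6l(Q)$.

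The argument reduces to dimension one, since cubes in $\mathscr Q^t$ are products of one-dimensional dyadic intervals of a common generation $k$ with shifts $(-1)^kt_i$. Fix $Q=\prod_{i=1}^n I_i$ with each $I_i$ an interval of length $\ell:=l(Q)$. Choose $k\in\mathbb Z$ with $2^{k-1}<3\ell\le 2^k$; then $\ell\le 2^k/3$ and $2^k<6\ell$. It suffices to show that, for each coordinate $i$, some $t_i\in\{0,\frac13\}$ and some $m_i\in\mathbb Z$ give
\[
I_i\subset 2^k\left([0,1)+m_i+(-1)^kt_i\right).
\]
Taking the product over $i$ then yields a cube $Q_t\in\mathscr Q^t$, with $t=(t_1,\dots,t_n)$, of edge length $2^k\le 6\ell$ containing $Q$. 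The key point is that $k$ is common to all coordinates, while each $t_i$ may be chosen independently.

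For the one-dimensional step, rescale by $2^{-k}$. Then $J:=2^{-k}I_i$ is an interval of length at most $\frac13$, and we need $J\subset[0,1)+m+s$ for some $m\in\mathbb Z$, where $s\in\{0,(-1)^k/3\}$. The family with $s=0$ fails to contain $J$ only when the closure of $J$ contains a point of $\mathbb Z$ lying in its interior or at its right end. Allowing for half-open intervals, the failure condition is more precisely that $J$ meets both $[j-1,j)$ and $[j,j+1)$, so $J$ lies within distance $\frac13$ of some integer $j$. The second family has endpoints $\mathbb Z\pm\frac13$, which lie at distance $\frac13$ from $\mathbb Z$. An interval of length at most $\frac13$ cannot contain or straddle two points at mutual distance $\frac13$ in a way that breaks both families. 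Concretely, suppose $J=[a,b)$ (or another interval with the same endpoints) with $b-a\le\frac13$. If $J$ straddles $j$, meaning $a<j<b$ or $a<j\le b$ with $J$ hitting both sides, then $J\subset(j-\frac13,j+\frac13)$. This set lies strictly inside one interval of $\mathbb Z\pm\frac13+[0,1)$, because consecutive points of $\mathbb Z+\frac13$ (or of $\mathbb Z-\frac13$) are $j-\frac23$ and $j+\frac13$, respectively $j-\frac13$ and $j+\frac23$. The open interval $(j-\frac13,j+\frac13)$ lies in $[j-\frac23,j+\frac13)$ and in $[j-\frac13,j+\frac23)$, whichever sign $(-1)^k$ dictates.

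The only place requiring care is this boundary bookkeeping with half-open intervals and closed or open cubes. It is routine once the distances are written down, and I expect no genuine obstacle. Alternatively, one may simply cite \cite[Lemma 4.3]{np25}, as the statement indicates.
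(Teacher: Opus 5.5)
Your argument is correct. The paper gives no proof of this lemma and simply cites \cite[Lemma 4.3]{np25}. What you wrote is the standard self-contained proof of the one-third trick, which has three ingredients:
\begin{itemize}
\item fix the common scale $2^k$ with $3l(Q)\le 2^k<6l(Q)$;
\item observe that $[0,1)^n+m+(-1)^kt=\prod_{i=1}^n([0,1)+m_i+(-1)^kt_i)$, so each $t_i$ can be chosen independently;
\item use that the endpoint sets $\mathbb{Z}$ and $\mathbb{Z}\pm\frac13$ are $\frac13$-separated.
\end{itemize}
Compared with the bare citation, this makes explicit where the constant $6$ comes from and why $2^n$ shifted grids suffice.

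There is one small slip in the boundary bookkeeping. From $\inf J<j\le\sup J$ and $\sup J-\inf J\le\frac13$ you only get $J\subset[j-\frac13,j+\frac13)$, not $J\subset(j-\frac13,j+\frac13)$. For instance, the closed interval $J=[j-\frac13,j]$ contains $j-\frac13$. This does not affect the conclusion, because $[j-\frac13,j+\frac13)$ is still contained in both $[j-\frac23,j+\frac13)$ and $[j-\frac13,j+\frac23)$. The right endpoint $j+\frac13$ is never reached, since $\sup J\le\inf J+\frac13<j+\frac13$.
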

Let ${\mathscr Q}$ be a given dyadic lattice in ${\mathbb{R}^n}$.
Then, for any dyadic cube $Q\in {\mathscr Q}$, let $s_Q := -\log_2 (l(Q))$ and $Q^\ast := \frac98 Q$.
The following construction of the stopping collection essentially comes from \cite{ccdo17},
which can be regarded as a \emph{refined Whitney decomposition}.
For the sake of completeness, we give some details here.
\begin{lemma}\label{stop coll}
Let ${\mathscr Q}$ be a fixed dyadic lattice in ${\mathbb{R}^n}$ and $\{Q_k\}_{k\in{\mathbb{N}}} \subset {\mathscr Q}$
be a sequence of pairwise disjoint cubes.
Then, for any open set $E \subset \bigcup_{k\in{\mathbb{N}}} 3Q_k$,
there exists a collection $S \subset {\mathscr Q}$ of dyadic cubes
such that the following properties hold:
\begin{itemize}
\item[{\rm (i)}] For any $L, L'\in S$, if $L\neq L'$, then $L\cap L' = \emptyset$.
\item[{\rm (ii)}] $E = \bigcup_{L\in S} L = \bigcup_{L\in S} 9L$.
\item[{\rm (iii)}] For any $L\in S$, $32L \cap E^{\complement} \neq \emptyset$.
\item[{\rm (iv)}] For any $L,L'\in S$, if $7L\cap 7L' \neq \emptyset$, then $|s_L - s_{L'}| < 8$.
\end{itemize}

\begin{itemize}
\item[{\rm (v)}] If $|3Q_k\cap E| < 2^{-4n}|Q_k|$ for any $k\in{\mathbb{N}}$, then,
for any $L\in S$, if $L^\ast \cap Q_k^\ast\neq \emptyset$ for some $k\in{\mathbb{N}}$,
then $32L \subset 3Q_k$ and $32L\cap (3Q_k\setminus E) \neq \emptyset$.
\end{itemize}
\end{lemma}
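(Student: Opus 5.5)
I will build $S$ as a stopping collection of maximal dyadic cubes selected by a distance condition, as in the Whitney decomposition and in \cite{ccdo17}. Let $E$ be open and, to avoid trivialities, $E\neq\emptyset$ and $E\neq\mathbb{R}^n$; the latter holds if $\bigcup_k 3Q_k\neq\mathbb{R}^n$, and otherwise I would replace $E^{\complement}$ by the relevant part of the complement. Call a dyadic cube $L\in\mathscr Q$ \emph{admissible} if $9L\subset E$, and let $S$ be the set of maximal admissible cubes. Maximal elements exist because an admissible cube has $l(L)\lesssim \operatorname{dist}(L,E^{\complement})<\infty$, so every chain of admissible cubes is bounded above. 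Maximality in a dyadic lattice gives (i) at once. For (ii), take $x\in E$: since $E$ is open, every sufficiently small dyadic cube containing $x$ is admissible, so $x$ lies in some $L\in S$. Hence $E\subset\bigcup_{L\in S}L\subset\bigcup_{L\in S}9L\subset E$.

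For (iii), let $L\in S$ with parent $\widehat L$. By maximality $9\widehat L\not\subset E$. Since $\widehat L$ has side $2l(L)$ and contains $L$, we have $9\widehat L\subset 32L$, because its half-width measured from $c_L$ is at most $\tfrac12 l(L)+9l(L)<16l(L)$. Thus $32L\cap E^{\complement}\neq\emptyset$. In particular $l(L)\sim\operatorname{dist}(L,E^{\complement})$: from admissibility, $\operatorname{dist}(L,E^{\complement})\ge 4l(L)$, and from (iii), $\operatorname{dist}(L,E^{\complement})\le 16\sqrt n\,l(L)$. For (iv), suppose $7L\cap7L'\neq\emptyset$ and $l(L')\ge l(L)$. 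Then points of $L'$ lie within about $3l(L)+4l(L')$ of $L$. Combining the two distance bounds, $4l(L')\le \operatorname{dist}(L',E^{\complement})\le \operatorname{dist}(L,E^{\complement})+c\,(l(L)+l(L'))$. This gives a ratio bound on $l(L')/l(L)$, but the naive constants carry factors of $\sqrt n$.

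The main obstacle is that the cubic geometry and the exact numbers $9$, $32$ and $8$ must produce a dimension-free ratio $<2^8$. Working in the $\ell^\infty$ metric, the paper's cubes give $\operatorname{dist}_\infty(L,E^{\complement})\in[4l(L),\,15.5\,l(L)]$. Likewise, $7L\cap7L'\neq\emptyset$ places $L$ within $\ell^\infty$-distance $3l(L)+3l(L')$ of $L'$. So $4l(L')\le 15.5\,l(L)+3l(L)+3l(L')+l(L)$, which gives $l(L')\le 19.5\,l(L)<2^8 l(L)$ and hence $|s_L-s_{L'}|<8$. I would check these constants carefully.

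For (v), assume $|3Q_k\cap E|<2^{-4n}|Q_k|$ and $L^\ast\cap Q_k^\ast\neq\emptyset$. First I bound the size of $L$: since $L\subset E$, we have $L\not\supset Q_k$. Indeed, if $l(L)\ge l(Q_k)$ then $9L\supset 3Q_k$ (because $L^\ast$ meets $Q_k^\ast$), so $|3Q_k\cap E|=|3Q_k|$, which is a contradiction. So $l(L)\le\frac12 l(Q_k)$, and iterating the same measure comparison with smaller ratios shows $l(L)\le 2^{-4}l(Q_k)$: if $l(L)\ge 2^{-4}l(Q_k)$ then $L\subset 3Q_k\cap E$ would have measure at least $2^{-4n}|Q_k|$. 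With $l(L)\le 2^{-5}l(Q_k)$ and $L^\ast$ meeting $Q_k^\ast$, the cube $32L$ has half-width $16l(L)\le\frac12 l(Q_k)$ around a point within $\frac{9}{16}l(Q_k)+\frac{9}{16}l(L)$ of $c_{Q_k}$, so $32L\subset 3Q_k$. Finally, $32L\cap E^{\complement}\neq\emptyset$ by (iii), and since $32L\subset 3Q_k$ this point lies in $3Q_k\setminus E$. The remaining step is to adjust the threshold $2^{-4}$ versus $2^{-5}$ so that the measure hypothesis $2^{-4n}$ exactly excludes $l(L)\ge 2^{-4}l(Q_k)$ while the containment still holds. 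I expect this constant bookkeeping, rather than any conceptual point, to be the delicate part.
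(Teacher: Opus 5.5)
Your proposal is correct and is essentially the paper's proof. $S$ is the family of maximal dyadic cubes $L$ with $9L\subset E$, (iii) comes from $9\widehat L\subset 32L$, and (iv), (v) rest on the same two facts the paper uses: admissibility of the larger cube played against (iii) for the smaller one, and the measure hypothesis forcing $l(L)<2^{-4}l(Q_k)$. Your $\ell^\infty$-distance bookkeeping simply replaces the paper's containment $32L\subset 9L'$ and its same-size neighbouring-cube argument.

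The constant issue you flag in (v) is already settled. The strict hypothesis gives $l(L)<2^{-4}l(Q_k)$, hence $l(L)\le 2^{-5}l(Q_k)$ because dyadic side-length ratios are powers of $2$.

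The case $E=\mathbb{R}^n$ that you set aside is a genuine exception to (iii), not something a modified complement can repair. The paper tacitly excludes it too, and it cannot occur under the hypothesis of (v).
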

\begin{proof}
Since $E$ is an open set, it follows that, for any $x\in E$, there exists $Q_x\in {\mathscr Q}$ such that
$x\in Q_x$ and $9Q_x \subset E$.
Now, by the definition of ${\mathscr Q}$, let
$$ S := \left\{ L\in {\mathscr Q}:\ L \text{ is the maximal dyadic cube such that}\ 9L\subset E \right\}, $$
and hence $E = \bigcup_{L\in S} L = \bigcup_{L\in S} 9L$,
which completes the proof of {\rm (ii)}.

Using the maximality of $L$, we find that, for any $L,L'\in S$,
if $L\cap L' \neq \emptyset$, then $L = L'$. This finishes the proof of {\rm (i)}.

Next, we show {\rm (iii)} and {\rm (iv)}.
Observe that, by the construction of $S$, for any $L\in S$,
the dyadic parent $\widehat{L} \notin S$, which further implies that
$9\widehat{L} \cap E^\complement \neq \emptyset$.
Applying this and the fact that $9\widehat{L}\subset 32L$,
we conclude that $ 32L\cap  E^\complement \neq \emptyset$ and hence {\rm (iii)}.
Then we prove {\rm (iv)}.
Assume that $L,L'\in S$ and $7L\cap 7L'\neq \emptyset$ satisfies $|s_L - s_{L'}| \geq 8$
(without loss of generality, we may assume that $s_L \geq s_{L'} + 8$).
Then we obtain $32 L \subset 9L'\subset E$,
which contradicts {\rm (iii)} and hence
completes the proof of {\rm (iv)}.

Finally, we show {\rm (v)}.
Assume that $L\in S$ satisfies $L^\ast \cap Q_k^\ast \neq \emptyset$ for some $k\in{\mathbb{N}}$.
Then we find that $L^\ast \cap 3Q_k \neq \emptyset$ and hence $3L\cap 3Q_k \neq \emptyset$.
Since $3L$ (resp. $3Q_k$) is the $3^n$ translates of the dyadic cube $L$ (resp. $Q_k$),
we infer that there exists at least one dyadic cube $L'\in {\mathscr Q}$ (resp. $Q'\in{\mathscr Q}$) satisfying
$l(L') = l(L)$ (resp. $l(Q') = l(Q_k)$) and  $L'\subset 3L$ (resp. $Q'\subset 3Q_k$)
such that $L'\cap Q' \neq \emptyset$.
Note that, by the construction of $S$, $9L\subset E$ and hence $L'\subset E$.
Applying this and the assumption that $ |3Q_k\cap E| < 2^{-4n}|Q_k|$ for any $k\in{\mathbb{N}}$,
we conclude that $L'\subset Q'$ and $l(L') < \frac{1}{16}l(Q_k)$,
which further implies that $l(L) = l(L') < \frac{1}{16}l(Q_k)$.
Using this, we find that $ 32L\subset 3Q_k $, which, together with {\rm (iii)}, further implies that
$32L \cap [3Q_k\setminus E] \neq \emptyset$. This finishes the proof of Lemma \ref{stop coll}.
\end{proof}

We deduce the following lemma directly from
\cite[p.\,102]{s93}; we omit the details here.
\begin{lemma}\label{stop eta}
Let $E$ and $S$ be the same as in Lemma \ref{stop coll}.
Then there exists a sequence of measurable functions
$\{\eta_L:\ {\mathbb{R}^n}\to [0,1]\}_{L\in S} \subset C^\infty_{\rm c}$
such that
${\mathbf{1}}_{E} = \sum_{L\in S} \eta_L $,
${\mathop\mathrm{\,supp\,}} \eta_L \subset L^\ast$
for any $L\in S$,
\begin{align}\label{1220}
\sup_{x\in{\mathbb{R}^n}} |\partial^\alpha \eta_L(x)| \lesssim [l(L)]^{-|\alpha|}
\end{align}
for any $L\in S$ and $\alpha\in{\mathbb{Z}}_+^n$
with the implicit positive constant depending only on $\alpha$,
and
\begin{align}\label{1221}
\int_{{\mathbb{R}^n}} \eta_L(x) \,dx \sim |L|
\end{align}
for any $L\in S, $
where the positive equivalence constants depend only on $n$.
\end{lemma}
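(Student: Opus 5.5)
The statement is Lemma \ref{stop eta}: a smooth partition of unity subordinate to the stopping collection $S$ of Lemma \ref{stop coll}. I would follow the classical Whitney partition-of-unity construction in \cite[p.\,102]{s93}.

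\emph{Construction.} Fix a nonnegative $\phi\in C^\infty_{\rm c}$ with $0\le\phi\le1$, $\phi\equiv1$ on the closed unit cube $\overline{Q(\mathbf 0,1)}$ and $\operatorname{supp}\phi\subset \frac98 Q(\mathbf 0,1)$. For each $L\in S$ set $\phi_L(x):=\phi(\frac{x-c_L}{l(L)})$, so $\phi_L\equiv1$ on $\overline L$ and $\operatorname{supp}\phi_L\subset L^\ast$. Then $\sup_x|\partial^\alpha\phi_L(x)|\lesssim[l(L)]^{-|\alpha|}$ by scaling.

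\emph{Bounded overlap.} Let $\Phi:=\sum_{L\in S}\phi_L$. If $L^\ast\cap L'^\ast\neq\emptyset$, then $7L\cap7L'\neq\emptyset$, so Lemma \ref{stop coll}(iv) gives $|s_L-s_{L'}|<8$. The cubes $L'$ of a fixed side length meeting $L^\ast$ are pairwise disjoint by Lemma \ref{stop coll}(i), so there are boundedly many of them. Hence each point lies in at most $C(n)$ of the sets $L^\ast$, the sum defining $\Phi$ is locally finite and smooth on $E$, and $1\le\Phi\le C(n)$ on $E$. The lower bound holds because $E=\bigcup_{L\in S}L$ by Lemma \ref{stop coll}(ii), and every point of $E$ lies in some $L$ where $\phi_L=1$.

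\emph{Normalization.} Since $L^\ast\subset9L\subset E$, every support lies in $E$. Define $\eta_L:=\phi_L/\Phi$ on $E$ and $\eta_L:=0$ off $E$. Then $\eta_L\in C^\infty_{\rm c}$ with support in $L^\ast$, $0\le\eta_L\le1$, and $\sum_{L\in S}\eta_L=\mathbf 1_E$.

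\emph{Derivative bound \eqref{1220}.} On $L^\ast$ only the boundedly many $L'$ with comparable side lengths ($2^{-8}l(L)<l(L')<2^8l(L)$) contribute to $\Phi$. So $|\partial^\beta\Phi|\lesssim[l(L)]^{-|\beta|}$ there, and the quotient rule with $\Phi\ge1$ yields the bound.

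\emph{Integral \eqref{1221}.} The upper bound $\int\eta_L\le|L^\ast|\sim|L|$ is immediate. For the lower bound, on $L$ we have $\phi_L=1$ and $\Phi\le C(n)$, so $\int\eta_L\ge|L|/C(n)$.

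The only step needing care is the bounded-overlap claim, which is exactly what Lemma \ref{stop coll}(iv) provides. Everything else is routine scaling.
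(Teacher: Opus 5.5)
Your proposal is correct and follows the route the paper intends. The paper only cites Stein's Whitney partition of unity \cite[p.\,102]{s93}, which normalizes the rescaled bumps $\phi_L$ by $\Phi=\sum_{L\in S}\phi_L$, and you correctly identify what replaces the classical Whitney properties: $9L\subset E$ from Lemma \ref{stop coll}(ii), disjointness from (i), and the scale comparability in (iv); these give bounded overlap, local finiteness, $1\le\Phi\lesssim1$ on $E$, and the scale-invariant derivative bounds. (A cosmetic slip: in the overlap count you should count the cubes $L'\in S$ with $(L')^{\ast}\cap L^{\ast}\neq\emptyset$, not those with $L'$ meeting $L^{\ast}$; the counting argument is otherwise unchanged.)
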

Let $\eta_L$ be the same as in Lemma \ref{stop eta}
and
$$\widetilde{\eta}_L := \frac{\eta_L}{\int_{{\mathbb{R}^n}} \eta_L(x)\,dx}.$$
For any $s\in{\mathbb{Z}}_+$, let ${\mathscr P}_s$ be the set of all polynomials on ${\mathbb{R}^n}$ of total
degree not greater than $s$.
Let ${\mathcal H}_s$ be ${\mathscr P}_s$ regarded as a subspace of the Hilbert space $L^2(L^\ast,\widetilde{\eta}_L\,dx)$.
The following is precisely presented in \cite[p.\,104]{s93}.
\begin{lemma}\label{stop e}
Let $L$ be a cube and $\eta_L$ satisfy ${\mathop\mathrm{\,supp\,}} \eta_L \subset L^\ast$,
\eqref{1220}, and \eqref{1221}.
Then, for any $s\in {\mathbb{Z}}_+$, there exists a sequence of polynomials
$\{e^{(L)}_i\}_{i = 1}^M \subset {\mathscr P}_s$, where $M\in{\mathbb{N}}$ depends only on $s$ and $n$,
such that $\{e^{(L)}_i\}_{i = 1}^M$ is an orthonormal basis of ${\mathcal H}_s$,
that is, for any $i,j\in\{1,\dots,M\}$,
\begin{align*}
\left\langle e_i^{(L)},e_j^{(L)}\widetilde{\eta}_L \right\rangle =
\begin{cases}
\displaystyle 1 &\text{if}\ i = j, \\
\displaystyle 0 &\text{otherwise}
\end{cases}
\end{align*}
and, moreover, for any $i\in\{1,\dots,M\}$ and $\alpha\in{\mathbb{Z}}_+^n$,
\begin{align*}
\sup_{x\in{\mathbb{R}^n}}|\partial^\alpha e_i^{(L)}(x)| \lesssim [l(L)]^{-|\alpha|},
\end{align*}
where the implicit positive constant depends only on $\alpha$.
\end{lemma}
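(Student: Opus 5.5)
The plan is to run the classical Gram--Schmidt construction (as in \cite[p.\,104]{s93}), made quantitative by a rescaling argument. First, reduce to the unit scale. Let $c_L$ be the center of $L$ and $\ell := l(L)$, and define the affine map $\tau(y) := c_L + \ell y$. Put
\[
\theta(y) := \ell^n \widetilde{\eta}_L(\tau(y)).
\]
By \eqref{1220} and \eqref{1221}, $\theta$ is a probability density supported in $Q_0^\ast := \frac98 [-\frac12,\frac12]^n$, and $\|\theta\|_{L^\infty} \lesssim 1$ with a constant depending only on $n$. If $\{\widetilde e_i\}_{i=1}^M$ is an orthonormal basis of ${\mathscr P}_s$ in $L^2(\theta\,dy)$, then $e_i^{(L)} := \widetilde e_i\circ \tau^{-1}$ is an orthonormal basis of ${\mathcal H}_s$, because the change of variables preserves the inner product. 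The chain rule then gives $\partial^\alpha e_i^{(L)} = \ell^{-|\alpha|}(\partial^\alpha \widetilde e_i)\circ\tau^{-1}$. Here $M=\dim {\mathscr P}_s$ depends only on $s$ and $n$. So it suffices to bound the derivatives of $\widetilde e_i$ on $\mathbb R^n$ uniformly in $\theta$. For $|\alpha|>s$ the derivatives vanish, so only finitely many $\alpha$ matter.

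The main step is a uniform comparison of norms on ${\mathscr P}_s$. On the finite-dimensional space ${\mathscr P}_s$, take the coefficient norm $\|P\|_{\rm coef}$ (the sum of the absolute values of the monomial coefficients). The upper bound $\|P\|_{L^2(\theta)}\lesssim \|P\|_{\rm coef}$ follows from $\|\theta\|_{L^1}=1$ and $\mathrm{supp}\,\theta\subset Q_0^\ast$.

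For the lower bound I use that $\eta_L\in[0,1]$ and $\sum_{L'} \eta_{L'}=1$ on $E$, so $\eta_L$ cannot be small everywhere. By \eqref{1221} we have $\int\theta \,dy= 1$ and $\theta\lesssim 1$. Hence the set $G:=\{y:\ \theta(y)\ge c\}$ has measure at least some $c'>0$, with $c,c'$ depending only on $n$. Then
\[
\|P\|_{L^2(\theta)}^2 \ge c\int_G |P|^2\,dy.
\]
So I need $\inf\{\int_G|P|^2:\ \|P\|_{\rm coef}=1,\ G\subset Q_0^\ast,\ |G|\ge c'\}>0$. This is the standard Remez-type fact that a polynomial of degree at most $s$ normalized on $Q_0^\ast$ cannot be small on a set of measure $c'$. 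It follows from the Remez inequality, or by compactness: the sublevel sets $\{|P|<\delta\}$ of unit-norm polynomials have measure tending to $0$ uniformly as $\delta\to0$. I expect this uniform lower bound to be the only real obstacle. I would first try the Remez inequality $\sup_{Q_0^\ast}|P|\lesssim_{s,n,c'} \sup_G |P|$, combined with a finite-dimensional equivalence of norms on ${\mathscr P}_s$ and a second compactness step that passes from $\sup_G$ to $L^2(G)$.

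Once the two bounds are in place, the $L^2(\theta)$ norm and the coefficient norm on ${\mathscr P}_s$ are equivalent with constants depending only on $s$ and $n$. Each $\widetilde e_i$ has $\|\widetilde e_i\|_{L^2(\theta)}=1$, so $\|\widetilde e_i\|_{\rm coef}\lesssim 1$. This bounds every derivative uniformly on $Q_0^\ast$ and on any fixed neighborhood of it; the supremum over $\mathbb R^n$ in the statement is understood on the relevant region, namely the support of $\widetilde\eta_L$. Rescaling back gives $\sup|\partial^\alpha e_i^{(L)}|\lesssim \ell^{-|\alpha|}$ on $L^\ast$. The orthonormality relations are exactly the Gram--Schmidt output, and the proof is complete.
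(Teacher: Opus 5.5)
Your argument is correct. The paper gives no proof of its own; it only cites Stein's book. Your skeleton is the classical one: rescale $L$ to unit size, show that $\|P\|_{L^2(\theta)}$ is equivalent to a fixed norm on $\mathscr P_s$ uniformly in $L$, and read off coefficient and derivative bounds for the Gram--Schmidt basis.

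Where you differ from the quickest standard route is the lower bound. That route uses the pointwise bound $\eta_L\gtrsim 1$ on $L$, which holds for the Whitney-type partition of unity because the cubes $L^\ast$ overlap boundedly. Then $\int|P|^2\widetilde\eta_L\gtrsim\fint_L|P|^2$, and nothing more is needed. That pointwise bound is not among the hypotheses of the lemma as stated. You use only $0\le\eta_L\lesssim1$ and \eqref{1221}, which is more general but costs you a uniform Remez-type step.

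Two comments on that step.
\begin{itemize}
\item Passing from $\sup_G$ to $L^2(G)$ is not a compactness argument in $P$, because the admissible sets $G$ do not form a compact family. It follows instead from the sublevel-set estimate you also state, for instance by applying Remez to $G\cap\{|P|<\delta\sup_{Q_0^\ast}|P|\}$.
\item You can avoid Remez entirely. By \eqref{1220} with $|\alpha|=1$ together with \eqref{1221}, $\theta$ is uniformly Lipschitz. Hence $\theta\ge\frac12|Q_0^\ast|^{-1}$ on a ball $B(y_0,\rho)$ of fixed radius around a maximum point $y_0$. The function $(y_0,P)\mapsto\int_{B(y_0,\rho)}|P|^2$ is continuous and positive on the compact set $\overline{Q_0^\ast}\times\{\|P\|_{\rm coef}=1\}$, so it is bounded below.
\end{itemize}

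Restricting the supremum to $L^\ast$, or to a fixed dilate of $L$, is necessary, not merely convenient. For $s\ge1$ some $e_i^{(L)}$ are nonconstant polynomials, so the literal $\sup_{x\in\mathbb R^n}$ in the statement is infinite. Every later use in the paper evaluates $e_i^{(L)}$ only on $L^\ast$ or on sets such as $3L$, where your bound holds.
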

Applying this and Lemma \ref{W f var}, we conclude the following result.
\begin{lemma}\label{stop P}
Let $L$, $\eta_L$, $s$, and $\{e^{(L)}_i\}_{i = 1}^M$ be the same
as in Lemma \ref{stop e}.
For any $\vec{h} \in ({\mathcal S}')^m$, let
$$P_L(\vec{h}) := \sum_{i = 1}^M \left\langle \vec{h}, e^{(L)}_i \widetilde{\eta}_L\right\rangle e_i^{(L)}.$$
Then, for any $q\in {\mathscr P}_s$,
$$\left\langle \left[\vec{h} - P_L\left(\vec{h}\right)\right]\eta_L, q \right\rangle = 0. $$
Moreover, for any given $N\in\mathbb Z_+$, there exists a positive constant $C$,
independent of $L$ and $\vec{h}$,
such that, for any $i\in\{1,\ldots,M\}$ and $y\in {\mathbb{R}^n}$,
\begin{align*}
\left\langle \vec{h}, e_i^{(L)}\widetilde{\eta}_L \right\rangle \in C \left( 2 + \frac{|y-c_L|}{l(L)} \right)^{N+n+1} M_N^{{\mathcal K}} \left(\vec{h}\right)(y),
\end{align*}
where $c_L$ is the center of $L$.
\end{lemma}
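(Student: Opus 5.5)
The plan has two parts: first the vanishing-moment identity, which is pure linear algebra in the Hilbert space ${\mathcal H}_s$, and then the convex-body inclusion, which is obtained by applying Lemma \ref{W f var} to the test function $e^{(L)}_i\widetilde{\eta}_L$ after rescaling it.

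For the identity, it suffices by linearity to take $q=e^{(L)}_j$ for $j\in\{1,\dots,M\}$, because $\{e^{(L)}_j\}_{j=1}^M$ is a basis of ${\mathscr P}_s$ (Lemma \ref{stop e}). Since $\eta_L = (\int_{\mathbb{R}^n}\eta_L)\,\widetilde{\eta}_L$, it is enough to prove the identity with $\eta_L$ replaced by $\widetilde{\eta}_L$. On one side,
\[
\langle \vec h\,\widetilde{\eta}_L, e_j^{(L)}\rangle=\langle \vec h, e_j^{(L)}\widetilde{\eta}_L\rangle ,
\]
which is meaningful since $e_j^{(L)}\widetilde{\eta}_L\in C^\infty_{\rm c}\subset{\mathcal S}$. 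On the other side, using the orthonormality from Lemma \ref{stop e},
\[
\langle P_L(\vec h)\widetilde{\eta}_L, e_j^{(L)}\rangle=\sum_{i=1}^M\langle \vec h, e_i^{(L)}\widetilde{\eta}_L\rangle\langle e_i^{(L)}, e_j^{(L)}\widetilde{\eta}_L\rangle=\langle \vec h, e_j^{(L)}\widetilde{\eta}_L\rangle .
\]
Subtracting the two gives the claim.

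For the inclusion, fix $i$ and $N$, and set $t:=c\,l(L)$, where $c$ depends only on $n$ and is chosen so that $L^\ast=\frac98 L\subset B(c_L,t)$; for instance $c=\sqrt n$ works. Put $\varphi:=\kappa\, e_i^{(L)}\widetilde{\eta}_L$ with a small constant $\kappa$ independent of $L$. Then ${\rm supp}\,\varphi\subset B(c_L,t)$. By the Leibniz rule, \eqref{1220}, \eqref{1221}, and the derivative bounds in Lemma \ref{stop e}, we have $|\partial^\alpha(e_i^{(L)}\widetilde{\eta}_L)|\lesssim |L|^{-1}l(L)^{-|\alpha|}\sim t^{-(n+|\alpha|)}$ for all $|\alpha|\le N+1$, so choosing $\kappa$ small makes $\varphi$ satisfy \eqref{W f var eq 2}. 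Lemma \ref{W f var} with $x_0=c_L$ and $x=y$ then gives
\[
{\mathcal K}(\langle\vec h,\varphi\rangle)\subset C\Bigl(2+\tfrac{|y-c_L|}{t}\Bigr)^{N+n+1}M_N^{\mathcal K}(\vec h)(y).
\]
Two adjustments finish the argument. Since $t\sim l(L)$ with $c\ge1$, the factor $(2+|y-c_L|/t)$ is at most $(2+|y-c_L|/l(L))$. The vector $\langle\vec h,\varphi\rangle$ itself lies in ${\mathcal K}(\langle\vec h,\varphi\rangle)$, and dividing by $\kappa$ only enlarges $C$. Hence the stated inclusion holds with a constant independent of $L$ and $\vec h$.

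The main point needing care is the uniform derivative bound for $e_i^{(L)}$ on the support of $\eta_L$. The sup bound in Lemma \ref{stop e} is stated over all of $\mathbb R^n$, which is literally impossible for nonconstant polynomials, so I would read it as a bound over $L^\ast$. This restricted bound is all that is needed, because $\eta_L$ vanishes off $L^\ast$. Apart from this, the argument is bookkeeping of the constants.
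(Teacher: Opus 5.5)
Your proposal is correct and follows the paper's proof. The moment identity comes from the orthonormality of $\{e_i^{(L)}\}$ with respect to $\widetilde{\eta}_L\,dx$, and the inclusion comes from the Leibniz bound $|\partial^\alpha(e_i^{(L)}\widetilde{\eta}_L)|\lesssim l(L)^{-(n+|\alpha|)}$ inserted into Lemma \ref{W f var}; your extra bookkeeping (the normalizing factor $\kappa$, the radius $t=\sqrt{n}\,l(L)$ giving $L^\ast\subset B(c_L,t)$, and reading the polynomial derivative bounds of Lemma \ref{stop e} on $L^\ast$ rather than on all of $\mathbb{R}^n$) is sound and fills in details the paper leaves implicit.
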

\begin{proof}
By Lemma \ref{stop e}, we have,
for any $q := \sum_{j = 1}^M c_je^{(L)}_j\in{\mathscr P}_s$ and $\vec{h} \in ({\mathcal S}')^m$,
\begin{align*}
\left\langle \left[ P_L \vec{h} - \vec{h} \right]\eta_L,q \right\rangle
&= \sum_{j = 1}^M c_j \left\langle P_L \vec{h}, e^{(L)}_j\eta_L \right\rangle - \sum_{j = 1}^M c_j \left\langle \vec{h},e^{(L)}_j \eta_L \right\rangle{\nonumber}\\
& = \sum_{j = 1}^M c_j \left\langle \vec{h}, e^{(L)}_j\widetilde{\eta}_L \right\rangle \left\langle e^{(L)}_j,e^{(L)}_j\eta_L \right\rangle - \sum_{j = 1}^M c_j \left\langle \vec{h},e^{(L)}_j \eta_L \right\rangle
= \vec{0}.
\end{align*}
Then, using Lemmas \ref{stop eta} and \ref{stop e},
we obtain, for any $\alpha\in{\mathbb{Z}}_+^n$ with $|\alpha| \leq N+1$ and for any $i\in\{1,\dots,M\}$,
\begin{align*}
\sup_{x\in{\mathbb{R}^n}} \left|\partial^\alpha \left( e^{(L)}_i \widetilde{\eta}_L \right)(x)\right|
\sim \sup_{x\in{\mathbb{R}^n}} \left|\sum_{\beta \leq \alpha} \partial^\beta e^{(L)}_i(x) \partial^{\alpha-\beta} \widetilde{\eta}_L(x) \right|
\lesssim \sum_{\beta \leq \alpha} [l(L)]^{-n-|\beta| - |\alpha-\beta|}
\sim [l(L)]^{-(n+|\alpha|)},
\end{align*}
which, together with Lemma \ref{W f var}, further implies that
\begin{align*}
\left\langle \vec{h}, e_i^{(L)}\widetilde{\eta}_L \right\rangle \in C \left( 2 + \frac{|y-c_L|}{l(L)} \right)^{N+n+1} M_N^{{\mathcal K}} \left(\vec{h}\right)(y).
\end{align*}
This finishes the proof of Lemma \ref{stop P}.
\end{proof}
\begin{lemma}\label{stop f}
Let $L$ and $\eta_L$ be the same as in Lemma \ref{stop e}
and let $N\in\mathbb Z_+$.
Assume that $\vec{f} \in (L^{1}_{\rm loc})^m \cap H^{p(\cdot)}_W$
and let $\psi\in{\mathcal S}$ be supported in $B(\mathbf{0},1)$ and $\int_{{\mathbb{R}^n}} \psi(x)\,dx = 1$.
Then there exists a positive constant $C$ such that, for any $x\in L^\ast$,
\begin{align*}
M^{{\mathcal K}} \left( \vec{f}\eta_L,\psi \right)(x) \subset C M^{{\mathcal K}}_N \left( \vec{f} \right)(x).
\end{align*}
Moreover, letting $P_L$ be as in Lemma \ref{stop P},
then, for any $x\in (L^\ast)^\complement$ and $y\in {\mathbb{R}^n}$,
\begin{align*}
M^{{\mathcal K}} \left( \left[\vec{f} - P_L\left(\vec{f}\right)\right]\eta_L,\psi \right)(x) \subset
C \left( \frac{l(L)}{l(L) + |x - c_L|} \right)^{n+s+1} \left( 2 + \frac{|y - c_L|}{l(L)} \right)^{N+n+1}
 M^{{\mathcal K}}_N \left( \vec{f}\right)(y).
\end{align*}
\end{lemma}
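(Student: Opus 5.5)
Both inclusions will be obtained by writing each convolution $\psi_t\ast(\cdot)(x)$ as a pairing $\langle \vec f,\varphi\rangle$ with a suitably normalized bump $\varphi$ and then applying Lemma \ref{W f var}. By the definition of $M^{\mathcal K}$ as a closed convex hull of the sets ${\mathcal K}(\psi_t\ast\cdot(x))$, it suffices to show that each ${\mathcal K}(\psi_t\ast(\vec f\eta_L)(x))$ lies in the claimed convex body, uniformly in $t$. Since the target sets are closed, convex and symmetric, the inclusion then passes to the closed convex hull.

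\emph{First inclusion, $x\in L^\ast$.} Write $\psi_t\ast(\vec f\eta_L)(x)=\langle\vec f,\varphi\rangle$ with $\varphi(y):=\psi_t(x-y)\eta_L(y)$; since $\vec f\in (L^1_{\rm loc})^m$, this is an honest integral.

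\begin{itemize}
\item[(a)] If $t\le l(L)$, then $\operatorname{supp}\varphi\subset B(x,t)$. By \eqref{1220} and Leibniz, $|\partial^\alpha\varphi|\lesssim t^{-n-|\alpha|}$ for $|\alpha|\le N+1$. Lemma \ref{W f var} with $x_0:=x$ gives ${\mathcal K}(\langle\vec f,\varphi\rangle)\subset C2^{N+n+1}M_N^{\mathcal K}(\vec f)(x)$.
\item[(b)] If $t>l(L)$, then $\operatorname{supp}\varphi\subset L^\ast\subset B(c_L,\sqrt n\, l(L))$. Now $|\partial^\alpha\varphi|\lesssim t^{-n}l(L)^{-|\alpha|}\le l(L)^{-n-|\alpha|}$. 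Since $|x-c_L|\lesssim l(L)$, Lemma \ref{W f var} with scale $\sqrt n\,l(L)$ again gives a bounded factor.
\end{itemize}

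After dividing by harmless constants, both cases give the first inclusion.

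\emph{Second inclusion, $x\notin L^\ast$.} Here the moment cancellation from Lemma \ref{stop P} does the work. Set $\vec g:=[\vec f-P_L(\vec f)]\eta_L$. Since $\operatorname{supp}\eta_L\subset L^\ast$ and $x\notin L^\ast$, the convolution $\psi_t\ast\vec g(x)$ vanishes unless $t\gtrsim |x-c_L|$, up to the gap between $L^\ast$ and $\partial(\frac98 L)$. Using $\langle\vec g,q\rangle=0$ for $q\in\mathscr P_s$, subtract the degree-$s$ Taylor polynomial $T_s$ of $y\mapsto\psi_t(x-y)$ at $c_L$:
$$\psi_t\ast\vec g(x)=\langle \vec g,\ \psi_t(x-\cdot)-T_s(\cdot)\rangle.$$
The remainder, restricted to $L^\ast$ via a cutoff (multiply by a fixed bump adapted to $L^\ast$, which equals $1$ on $\operatorname{supp}\eta_L$), has $C^{N+1}$ norms of size $t^{-n}(l(L)/t)^{s+1}$ at scale $l(L)$. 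Split $\vec g=\vec f\eta_L-P_L(\vec f)\eta_L$.

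\begin{itemize}
\item The term $\vec f\eta_L$ is handled like case (b) of the first part. One gets a test function supported in $B(c_L,\sqrt n\,l(L))$ with derivatives $\lesssim l(L)^{-n-|\alpha|}(l(L)/t)^{n+s+1}$.
\item For $P_L(\vec f)\eta_L=\sum_i\langle\vec f,e_i^{(L)}\widetilde\eta_L\rangle e_i^{(L)}\eta_L$, each coefficient lies in $C(2+|y-c_L|/l(L))^{N+n+1}M_N^{\mathcal K}(\vec f)(y)$ by Lemma \ref{stop P}. The scalar factor $\int e_i^{(L)}\eta_L(\cdot)[\psi_t(x-\cdot)-T_s]$ is $\lesssim (l(L)/t)^{n+s+1}$ by Lemma \ref{stop e}.
\end{itemize}

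For the $\vec f\eta_L$ piece, apply Lemma \ref{W f var} at the point $y$ with $x_0:=c_L$ and scale $\sqrt n\,l(L)$; this produces the factor $(2+|y-c_L|/l(L))^{N+n+1}$. Combining the two pieces with $t\gtrsim l(L)+|x-c_L|$ yields
$${\mathcal K}(\psi_t\ast\vec g(x))\subset C\Bigl(\frac{l(L)}{l(L)+|x-c_L|}\Bigr)^{n+s+1}\Bigl(2+\frac{|y-c_L|}{l(L)}\Bigr)^{N+n+1}M_N^{\mathcal K}(\vec f)(y).$$
Taking the closed convex hull over $t$ gives the second inclusion.

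The main obstacle is the Taylor-remainder bookkeeping. One must check that the localized remainder $\varphi$ satisfies \eqref{W f var eq 2}, with the gain $(l(L)/t)^{n+s+1}$, uniformly in $t$ and $x$. This requires $N+1$ derivatives of the remainder to be controlled on $L^\ast$ by $t^{-n-s-1}l(L)^{s+1-|\alpha|}$. That bound follows from the standard estimate for Taylor remainders of $\psi_t$, which is valid since $|y-c_L|\lesssim l(L)\lesssim t$.
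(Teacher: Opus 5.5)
Your proposal is correct and follows the paper's proof essentially step for step. For $x\in L^\ast$ you split into $t\le l(L)$ and $t>l(L)$ and apply Lemma \ref{W f var} at scale $t$ or $\sqrt n\,l(L)$; for $x\notin L^\ast$ you subtract the degree-$s$ Taylor polynomial of $\psi_t(x-\cdot)$ at $c_L$, use the moment cancellation from Lemma \ref{stop P}, and handle the $\vec f\eta_L$ piece by Lemma \ref{W f var} and the $P_L(\vec f)\eta_L$ piece by Lemma \ref{stop P} plus a scalar bound.

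One caveat, which the paper also passes over by citing Stein. Because $L^\ast=\frac98L$, your ``gap between $L^\ast$ and $\partial(\frac98L)$'' is zero, so the claim that $\psi_t\ast\vec g(x)\neq\vec 0$ forces $t\gtrsim|x-c_L|$ can fail for $x$ just outside $L^\ast$. It holds only if $\operatorname{supp}\eta_L$ lies in a strictly smaller dilate of $L$, as in Stein's construction. Otherwise, for such $x$ and $t<\frac{9}{16}l(L)$, the Taylor polynomial vanishes and the localized remainder is just $\eta_L\psi_t(x-\cdot)$. In that case use that $\eta_L$ vanishes to infinite order on the boundary of its support, so $|\partial^\gamma\eta_L|\lesssim l(L)^{-|\gamma|}(t/l(L))^{K}$ on $B(x,t)$ for every $K$. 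This still yields $|\partial^\alpha(\eta_L\psi_t(x-\cdot))|\lesssim l(L)^{-n-|\alpha|}$, which is what you need there.
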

\begin{proof}
We first consider the case $x\in L^\ast$.
Indeed, for any $x\in{\mathbb{R}^n}$,
\begin{align}\label{stop f eq 1}
M^{{\mathcal K}} \left( \vec{f}\eta_L ,\psi \right)(x)
& = \overline{{\mathop\mathrm{\,conv\,}}} \bigcup_{t\in (0,\infty)} {\mathcal K}\left(\psi_t\ast \left(\vec{f}\eta_L\right)(x)\right) {\nonumber}\\
&\subset \overline{{\mathop\mathrm{\,conv\,}}} \bigcup_{t\in (0,l(L)]} {\mathcal K}\left(\psi_t\ast \left(\vec{f}\eta_L\right)(x)\right)
+ \overline{{\mathop\mathrm{\,conv\,}}} \bigcup_{t\in (l(L),\infty)} {\mathcal K}\left( \psi_t\ast \left(\vec{f}\eta_L\right)(x)\right) {\nonumber}\\
&=: I_1(x) + I_2(x).
\end{align}
For any $y\in{\mathbb{R}^n}$, $x\in L^\ast$, and $t\in (0,\infty)$,
let $\zeta_{(t,x)}(y) := \psi_t(x - y)\eta_L(y)$, and hence
$$ \psi_t \ast(\vec{f} \eta_L)(x)
= \langle \vec{f}, \psi_t(x - \cdot)\eta_L \rangle . $$
Then, from the assumptions that ${\mathop\mathrm{\,supp\,}} \psi \subset B(\mathbf{0},1)$
and $\eta_L,\psi\in{\mathcal S}$,
it follows that $\zeta_{(t,x)} \in {\mathcal S}$ and $ {\mathop\mathrm{\,supp\,}} \zeta_{(t,x)} \subset B(x,t) $.
By this and Lemma \ref{stop eta}, we find that, for any $t\in (0,l(L)]$
and $\alpha\in{\mathbb{Z}}_+^n$ with $|\alpha| \leq N+1$,
\begin{align*}
\sup_{y\in{\mathbb{R}^n}} \left|\partial^\alpha \zeta_{(t,x)}(y)\right|
&\sim \sup_{y\in{\mathbb{R}^n}} \left| \sum_{\beta\leq \alpha} t^{-(n + |\beta|)}
\partial^\beta \psi\left( \frac{x-y}{t} \right) \partial^{\alpha-\beta} \eta_L(y) \right|{\nonumber} \\
&\lesssim \sum_{\beta\leq \alpha} t^{-(n + |\beta|)} [l(L)]^{-|\alpha-\beta|} \lesssim t^{-(n+|\alpha|)},
\end{align*}
which, combined with Lemma \ref{W f var},
further implies that there exists a positive constant $C$ such that, for any $x\in L^\ast$,
$\psi_t \ast(\vec{f} \eta_L)(x) \in C M_N^{{\mathcal K}} (\vec{f})(x)$.
This further yields, for any $x\in L^\ast$,
\begin{align*}
I_1(x) \subset C M_N^{{\mathcal K}} \left(\vec{f}\right)(x).
\end{align*}
If $t\in (l(L),\infty)$,
since ${\mathop\mathrm{\,supp\,}} \eta_L \subset L^\ast $,
we infer that
${\mathop\mathrm{\,supp\,}} \zeta_{(t,x)} \subset {\mathop\mathrm{\,supp\,}} \eta_L \subset L^\ast. $
Using this and Lemma \ref{stop eta},
we obtain, for any $\alpha\in{\mathbb{Z}}_+^n$ with $|\alpha| \leq N+1$
\begin{align*}
\sup_{y\in{\mathbb{R}^n}} \left|\partial^\alpha \zeta_{(t,x)}(y)\right|
&\sim \sup_{y\in{\mathbb{R}^n}} \left| \sum_{\beta\leq \alpha} t^{-(n + |\beta|)} \partial^\beta \psi\left( \frac{x-y}{t} \right) \partial^{\alpha-\beta} \eta_L(y) \right|{\nonumber}\\
&\lesssim \sum_{\beta\leq \alpha} t^{-(n + |\beta|)} [l(L)]^{-|\alpha-\beta|} \lesssim [l(L)]^{-(n+|\alpha|)},
\end{align*}
which, together with Lemma \ref{W f var},
further implies that there exists a positive constant $C$ such that,
for any $x \in L^\ast$,
$\psi_t \ast(\vec{f} \eta_L)(x) \in C M_N^{{\mathcal K}} (\vec{f})(x)$
and hence, for any $x \in L^\ast$,
\begin{align*}
I_2(x) \in C M_N^{{\mathcal K}} \left(\vec{f}\right)(x).
\end{align*}
Applying this and \eqref{stop f eq 1},
we conclude that, for any $x\in L^\ast$,
\begin{align*}
M^{{\mathcal K}} \left( \vec{f}\eta_L,\psi \right)(x) \subset C M^{{\mathcal K}}_N \left( \vec{f} \right)(x).
\end{align*}

Then we consider the case $x\in (L^\ast)^\complement$.
Let $q_{(t,x)}$ be the $s$-th degree Taylor polynomial of $\psi_t(x - \cdot)$ centered at $x$.
Then, from Lemma \ref{stop P}, it follows that,
for any $x\in (L^\ast)^\complement$ and $t\in (0,\infty)$,
\begin{align}\label{stop f eq 2}
\psi_t \ast \left(\left[\vec{f}  - P_L\left( \vec{f} \right) \right]\eta_L\right)(x)
& = \left\langle \vec{f} - P_L\left( \vec{f} \right), \eta_L(\cdot)\left[\psi_t(x-\cdot) - q_{(t,x)}\right] (\cdot) \right\rangle {\nonumber}\\
& = \left\langle \vec{f}, \phi_{(t,x)}\right\rangle
- \left\langle P_L\left( \vec{f} \right), \phi_{(t,x)} \right\rangle,
\end{align}
where $\phi_{(t,x)}(\cdot) := \eta_L(\cdot)[\psi_t(x - \cdot) - q_{(t,x)}(\cdot)]$.
Using the definition of $\phi_{(t,x)}$ and the assumption that ${\mathop\mathrm{\,supp\,}} \eta_L\subset L^\ast$,
we find that, for any $t\in(0,\infty)$ and $x\in (L^\ast)^\complement$,
\begin{align*}
{\mathop\mathrm{\,supp\,}} \phi_{(t,x)} \subset {\mathop\mathrm{\,supp\,}} \eta_L \subset B(c_L, r_L),
\end{align*}
where $c_L$ is the center of $L$ and $r_L := \sqrt{n}l(L)$.
From this and the formula in \cite[p.\,105]{s93}, we infer that, for any $\alpha \in {\mathbb{Z}}_+^n$,
$t\in (0,\infty)$, and $x\in (L^\ast)^\complement$,
\begin{align*}
\sup_{y\in{\mathbb{R}^n}} \left|\partial^\alpha \phi_{(t,x)}(y)\right|
\lesssim \frac{l(L)^{n+s+1}}{|x-c_L|^{n+s+1}} l(L)^{-(n+|\alpha|)}
\sim \frac{l(L)^{n+s+1}}{|x-c_L|^{n+s+1}} r_L^{-(n+|\alpha|)},
\end{align*}
where the implicit positive constant is independent of $t$, $x$, and $L$.
Applying this and Lemma \ref{W f var},
we conclude that there exists a positive constant $C$, independent of $\vec{f}$,
$t$, $x$, and $L$,
such that, for any $t\in (0,\infty)$, $x\in (L^\ast)^{\complement}$, and $y\in{\mathbb{R}^n}$,
\begin{align}\label{stop f eq 3}
\left\langle \vec{f}, \phi_{(t,x)} \right\rangle
&\in C \frac{l(L)^{n+s+1}}{|x-c_L|^{n+s+1}} \left( 2 + \frac{ |y - c_L| }{l(L)} \right)^{N+n+1}
 M^{{\mathcal K}}_N \left( \vec{f} \right)(y).
\end{align}
Observe that, by the definition of $P_L$,
we find that, for any $t\in (0,\infty)$ and $x\in {\mathbb{R}^n}$,
\begin{align*}
\left\langle P_L \vec{f}, \phi_{(t,x)} \right\rangle
= \sum_{i = 1}^M \left\langle \vec{f},e_i^{(L)} \widetilde{\eta}_L \right\rangle \left\langle e_i^{(L)}, \phi_{(t,x)} \right\rangle ,
\end{align*}
which, combined with Lemmas \ref{stop e} and \ref{stop P},
further implies that there exists a positive constant $C$ such that, for any $y\in {\mathbb{R}^n}$,
\begin{align*}
\left\langle P_L \vec{f}, \phi_{(t,x)} \right\rangle
&\in \sum_{i = 1}^M {\mathcal K}\left(\left\langle \vec{f},e_i^{(L)} \widetilde{\eta}_L \right\rangle\right)
\left|\left\langle e_i^{(L)}, \phi_{(t,x)} \right\rangle\right|\\
& \subset C\frac{l(L)^{n+s+1}}{|x-c_L|^{n+s+1}} \left( 2 + \frac{ |y - c_L| }{l(L)} \right)^{N+n+1}
M^{{\mathcal K}}_N \left( \vec{f} \right)(y) .
\end{align*}
Applying this, \eqref{stop f eq 2}, and \eqref{stop f eq 3},
we conclude that there exists a positive constant $C$ such that, for any $y\in {\mathbb{R}^n}$,
\begin{align*}
M^{{\mathcal K}} \left( \left[\vec{f} - P_L\left(\vec{f}\right)\right]\eta_L,\psi \right)(x) \subset
C \left( \frac{l(L)}{l(L) + |x - c_L|} \right)^{n+s+1} \left( 2 + \frac{|y - c_L|}{l(L)} \right)^{N+n+1}
 M^{{\mathcal K}}_N \left( \vec{f}\right)(y).
\end{align*}
This finishes the proof of Lemma \ref{stop f}.
\end{proof}

Let
$$ \mathcal{S}_\infty := \left\{ \phi\in \mathcal{S}:\ \int_{\mathbb{R}^n} x^{\gamma} \phi(x)\,dx = 0\
\text{for any}\ \gamma\in \mathbb{Z}_+^n \right\} $$
and
$$\widehat{\mathcal{D}}_0 :=\left\{\phi \in {\mathcal S}_\infty:\  {\mathop\mathrm{\,supp\,}} \widehat{\phi}\ \ \text{is compact}\right\}.$$
We have the following dense subset of $H^{p(\cdot)}_W$
(see, for instance, \cite[Chapter VII, Theorem 1]{st89} for
the scalar-valued case).
\begin{proposition}\label{H dense}
Let $p(\cdot) \in {\mathcal P}_0\cap LH$ and $W\in {\mathscr A}_{p(\cdot),\infty}$.
Then $(\widehat{\mathcal{D}}_0)^m\cap H^{p(\cdot)}_W$ is dense in $H^{p(\cdot)}_W$.
\end{proposition}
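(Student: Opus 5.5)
We will follow the classical Str\"omberg--Torchinsky argument, adapted via the matrix-weighted grand maximal function. Fix $\vec f\in H^{p(\cdot)}_W$. First we choose $\theta\in\mathcal S$ whose Fourier transform has compact support, equals $1$ near the origin, and satisfies $\int_{\mathbb{R}^n}\theta(x)\,dx=1$. Then we set
\[
\vec f_{j}:=\theta_{2^{-j}}\ast\vec f-\theta_{2^{j}}\ast\vec f,\qquad j\in\mathbb N .
\]
The Fourier transform of $\vec f_j$ is supported in an annulus, so $\vec f_j\in(\widehat{\mathcal D}_0)^m$: it is a Schwartz-type object with spectrum away from the origin. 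Strictly, $\vec f_j$ is only a smooth function of polynomial growth. To land inside $(\widehat{\mathcal D}_0)^m$ we will multiply in the spatial variable by $\chi(\delta\,\cdot)$, where $\widehat\chi$ has small compact support. This keeps the Fourier support compact and away from $0$ for $\delta$ small, and it makes the function Schwartz. It then remains to show two things: $\|\vec f-\vec f_j\|_{H^{p(\cdot)}_W}\to0$, and the spatial cutoff costs an arbitrarily small error in $H^{p(\cdot)}_W$.

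For the first convergence, write $\vec f-\vec f_j=(\vec f-\theta_{2^{-j}}\ast\vec f)+\theta_{2^j}\ast\vec f$. For any $\phi\in\mathcal S_N$ we have
\[
\phi_t\ast(\theta_{2^{-j}}\ast\vec f)=(\phi_t\ast\theta_{2^{-j}})\ast\vec f,
\]
and $\phi_t\ast\theta_{2^{-j}}$ is, up to a constant, again of the form $\widetilde\phi_{t'}$ with $\widetilde\phi\in\mathcal S_N$. Hence we get the pointwise domination
\[
\left|W(x)\,\phi_t\ast(\vec f-\theta_{2^{-j}}\ast\vec f)(x)\right|\lesssim (M_N)_W(\vec f)(x).
\]
By the Remark after Definition~\ref{def convex max}, this is a statement about $M_N^{\mathcal K}(\vec f)$. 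Next we show pointwise convergence to $0$ almost everywhere. For small $t$ relative to $2^{-j}$, we use the Peetre-type maximal function and Theorem~\ref{M equal}(iii): $|W(x)(\phi_t-\phi_t\ast\theta_{2^{-j}})\ast\vec f(x)|$ is bounded by the Schwartz seminorm of $\phi-\phi\ast\theta_{2^{-j}/t}$ times $(M^{\ast\ast}_{l,N})_W(\vec f)(x)$, and that seminorm tends to $0$. Since $(M^{\ast\ast}_{l,N})_W(\vec f)\in L^{p(\cdot)}$ by Theorem~\ref{H equal}, it is finite almost everywhere. The dominated convergence theorem in $L^{p(\cdot)}$ (valid because $p_+<\infty$ under $LH$) then gives convergence in norm. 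For the low-frequency piece $\theta_{2^j}\ast\vec f$, we bound $|W(x)\phi_t\ast\theta_{2^j}\ast\vec f(x)|$ by $(M^{\ast\ast}_{l,N})_W(\vec f)(x)$ and show it tends to $0$ pointwise. For this we use Proposition~\ref{H embed} combined with Lemma~\ref{QP5}: via the reducing operators $A_{Q(x,2^j)}$ and the estimate $\|\mathbf 1_{Q}\|_{L^{p(\cdot)}}\to\infty$, we get $|\theta_{2^j}\ast\vec f(y)|\to0$ with a rate that beats the polynomial growth of $\|W(x)A^{-1}_{Q(x,2^j)}\|$. Domination again gives convergence in $L^{p(\cdot)}$.

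The main obstacle is this low-frequency term in the matrix setting. Scalar decay of $\theta_{2^j}\ast\vec f$ must be converted into decay of $W(x)\theta_{2^j}\ast\vec f$. Our plan is to take averages over $Q(x,2^j)$ using Lemma~\ref{eq reduc M}, which gives $|A_{Q(x,2^j)}\theta_{2^j}\ast\vec f(y)|\lesssim \|\mathbf 1_{Q(x,2^j)}\|^{-1}_{L^{p(\cdot)}}\|\vec f\|_{H^{p(\cdot)}_W}$. We then compare with a fixed cube $Q(x,1)$ through Lemma~\ref{QP5}, which costs the factor $2^{jd_1}$ with $d_1<n/p_-$. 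The competition between $2^{jd_1}$ and $\|\mathbf 1_{Q(x,2^j)}\|_{L^{p(\cdot)}}\gtrsim 2^{jn/p_+}$-type growth is delicate. If it fails, the fallback is to use only pointwise convergence to $0$ along a subsequence, obtained from $\theta_{2^j}\ast\vec f\to0$ in $(\mathcal S')^m$ tested against translates, together with domination by $(M^{\ast\ast}_{l,N})_W(\vec f)$.

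The spatial cutoff step is handled the same way. We show $M_N^{\mathcal K}(\vec f_j-\chi(\delta\cdot)\vec f_j)$ is dominated by a fixed $L^{p(\cdot)}$ function and tends to $0$ pointwise as $\delta\to0$. Membership $\chi(\delta\cdot)\vec f_j\in H^{p(\cdot)}_W$ then follows from the quasi-triangle inequality, which completes the density.
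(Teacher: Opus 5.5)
The step you flag as delicate is a genuine gap, and neither of your two routes closes it.

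\textbf{The low-frequency term.} Passing from $Q(x,2^j)$ to $Q(x,1)$ via Lemma~\ref{QP5} costs $2^{jd_1}$ with $d_1>d^{\rm lower}_{p(\cdot),\infty}(W)$. But $\|\mathbf{1}_{Q(x,2^j)}\|_{L^{p(\cdot)}}$ grows only like $2^{jn/p_\infty}$ for large $j$, by $LH_\infty$. For constant $p$ the comparison $d_1<n/p$ does work, but in general it fails. The lower dimension is a worst case over all cubes, and it can exceed $n/p_\infty$ because of a local singularity where $p$ is close to $p_-$. For example, take $m=1$, $W(x)=|x|^{-a}$ near the origin, $p\equiv p_-$ there and $p\equiv p_\infty>p_-$ far away, with $n/p_\infty<a<n/p_-$. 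Then $d^{\rm lower}_{p(\cdot),\infty}(W)\ge a$, so your bound $2^{j(d_1-n/p_\infty)}$ diverges. Yet the quantity it is meant to control, $\|A_{Q(x,1)}A_{Q(x,2^j)}^{-1}\|\,\|\mathbf{1}_{Q(x,2^j)}\|_{L^{p(\cdot)}}^{-1}$, does tend to $0$.

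The fallback is not an argument either. Proposition~\ref{H embed} gives no decay, since $\|\theta_{2^j}(y-\cdot)\|_{\mathcal S_N}\to\infty$. The convergence $\theta_{2^j}\ast\vec f\to\vec 0$ in $(\mathcal S')^m$ fails for general distributions (constants), so it is precisely what has to be proved. And you need decay uniformly over all scales $t\gtrsim 2^j$, not along a subsequence.

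The missing idea is never to separate the two factors. Let $Q_0:=Q(x,2)$ and $Q_r:=Q(x,2r)$. Combine Lemma~\ref{eq reduc M}, H\"older's inequality with exponents $r_W'p(\cdot)$ and $r_Wp(\cdot)$ on $Q_0$, enlarging $Q_0$ to $Q_r$, and the reverse H\"older inequality of Lemma~\ref{reverse Holder} on $Q_r$. This gives
\[
\frac{\|\mathbf{1}_{Q_0}\|_{L^{p(\cdot)}}}{\|\mathbf{1}_{Q_r}\|_{L^{p(\cdot)}}}\left\|A_{Q_0}A_{Q_r}^{-1}\right\|
\lesssim\left[\frac{\|\mathbf{1}_{Q_0}\|_{L^{p(\cdot)}}}{\|\mathbf{1}_{Q_r}\|_{L^{p(\cdot)}}}\right]^{1-\frac{1}{r_W}},
\]
which tends to $0$ as $r\to\infty$ with no dimension comparison. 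Combine this with your (correct) bound on $|A_{Q_r}(\cdot)|$ for the convolution values at scale $r$. The result is decay of $|M^{\mathcal K}_l(\vec f,\phi,x,t)|$ uniformly in $t\ge r$, which is exactly what the paper does.

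\textbf{Two further steps are wrong as written.}

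\emph{High-frequency term.} We have $\phi_t\ast\theta_{2^{-j}}=(\phi\ast\theta_{2^{-j}/t})_t$, and $\|\phi-\phi\ast\theta_{2^{-j}/t}\|_{\mathcal S_N}\to0$ only when $2^{-j}/t\to0$, i.e.\ for $t\gg 2^{-j}$, not for small $t$. At $t=2^{-j}$ this seminorm is a fixed positive number. So your bound yields only $\limsup_{j\to\infty}(M_N)_W(\vec f-\theta_{2^{-j}}\ast\vec f)(x)\lesssim(M^{\ast\ast}_{l,N})_W(\vec f)(x)$, not $0$. The regime $t\lesssim 2^{-j}$ needs a genuine almost-everywhere convergence statement. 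The paper imports it from the scalar argument in \cite[pp.\,107--108]{st89}, applied componentwise and then multiplied by $\|W(x)\|$.

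\emph{Spatial cutoff.} This is not handled ``the same way''. The function $\vec f_j$ grows polynomially, and multiplication by $\chi(\delta\cdot)$ does not commute with convolution, so no $\delta$-uniform majorant of the grand maximal function is apparent. The paper proceeds as follows:
\begin{itemize}
\item[(a)] it switches to the radial maximal function with a band-limited $\phi$, which Theorem~\ref{H equal} permits;
\item[(b)] it uses that $(\vec g-\vec g^{(\epsilon)})\ast\phi_t\equiv\vec 0$ for $t>t_0$, because of the Fourier supports;
\item[(c)] it Taylor-expands $1-\eta(\epsilon(x-y))$ into the pieces $N_0^{(\epsilon)}$, $N_{1,\alpha}^{(\epsilon)}$, $N_2^{(\epsilon)}$;
\item[(d)] it controls the remainder via the polynomial growth of $\vec g$ and the bound $\bigl\|(1+|\cdot|)^{-l'}\|W(\cdot)\|\bigr\|_{L^{p(\cdot)}}<\infty$ from Lemma~\ref{f dec eq le}.
\end{itemize}
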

To prove Proposition \ref{H dense}, for any $l\in(0,\infty)$,
we introduce the following \emph{convex body valued operator $M_l^{\mathcal K}$} by setting,
for any $\vec{f}\in ({\mathcal S}')^m$, $\phi\in{\mathcal S}$, $x\in{\mathbb{R}^n}$,
and $t\in (0,\infty)$,
\begin{align*}
M_l^{{\mathcal K}} (\vec{f},\phi,x,t) :=  \overline{{\mathop\mathrm{\,conv\,}}}\left(\bigcup_{y\in {\mathbb{R}^n}} {\mathcal K}\left(\vec{f}\ast\phi_t(y)\right)\left( 1 + \frac{|x-y|}{t} \right)^{-l}\right).
\end{align*}

\begin{lemma}\label{G Ml}
Let $l\in(0,\infty)$, $\vec{f}\in ({\mathcal S}')^m$, and $\phi\in{\mathcal S}$.
Then, for any $\psi \in {\mathcal S}$, there exists a positive constant $C$,
independent of $\vec{f}$ and $\phi$, such that,
for any $t,s\in (0,\infty)$ with $t < s$ and for any $x,y\in{\mathbb{R}^n}$ with $|x-y| < s$,
\begin{align*}
\left(\vec{f}\ast \phi_s\right) \ast \psi_t(y) \in C M^{{\mathcal K}}_l (\vec{f},\phi,x,s).
\end{align*}
\end{lemma}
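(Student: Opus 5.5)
We write the convolution as an integral and absorb the kernel into the definition of $M^{\mathcal K}_l$. Since $\vec f\ast\phi_s$ is a smooth vector-valued function, for any $y\in\mathbb R^n$ we have
\begin{align*}
\left(\vec{f}\ast\phi_s\right)\ast\psi_t(y)=\int_{\mathbb{R}^n}\psi_t(y-z)\left(\vec{f}\ast\phi_s\right)(z)\,dz .
\end{align*}
For every $z$, the definition of $M^{\mathcal K}_l(\vec f,\phi,x,s)$ gives
\begin{align*}
\left(\vec{f}\ast\phi_s\right)(z)\in\left(1+\frac{|x-z|}{s}\right)^{l}M^{\mathcal K}_l\left(\vec{f},\phi,x,s\right).
\end{align*}
Write $K:=M^{\mathcal K}_l(\vec f,\phi,x,s)$. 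It is a closed, convex, symmetric set, and symmetric here means invariant under multiplication by complex unimodular scalars. Hence the integrand $\psi_t(y-z)(\vec f\ast\phi_s)(z)$ lies in $|\psi_t(y-z)|(1+\frac{|x-z|}{s})^{l}K$ for every $z$.

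The key step is the following fact about closed convex bodies. If $\vec g(z)\in w(z)K$ for a.e.\ $z$, where $w\ge0$ is integrable and $\vec g$ is integrable, then $\int\vec g\in(\int w)K$. If $K$ were a Banach-space unit ball this would be the triangle inequality; for a general closed convex body one argues as follows. Suppose $\int\vec g$ lies outside the closed convex set $(\int w)K$. Hahn--Banach separation over $\mathbb R^{2m}$ then yields a real functional $\Lambda$ with $\Lambda(\int\vec g)>(\int w)\sup_{K}\Lambda$. On the other hand, $\Lambda(\vec g(z))\le w(z)\sup_K\Lambda$ pointwise, and integrating gives a contradiction.

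If $K$ is unbounded this causes no trouble, since then either $\sup_K\Lambda=\infty$ or the inequality still holds. If $\int w=0$, then $\vec g=0$ a.e.\ and the claim is trivial. One also has to check that $\vec g$ is integrable; this holds when $K$ is bounded, because $(\vec f\ast\phi_s)$ grows at most polynomially and $\psi_t\in\mathcal S$. If $K$ is unbounded it has no bounded direction constraint in the relevant sense and the estimate reduces to the separating-functional argument above whenever the integral converges absolutely, which it always does by the polynomial growth of $\vec f\ast\phi_s$. This is the step where care is needed, though the separation argument handles it cleanly.

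It therefore remains to bound the scalar quantity
\begin{align*}
\int_{\mathbb{R}^n}|\psi_t(y-z)|\left(1+\frac{|x-z|}{s}\right)^{l}dz
\end{align*}
uniformly. Since $|x-y|<s$, we have
\begin{align*}
1+\frac{|x-z|}{s}\le 2+\frac{|y-z|}{s}\le 2+\frac{|y-z|}{t}
\end{align*}
because $t<s$. After the change of variables $u=(y-z)/t$, the integral is bounded by
\begin{align*}
\int_{\mathbb{R}^n}|\psi(u)|(2+|u|)^{l}\,du=:C<\infty,
\end{align*}
which depends only on $\psi$ and $l$. Thus $(\vec f\ast\phi_s)\ast\psi_t(y)\in C\,M^{\mathcal K}_l(\vec f,\phi,x,s)$, as desired. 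The only real obstacle is justifying the vector integral inclusion into a convex body, which the separation argument above settles.
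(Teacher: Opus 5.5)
Your proof is correct and follows the same route as the paper: write $(\vec f\ast\phi_s)\ast\psi_t(y)$ as an integral, insert the factor $(1+\frac{|x-z|}{s})^{-l}(1+\frac{|x-z|}{s})^{l}$, invoke the definition of $M^{\mathcal K}_l(\vec f,\phi,x,s)$, and bound the remaining scalar integral by $\int_{\mathbb{R}^n}|\psi(u)|(2+|u|)^l\,du$ using $|x-y|<s$ and $t<s$. Your version also supplies two details the paper leaves implicit. First, the separation argument shows that an integrable function lying pointwise in $w(z)K$ has its integral in $(\int w)K$. Second, the constant must involve $|\psi_t|$ inside the integral (the paper writes $|\int(\cdots)^l\psi_t(z)\,dz|$), and $t<s$ is exactly what makes this constant uniform.
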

\begin{proof}
For any $t\in (0,\infty)$ and $x,y \in {\mathbb{R}^n}$ with $|x-y| < s$,
\begin{align*}
\left(\vec{f}\ast \phi_s\right) \ast \psi_t(y)
& = \int_{{\mathbb{R}^n}} \left(\vec{f}\ast \phi_s\right)(y-z) \psi_t(z)\,dz \\
& = \int_{{\mathbb{R}^n}} \left(\vec{f}\ast \phi_s\right)(y-z) \left( 1 + \frac{|x-y+z|}{s} \right)^{-l} \left( 1 + \frac{|x-y+z|}{s} \right)^{l} \psi_t(z)\,dz\\
& \in \left|\int_{{\mathbb{R}^n}} \left( 1 + \frac{|x-y+z|}{s} \right)^{l} \psi_t(z)\,dz\right|M^{{\mathcal K}}_l (\vec{f},\phi,x,s)
\subset CM^{{\mathcal K}}_l (\vec{f},\phi,x,s).
\end{align*}
This finishes the proof of Lemma \ref{G Ml}.
\end{proof}
Now, we show Proposition \ref{H dense}
by borrowing some ideas from the proof of \cite[Chapter VII, Theorem 1]{st89}.
\begin{proof}[Proof of Proposition \ref{H dense}]
To prove Proposition \ref{H dense}, we first show that
the class of functions, whose Fourier transforms both have compact supports
and vanish in the origin, is dense in $H^{p(\cdot)}_W$.

Now, let $\phi \in {\mathcal S}$ with ${\mathop\mathrm{\,supp\,}} \widehat{\phi} \subset B(\mathbf{0},2)$
and $\widehat{\phi}(\xi) = 1$ for any $\xi \in B(\mathbf{0},1)$.
Then, for any $\vec{f} := (f_1, \dots,f_m) \in H^{p(\cdot)}_W$,
we claim that
\begin{align}\label{H dense eq 1}
\left\| \vec{f} - \vec{f}\ast \phi_s + \vec{f} \ast \phi_{\frac1s} \right\|_{H^{p(\cdot)}_W}
\to 0 \ \ \text{as}\ \ s\to 0^+,
\end{align}
here and thereafter, the \emph{notation $s\to0^+$} means that $s\in(0,\infty)$ and $s\to0$.
Indeed, for any $s\in (0,\infty)$, we have
\begin{align*}
\left\| \vec{f} - \vec{f}\ast \phi_s + \vec{f} \ast \phi_{\frac1s} \right\|_{H^{p(\cdot)}_W}
\lesssim \left\| \vec{f} - \vec{f}\ast \phi_s \right\|_{H^{p(\cdot)}_W}
+ \left\| \vec{f} \ast \phi_{\frac1s} \right\|_{H^{p(\cdot)}_W}
=: I(s) + II(s)
\end{align*}
and hence, to prove \eqref{H dense eq 1},
we only need to show that $I(s) \to 0$ and $II(s) \to 0$ as $s \to 0^+$.

We first prove that $I(s) \to 0$ as $s \to 0^+$.
Let $l\in(\frac{n}{\alpha_W},\infty)$ be sufficiently large.
Note that, for any $t,s\in (0,\infty)$,
$$ \left(\vec{f}\ast \phi_s\right) \ast \psi_t = \left(\vec{f}\ast \psi_t\right) \ast \phi_s .$$
By this and Lemma \ref{G Ml}, we conclude that there exists a positive constant $C$ such that,
for any $s,t \in (0,\infty)$ and $x,y\in {\mathbb{R}^n}$ with $|x-y| < t$,
if $s \geq t$, then
\begin{align*}
\left( \vec{f} \ast \phi_s \ast \psi_t \right)(y) \in C M^{{\mathcal K}}_l (\vec{f}, \phi , x, s)
\end{align*}
and, if $s < t$, then
\begin{align*}
\left( \vec{f} \ast \phi_s \ast \psi_t \right)(y) \in C M^{{\mathcal K}}_l (\vec{f}, \psi , x, t).
\end{align*}
From these and the definition of $M^{**,\mathcal K}_l$, it follows that there exists a positive constant $C$, independent of $\vec{f}$,
such that, for any $s\in (0,\infty)$ and $x\in{\mathbb{R}^n}$,
\begin{align}\label{2102}
M^{\ast,{\mathcal K}}_1 \left( \vec{f} \ast \phi_s, \psi \right)(x)
&= \overline{{\mathop\mathrm{\,conv\,}}}\left( \bigcup_{t\in (0,\infty)} \bigcup_{y\in B(x,t)} {\mathcal K}\left(\vec{f} \ast \phi_s \ast \psi_t\right) (y) \right){\nonumber}\\
& \subset C M^{{\mathcal K}}_l (\vec{f}, \phi , x, s) + C \overline{{\mathop\mathrm{\,conv\,}}}\left(\bigcup_{t\in [s,\infty)} M^{{\mathcal K}}_l (\vec{f}, \psi , x, t)\right){\nonumber}\\
&\subset C M^{\ast\ast,{\mathcal K}}_l \left(\vec{f}, \phi\right) (x) + C \overline{{\mathop\mathrm{\,conv\,}}}\left(\bigcup_{t\in [s,\infty)} M^{{\mathcal K}}_l (\vec{f}, \psi , x, t)\right),
\end{align}
where $M^{\ast,{\mathcal K}}_1$ is defined as in Definition \ref{def convex max} with $a := 1$,
which, together with Lemma \ref{A F}, further implies that
\begin{align}\label{H dense eq 3}
\left(M^{\ast}_1\right)_W \left( \vec{f} \ast \phi_s, \psi \right)(x)
&\lesssim \left(M^{\ast\ast}_l\right)_W \left(\vec{f}, \phi\right) (x) +  \sup_{t\in [s,\infty)} \left|W(x)M^{{\mathcal K}}_l (\vec{f}, \psi , x, t)\right|{\nonumber} \\
&\leq \left(M^{\ast\ast}_l\right)_W \left(\vec{f}, \phi\right) (x) + \left(M^{\ast\ast}_l\right)_W \left(\vec{f}, \psi\right) (x).
\end{align}
Using this, we find that, for any $s \in (0,\infty)$ and $x\in{\mathbb{R}^n}$,
\begin{align}\label{H dense eq 12}
(M_1^*)_W(\vec{f} - \vec{f} \ast \phi_s,\psi)(x)
&\leq (M_1^*)_W(\vec{f}, \psi)(x) + (M_1^*)_W(\vec{f} \ast \phi_s, \psi)(x){\nonumber}\\
& \lesssim (M_1^*)_W(\vec{f}, \psi)(x) + \left(M^{\ast\ast}_l\right)_W \left(\vec{f}, \phi\right) (x)
+ \left(M^{\ast\ast}_l\right)_W \left(\vec{f}, \psi\right) (x).
\end{align}
From Theorem \ref{H equal} and the assumption $\vec{f} \in H^{p(\cdot)}_W$,
we deduce that $(M^{\ast\ast}_l)_W (\vec{f}, \phi)$ and $(M^{\ast\ast}_l)_W (\vec{f}, \psi)$
are in $L^{p(\cdot)}$.
Hence, applying this, \eqref{H dense eq 12}, and the Lebesgue dominated convergence theorem
in the setting of variable Lebesgue spaces (see, for instance, \cite[Theorem 2.62]{cf13}),
we conclude that, to prove $ I(s) \to 0$ as $s \to 0^+$,
we only need to show that, for almost every $x \in {\mathbb{R}^n}$,
\begin{align*}
\left(M_1^*\right)_W\left(\vec{f} - \vec{f} \ast \phi_s,\psi\right)(x) \to 0 \ \ \text{as}\ \ s \to 0^+.
\end{align*}
Indeed, by the proof of
\cite[pp.\,107--108]{st89},
we immediately obtain, for any $k\in \{1,\dots,m\}$ and almost every $x\in\mathbb R^n$,
\begin{align*}
M_1^\ast \left(f_k - f_k \ast \phi_s,\psi\right)(x) \to 0\ \ \text{as}\ \ s\to 0^+,
\end{align*}
where $M_1^\ast$ denotes the scalar-valued unweighted
grand non-tangential maximal function.
This, combined with the definition of $M^{*,{\mathcal K}}_1$,
further implies that
$|M_1^{*,{\mathcal K}}(\vec{f}- \vec{f} \ast \phi_s,\psi)(x)| \to 0$ as $s\to 0^+$.
Using this, we conclude that, for almost every $x\in\mathbb R^n$,
\begin{align*}
\left(M_1^*\right)_W\left(\vec{f}- \vec{f} \ast \phi_s,\psi\right)(x)
\leq \|W(x)\| \left|M_1^{*,{\mathcal K}} \left(\vec f- \vec f \ast \phi_s,\psi\right)(x)\right|
\to 0\ \ \text{as}\ \ s\to 0^+
\end{align*}
and hence $I(s) \to 0$ as $s \to 0^+$.

Next, we estimate $II(s)$.
From \eqref{H dense eq 3}, we infer that,
for any $s\in (0,\infty)$ and $x \in {\mathbb{R}^n}$,
\begin{align*}
\left(M_1^\ast\right)_W\left(\vec{f} \ast \phi_{\frac1s},\psi\right)(x)
\lesssim \left(M_l^{\ast\ast}\right)_W\left(\vec{f} ,\phi\right)(x) + \left(M_l^{\ast\ast}\right)_W\left(\vec{f} ,\psi\right)(x).
\end{align*}
This, together with the the Lebesgue dominated convergence theorem
in the setting of variable Lebesgue spaces,
further implies that, to prove $II(s) \to 0$ as $s\to 0^+$,
it is sufficient to show that, for almost every $x\in {\mathbb{R}^n}$,
$(M_1^\ast)_W(\vec{f} \ast \phi_{\frac1s},\psi)(x) \to 0$ as $s \to 0^+, $
which is equivalent to
\begin{align*}
\left(M_1^\ast\right)_W\left(\vec{f} \ast \phi_{s},\psi\right)(x) \to 0 \ \ \text{as}\ \ s \to \infty.
\end{align*}
Let $x$ be any given point in ${\mathbb{R}^n}$ and,
for any $r \in (1,\infty)$, let $Q_r := Q(x,2r)$ and $Q_0 := Q(x,2)$.
Observe that, by the definition of $M^{{\mathcal K}}_l$,
for any $t\in (0,\infty)$ and $y\in{\mathbb{R}^n}$ with $y\in B(x,t)$,
there exists a positive constant $C$, depending only on $l$, such that
\begin{align}\label{H dense eq 4}
M^{{\mathcal K}}_l \left( \vec{f}, \phi,x,t \right)\subset CM^{{\mathcal K}}_l \left( \vec{f}, \phi,y,t \right).
\end{align}
Moreover, by Lemmas \ref{eq reduc M} and \ref{est Q}
and \cite[Corollary 2.28]{cf13}, we have,
for any $r\in (1,\infty)$,
\begin{align*}
\frac{\|{\mathbf{1}}_{Q_0}\|_{L^{p(\cdot)}}}{\|{\mathbf{1}}_{Q_r}\|_{L^{p(\cdot)}}} \left\|A_{Q_0} A_{Q_r}^{-1}\right\|
&\sim \frac{\|{\mathbf{1}}_{Q_0}\|_{L^{p(\cdot)}}}{\|{\mathbf{1}}_{Q_r}\|_{L^{p(\cdot)}}} \frac{1}{\|{\mathbf{1}}_{Q_0}\|_{L^{p(\cdot)}}} \left\|\,\left\| W(\cdot) A_{Q_r}^{-1} \right\|{\mathbf{1}}_{Q_0} \right\|_{L^{p(\cdot)}}\\
&\lesssim  \frac{\|{\mathbf{1}}_{Q_0}\|_{L^{p(\cdot)}}}{\|{\mathbf{1}}_{Q_r}\|_{L^{p(\cdot)}}}
\frac{\|{\mathbf{1}}_{Q_0}\|_{L^{(r_W)'p(\cdot)}}}{\|{\mathbf{1}}_{Q_0}\|_{L^{p(\cdot)}}} \left\|\,\left\| W(\cdot) A_{Q_r}^{-1} \right\|{\mathbf{1}}_{Q_0} \right\|_{L^{r_Wp(\cdot)}} \\
&\sim \frac{\|{\mathbf{1}}_{Q_0}\|_{L^{p(\cdot)}}}{\|{\mathbf{1}}_{Q_r}\|_{L^{p(\cdot)}}}
\frac{1}{\|{\mathbf{1}}_{Q_0}\|_{L^{r_Wp(\cdot)}}} \left\|\,\left\| W(\cdot) A_{Q_r}^{-1} \right\|{\mathbf{1}}_{Q_0} \right\|_{L^{r_Wp(\cdot)}} ,
\end{align*}
where $r_W$ is the same as in Lemma \ref{reverse Holder}.
Applying this and Lemmas \ref{con f}, \ref{reverse Holder}, and \ref{eq reduc M},
we conclude that
\begin{align}\label{H dense eq 13}
\frac{\|{\mathbf{1}}_{Q_0}\|_{L^{p(\cdot)}}}{\|{\mathbf{1}}_{Q_r}\|_{L^{p(\cdot)}}} \left\|A_{Q_0} A_{Q_r}^{-1}\right\|
&\lesssim \left[\frac{\|{\mathbf{1}}_{Q_0}\|_{L^{p(\cdot)}}}{\|{\mathbf{1}}_{Q_r}\|_{L^{p(\cdot)}}}\right]^{1 - \frac{1}{r_W}}
\frac{1}{\|{\mathbf{1}}_{Q_r}\|_{L^{r_Wp(\cdot)}}} \left\|\,\left\| W(\cdot) A_{Q_r}^{-1} \right\|{\mathbf{1}}_{Q_r} \right\|_{L^{r_Wp(\cdot)}}{\nonumber} \\
&\lesssim \left[\frac{\|{\mathbf{1}}_{Q_0}\|_{L^{p(\cdot)}}}{\|{\mathbf{1}}_{Q_r}\|_{L^{p(\cdot)}}}\right]^{1 - \frac{1}{r_W}}
\frac{1}{\|{\mathbf{1}}_{Q_r}\|_{L^{p(\cdot)}}} \left\|\,\left\| W(\cdot) A_{Q_r}^{-1} \right\|{\mathbf{1}}_{Q_r} \right\|_{L^{p(\cdot)}}{\nonumber}\\
&\sim \left[\frac{\|{\mathbf{1}}_{Q_0}\|_{L^{p(\cdot)}}}{\|{\mathbf{1}}_{Q_r}\|_{L^{p(\cdot)}}}\right]^{1 - \frac{1}{r_W}} \left\|A_{Q_r} A_{Q_r}^{-1}\right\|
 = \left[\frac{\|{\mathbf{1}}_{Q_0}\|_{L^{p(\cdot)}}}{\|{\mathbf{1}}_{Q_r}\|_{L^{p(\cdot)}}}\right]^{1 - \frac{1}{r_W}}.
\end{align}
Furthermore, from Lemma \ref{eq reduc M}, we deduce that,
for any convex body $F$ and any cube $Q$ in ${\mathbb{R}^n}$,
\begin{align*}
\left|A_Q F\right| = \sup_{\vec{z}\in F} \left|A_Q \vec{z}\right|
\sim \frac{1}{\|{\mathbf{1}}_Q\|_{L^{p(\cdot)}}} \sup_{\vec{z}\in F} \left\| W(\cdot)\vec{z} {\mathbf{1}}_Q \right\|_{L^{p(\cdot)}}
\leq \frac{1}{\|{\mathbf{1}}_Q\|_{L^{p(\cdot)}}} \left\| \left|W(\cdot)F\right|{\mathbf{1}}_Q \right\|_{L^{p(\cdot)}}.
\end{align*}
Combining this with \eqref{H dense eq 4}, \eqref{H dense eq 13}, and Theorem \ref{H equal},
we conclude that, for any $r\in (1,\infty)$,
\begin{align*}
&\left\|{\mathbf{1}}_{Q_0}\right\|_{L^{p(\cdot)}} \left|A_{Q_0} M^{{\mathcal K}}_l \left( \vec{f}, \phi,x,\sqrt{n}r \right)\right|\\
&\quad \leq \frac{\|{\mathbf{1}}_{Q_0}\|_{L^{p(\cdot)}}}{\|{\mathbf{1}}_{Q_r}\|_{L^{p(\cdot)}}} \left\|A_{Q_0} A_{Q_r}^{-1}\right\| \left\|{\mathbf{1}}_{Q_r}\right\|_{L^{p(\cdot)}}\left|A_{Q_r} M^{{\mathcal K}}_l \left( \vec{f}, \phi,x,\sqrt{n}r \right)\right|\\
&\quad \lesssim \left[\frac{\|{\mathbf{1}}_{Q_0}\|_{L^{p(\cdot)}}}{\|{\mathbf{1}}_{Q_r}\|_{L^{p(\cdot)}}}\right]^{1 - \frac{1}{r_W}} \left\|\, \left|W(\cdot)M^{{\mathcal K}}_l \left( \vec{f}, \phi,x,\sqrt{n}r \right)\right|{\mathbf{1}}_{Q_r} \right\|_{L^{p(\cdot)}}\\
&\quad \lesssim \left[\frac{\|{\mathbf{1}}_{Q_0}\|_{L^{p(\cdot)}}}{\|{\mathbf{1}}_{Q_r}\|_{L^{p(\cdot)}}}\right]^{1 - \frac{1}{r_W}} \left\|\, \left|W(\cdot)M^{{\mathcal K}}_l \left( \vec{f}, \phi,\cdot,\sqrt{n}r \right)\right|{\mathbf{1}}_{Q_r} \right\|_{L^{p(\cdot)}}\\
&\quad \leq \left[\frac{\|{\mathbf{1}}_{Q_0}\|_{L^{p(\cdot)}}}{\|{\mathbf{1}}_{Q_r}\|_{L^{p(\cdot)}}}\right]^{1 - \frac{1}{r_W}} \left\|\, \left|\left(M^{\ast\ast}_l\right)_W \left( \vec{f}, \phi\right)\right|{\mathbf{1}}_{Q_r} \right\|_{L^{p(\cdot)}}
\lesssim \left[\frac{\|{\mathbf{1}}_{Q_0}\|_{L^{p(\cdot)}}}{\|{\mathbf{1}}_{Q_r}\|_{L^{p(\cdot)}}}\right]^{1 - \frac{1}{r_W}} \left\| \vec{f} \right\|_{H^{p(\cdot)}_W},
\end{align*}
which further implies that
$\lim_{r\to \infty} |M_l^{{\mathcal K}} ( \vec{f}, \phi,x,r )| = 0. $
Using this and the second inclusion of \eqref{2102},
we conclude that, for almost every $x\in{\mathbb{R}^n}$,
\begin{align*}
(M_1^*)_W\left(\vec{f} \ast \phi_s,\psi\right)(x)
\lesssim \left|W(x) M_l^{{\mathcal K}} \left(\vec{f}, \phi , x, s\right)\right|
+ \sup_{t\in [s,\infty)} \left|W(x) M_l^{{\mathcal K}} \left(\vec{f}, \psi , x, t\right)\right| \to 0
\end{align*}
as $s \to \infty$. This finishes the estimate of $II(s)$
and hence the proof of the claim \eqref{H dense eq 1}.

Now, let $\vec{f}_{(s)} := \vec{f} \ast \phi_s - \vec{f} \ast \phi_{\frac1s}$.
Then, by the definition of $\phi$, we find that $\widehat{\vec{f}}_{(s)}$ has
a compact support and vanishes at the origin.
Moreover, using \eqref{H dense eq 1}, we conclude that
$\vec{f}_{(s)} \to \vec{f}$ in $H^{p(\cdot)}_W$ as $s\to0^+$.
Thus, the class of all functions $\vec g\in H^{p(\cdot)}_W$,
whose Fourier transforms have compact supports and vanish in a neighborhood
of the origin, is dense in $H^{p(\cdot)}_W$.

It remains to show that every element of this class can be approximated by
elements of $(\widehat{\mathcal D}_0)^m$ in $H^{p(\cdot)}_W$. To this end,
fix such a function $\vec g$. Let $\eta\in \mathcal{S}$ satisfy $\eta(0)=1$ and
${\mathop\mathrm{\,supp\,}} \widehat{\eta} \subset B(\mathbf{0},1)$.
For any $\epsilon\in (0,\infty)$ and $x\in{\mathbb{R}^n}$, let
$\vec{g}^{(\epsilon)}(x) := \vec{g}(x)\eta(\epsilon x)$
and hence $\widehat{\vec{g}^{(\epsilon)}} = \widehat{\vec{g}}\ast (\widehat{\eta})_\epsilon$.
By the compactness of ${\mathop\mathrm{\,supp\,}} \widehat{\vec g}$ and ${\mathop\mathrm{\,supp\,}}\widehat{\eta}$,
we conclude that $\widehat{\vec g^{(\epsilon)}}$ has compact support. Moreover, since
$\widehat{\vec g}$ vanishes in a neighborhood of the origin and
${\mathop\mathrm{\,supp\,}}\widehat{\eta}$ is compact, we deduce that
there exists a ball $B(\mathbf0,r)$ with $r\in(0,1)$
and $\epsilon_0\in(0,\infty)$ small enough
such that, for any $\epsilon\in(0,\epsilon_0)$,
\begin{align}\label{1041}
\widehat{\vec g^{(\epsilon)}}=0 \text{ in } B(\mathbf0,r).
\end{align}
Consequently,
$\vec g^{(\epsilon)}\in(\widehat{\mathcal D}_0)^m.$
Now, we claim that
$\|\vec g-\vec g^{(\epsilon)}\|_{H^{p(\cdot)}_W}\to0$ as
$\epsilon\to0^+.$
From the assumption that $ {\mathop\mathrm{\,supp\,}} \widehat{\vec g}$ is compact
and \cite[Theorem 2.3.21]{g14}, it follows that $\vec{g}$ is a function
and, moreover, there exists $M\in\mathbb Z_+$ such that,
for any $x\in{\mathbb{R}^n}$,
\begin{align}\label{H dense eq 10}
|\vec g(x)| \lesssim(1+|x|)^M,
\end{align}
where the implicit positive constant is independent of $x$.
Let $\phi\in {\mathcal S}$ satisfy that ${\mathop\mathrm{\,supp\,}} \widehat{\phi}$ has compact support
and $\int_{{\mathbb{R}^n}} \phi(x)\,dx = 1$.
Then, by Theorem \ref{H equal}, we only need to show that
\begin{align}\label{H dense eq 8}
\left\|M_W\left(\vec{g} - \vec{g}^{(\epsilon)}, \phi\right)\right\|_{L^{p(\cdot)}} \to 0
\ \ \text{as}\ \ \epsilon\to 0^+.
\end{align}
For any $\epsilon,t\in (0,\infty)$ and $x\in{\mathbb{R}^n}$, let
\begin{align}\label{H dense eq 5}
\vec{H}^{(\epsilon)}(x,t) := \left[ \left( \vec{g} - \vec{g}^{(\epsilon)} \right)  \ast \phi_t \right](x)
= \int_{{\mathbb{R}^n}} \vec{g}(x-y) \left[ 1 - \eta\left(\epsilon[x-y]\right) \right]\phi_t(y)\,dy.
\end{align}
Since $ \widehat{\phi} $ has compact support and \eqref{1041} holds
for any $\epsilon\in(0,\epsilon_0)$,
we infer that there exists $t_0\in (0,\infty)$ large enough
such that, for any $\epsilon\in (0,\epsilon_0)$, $t\in (t_0,\infty)$, and $\xi\in{\mathbb{R}^n}$,
$$
\left[ \widehat{\vec{g}}(\xi) - \widehat{\vec{g}^{(\epsilon)}}(\xi) \right] \widehat{\phi}(t\xi) =\vec{0},
$$
which further implies that, for any $\epsilon\in (0,\epsilon_0)$, $t\in (t_0,\infty)$, and $x\in{\mathbb{R}^n}$,
$ \vec{H}^{(\epsilon)}(x,t) = \vec{0}$.
This, combined with the definition of $(M_1)_W$,
further implies that, for any $\epsilon\in (0,\epsilon_0)$ and $x\in{\mathbb{R}^n}$,
\begin{align}\label{H dense eq 7}
M_W \left( \vec{g} - \vec{g}^{(\epsilon)}, \phi \right)(x)
= \sup_{t\in (0,t_0]} \left|W(x) \vec{H}^{(\epsilon)}(x,t)\right|.
\end{align}
Using Taylor's formula and $\eta \in {\mathcal S}$,
we obtain, for any $x,y\in{\mathbb{R}^n}$ and $\epsilon\in (0,\infty)$,
\begin{align}\label{H dense eq 6}
1 - \eta\left(\epsilon(x-y)\right)
&= 1- \eta(\epsilon x) - \sum_{0<|\alpha|<d}(-\epsilon)^{|\alpha|}
\frac{y^\alpha}{\alpha!}D^\alpha\eta(\epsilon x)
+R(\epsilon x,\epsilon y),
\end{align}
where $d\in\mathbb Z_+$ will be determined later. Here $R(\cdot,\cdot)$ is the Taylor remainder
satisfying, for any $x,y\in{\mathbb{R}^n}$,
$|R(x,y)|\lesssim |y|^d,$
where the implicit positive constant depends on $d$. Moreover, from
$\eta\in{\mathcal S}$ and \eqref{H dense eq 6},
we infer that, for any $\kappa\in(0,\infty)$ and $x,y\in{\mathbb{R}^n}$ with $|y|\leq |x|/2$,
\begin{align}\label{H dense eq 11}
|R(x,y)| \lesssim |y|^d(1+|x|)^{-\kappa},
\end{align}
where the implicit positive constant depends on $d$ and $\kappa$.
Using \eqref{H dense eq 5} and \eqref{H dense eq 6}, we obtain, for any
$x\in{\mathbb{R}^n}$ and $\epsilon,t\in(0,\infty)$,
\begin{align*}
\vec{H}^{(\epsilon)}(x,t)
&= [1-\eta(\epsilon x)]
\int_{{\mathbb{R}^n}}\phi_t(y)\vec g(x-y)\,dy  \\
&\quad
-\sum_{0<|\alpha|<d}
(-\epsilon t)^{|\alpha|}\frac{1}{\alpha!}D^\alpha\eta(\epsilon x)
\int_{{\mathbb{R}^n}}\left(\frac{y}{t}\right)^\alpha\phi_t(y)\vec g(x-y)\,dy \\
&\quad
+\int_{{\mathbb{R}^n}}R(\epsilon x,\epsilon y)\phi_t(y)\vec g(x-y)\,dy \\
&=: \vec{H}_0^{(\epsilon)}(x,t)
+\sum_{0<|\alpha|<d}\vec{H}_{1,\alpha}^{(\epsilon)}(x,t)
+ \vec{H}_2^{(\epsilon)}(x,t).
\end{align*}
This, together with \eqref{H dense eq 7}, further implies that, for any
$x\in{\mathbb{R}^n}$ and $\epsilon\in(0,\epsilon_0)$,
\begin{align}\label{H dense eq 14}
M_W\left( \vec{g} - \vec{g}^{(\epsilon)},\phi \right)(x)
& = \sup_{t\in(0,t_0]}\left|W(x)\vec{H}^{(\epsilon)}(x,t)\right|{\nonumber} \\
& \leq \sup_{t\in(0,t_0]}\left|W(x)\vec{H}_0^{(\epsilon)}(x,t)\right|
+\sum_{0<|\alpha|<d}\sup_{t\in(0,t_0]}
\left|W(x)\vec{H}_{1,\alpha}^{(\epsilon)}(x,t)\right|{\nonumber} \\
&\quad +\sup_{t\in(0,t_0]}\left|W(x)\vec{H}_2^{(\epsilon)}(x,t)\right|{\nonumber} \\
&=: N_0^{(\epsilon)}(x)
+\sum_{0<|\alpha|<d}N_{1,\alpha}^{(\epsilon)}(x)
+N_2^{(\epsilon)}(x).
\end{align}

We first estimate $N_0^{(\epsilon)}$. By the definition of $\vec{H}_0^{(\epsilon)}$,
for any $x\in{\mathbb{R}^n}$,
$$
N_0^{(\epsilon)}(x)
\leq \left|1-\eta(\epsilon x)\right|\sup_{t\in(0,t_0]}
\left|W(x)(\vec g\ast\phi_t)(x)\right|
\leq \left|1-\eta(\epsilon x)\right|M_W(\vec g,\phi)(x).
$$
Since $\eta(0)=1$ and $\eta\in \mathcal S$,
we deduce that, for any $x\in{\mathbb{R}^n}$, $|1-\eta(\epsilon x)|\to0$ as
$\epsilon\to0^+$. By Theorem \ref{H equal} and
the Lebesgue dominated convergence theorem on
variable Lebesgue spaces, we find that
\begin{align}\label{H dense eq N0}
\left\|N_0^{(\epsilon)}\right\|_{L^{p(\cdot)}}\to0
\quad\text{as } \epsilon\to0^+.
\end{align}

Next, we estimate $N_{1,\alpha}^{(\epsilon)}$. For any $\alpha\in{\mathbb{Z}}_+^n$ and $x\in{\mathbb{R}^n}$,
let $\phi^{(\alpha)}(x):=x^\alpha\phi(x)$. Using this, $\eta\in{\mathcal S}$, and the definition
of $\vec{H}_{1,\alpha}^{(\epsilon)}(x,t)$, we conclude that, for any
$\epsilon,t\in(0,\infty)$, $0<|\alpha|<d$, and $x\in{\mathbb{R}^n}$,
\begin{align}\label{H dense eq 9}
N_{1,\alpha}^{(\epsilon)}(x)
&=\sup_{t\in(0,t_0]}\left|W(x)\vec{H}_{1,\alpha}^{(\epsilon)}(x,t)\right|
\lesssim \epsilon^{|\alpha|}\sup_{t\in(0,t_0]}t^{|\alpha|}
\left|W(x)\vec g\ast\left(\phi^{(\alpha)}\right)_t(x)\right|
\lesssim \epsilon^{|\alpha|}M_W(\vec g,\phi^{(\alpha)})(x).
\end{align}
From the assumptions that $\phi\in{\mathcal S}$ and $\vec g\in H^{p(\cdot)}_W$, we infer that
$\|M_W(\vec g,\phi^{(\alpha)})\|_{L^{p(\cdot)}}<\infty,$
which, together with \eqref{H dense eq 9}, further implies that, for any $\alpha\in{\mathbb{Z}}_+^n$ with
$0<|\alpha|<d$,
\begin{align}\label{H dense eq N1}
\|N_{1,\alpha}^{(\epsilon)}\|_{L^{p(\cdot)}}\to0
\quad\text{as } \epsilon\to0^+.
\end{align}

Finally, we estimate $N_2^{(\epsilon)}$.
By Lemma \ref{f dec eq le}, we find that there exists $l'\in(0,\infty)$ large enough such that
\begin{align}\label{H dense eq W decay}
\left\|(1+|\cdot|)^{-l'}\|W(\cdot)\|\right\|_{L^{p(\cdot)}}<\infty.
\end{align}
Let $\kappa>M+l'$, $d>\kappa+1$, and $l>2M+d+n+l'.$
Using the conclusion in
\cite[pp.\,109--110]{st89}, \eqref{H dense eq 10}, and
\eqref{H dense eq 11}, we obtain, for any $\epsilon\in(0,\epsilon_0)$,
$t\in(0,t_0]$, and $x\in{\mathbb{R}^n}$,
\begin{align*}
\left|\vec{H}_2^{(\epsilon)}(x,t)\right|
\lesssim
t^d\left[
\epsilon^{d-\kappa}(1+|x|)^{M-\kappa}
+\epsilon^d(1+|x|)^{2M+d+n-l}
\right].
\end{align*}
From this and $t\in(0,t_0]$, we further deduce that
$$
N_2^{(\epsilon)}(x)
\lesssim
\left[
\epsilon^{d-\kappa}(1+|x|)^{-l'}
+\epsilon^d(1+|x|)^{-l'}
\right]\|W(x)\|.
$$
Combining this with \eqref{H dense eq W decay}, we conclude that
\begin{align*}
\|N_2^{(\epsilon)}\|_{L^{p(\cdot)}}
&\lesssim
\epsilon^{d-\kappa}
\left\|(1+|\cdot|)^{-l'}|W(\cdot)|\right\|_{L^{p(\cdot)}}
+\epsilon^d
\left\|(1+|\cdot|)^{-l'}|W(\cdot)|\right\|_{L^{p(\cdot)}}
\lesssim \epsilon^{d-\kappa}+\epsilon^d
\to 0
\end{align*}
as $\epsilon\to 0^+$. This, together with \eqref{H dense eq 14}, \eqref{H dense eq N0},
and \eqref{H dense eq N1},
further implies that
\begin{align*}
\left\|M_W\left(\vec{g} - \vec{g}^{(\epsilon)}, \phi\right)\right\|_{L^{p(\cdot)}}
\lesssim \left\|N_0^{(\epsilon)}\right\|_{L^{p(\cdot)}}
+ \sum_{0< |\alpha| <d} \left\|N_{1,\alpha}^{(\epsilon)}\right\|_{L^{p(\cdot)}}
+ \left\|N_2^{(\epsilon)}\right\|_{L^{p(\cdot)}} \to 0
\end{align*}
as $\epsilon \to 0^+$ and hence \eqref{H dense eq 8}.
This finishes the proof of Proposition \ref{H dense}.
\end{proof}

For any $s\in {\mathbb{Z}}_+$, let
$${\mathcal S}^{(s)}:= \left\{\vec{f}\in ({\mathcal S})^m :\ \int_{{\mathbb{R}^n}} x^\alpha \vec{f}(x)\,dx = 0,\ \alpha \in {\mathbb{Z}}_+^n\ \text{with}\ |\alpha| \leq s\right\} $$
and
$$\mathcal{O}_s := \left\{\vec{f}\in \left(C_{\rm c}^\infty\right)^m :\ \int_{{\mathbb{R}^n}} x^\alpha \vec{f}(x)\,dx = 0,\ \alpha \in {\mathbb{Z}}_+^n\ \text{with}\ |\alpha| \leq s \right\}. $$
\begin{proposition}\label{H dense O}
Let $p(\cdot) \in {\mathcal P}_0\cap LH$, $W\in {\mathscr A}_{p(\cdot),\infty}$, and $s\in{\mathbb{Z}}_+$.
Then $\mathcal{O}_s\cap H^{p(\cdot)}_W$ is dense in $H^{p(\cdot)}_W$.
\end{proposition}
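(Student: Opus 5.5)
The plan is to reduce to Proposition \ref{H dense} and then truncate a Schwartz function smoothly in space while keeping its moments. Since $\mathcal O_{s'}\subset\mathcal O_s$ whenever $s'\ge s$, it suffices to prove the statement with $s$ replaced by an arbitrarily large $s'\in\mathbb Z_+$. Fix $\vec f\in H^{p(\cdot)}_W$ and $\delta>0$. By Proposition \ref{H dense}, there is $\vec g\in(\widehat{\mathcal D}_0)^m\cap H^{p(\cdot)}_W$ with $\|\vec f-\vec g\|_{H^{p(\cdot)}_W}<\delta$. By the quasi-triangle inequality, it then remains to approximate such a $\vec g$ by elements of $\mathcal O_{s'}$. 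Note that $\vec g\in(\mathcal S_\infty)^m$, so all moments of $\vec g$ vanish.

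\textbf{Construction.} Fix $\chi\in C^\infty_{\rm c}$ with $\chi=1$ on $B(\mathbf 0,1)$, and fix $\{\theta_\beta\}_{|\beta|\le s'}\subset C^\infty_{\rm c}(B(\mathbf 0,1))$ dual to the monomials, that is,
\[
\int_{\mathbb R^n}x^\alpha\theta_\beta(x)\,dx=\delta_{\alpha\beta}\quad\text{for all }|\alpha|,|\beta|\le s'.
\]
Such a family exists because the monomials restricted to a ball are linearly independent. For $R>1$, set
\[
\vec g_R:=\vec g\,\chi(\cdot/R)+\sum_{|\beta|\le s'}\vec c_\beta(R)\,\theta_\beta,
\qquad
\vec c_\beta(R):=\int_{\mathbb R^n}x^\beta\,\vec g(x)\bigl[1-\chi(x/R)\bigr]\,dx .
\]
Because the moments of $\vec g$ vanish, $\vec g_R\in\mathcal O_{s'}$. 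Moreover, $\vec h_R:=\vec g-\vec g_R$ is Schwartz and all its moments up to order $s'$ vanish. Since $\vec g$ decays rapidly, every Schwartz seminorm
\[
\sup_x(1+|x|)^{K}|\partial^\gamma\vec h_R(x)|
\]
with $K,|\gamma|$ bounded in terms of $s'$ tends to $0$ as $R\to\infty$; in particular $|\vec c_\beta(R)|\lesssim_K R^{-K}$.

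\textbf{Pointwise decay.} Fix $\psi\in\mathcal S$ with $\int\psi\neq0$. The key estimate is the standard molecule bound: for a Schwartz function $\vec h$ with vanishing moments up to order $s'$,
\[
\sup_{t>0}\bigl|\psi_t*\vec h(x)\bigr|\lesssim \mathcal N(\vec h)\,(1+|x|)^{-(n+s'+1)},
\]
where $\mathcal N$ is a finite Schwartz seminorm. For $t\le 1+|x|$, I would argue with the decay of $\vec h$ and $\psi$ by splitting $|y|\le|x|/2$ and $|y|>|x|/2$. For $t>1+|x|$, I would subtract the degree-$s'$ Taylor polynomial of $\psi_t(x-\cdot)$ at $\mathbf 0$, which is allowed by the vanishing moments, and gain $t^{-n-s'-1}\le(1+|x|)^{-n-s'-1}$. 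This yields
\[
M_W(\vec h_R,\psi)(x)\le\|W(x)\|\sup_{t>0}|\psi_t*\vec h_R(x)|\lesssim\mathcal N(\vec h_R)\,\|W(x)\|(1+|x|)^{-(n+s'+1)}.
\]

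\textbf{Integrability of the weight (main step).} Next I need $\|W(\cdot)\|(1+|\cdot|)^{-L}\in L^{p(\cdot)}$ for $L:=n+s'+1$. Apply Lemma \ref{f dec eq le} to the single cube $Q_0:=Q(\mathbf 0,1)$ with $\lambda=1$; this requires $L>d^{\rm upper}_{p(\cdot),\infty}(W)+\frac nr$, which holds once $s'$ is large. Combining this with
\[
\|W(x)\|\le\|W(x)A_{Q_0}^{-1}\|\,\|A_{Q_0}\|
\]
gives
\[
\bigl\|\,\|W(\cdot)\|(1+|\cdot|)^{-L}\bigr\|_{L^{p(\cdot)}}\lesssim\|\mathbf 1_{Q_0}\|_{L^{p(\cdot)}}<\infty,
\]
exactly as in \eqref{H dense eq W decay}. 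This is the only place where the weight enters, and it is the step I expect to be delicate; choosing $s'$ large is what makes it go through.

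\textbf{Conclusion.} From the two bounds above,
\[
\|\vec g-\vec g_R\|_{H^{p(\cdot)}_W}\sim\|M_W(\vec h_R,\psi)\|_{L^{p(\cdot)}}\lesssim\mathcal N(\vec h_R)\to0\quad\text{as }R\to\infty,
\]
where the first equivalence is Theorem \ref{H equal}. In particular $\vec g_R\in\mathcal O_{s'}\cap H^{p(\cdot)}_W$, and together with the first step this proves that $\mathcal O_s\cap H^{p(\cdot)}_W$ is dense in $H^{p(\cdot)}_W$.
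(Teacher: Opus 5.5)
Your route differs from the paper's and works, except for one sub-step that fails as written: the molecule bound for $t\le 1+|x|$. Size estimates alone cannot give $(1+|x|)^{-(n+s'+1)}$ there. For $|y|\le |x|/2$ and $t\approx |x|$ one has $|\psi_t(x-y)|\approx |x|^{-n}$. So splitting $|y|\le|x|/2$ versus $|y|>|x|/2$ without cancellation only yields $\sup_{t\le 1+|x|}|\psi_t*\vec h(x)|\lesssim (1+|x|)^{-n}\|\vec h\|_{L^1}$. This is insufficient, because Lemma \ref{f dec eq le} needs $L>d^{\rm upper}_{p(\cdot),\infty}(W)+\frac nr$, which always exceeds $n$ since $r\le 1$.

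The repair is to use the vanishing moments for every $t>0$. Write $\psi_t*\vec h(x)=\int[\psi_t(x-y)-q_{(t,x)}(y)]\vec h(y)\,dy$, where $q_{(t,x)}$ is the degree-$s'$ Taylor polynomial of $\psi_t(x-\cdot)$ at $\mathbf 0$. On $|y|\le |x|/2$, the decay of $\nabla^{s'+1}\psi$ bounds the remainder uniformly in $t$ by
\[
C|y|^{s'+1}t^{-n-s'-1}\Bigl(1+\frac{|x|}{2t}\Bigr)^{-(n+s'+1)}\le C'|y|^{s'+1}|x|^{-(n+s'+1)}.
\]
On $|y|>|x|/2$, bound $\psi_t(x-\cdot)$ and $q_{(t,x)}$ separately. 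Use $\|\psi_t\|_{L^1}=\|\psi\|_{L^1}$, the coefficient bounds $t^{-n-|\alpha|}|\partial^\alpha\psi(x/t)|\lesssim(t+|x|)^{-n-|\alpha|}$, and the rapid decay of $\vec h$. This gives your estimate with $\mathcal N(\vec h)=\sup_y(1+|y|)^K|\vec h(y)|$ for $K$ large, and the rest of your argument goes through. The weight step you flagged as delicate is routine (it is exactly \eqref{H dense eq W decay}); this cancellation is where care is needed.

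Comparison with the paper: the paper also starts from Proposition \ref{H dense}, and takes $\vec f\in\mathcal S^{(s)}\cap H^{p(\cdot)}_W$ with $s\ge N+1$. It truncates by $\vec f_k=[\vec f-P_k(\vec f)]\eta_k$, where $\eta_k$ is a bump on $Q(\mathbf 0,2k)^\ast$ and $P_k$ is the $\widetilde\eta_k$-weighted projection onto $\mathscr P_s$. So its moment correction is a polynomial spread over the large cube, not your fixed bumps $\theta_\beta$ near the origin. It then shows $M_W(\vec f-\vec f_k,\psi)\to0$ pointwise and dominates it by $(M_N)_W(\vec f)$ via Lemmas \ref{stop P} and \ref{stop f}. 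It concludes by dominated convergence in $L^{p(\cdot)}$, so the weight enters only through $\|(M_N)_W(\vec f)\|_{L^{p(\cdot)}}<\infty$. That argument reuses the projection machinery needed anyway for the atomic decomposition and requires only $s\ge N+1$.

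Your argument avoids that machinery and the grand maximal domination, and gives a quantitative rate $\mathcal N(\vec h_R)\to0$. As a by-product it shows that every Schwartz function with vanishing moments up to order $s'$ lies in $H^{p(\cdot)}_W$. The price is taking $s'$ large in terms of $d^{\rm upper}_{p(\cdot),\infty}(W)$ and $\frac nr$, which is harmless by the nesting $\mathcal O_{s'}\subset\mathcal O_s$. You also need Lemma \ref{f dec eq le} for the integrability of $\|W(\cdot)\|(1+|\cdot|)^{-L}$.
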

\begin{proof}
Let $N\in(\frac{n}{\alpha_W},\infty)\cap\mathbb N$.
Note that, for any $s_1,s_2\in{\mathbb{Z}}_+$ with $s_1\geq s_2$,
$ \mathcal{O}_{s_1} \subset \mathcal{O}_{s_2} $.
Thus, without loss of generality, we may only consider the case $s\geq N+1$.
Moreover, by Proposition \ref{H dense} and the fact that
$(\widehat{\mathcal{D}}_0)^m\subset {\mathcal S}^{(s)}$,
we find that ${\mathcal S}^{(s)}\cap H^{p(\cdot)}_W$ is dense in $H^{p(\cdot)}_W$
and hence, to show Proposition \ref{H dense O},
we only need to prove that, for any $\vec{f}\in {\mathcal S}^{(s)}\cap H^{p(\cdot)}_W$,
there exists a sequence of functions $\{\vec{f}_k\}_{k\in{\mathbb{N}}}\subset \mathcal{O}_s$ such that
$\vec{f}_k \to \vec{f}$ in $H^{p(\cdot)}_W$ as $k\to\infty$.

Fix $\vec{f}\in {\mathcal S}^{(s)}\cap H^{p(\cdot)}_W$.
For any $k\in{\mathbb{N}}$, let $Q_k := Q(\mathbf{0},2k)$ and
$\eta_{k}:=\eta_{Q_k}$ satisfy
${\mathop\mathrm{\,supp\,}} \eta_k \subset Q_k^\ast$
and both \eqref{1220} and \eqref{1221} with $L:=Q_k$.
Using Lemmas \ref{stop e} and \ref{stop P},
we obtain $P_k:=P_{Q_k}$ as in Lemma \ref{stop P}
with $L := Q_k$ for any $k\in{\mathbb{N}}$.
Assume that $\vec{f}_k := [\vec{f} - P_k(\vec{f})]\eta_k $ for any $k\in{\mathbb{N}}$.
Then, by Lemma \ref{stop e}, we find that, for any polynomial $q\in{\mathscr P}_s$,
\begin{align*}
\left\langle \vec{f}_k, q \right\rangle = \left\langle \left[ P_k\left(\vec{f}\right) - \vec{f} \right]\eta_k , q \right\rangle = \vec{0}
\end{align*}
and hence $\vec{f}_k \in \mathcal{O}_s$ for any $k\in{\mathbb{N}}$.

Now, for any $k\in{\mathbb{N}}$, let $\vec{h}_k := (1 - \eta_k) \vec{f}$
and hence $\vec{f} = \vec{f}_k + \vec{h}_k + P_k(\vec{f})\eta_k $.
To show the desired density, we only need to prove
\begin{align}\label{4.31x}
\left\|\vec h_k+P_k\left(\vec f\right)\eta_k\right\|_{H^{p(\cdot)}_W}\to0
\quad\text{as } k\to\infty.
\end{align}
To this end, let $\psi\in {\mathcal S}$ support in $B(\mathbf{0},1)$ and $\int_{{\mathbb{R}^n}} \psi(x)\,dx = 1$.
Let $x$ be any given point in ${\mathbb{R}^n}$. Fix $k > 2|x|$.
Using the assumption that ${\mathop\mathrm{\,supp\,}} \psi \subset B(\mathbf{0},1)$
and the fact that $\vec{h}_k(y) = 0$ for any $y\in Q_k$,
we conclude that, for any $t\in (0,k - |x|)$,
$\psi_t \ast \vec{h}_k(x) = \vec{0} $.
Then we consider the case where $t\in (k - |x|,\infty)$.
Observe that, by the assumption that $\vec{f} \in ({\mathcal S})^m$ and $\psi \in {\mathcal S}$,
for any $M\in[n+1,\infty)\cap{\mathbb{N}}$ and $y\in {\mathbb{R}^n}$,
$ |\vec f(y)|\lesssim \frac{1}{(1 + |y|)^M} $ and $\|\psi\|_{L^\infty} \lesssim 1$.
Applying these yields, for any $t\in [k - |x|,\infty)$,
\begin{align*}
\left| \vec{h}_k \ast \psi_t(x) \right|
& \leq \int_{{\mathbb{R}^n}} \left| \vec f(y) \right| \left|\psi_t(x-y)\right|\,dy
\lesssim \int_{{\mathbb{R}^n}} \frac{1}{(1 + |y|)^M} t^{-n} \,dy\\
&\leq \left(\frac{1}{k - |x|}\right)^n \int_{{\mathbb{R}^n}} \frac{1}{(1 + |y|)^M}\,dy \lesssim \left(\frac{1}{k - |x|}\right)^n,
\end{align*}
which further implies that
\begin{align}\label{H dense O eq 8}
M_W\left( \vec{h}_k,\psi \right)(x) \to 0\ \ \text{as}\ \ k\to\infty.
\end{align}
Moreover, using Lemmas \ref{stop eta} and \ref{stop e},
we find that, for any $k\in{\mathbb{N}}$ and $t\in (0,\infty)$,
\begin{align}\label{H dense O eq 4}
\left| \psi_t \ast \left( e_i^{(Q_k)} \eta_k \right)(x) \right|
\leq \int_{{\mathbb{R}^n}} \left|\psi_t(y)\right| \left| e_i^{(Q_k)} (x-y) \right| \left| \eta_k (x-y)\right|\,dy
\lesssim \|\psi\|_{L^1},
\end{align}
which, combined with the definition of $P_k$ and $\psi \in \mathcal{S}$, further implies that, for any $k\in{\mathbb{N}}$ and $t\in(0,\infty)$,
\begin{align}\label{H dense O eq 1}
\left|\psi_t\ast \left[\eta_k P_k\left(\vec{f}\right) \right](x)\right|
&\leq \sum_{i = 1}^M \left\langle \vec{f}, e_i^{(Q_k)} \widetilde{\eta}_k \right\rangle \left[\psi_t \ast \left( e_i^{(Q_k)} \eta_k \right)(x)\right]
\lesssim \sum_{i = 1}^M \left|\left\langle\vec{f}, e_i^{(Q_k)} \widetilde{\eta}_k \right\rangle\right|.
\end{align}
Note that, by the fact that $\vec{f} \in {\mathcal S}^{(s)}\subset (L^1)^m$ and Lemmas \ref{stop eta} and \ref{stop e},
we have
$|\langle\vec{f}, e_i^{(Q_k)} \widetilde{\eta}_k \rangle| \lesssim |Q_k|^{-1} \|\vec{f}\|_{L^1} \to 0$
as $k\to \infty. $
Applying this and \eqref{H dense O eq 1}, we conclude that
$M_W (P_k (\vec{f})\eta_k,\psi)(x) \to 0$ as $k\to \infty, $
which, together with \eqref{H dense O eq 8},
further implies that, for any given $x\in {\mathbb{R}^n}$,
\begin{align}\label{H dense O eq 10}
M_W \left(\vec{h}_k + P_k \left(\vec{f}\right)\eta_k,\psi\right)(x) \to 0 \ \ \text{as}\ \ k\to \infty.
\end{align}

Since $\vec{h}_k + P_k (\vec{f})\eta_k = \vec{f} - [\vec{f} - P_k(\vec{f})]\eta_k$,
it follows that, for any $x\in{\mathbb{R}^n}$,
\begin{align}\label{H dense O eq 12}
M_W \left(\vec{h}_k + P_k \left(\vec{f}\right)\eta_k,\psi\right)(x)
&\leq M_W \left(\vec{f},\psi\right)(x) + M_W \left(\left[\vec{f} - P_k(\vec{f})\right]\eta_k,\psi\right)(x) {\nonumber} \\
&\lesssim \left(M_N\right)_W \left( \vec{f} \right)(x) + M_W \left(\left[\vec{f} - P_k(\vec{f})\right]\eta_k,\psi\right)(x).
\end{align}
To show \eqref{4.31x}, we need to establish an estimate 
of $ M_W([\vec{f} - P_k(\vec{f})]\eta_k,\psi) $.
To this end, we first consider the case $ x\in Q_k^\ast $.
From the definition of $P_k$,
we deduce that, for any $t\in (0,\infty)$ and $x\in Q_k^\ast$,
\begin{align}\label{H dense O eq 5}
\psi_t\ast \left[\eta_k P_k\left(\vec{f}\right) \right](x)
&= \sum_{i = 1}^M \left\langle \vec{f}, e_i^{(Q_k)} \widetilde{\eta}_k \right\rangle \left[\psi_t \ast \left( e_i^{(Q_k)} \eta_k \right)(x)\right] {\nonumber} \\
&\in \sum_{i = 1}^M  \left|\psi_t \ast \left( e_i^{(Q_k)} \eta_k \right)(x)\right| {\mathcal K}\left(\left\langle \vec{f}, e_i^{(Q_k)} \widetilde{\eta}_k \right\rangle\right).
\end{align}
Moreover, by Lemma \ref{stop P} with $y := x$ and $L := Q_k$, we find that,
for any $x\in Q^*_k$,
\begin{align*}
\left\langle \vec{f}, e_i^{(Q_k)} \widetilde{\eta}_k \right\rangle
\in C \left( 2 + \frac{|x|}{2k} \right)^{N+n+1} M_N^{{\mathcal K}} \left(\vec{f}\right)(x)
\subset C M_N^{{\mathcal K}} \left(\vec{f}\right)(x),
\end{align*}
where $C$ is a positive constant independent of $x$.
This, together with  \eqref{H dense O eq 5} and \eqref{H dense O eq 4},
further implies that, for any $x\in Q^\ast_k$,
\begin{align*}
M_W\left( P_k\left( \vec{f} \right)\eta_k ,\psi \right)(x) \lesssim (M_N)_W( \vec{f})(x).
\end{align*}
Combining this and Lemma \ref{stop f}, we conclude that,
for any $x\in Q_k^\ast$,
\begin{align}\label{H dense O eq 11}
M_W \left(\left[\vec{f} - P_k(\vec{f})\right]\eta_k,\psi\right)(x)
\leq M_W \left(\vec{f}\eta_k,\psi\right)(x) + M_W \left(P_k\left(\vec{f}\right)\eta_k,\psi\right)(x)
\lesssim (M_N)_W( \vec{f})(x),
\end{align}
which is a desired estimate.
Next, we consider the case $x\in (Q_k^\ast)^\complement$.
Using Lemma \ref{stop f} with $y := x$ and the assumption $s \geq N+1$,
we find that
\begin{align*}
M_W \left(\left[\vec{f} - P_k(\vec{f})\right]\eta_k,\psi\right)(x)
&\lesssim \left( \frac{k}{k + |x|} \right)^{n+s+1} \left( 2 + \frac{|x|}{k} \right)^{N+n+1}
 \left(M_N\right)_W \left( \vec{f}\right)(x)
\lesssim \left(M_N\right)_W \left( \vec{f}\right)(x).
\end{align*}
By this, \eqref{H dense O eq 12}, and \eqref{H dense O eq 11},
we obtain, for any $k\in{\mathbb{N}}$,
\begin{align*}
M_W \left(\vec{h}_k + P_k \left(\vec{f}\right)\eta_k,\psi\right)(x) \lesssim \left(M_N\right)_W \left( \vec{f}\right)(x),
\end{align*}
which is also a desired estimate.
Applying this, Theorem \ref{H equal}, \eqref{H dense O eq 10},
and the Lebesgue dominated convergence theorem in the setting of variable Lebesgue spaces,
we conclude that
$$\left\|\vec{h}_k + P_k(\vec{f})\eta_k\right\|_{H^{p(\cdot)}_W}
 \sim \left\| M_W \left(\vec{h}_k + P_k \left(\vec{f}\right)\eta_k,\psi\right) \right\|_{L^{p(\cdot)}}
\to 0\ \
\text{as}\ \ k\to \infty, $$
i.e., \eqref{4.31x} holds.
This finishes the proof of Proposition \ref{H dense O}.
\end{proof}

For any $K\in {\mathcal K}_{\mathrm{cs}}$ and $\vec{v} \in {\mathbb{C}}^m$, let
\begin{align*}
\rho_{K}(\vec{v}) := \sup_{\vec{w} \in K} \left|\left\langle \vec{v}, \vec{w}\right\rangle\right|.
\end{align*}
Then, by \cite[pp.\,12--13]{bc22}, we find that $\rho_K$ is a semi-norm if and only if $K\in {\mathcal K}_{\mathrm{bcs}}$
and, moreover, $\rho_K$ is a norm if and only if $K\in {\mathcal K}_{\mathrm{abcs}}$.

Let $u\in(0,\infty)$. For any $F\in L^u_{\rm{loc}}(\mathcal K)$, we now introduce its corresponding $u$-th convex body reducing operator,
whose existence is guaranteed by the following lemma, which plays an essential role
in the remainder of this article.
\begin{lemma}\label{M u exist}
Let $u\in(0,\infty)$ and $F \in L^{u}_{\rm loc}({\mathcal K})$ with
$F(x) \in {\mathcal K}_{\mathrm{abcs}}$ for almost every $x\in{\mathbb{R}^n}$.
Then, for any cube $Q\subset {\mathbb{R}^n}$, there exists a positive definite and self-adjoint matrix
$M^{(u)}_{Q}$ such that, for any vector $\vec{z}\in {\mathbb{C}}^m$,
\begin{align*}
\left|M^{(u)}_{Q}\vec{z}\right| \sim \left[\fint_{Q} \rho_{F(x)}(\vec{z})^u \,dx\right]^\frac1u,
\end{align*}
where the positive equivalence constants are independent of $F$, $Q$, and $\vec{z}$.
Moreover, for any positive-definite and self-adjoint matrix $M\in M_m$,
\begin{align*}
\left\|M^{(u)}_{Q}M\right\| \sim \left[\fint_{Q} \left| MF(x) \right|^u \,dx\right]^\frac1u,
\end{align*}
where the positive equivalence constants are independent of $F$, $Q$, and $M$.
The above matrix $M^{(u)}_{Q}$ is called the \emph{$u$-th convex body reducing operator} of $F$ over $Q$.
\end{lemma}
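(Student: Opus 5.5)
The plan is to define a functional on $\mathbb{C}^m$ by
\[
\|\vec z\|_Q:=\Big[\fint_Q \rho_{F(x)}(\vec z)^u\,dx\Big]^{\frac1u}
\]
and to show that it is a quasi-norm, in fact equivalent to a genuine norm. We would then invoke the John ellipsoid theorem, in the form used for reducing operators in the matrix-weighted literature (e.g., \cite{bc22} and the proof of Lemma \ref{ext redu} in \cite{yyz25}), to produce the matrix $M^{(u)}_Q$.

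The first step is to verify the properties of $\|\cdot\|_Q$. Measurability of $x\mapsto\rho_{F(x)}(\vec z)$ follows from the measurability of $F$ by writing the supremum over a countable dense selection (Castaing representation). Since $F(x)\in\mathcal K_{\rm abcs}$ almost everywhere, each $\rho_{F(x)}$ is a norm, and $\rho_{F(x)}(\vec z)\le |F(x)||\vec z|$. Hence $|F|\in L^u_{\rm loc}$ gives finiteness of $\|\vec z\|_Q$. For $u\ge1$, Minkowski's inequality shows that $\|\cdot\|_Q$ is a norm. For $u<1$, the $u$-triangle inequality $\|\vec z+\vec w\|_Q^u\le\|\vec z\|_Q^u+\|\vec w\|_Q^u$ holds. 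Positivity (definiteness) holds because $\rho_{F(x)}(\vec z)>0$ for $\vec z\ne0$ almost everywhere, so the integral is positive.

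The second step is to pass from the quasi-norm to a norm. In finite dimensions, a $u$-norm $\|\cdot\|$ is equivalent to the Minkowski functional of the convex hull of its unit ball, with constants depending only on $m$ and $u$. Concretely, the unit ball $B$ satisfies $\operatorname{conv}(B)\subset C_{m,u}B$, by Carathéodory's theorem together with the $u$-triangle inequality applied to at most $2m+1$ points (real dimension $2m$). Applying John's theorem to the resulting symmetric convex body $\operatorname{conv}(B)$ (a complex-balanced set) yields an ellipsoid $E=\{\vec z:|A\vec z|\le1\}$ with $E\subset\operatorname{conv}(B)\subset\sqrt{2m}\,E$, where $A$ is positive definite and self-adjoint. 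Taking $M^{(u)}_Q:=A$ gives the first equivalence, with constants depending only on $m$ and $u$.

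The main obstacle is the matrix statement. We want $\|M^{(u)}_Q M\|\sim[\fint_Q|MF(x)|^u\,dx]^{1/u}$. Since $M$ is self-adjoint,
\[
|MF(x)|=\sup_{\vec v\in F(x)}\sup_{|\vec e|=1}|\langle \vec v, M\vec e\rangle|=\sup_{|\vec e|=1}\rho_{F(x)}(M\vec e).
\]
Writing $\vec e_1,\dots,\vec e_m$ for an orthonormal basis, the sublinearity of $\rho_{F(x)}$ gives
\[
|MF(x)|\sim\max_j\rho_{F(x)}(M\vec e_j),
\]
with constants depending only on $m$. This is where the argument must swap the supremum with the integral, and the finite basis is what makes that possible. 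Therefore
\[
\fint_Q|MF|^u\sim\sum_j\fint_Q\rho_F(M\vec e_j)^u\sim\sum_j|M^{(u)}_QM\vec e_j|^u\sim\|M^{(u)}_QM\|^u,
\]
using the first part of the lemma. All constants are independent of $F$, $Q$, and $M$.
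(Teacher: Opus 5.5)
Your proposal is correct and follows the paper's argument. The paper quotes \cite[Proposition 2.2]{bchyy26} for the existence of $M^{(u)}_Q$; your $u$-triangle inequality, Carath\'eodory convexification, and complex John ellipsoid steps give a self-contained proof of exactly that input, and your Castaing-representation treatment of the measurability of $x\mapsto\rho_{F(x)}(\vec z)$ supplies a detail the paper leaves implicit. The paper then derives the matrix equivalence just as you do: it tests on an orthonormal basis $\{\vec e_i\}_{i=1}^m$, uses the self-adjointness of $M$ to write $\rho_{F(x)}(M\vec e_i)=\sup_{\vec v\in F(x)}|\langle \vec e_i, M\vec v\rangle|$, and compares the resulting sum (your $\max_j$) with $|MF(x)|$.
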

\begin{proof}
Since $F(x) \in {\mathcal K}_{\mathrm{abcs}}$ for almost every $x\in {\mathbb{R}^n}$,
we infer that $\rho_{F(x)}$ is a norm of ${\mathbb{C}}^m$ for almost every $x\in{\mathbb{R}^n}$.
Using this and \cite[Proposition 2.2]{bchyy26},
we obtain, for any cube $Q$ in ${\mathbb{R}^n}$,
there exists a positive-definite and self-adjoint matrix
$M^{(u)}_{Q}$ such that, for any vector $\vec{z}\in {\mathbb{C}}^m$,
\begin{align}\label{M u exist eq 1}
\left|M^{(u)}_{Q}\vec{z}\right| \sim \left[\fint_{Q} \rho_{F(x)}(\vec{z})^u \,dx\right]^\frac1u.
\end{align}

Observe that, for any positive-definite and self-adjoint matrix $M\in M_m$, we have
$$\left\|M^{(u)}_{Q} M\right\| \sim \sum_{i = 1}^m \left\|M^{(u)}_{Q} M \vec{e}_i\right\|, $$
where $\{\vec{e}_i\}_{i = 1}^m$ is an orthonormal basis of ${\mathbb{C}}^m$.
Applying this and \eqref{M u exist eq 1},
we find that
\begin{align*}
\left\|M^{(u)}_{Q} M\right\| &\sim \sum_{i = 1}^m \left\|M^{(u)}_{Q} M \vec{e}_i\right\|
 \sim \sum_{i = 1}^m \left[\fint_{Q} \rho_{F(x)}(M \vec{e}_i) ^u \,dx\right]^\frac1u
\sim  \left\{\fint_{Q} \left[ \sum_{i = 1}^m\rho_{F(x)}(M \vec{e}_i) \right]^u \,dx\right\}^\frac1u \\
& = \left\{\fint_{Q} \left[ \sum_{i = 1}^m\sup_{\vec{v} \in F(x)}\left|\left\langle M \vec{e}_i, \vec{v}\right\rangle\right| \right]^u \,dx\right\}^\frac1u
 = \left\{\fint_{Q} \left[ \sum_{i = 1}^m\sup_{\vec{v} \in F(x)}\left|\left\langle\vec{e}_i, M\vec{v}\right\rangle\right| \right]^u \,dx\right\}^\frac1u\\
&= \left\{\fint_{Q} \left[ \sum_{i = 1}^m\sup_{\vec{v} \in MF(x)}\left|\left\langle\vec{e}_i, \vec{v}\right\rangle\right| \right]^u \,dx\right\}^\frac1u
\sim \left[\fint_{Q} \left| MF(x) \right|^u \,dx\right]^\frac1u.
\end{align*}
This finishes the proof of Lemma \ref{M u exist}.
\end{proof}

For any $\alpha \in(0,\infty)$, we define the \emph{$\alpha$-convexification maximal operator ${\mathcal M}^{(\alpha)}$} by setting,
for any $f\in L^\alpha_{\rm loc}$ and $x\in{\mathbb{R}^n}$,
\begin{align*}
{\mathcal M}^{(\alpha)} \left( f \right)(x) := \sup_{x\in Q} \left[\fint_Q \left|f(y)\right|^\alpha\,dy\right]^\frac1\alpha,
\end{align*}
where the supremum is taken over all cubes $Q$ containing $x$.
Let $p(\cdot)\in {\mathcal P}_0$ and $W$ be a matrix weight.
Then the \emph{$\alpha$-convexification reducing Christ--Goldberg
convex body maximal operator $\widetilde{{\mathcal M}}^{(\alpha)}_W$} is defined by setting,
for any $F\in L^{\alpha}_{\rm loc}({\mathcal K})$ and $x\in {\mathbb{R}^n}$,
$$\widetilde{{\mathcal M}}^{(\alpha)}_W(F) (x)
:= \sup_{x\in Q} \left[\fint_Q |A_Q W^{-1}(y)F(y)|^\alpha\,dy\right]^{\frac{1}{\alpha}}, $$
where the supremum is taken over all cubes $Q$ in ${\mathbb{R}^n}$ containing $x$
and $A_Q$ is the reducing operator of order $p(\cdot)$ for $W$.
We have the following boundedness of $\widetilde{{\mathcal M}}^{(\alpha)}_W$
form $L^{p(\cdot)}({\mathcal K})$ to $L^{p(\cdot)}$.
\begin{lemma}\label{bound max wid}
Let $p(\cdot) \in \mathcal{P}_0\cap LH$.
Then, for any $W \in \mathscr{A}_{p(\cdot),\infty}$,
there exists $u \in (0,1]$ such that,
for any $F\in L^{p(\cdot)}({\mathcal K})$,
\begin{align*}
\left\| \widetilde{{\mathcal M}}_{W}^{(u)} \left( F \right) \right\|_{L^{p(\cdot)}}
\lesssim \left\|F\right\|_{L^{p(\cdot)}(\mathcal K)},
\end{align*}
where the implicit positive constant is independent of $F$.
\end{lemma}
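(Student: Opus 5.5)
The plan is to reduce the reducing-operator maximal function $\widetilde{{\mathcal M}}^{(u)}_W$ pointwise to the Christ--Goldberg maximal function ${\mathcal M}^{(\alpha)}_W$, whose boundedness from $L^{p(\cdot)}({\mathcal K})$ to $L^{p(\cdot)}$ is Lemma \ref{bound max}. The key pointwise inequality is
$$ |A_Q\vec v|\lesssim \left[\fint_Q |W(x)\vec v|^{\beta}\,dx\right]^{\frac1\beta},\qquad \vec v\in\mathbb C^m, $$
for some small $\beta\in(0,1]$, uniformly in cubes $Q$. This is a reverse-type estimate: by \eqref{eq redu}, $|A_Q\vec v|$ is an $L^{p(\cdot)}$-average of $|W\vec v|$, and we want it controlled by a small-power average.

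To obtain it, write $|A_Q\vec v|\le \|A_QW^{-1}(x)\|\,|W(x)\vec v|$ for every $x\in Q$. Raise this to a power $\beta$ and average over $Q$, using H\"older with exponents $2$ and $2$:
$$ |A_Q\vec v|^{\beta}\le \left[\fint_Q\|A_QW^{-1}(x)\|^{2\beta}dx\right]^{\frac12}\left[\fint_Q|W(x)\vec v|^{2\beta}dx\right]^{\frac12}. $$
By Lemma \ref{norm matrix} (self-adjointness of $A_Q$ and $W$), $\|A_QW^{-1}(x)\|=\|W^{-1}(x)A_Q\|$. Choosing $2\beta<\frac{\log2}{C_1+C_2\log[W]_{\mathscr A_{p(\cdot),\infty}}}$, Lemma \ref{Apinfty u} makes the first factor uniformly bounded. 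This gives the inequality with exponent $2\beta$ in place of $\beta$. The same argument applies when $\vec v$ is replaced by a convex body, by taking suprema and using Lemma \ref{A F}.

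Next apply this with $\vec v\in W^{-1}(y)F(y)$. For $x\in Q$ we get
$$ |A_QW^{-1}(y)F(y)|\lesssim\left[\fint_Q|W(z)W^{-1}(y)F(y)|^{\gamma}dz\right]^{\frac1\gamma}, \qquad \gamma:=2\beta. $$
Raise to the power $u$, average in $y$ over $Q$, and take $u\le\gamma$. By Minkowski's and Fubini's inequalities,
$$ \left[\fint_Q|A_QW^{-1}(y)F(y)|^{u}dy\right]^{\frac1u}\lesssim\left\{\fint_Q\left[\fint_Q|W(z)W^{-1}(y)F(y)|^{u}dy\right]^{\frac{\gamma}{u}}dz\right\}^{\frac1\gamma}. $$
Concretely, first use Jensen to lower $\gamma$ to $u$ in the inner $z$-average, then interchange. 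The result is bounded by $\{\fint_Q [{\mathcal M}^{(u)}_W(F)(z)]^{\gamma}dz\}^{1/\gamma}$, after replacing cubes by comparable balls, which costs only a constant. Taking the supremum over $Q\ni x$ yields
$$ \widetilde{{\mathcal M}}^{(u)}_W(F)(x)\lesssim {\mathcal M}^{(\gamma)}\bigl({\mathcal M}^{(u)}_W(F)\bigr)(x). $$

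Finally choose $u$ small enough that $u\le\alpha$, with $\alpha$ from Lemma \ref{bound max} (so ${\mathcal M}^{(u)}_W\le{\mathcal M}^{(\alpha)}_W$ by Jensen), and also $u\le\gamma$. Also shrink $\gamma$ further so that $\gamma<p_-$. The scalar operator ${\mathcal M}^{(\gamma)}$ is bounded on $L^{p(\cdot)}$ by the boundedness of ${\mathcal M}$ on $L^{p(\cdot)/\gamma}$ for log-H\"older exponents with $(p/\gamma)_->1$, via Lemma \ref{con f}. Shrinking $\gamma$ is harmless because the key inequality only improves when the exponent decreases.

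The main obstacle is the interchange step: the order of averages with $u\le\gamma$ must be handled carefully, since Minkowski's inequality goes the convenient way only when the outer exponent is at least the inner one. If that direction fails, the fallback is to keep the same exponent throughout, i.e.\ take $u=\gamma$. Then Fubini gives directly
$$ \fint_Q|A_QW^{-1}(y)F(y)|^{u}dy\lesssim\fint_Q\fint_Q|W(z)W^{-1}(y)F(y)|^{u}dy\,dz\le\fint_Q[{\mathcal M}^{(u)}_W F(z)]^{u}dz, $$
so $\widetilde{{\mathcal M}}^{(u)}_W(F)\lesssim{\mathcal M}^{(u)}({\mathcal M}^{(u)}_WF)$, which is then bounded as above.
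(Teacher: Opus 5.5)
Your argument is correct in its fallback form ($u=\gamma$), but it takes a detour that the paper avoids.

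\textbf{The paper's route.} The paper applies H\"older directly in the integration variable. For $y\in Q$ one has $|A_QW^{-1}(y)F(y)|\le\|A_QW^{-1}(y)\|\,|F(y)|$. So if $2u$ lies below the threshold of Lemma \ref{Apinfty u} and $2u<p_-$, then
\[
\Bigl[\fint_Q|A_QW^{-1}(y)F(y)|^u\,dy\Bigr]^{\frac1u}\le\Bigl[\fint_Q\|W^{-1}(y)A_Q\|^{2u}\,dy\Bigr]^{\frac1{2u}}\Bigl[\fint_Q|F(y)|^{2u}\,dy\Bigr]^{\frac1{2u}}\lesssim\Bigl[\fint_Q|F(y)|^{2u}\,dy\Bigr]^{\frac1{2u}}.
\]
This gives the pointwise bound $\widetilde{\mathcal M}^{(u)}_W(F)\lesssim\mathcal M^{(2u)}(|F|)$. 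The conclusion then follows from the scalar maximal theorem on $L^{p(\cdot)/(2u)}$.

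\textbf{Your route.} You use the same H\"older step, but in an auxiliary variable $z$, to get $|A_Q\vec v|\lesssim[\fint_Q|W(z)\vec v|^{\gamma}\,dz]^{1/\gamma}$. This forces you through the Christ--Goldberg operator and Lemma \ref{bound max}, ending with $\widetilde{\mathcal M}^{(u)}_W F\lesssim\mathcal M^{(u)}(\mathcal M^{(u)}_W F)$.

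\textbf{Comparison.} Your comparison of the two matrix maximal operators is a meaningful statement in its own right. For this lemma, though, it costs a much deeper input: the $L^{p(\cdot)}$-boundedness of $\mathcal M^{(\alpha)}_W$. That lemma is also stated for $F\in L^1_{\rm loc}(\mathcal K)$, whereas here $F\in L^{p(\cdot)}(\mathcal K)$ with possibly $p_-<1$, so you would need a truncation argument. The paper's route shows the matrix weight enters only through Lemma \ref{Apinfty u}. In fact, your first inequality $|A_Q\vec v|\le\|A_QW^{-1}(x)\|\,|W(x)\vec v|$ already is the paper's proof if you take $x=y$ and $\vec v\in W^{-1}(y)F(y)$, instead of averaging in a separate variable.

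\textbf{A small correction to your main route.} ``Use Jensen to lower $\gamma$ to $u$ in the inner $z$-average'' is backwards, since Jensen gives $[\fint g^u]^{1/u}\le[\fint g^\gamma]^{1/\gamma}$. What actually justifies the step is that your key inequality holds for every exponent below the threshold of Lemma \ref{Apinfty u}. So you may simply derive it with exponent $u$. Fubini then gives your fallback bound, and Jensen on the outer average, which does go the right way, upgrades $\mathcal M^{(u)}$ to $\mathcal M^{(\gamma)}$ if you want it.
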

\begin{proof}
By Lemma \ref{Apinfty u}, we conclude that there exists $\alpha\in (0,\infty)$ such that,
for any cube $Q$ in ${\mathbb{R}^n}$,
$$\fint_Q \left\|W^{-1}(x)A_Q\right\|^\alpha\,dx \lesssim 1.$$
Now, let $u\in (0,\min\{\frac \alpha2, \frac{p_-}{2}\})$.
Then, using Holder's inequality, we find that, for any $Q$ in ${\mathbb{R}^n}$,
\begin{align*}
\left[\fint_Q \left|A_Q W^{-1}(y)F(y)\right|^u\,dy\right]^{\frac{1}{u}}
\leq \left[\fint_Q \left\|A_Q W^{-1}(y)\right\|^{2u}\,dy\right]^{\frac{1}{2u}}
\left[\fint_Q \left|F\right|^{2u}\,dy\right]^{\frac{1}{2u}}
\lesssim \left[\fint_Q \left|F\right|^{2u}\,dy\right]^{\frac{1}{2u}},
\end{align*}
which further implies that, for any $x\in{\mathbb{R}^n}$,
\begin{align*}
\widetilde{{\mathcal M}}_{W}^{(u)} \left( F \right)(x)
\lesssim {\mathcal M}^{(2u)} \left( \left|F\right| \right)(x).
\end{align*}
Applying this, Lemma \ref{con f}, and the boundedness of the
Hardy--Littlewood maximal operator $\mathcal M$ on $L^{p(\cdot)}$
(see, for instance, \cite[Theorem 3.4]{cf13}),
we conclude that
\begin{align*}
\left\| \widetilde{{\mathcal M}}_{W}^{(u)} \left( F \right) \right\|_{L^{p(\cdot)}}
\lesssim \left\| {\mathcal M}^{(2u)} \left(\left|F\right| \right) \right\|_{L^{p(\cdot)}}
 = \left\| {\mathcal M}\left( |F|^{2u} \right) \right\|_{L^{\frac{p(\cdot)}{2u}}}^\frac{1}{2u}
\lesssim \left\| \left|F\right|^{2u} \right\|^{\frac{1}{2u}}_{L^{\frac{p(\cdot)}{2u}}}
= \left\|F\right\|_{L^{p(\cdot)}({\mathcal K})},
\end{align*}
which completes the proof of Lemma \ref{bound max wid}.
\end{proof}

Now, we give the proof of Theorem \ref{atom con}.
\begin{proof}[Proof of Theorem \ref{atom con}]
We first prove {\rm (i)}.
Let $\psi\in {\mathcal S}$ with ${\mathop\mathrm{\,supp\,}} \psi\subset B(\mathbf{0},1)$ and $\int_{{\mathbb{R}^n}} \psi(x)\,dx \neq 0$.
Assume that $\vec{a}_Q$ is a $(p(\cdot),q,s)_W$-atom supported in a cube $Q$.
Then, for any $x\in {\mathbb{R}^n}$, we have
\begin{align}\label{a atom eq 4}
M_W\left( \vec{a}_Q, \psi \right)(x)
&= \sup_{t\in (0,\infty)} \left|W(x) \psi_t\ast \vec{a}_Q(x) \right|
\leq \left\| W(x) A_Q^{-1} \right\|
\sup_{t\in (0,\infty)} \left|  A_Q \psi_t\ast \vec{a}_Q(x) \right|{\nonumber}\\
&\lesssim \left\| W(x) A_Q^{-1} \right\|  {\mathcal M}\left( A_Q \vec{a}_Q\right)(x).
\end{align}

Let $x\in {\mathbb{R}^n}\setminus 2\sqrt{n} Q$.
For any $t\in (0,\infty)$ and $x\in{\mathbb{R}^n}$,
let $q_{(t,x)}$ be the $s$-th degree Taylor polynomial of $y \mapsto \psi_t(x -y)$, centered at $y = c_Q$.
From this, we deduce that, for any $x\in {\mathbb{R}^n}\setminus 2\sqrt{n} Q$ and $y\in Q$,
\begin{align*}
\left| \psi_t (x - y) - q_{(t,x)}(y) \right| \lesssim \frac{[l(Q)]^{s+1}}{t^{n+s+1}}.
\end{align*}
Combining this, both {\rm (iii)} and {\rm (iv)} of Definition \ref{def atom}, and
H\"older's inequality,
we obtain, for any $x\in {\mathbb{R}^n}\setminus 2\sqrt{n} Q$,
\begin{align}\label{a atom eq 3}
\left| W(x) \psi_t\ast \vec{a}_Q(x) \right|&=
\left| W(x)\int_{{\mathbb{R}^n}} \left[ \psi_t(x-y) - q_{(t,x)}(y) \right] \vec{a}_Q(y)\,dy\right|\nonumber\\
&\lesssim \frac{[l(Q)]^{n+s+1}}{t^{n+s+1}} \left\| W(x) A_Q^{-1}\right\|
\fint_{Q} \left|A_Q  \vec{a}_Q(y)\right|\,dy{\nonumber}\\
&\leq \frac{[l(Q)]^{n+s+1}}{t^{n+s+1}} \left\| W(x) A_Q^{-1}\right\|
\left[\fint_{Q} \left| A_Q  \vec{a}_Q(y)\right|^q\,dy\right]^\frac1q{\nonumber}\\
&\lesssim \frac{[l(Q)]^{n+s+1}}{t^{n+s+1}}  \left\| W(x) A_Q^{-1}\right\| \frac{1}{\| {\mathbf{1}}_Q \|_{L^{p(\cdot)}}} .
\end{align}
By the fact that ${\mathop\mathrm{\,supp\,}} \psi_t \subset B(\mathbf{0},t)$,
we find that, for any $x\in {\mathbb{R}^n}\setminus 2\sqrt{n}Q$ and $t\in (0,\frac12 |x - c_Q|)$,
$ |\psi_t \ast \vec{a}_Q(x)| = 0 $.
Thus, using this and \eqref{a atom eq 3},
we conclude that, for any $x\in {\mathbb{R}^n}\setminus 2\sqrt{n}Q$,
\begin{align}\label{a atom eq 5}
M_W\left(  \vec{a}_Q,\psi \right) (x)
&= \sup_{t\in [\frac12 |x - c_Q|,\infty)} \left|W(x) \psi_t\ast \left(\vec{a}_Q\right)(x)\right|{\nonumber}\\
&\leq \frac{1}{\| {\mathbf{1}}_Q \|_{L^{p(\cdot)}}} \left\|W(x) A_Q^{-1}\right\| \sup_{t\in [\frac12 |x - c_Q|,\infty)} \frac{[l(Q)]^{n+s+1}}{t^{n+s+1}}{\nonumber}\\
&\lesssim \frac{1}{\| {\mathbf{1}}_Q \|_{L^{p(\cdot)}}} \left\|W(x) A_Q^{-1}\right\| \left[\frac{l(Q)}{l(Q) + |x - c_Q| }\right]^{n+s+1}.
\end{align}
Now, for any $k_1,k_2\in {\mathbb{Z}}$ with $k_1 \leq k_2$,
by \eqref{a atom eq 4} and \eqref{a atom eq 5}, we have
\begin{align}\label{a atom eq 11}
\left\| \sum_{k = k_1}^{k_2} |\lambda_k| M_W \left( \vec{a}_k,\psi \right) \right\|_{L^{p(\cdot)}}
&\lesssim \left\| \sum_{k = k_1}^{k_2} | \lambda_k | \left\| W(\cdot) A_{Q_k}^{-1} \right\|  {\mathcal M}\left( A_{Q_k} \vec{a}_k\right){\mathbf{1}}_{2\sqrt{n} Q_k} \right\|_{L^{p(\cdot)}}{\nonumber} \\
&\quad + \left\| \sum_{k = k_1}^{k_2}  \frac{| \lambda_k |}{\| {\mathbf{1}}_{Q_k} \|_{L^{p(\cdot)}}} \left\|W(\cdot) A_{Q_k}^{-1}\right\| \left[\frac{l(Q_k)}{l(Q_k) + |\cdot - c_{Q_k}| }\right]^{n+s+1} \right\|_{L^{p(\cdot)}}.
\end{align}
From Lemma \ref{f dec eq le} with $\lambda_k := \frac{|\lambda_k|}{\|{\mathbf{1}}_{Q_k}\|_{L^{p(\cdot)}}}$,
we infer that
\begin{align}\label{a atom eq 7}
&\left\| \sum_{k = k_1}^{k_2}  \frac{| \lambda_k |}{\| {\mathbf{1}}_{Q_k} \|_{L^{p(\cdot)}}} \left\|W(\cdot) A_{Q_k}^{-1}\right\| \left[\frac{l(Q_k)}{l(Q_k) + |\cdot - c_{Q_k}| }\right]^{n+s+1} \right\|_{L^{p(\cdot)}}{\nonumber} \\
&\quad \lesssim \left\| \sum_{k= k_1}^{k_2} \left[\frac{| \lambda_k |}{\| {\mathbf{1}}_{Q_k} \|_{L^{p(\cdot)}}}\right]^r {\mathbf{1}}_{Q_k} \right\|_{L^{\frac{p(\cdot)}{r}}}^{\frac1r}
 = \left\| \left\{\sum_{k= k_1}^{k_2} \left[\frac{| \lambda_k |}{\| {\mathbf{1}}_{Q_k} \|_{L^{p(\cdot)}}}\right]^r {\mathbf{1}}_{Q_k}\right\}^{\frac1r} \right\|_{L^{p(\cdot)}}.
\end{align}
Moreover, by Lemma \ref{a atom eq le} with $\vec{a}_k := A_{Q_k}\vec{a}_k$ and $T := {\mathcal M}$
and by the definition of $(p(\cdot),q,s)_W$-atoms, we find that
\begin{align*}
&\left\| \sum_{k = k_1}^{k_2} | \lambda_k | \left\| W(\cdot) A_{Q_k}^{-1} \right\|  {\mathcal M}\left( A_{Q_k} \vec{a}_k\right){\mathbf{1}}_{2\sqrt{n} Q_k} \right\|_{L^{p(\cdot)}} \\
&\quad \lesssim \left\|\left[\sum_{k = k_1}^{k_2} \left(|Q_k|^{-\frac1q}| \lambda_k |\left\|A_{Q_k} \vec{a}_k\right\|_{L^q} \right)^r {\mathbf{1}}_{Q_k} \right]^\frac1r\right\|_{L^{p(\cdot)}}
 \lesssim \left\|\left\{\sum_{k = k_1}^{k_2} \left[\frac{| \lambda_k |}{\|{\mathbf{1}}_{Q_k}\|_{L^{p(\cdot)}}} \right]^r {\mathbf{1}}_{Q_k} \right\}^\frac1r\right\|_{L^{p(\cdot)}}.
\end{align*}
This, together with \eqref{a atom eq 7} and \eqref{a atom eq 11},  further implies that
\begin{align}\label{a atom eq 12}
\left\| \sum_{k = k_1}^{k_2} |\lambda_k| M_W \left( \vec{a}_k,\psi \right) \right\|_{L^{p(\cdot)}}
\lesssim \left\| \left\{ \sum_{k = k_1}^{k_2} \left[\frac{| \lambda_k |}{\|{\mathbf{1}}_{Q_k}\|_{L^{p(\cdot)}}}\right]^r {\mathbf{1}}_{Q_k} \right\}^{\frac1r} \right\|_{L^{p(\cdot)}},
\end{align}
and hence $\{\sum_{k = k_1}^{k_2}\lambda_k \vec{a}_k \}_{k_1,k_2\in {\mathbb{Z}}}$ is a Cauchy sequences in $H^{p(\cdot)}_W$.
Using this and Propositions \ref{H embed} and \ref{H comp},
we conclude that $\vec f:=\sum_{k \in{\mathbb{Z}} } \lambda_k \vec{a}_k$ converges in $({\mathcal S}')^m$.
From this, Theorem \ref{H equal}, and \eqref{a atom eq 12}, we deduce that
$$ \left\|\vec{f}\right\|_{H^{p(\cdot)}_W} \lesssim \left\| \left\{\sum_{k\in{\mathbb{Z}}} \left[\frac{|\lambda_k| }{\|{\mathbf{1}}_{Q_k}\|_{L^{p(\cdot)}}}\right]^{r} {\mathbf{1}}_{Q_k} \right\}^{\frac1r} \right\|_{L^{p(\cdot)}}. $$
This finishes the proof of {\rm (i)}.

Next, we prove {\rm (ii)}. By Proposition \ref{H dense O}, it suffices to assume that
$\vec f\in \mathcal O_s\cap H_W^{p(\cdot)}$.
Since $\vec f$ is compactly supported, from Lemma \ref{Q Qt},
it follows that there exist a dyadic lattice
$\mathscr Q^t$ and a cube $Q_0\in \mathscr Q^t$ such that
${\mathop\mathrm{\,supp\,}} \vec f\subset Q_0$.
In what follows, we simply write $\mathscr Q:=\mathscr Q^t$.

By Lemmas \ref{Apinfty u} and \ref{bound max wid},
there exists $u\in (0,\frac{p_-}{2})$ such that
$\sup_{Q} \fint_Q \|W^{-1}(x)A_Q\|^{2u}\,dx < \infty$
and $ \widetilde{\mathcal{M}}^{(u)}_W$ is bounded from $L^{p(\cdot)}(\mathcal{K})$ to $L^{p(\cdot)}$.
Using these, Lemma \ref{norm matrix},
H\"older's inequality, and Lemma \ref{Holder},
we find that, for any cube $Q$ in $\mathbb{R}^n$,
\begin{align*}
\left[\int_{Q} \left| M_{N}^{\mathcal{K}} \left( \vec{f} \right)(x) \right|^{u}\,dx \right]^{\frac1u}
& \leq \left\| A_Q^{-1} \right\|
\left[\int_{Q}\left\| A_Q W^{-1}(x) \right\|^u \left| W(x) M_{N}^{\mathcal{K}} \left( \vec{f} \right)(x) \right|^{u}\,dx \right]^{\frac1u} \\
& \lesssim \left[\int_{Q}\left\| A_Q W^{-1}(x) \right\|^{2u}\,dx\right]^{\frac{1}{2u}}
\left[\int_{Q} \left| W(x) M_{N}^{\mathcal{K}} \left( \vec{f} \right)(x) \right|^{2u}\,dx \right]^{\frac{1}{2u}} \\
& \lesssim \left[\int_{Q} \left| W(x) M_{N}^{\mathcal{K}} \left( \vec{f} \right)(x) \right|^{2u}\,dx \right]^{\frac{1}{2u}}
\lesssim \left\|\,\left| W(\cdot) M_{N}^{\mathcal{K}} \left( \vec{f} \right) \right| \mathbf{1}_{Q} \right\|_{L^{p(\cdot)}} < \infty,
\end{align*}
which further implies that $ M_{N}^{\mathcal{K}}(\vec{f}) \in L_{\rm loc}^u(\mathcal{K}) $.
By Lemma \ref{f dec eq le}, we may choose $L\in {\mathbb{Z}}$ sufficiently large so that
\begin{align}\label{Hardy dep eq 32}
\left\|\frac{\|W(\cdot)\|}{(1+|\cdot|)^L}\right\|_{L^{p(\cdot)}}<\infty.
\end{align}
For any $x\in {\mathbb{R}^n}$ and $\epsilon\in (0,1)$, let
$$
\widetilde M_{N,\epsilon}^{{\mathcal K}}(\vec f)(x)
:=M_N^{{\mathcal K}}(\vec f)(x)+\frac{\epsilon}{(1+|x|)^L}\overline B
$$
be the nondegenerate grand maximal convex body,
where $\overline B$ denotes the closed unit ball of ${\mathbb{C}}^m$.
From \eqref{Hardy dep eq 32} and Theorem \ref{H equal}, we infer that
\begin{align}\label{Hardy dep eq 33}
\left\| \,\left|W(\cdot) \widetilde{M}^{\mathcal K}_{N,\epsilon} \left(\vec{f}\right)\right|\, \right\|_{L^{p(\cdot)}}
\lesssim \left\| \vec{f} \right\|_{H^{p(\cdot)}_W} + \left\|\frac{\|W(\cdot)\|}{(1+|\cdot|)^L}\right\|_{L^{p(\cdot)}}
< \infty.
\end{align}
Moreover, $\widetilde M_{N,\epsilon}^{{\mathcal K}}(\vec f)(x)\in {\mathcal K}_{\mathrm{abcs}}$ for every $x\in {\mathbb{R}^n}$,
and hence, by Lemma \ref{M u exist}, for each cube $Q\subset {\mathbb{R}^n}$
there exists a $u$-th convex body reducing operator $M_Q^{(u)}$ of $\widetilde M_{N,\epsilon}^{{\mathcal K}}(\vec f)$
over $Q$ such that, for every positive definite self-adjoint matrix $M\in M_m$,
\begin{align}\label{Hardy dep eq 19}
\left\|M_Q^{(u)}M\right\|
\sim
\left[\fint_Q \left|M\widetilde M_{N,\epsilon}^{{\mathcal K}}(\vec f)(x)\right|^u\,dx\right]^{1/u}.
\end{align}

Next, we define the local level set with the height of the reducing operator $M_Q^{(u)}$.
For each $Q\in \mathscr Q$, let
\begin{align}\label{eq}
E_Q:=\left\{x\in (3Q)^\circ:
\left|\left[M_{3Q}^{(u)}\right]^{-1}M_N^{{\mathcal K}}(\vec f)(x)\right|>\widetilde{C}
\right\},
\end{align}
where $(3Q)^\circ$ denotes the \emph{interior} of $3Q$ and
the constant $\widetilde C\in(0,\infty)$ will be chosen sufficiently large later.
Note that, by the geometry of the dyadic lattice,
there exists a positive constant $\widetilde C_{\cap}$
such that
\begin{align}\label{Ccap}
\sharp\left\{
Q'\in \mathscr Q:
7Q\cap 7Q'\neq \emptyset
\ \text{and}\
|s_Q-s_{Q'}|<8
\right\}
\le \widetilde C_{\cap}
\end{align}
for every $Q\in \mathscr Q$,
where $s_Q := -\log_2 (l(Q))$ and $s_{Q'} := -\log_2 (l(Q'))$.
We claim that $\widetilde C\in(0,\infty)$ can be chosen sufficiently large such that,
for any $Q\in\mathscr Q$,
\begin{align}\label{422}
|E_Q|<\widetilde C_{\cap}^{-1}2^{-12n}|Q|.
\end{align}
Indeed, by \eqref{Hardy dep eq 19} and the obvious fact that
$M_N^{{\mathcal K}}(\vec f)(x)\subset \widetilde M_{N,\epsilon}^{{\mathcal K}}(\vec f)(x)$,
we have
\begin{align*}
|E_Q|
&\le
\widetilde C^{-u}\int_{3Q}
\left|
\left[M_{3Q}^{(u)}\right]^{-1}M_N^{{\mathcal K}}(\vec f)(x)
\right|^u\,dx\lesssim
\widetilde C^{-u}|3Q|.
\end{align*}
Hence, \eqref{422} follows by choosing $\widetilde C$ sufficiently large.
Observe that, by Lemma \ref{Kconvex}, we have
$$ \left|\left[M^{(u)}_{3Q}\right]^{-1} M_N^{{\mathcal K}}\left(\vec{f}\right)(x)\right|
= \sup_{\phi\in{\mathcal S}_N}\sup_{t\in (0,\infty)}\left| \left[M^{(u)}_{3Q}\right]^{-1} {\mathcal K} \left(\phi_t\ast\vec{f}\right)(x) \right|. $$
Using this and the fact that $|[M^{(u)}_{3Q}]^{-1} {\mathcal K} (\phi_t\ast\vec{f})|$ is continuous
for any $\phi\in{\mathcal S}_N$ and $t\in(0,\infty)$,
we find that $|[M^{(u)}_{3Q}]^{-1} M_N^{{\mathcal K}}(\vec{f})|$ is lower semi-continuous and hence
$E_{Q}$ is open.
Let $\mathcal F_0:=\{Q_0\}$ and $E_1:=E_{Q_0}$.
Then, by Lemma \ref{stop coll} with $E := E_1$,
we find that there exists a sequence of cubes ${\mathcal F}_1$ satisfying
{\rm (i)} through {\rm (iv)} of Lemma \ref{stop coll}.
Furthermore, letting $E_2 := \cup_{Q\in {\mathcal F}_1} E_Q$ and
using Lemma \ref{stop coll} with $E := E_2$,
we find that there exists a sequence of cubes ${\mathcal F}_2$ satisfying
{\rm (i)} through {\rm (iv)} of Lemma \ref{stop coll}.

Now, iterate this step infinity times.
More precisely, for any $k\in{\mathbb{N}}$, if we have a sequence of cubes ${\mathcal F}_{k-1}$
satisfying {\rm (i)} through {\rm (iv)} of Lemma \ref{stop coll},
then, letting $E_k:=\cup_{Q\in\mathcal F_{k-1}} E_Q$
and using Lemma \ref{stop coll} with $E := E_k$,
we conclude that there exists a sequence of cubes ${\mathcal F}_{k}$
also satisfying {\rm (i)} through {\rm (iv)} of Lemma \ref{stop coll}.

Thus, we conclude that, for any $k\in{\mathbb{N}}$, there exists a sequence of cubes ${\mathcal F}_{k}$
having the following four properties:
\begin{itemize}
\item[{\rm (i)}] For any $L, L'\in {\mathcal F}_k$, if $L\neq L'$, then $L\cap L' = \emptyset$.
\item[{\rm (ii)}] $E_k = \bigcup_{L\in {\mathcal F}_k} L = \bigcup_{L\in {\mathcal F}_k} 9L$.
\item[{\rm (iii)}] For any $L\in {\mathcal F}_k$, $32L \cap E_k^{\complement} \neq \emptyset$.
\item[{\rm (iv)}] For any $L,L'\in {\mathcal F}_k$, if $7L\cap 7L' \neq \emptyset$,
then $|s_L - s_{L'}| < 8$.
\end{itemize}
Moreover, note that, for any $k\in{\mathbb{N}}$, $Q\in {\mathcal F}_{k-1}$,
$$3Q\cap E_k \subset \bigcup_{\genfrac{}{}{0pt}{}{Q'\in 
{\mathcal F}_{k-1}}{3Q'\cap 3Q\neq \emptyset}} E_{Q'} $$
and hence, by the property (iv) of ${\mathcal F}_{k-1}$, \eqref{Ccap}, and \eqref{422},
we obtain
\begin{align}\label{Hardy dep eq 2}
\left| 3Q\cap E_k\right| \leq \sum_{
\genfrac{}{}{0pt}{}{Q'\in {\mathcal F}_{k-1}}{3Q'\cap 3Q\neq \emptyset}} \left|E_{Q'}\right|
< C_{\cap} 2^{8n} C_{\cap}^{-1} 2^{-12n} |Q| = 2^{-4n}|Q|.
\end{align}
Applying this and Lemma \ref{stop coll}, we conclude that, for any $k\in{\mathbb{N}}$,
${\mathcal F}_{k}$ also has the following fifth property:
\begin{itemize}
\item[{\rm (v)}] For any $Q\in {\mathcal F}_{k-1}$ and $L\in {\mathcal F}_k$,
if $L^\ast \cap Q^\ast\neq \emptyset$,
then $32L \subset 3Q$ and $32L\cap (3Q\setminus E_k) \neq \emptyset$.
\end{itemize}
Now, for any $k\in\mathbb Z_+$
and $Q\in\mathcal F_k$, let
$$
\mathcal G(Q) := \left\{L\in {\mathcal F}_{k+1}:\
L^*\cap Q^*\neq\emptyset\right\}.
$$

For any $k\in\mathbb N$, let $\{\eta_Q:\ {\mathbb{R}^n}\to [0,1]\}_{Q\in \mathcal F_k}$
be the same as in Lemma \ref{stop eta} with $E:=E_{k}$ and
$S := \mathcal F_k$.
Moreover, for any fixed $L\in {\mathcal F}_k$ with $k\in\mathbb N$, from Lemma \ref{stop e},
we infer that there exist $\{e_i^{(L)}\}_{i = 1}^M \subset {\mathscr P}_s$ as in Lemma \ref{stop e}.
Using this, we also define $P_L(\vec{f})$ as in Lemma \ref{stop P}
and
\begin{align*}
\vec b_L := \left[ \vec{f} - P_L\left( \vec{f} \right) \right]\eta_L.
\end{align*}
Then, by Lemma \ref{stop P},
we obtain $ \langle \vec{b}_L,\cdot^\gamma \rangle = 0 $
for any $\gamma\in{\mathbb{Z}}_+^n$ with $|\gamma|\leq s$.
For any $k\in{\mathbb{N}}$, let $\vec{b}_k := \sum_{L\in {\mathcal F}_k} \vec{b}_L$.

We claim that
\begin{align}\label{Hardy dep eq 23}
\left\|M_W\left( \vec{b}_k, \psi \right)\right\|_{L^{p(\cdot)}}
\to 0 \ \ \text{as}\ \ k\to\infty.
\end{align}
Note that, by the definition of $\vec{b}_L$, we conclude that, for any $x\in{\mathbb{R}^n}$,
\begin{align}\label{Hardy dep eq 18}
M_W\left( \vec{b}_L, \psi \right)(x)
&\leq M_W \left( \eta_L P_L\vec{f}, \psi \right)(x) {\mathbf{1}}_{L^\ast}(x)
+ M_W\left( \vec{f}\eta_L ,\psi \right)(x) {\mathbf{1}}_{L^\ast}(x)
 + M_W\left( \vec{b}_L, \psi \right)(x) {\mathbf{1}}_{(L^\ast)^\complement}(x){\nonumber} \\
& =: I(x) + II(x) + III(x).
\end{align}
We first give the estimate of $I(x)$.
From the definition of $P_L$,
we deduce that, for any $t\in (0,\infty)$ and $x\in L^\ast$,
\begin{align}\label{Hardy dep eq 11}
\psi_t\ast \left(\eta_L P_L\vec{f} \right)(x)
&= \sum_{i = 1}^M \left\langle \vec{f}, e_i^{(L)} \widetilde{\eta}_L \right\rangle \left[\psi_t \ast \left( e_i^{(L)} \eta_L \right)(x)\right] {\nonumber}\\
&\in \sum_{i = 1}^M  \left|\psi_t \ast \left( e_i^{(L)} \eta_L \right)(x)\right| {\mathcal K}\left(\left\langle \vec{f}, e_i^{(L)} \widetilde{\eta}_L \right\rangle\right).
\end{align}
By Lemmas \ref{stop eta} and \ref{stop e},
we find that, for any $t\in (0,\infty)$ and $x\in L^\ast$,
\begin{align*}
\left| \psi_t \ast \left( e_i^{(L)} \eta_L \right)(x) \right|
\leq \int_{{\mathbb{R}^n}} \left|\psi_t(y)\right| \left| e_i^{(L)}(x-y) \right| \left| \eta_L (x-y)\right|\,dy
\lesssim \|\psi\|_{L^1},
\end{align*}
which, combined with \eqref{Hardy dep eq 11} and Lemma \ref{stop P} with $y := x$, further implies that
$$\left|W(x) \psi_t\ast \left(\eta_L P_L\vec{f} \right)(x)\right|
\lesssim \left|W(x) M_N^{{\mathcal K}}\left( \vec{f}\right)(x)\right|, $$
and hence
\begin{align}\label{Hardy dep eq 20}
I(x) \lesssim \left(M_N\right)_W \left( \vec{f}\right)(x) {\mathbf{1}}_{L^\ast}(x).
\end{align}

Next, using Lemma \ref{stop f},
we find that there exists a positive constant $C$ such that, for any $x\in L^\ast$,
$M^{{\mathcal K}} ( \vec{f}\eta_L,\psi )(x) \subset C M^{{\mathcal K}}_N ( \vec{f} )(x),$
and hence
\begin{align}\label{Hardy dep eq 21}
II(x) \lesssim \left| W(x) M_N^{{\mathcal K}} \left( \vec{f} \right)(x)\right| {\mathbf{1}}_{L^\ast}(x)
 = (M_N)_W \left( \vec{f} \right)(x) {\mathbf{1}}_{L^\ast}(x).
\end{align}

Now, we estimate $III(x)$. From Lemma \ref{stop f}, we infer that
there exists a positive constant $C$ such that,
for any $x\in (L^\ast)^\complement$ and $y\in{\mathbb{R}^n}$,
\begin{align*}
M^{{\mathcal K}} \left( \vec{b}_L,\psi \right)(x) \subset
C \left( \frac{l(L)}{l(L) + |x - c_L|} \right)^{n+s+1} \left( 2 + \frac{|y - c_L|}{l(L)} \right)^{N+n+1}
 M^{{\mathcal K}}_N \left( \vec{f}\right)(y)
\end{align*}
and hence, for any $y\in 3L$,
\begin{align*}
III(x)
& \lesssim \left( \frac{l(L)}{l(L) + |x - c_L|} \right)^{n+s+1} \left\|W(x) A_L^{-1}\right\|
\left\| A_L M^{(u)}_{3L} \right\| \left| \left[ M^{(u)}_{3L} \right]^{-1} M^{{\mathcal K}}_N \left( \vec{f}\right)(y)\right|.
\end{align*}
Using this, integrating with $y$ over $3L$,
and using the obvious fact that $M^{\mathcal{K}}_{N}(\vec{f})(x) \subset \widetilde{M}^{\mathcal K}_{N,\epsilon} (\vec{f})(x)$
and \eqref{Hardy dep eq 19}, we find that,
for any $x\in(L^\ast)^\complement$,
\begin{align*}
III(x)
&\lesssim \left( \frac{l(L)}{l(L) + |x - c_L|} \right)^{n+s+1} \left\|W(x) A_L^{-1}\right\|
\left\| A_L M^{(u)}_{3L} \right\| \left\{\fint_{3L} \left| \left[ M^{(u)}_{3L} \right]^{-1} M^{{\mathcal K}}_N \left( \vec{f}\right)(y)\right|^u\,dy\right\}^\frac1u\\
&\lesssim   \left\|W(x) A_L^{-1}\right\| \left\| A_L M^{(u)}_{3L} \right\|
 \left[\frac{l(L)}{l(L) + |x - c_L|} \right]^{n+s+1},
\end{align*}
which, combined with \eqref{Hardy dep eq 18}, \eqref{Hardy dep eq 20}, and \eqref{Hardy dep eq 21},
further implies that, for any $x\in{\mathbb{R}^n}$,
\begin{align*}
M_W\left( \vec{b}_L, \psi \right)(x)
\lesssim (M_N)_W \left( \vec{f} \right)(x) {\mathbf{1}}_{L^\ast}(x)
 + \left\|W(x) A_L^{-1}\right\| \left\| A_L M^{(u)}_{3L} \right\|
 \left[ \frac{l(L)}{l(L) + |x - c_L|} \right]^{n+s+1}.
\end{align*}
By this, Lemmas \ref{f dec eq le},
$s\in {\mathbb{Z}}_+ \cap [\lfloor d^{\rm upper}_{p(\cdot),\infty}(W) + n(\frac{1}{r} - 1)\rfloor,\infty)$,
and both {\rm (ii)} and {\rm (iv)}  of ${\mathcal F}_k$,
we conclude that
\begin{align}\label{Hardy dep eq 22}
&\left\|M_W\left( \vec{b}_k, \psi \right)\right\|_{L^{p(\cdot)}} {\nonumber}\\
&\quad \lesssim \left\|\sum_{L\in {\mathcal F}_k} (M_N)_W \left( \vec{f} \right) {\mathbf{1}}_{L^\ast} \right\|_{L^{p(\cdot)}}
+ \left\|\sum_{L\in {\mathcal F}_k} \left\| A_L M^{(u)}_{3L} \right\| \left\|W(\cdot) A_{L}^{-1} \right\| \left[\frac{l(L)}{l(L) + |\cdot-c_L|}\right]^{n + s +1} \right\|_{L^{p(\cdot)}} {\nonumber}\\
&\quad \lesssim \left\|(M_N)_W \left( \vec{f} \right) {\mathbf{1}}_{E_k} \right\|_{L^{p(\cdot)}}
+ \left\|\sum_{L\in {\mathcal F}_k} \left\| A_L M^{(u)}_{3L} \right\|^r {\mathbf{1}}_{L} \right\|^{\frac1r}_{L^{\frac{p(\cdot)}{r}}},
\end{align}
where $r := \min\{1,p_-\}$.
Observe that, using \eqref{Hardy dep eq 19} and Lemma \ref{QP5},
we obtain, for any $x\in L$,
\begin{align*}
\left\| A_L M^{(u)}_{3L} \right\|
&\lesssim \left\|A_L A_{3L}^{-1}\right\| \left\| A_{3L} M^{(u)}_{3L} \right\|
\lesssim \left[\fint_{3L} \left| A_{3L} \widetilde{M}^{{\mathcal K}}_{N,\epsilon}(\vec{f})(y) \right|^u \,dy\right]^\frac1u
\lesssim \widetilde{{\mathcal M}}^{(u)}_W \left(W(\cdot) \widetilde{M}^{{\mathcal K}}_{N,\epsilon}(\vec{f}) {\mathbf{1}}_{E_k} \right)(x).
\end{align*}
Applying this, the properties {\rm (ii)} and {\rm (iv)} of ${\mathcal F}_k$,
and Lemmas \ref{con f} and \ref{bound max wid},
we conclude that
\begin{align*}
\left\|\sum_{L\in {\mathcal F}_k} \left\| A_L M^{(u)}_{3L} \right\|^r {\mathbf{1}}_{L} \right\|^{\frac1r}_{L^{\frac{p(\cdot)}{r}}}
&\lesssim \left\|\sum_{L\in {\mathcal F}_k} \left[\widetilde{{\mathcal M}}^{(u)}_W \left( W(\cdot) \widetilde{M}^{{\mathcal K}}_{N,\epsilon}(\vec{f}) {\mathbf{1}}_{E_k} \right)\right]^r {\mathbf{1}}_{L} \right\|^{\frac1r}_{L^{\frac{p(\cdot)}{r}}} \\
&\lesssim \left\|\widetilde{{\mathcal M}}^{(u)}_W \left(W(\cdot) \widetilde{M}^{{\mathcal K}}_{N,\epsilon}(\vec{f}) {\mathbf{1}}_{E_k} \right) {\mathbf{1}}_{E_k} \right\|_{L^{p(\cdot)}}
\lesssim \left\|\,\left| W(\cdot)\widetilde{M}^{{\mathcal K}}_{N,\epsilon}(\vec{f})\right| {\mathbf{1}}_{E_k}  \right\|_{L^{p(\cdot)}},
\end{align*}
which, together with \eqref{Hardy dep eq 22}, \eqref{Hardy dep eq 33},
and the Lebesgue dominated convergence theorem with $|E_k| \to 0$ as $k\to \infty$,
further implies that
\begin{align*}
\left\|M_W\left( \vec{b}_k, \psi \right)\right\|_{L^{p(\cdot)}}
\lesssim \left\|\, \left|W(\cdot) \widetilde{M}^{{\mathcal K}}_{N,\epsilon}(\vec{f})\right| {\mathbf{1}}_{E_k}  \right\|_{L^{p(\cdot)}} \to 0
\end{align*}
as $k\to\infty$.
This further shows the claim \eqref{Hardy dep eq 23}.
Combining this and Proposition \ref{H embed},
we find that $\vec{b}_k \to \vec{0}$ in $({\mathcal S}')^m$ as $k\to\infty$.

Next, let $\vec{g} := \vec{f} - \vec{b}_1$.
Then, by the definitions of both $\vec{b}_1$ and $\mathcal F_1$ and
by ${\mathop\mathrm{\,supp\,}} \vec f\subset Q_0$, we obtain
\begin{align}\label{Hardy dep eq 12}
\vec{g} = \vec{f} - \sum_{L\in {\mathcal F}_1} \left[ \vec{f} - P_L\left(\vec{f}\right) \right]\eta_L
= \vec{f} {\mathbf{1}}_{E_1^\complement} + \sum_{L\in {\mathcal F}_1} P_L\left(\vec{f}\right)\eta_L.
\end{align}
Using this and \eqref{Hardy dep eq 23},
we conclude that
\begin{align*}
\vec{f} = \vec{g} + \sum_{k = 1}^{\infty} \left( \vec{b}_k - \vec{b}_{k+1} \right)
\end{align*}
in both $H^{p(\cdot)}_W$ and $({\mathcal S}')^m$.
From the assumption that ${\mathop\mathrm{\,supp\,}} \vec{f}\subset Q_0$,
it follows that, for any $L\in {\mathcal F}_1\setminus \mathcal G(Q_0)$,
$\vec{f}\eta_L = \vec{0}$ and hence $P_L(\vec{f})\eta_L = \vec{0}$.
Applying this and \eqref{Hardy dep eq 12} yields
\begin{align}\label{Hardy dep eq 16}
\vec{g} =\vec{f} {\mathbf{1}}_{E_1^\complement} + \sum_{L\in \mathcal G(Q_0)} P_L\left(\vec{f}\right)\eta_L.
\end{align}

For any $k\in\mathbb N$, $Q\in {\mathcal F}_k$, and $L\in {\mathcal F}_{k+1}$,
let
$$ \vec{c}_{Q,L} := P_{L} \left( \left[ \vec{f} - P_L\left(\vec{f}\right) \right] \eta_{Q} \right). $$
Note that, for any $k\in{\mathbb{N}}$, $Q\in{\mathcal F}_k$, and $L\in{\mathcal F}_{k+1}$,
if $Q^\ast \cap L^\ast = \emptyset$, then $\vec{c}_{Q,L} = \vec{0}$.
By the definition of $\eta_{Q}$ and the fact that,
for any $L\in{\mathcal F}_{k+1}$, $L\subset L^*\subset E_{k}\subset E_{k-1}$, we conclude that, for any $x\in L^*$,
$\sum_{Q\in {\mathcal F}_k,Q^*\cap L^*\neq\emptyset}\eta_Q(x)=1$,
which further implies that
$$\sum_{Q\in {\mathcal F}_k,Q^*\cap L^*\neq\emptyset} \vec{c}_{Q,L}\eta_L = P_{L} \left[\vec{f} - P_L\left(\vec{f}\right)\right]
 \eta_L = \vec{0}
\ \ \text{in}\ \ \left( {\mathcal S}' \right)^m. $$
Moreover, from the construction of $P_L$, we deduce that,
for any $Q\in {\mathcal F}_k$, any $L\in {\mathcal F}_{k+1}$, and any polynomial $q\in {\mathscr P}_s$,
\begin{align}\label{Hardy dep eq 26}
\int_{{\mathbb{R}^n}} q(x)\left[ \left\{ \vec{f}(x) - P_L\left( \vec{f} \right)(x) \right\}\eta_Q(x) - \vec{c}_{Q,L}(x) \right]\eta_L(x) = 0.
\end{align}
Observe that, for any $k\in{\mathbb{N}}$,
\begin{align*}
\vec{b}_k - \vec{b}_{k+1}
&= \sum_{Q\in{\mathcal F}_k} \left[ \vec{f} - P_Q\left( \vec{f} \right) \right]\eta_Q - \sum_{L\in {\mathcal F}_{k+1}} \left[ \vec{f} - P_L\left( \vec{f} \right) \right]\eta_L\nonumber\\
&= \sum_{Q\in{\mathcal F}_k} \left\{ \vec{f} \eta_Q - P_Q\left( \vec{f} \right) \eta_Q - \sum_{L\in \mathcal G(Q)} \left[ \vec{f} - P_L\left( \vec{f} \right) \right]\eta_L \eta_Q  \right\}\nonumber\\
&= \sum_{Q\in{\mathcal F}_k} \left\{ \vec{f} \eta_Q - P_Q\left( \vec{f} \right) \eta_Q - \sum_{L\in \mathcal G(Q)}
\left[ \vec{f} - P_L\left( \vec{f} \right) \right]\eta_L \eta_Q + \sum_{L\in \mathcal G(Q)}\vec c_{Q,L} \eta_L  \right\}\nonumber\\
&=: \sum_{Q\in{\mathcal F}_k} \vec{A}_{k,Q}\nonumber
\end{align*}
and let $\vec{A}_{0,Q_0} := \vec{g}$.

Now, we claim that
\begin{align}\label{Hardy dep eq 17}
\vec{f} = \sum_{k\in{\mathbb{Z}_+}} \sum_{Q\in {\mathcal F}_k} \vec{A}_{k,Q}
\end{align}
in $L^q$ for any $q\in (0,\infty)$ and hence in $(\mathcal S')^m$.
For any $k\in{\mathbb{N}}$, we first show that $\sum_{Q\in{\mathcal F}_k} \vec{A}_{k,Q}$ converges in $L^q$.
From Lemma \ref{stop e} and the assumption $\vec{f} \in \mathcal{O}_s$,
we infer that, for any $Q\in {\mathcal F}_k$ with $k\in\mathbb Z_+$ and for any $x\in Q^\ast$,
\begin{align}\label{Hardy dep eq 15}
P_Q\left( \vec{f} \right)(x) = \sum_{i = 1}^M \left\langle \vec{f}, e^{(Q)}_i\widetilde{\eta}_Q \right\rangle e^{(Q)}_i(x)
\lesssim \left\| \vec{f} \right\|_{L^\infty} \lesssim 1,
\end{align}
which, combined with the definition of $\vec{A}_{k,Q}$, further implies that,
for any $k\in{\mathbb{N}}$, $Q\in {\mathcal F}_k$, and $x\in{\mathbb{R}^n}$,
\begin{align}\label{AkQ}
|\vec{A}_{k,Q}|\lesssim {\mathbf{1}}_{3Q},
\end{align}
where the implicit positive constant depends on $\vec f$.
Using this and and the properties {\rm (ii)} and {\rm (iv)} of ${\mathcal F}_k$, we find that, for any $x\in{\mathbb{R}^n}$,
$ \sum_{Q\in{\mathcal F}_k}|\vec{A}_{k,Q}| \lesssim {\mathbf{1}}_{E_k} $.
Applying this, the fact that ${\mathbf{1}}_{E_k} \in L^q$,
and the Lebesgue dominated convergence theorem,
we conclude that $\sum_{Q\in{\mathcal F}_k} \vec{A}_{k,Q}$ converges in $L^q$.
Note that $\vec f=\vec b_{N}+\sum_{k=0}^{N-1}\sum_{Q\in{\mathcal F}_k} \vec{A}_{k,Q}$
and then, to show \eqref{Hardy dep eq 17},
we only need to prove that
$$\vec{b}_N = \sum_{L\in {\mathcal F}_N}\left[ \vec{f} - P_L\left( \vec{f} \right) \right]\eta_L \to 0$$
in $L^q$ as $N\to\infty$.
Using the properties {\rm (ii)} and {\rm (iv)} of ${\mathcal F}_k$ and
\eqref{Hardy dep eq 15}, we find that
\begin{align}\label{2036}
\left|\vec{b}_N(x)\right| \lesssim \left\|\vec{f}\right\|_{L^\infty} {\mathbf{1}}_{ E_{N-1} }(x).
\end{align}
Applying this and Fatou's lemma yields
$\vec{b}_N \to \vec{0} $ in $L^q$ as $N\to \infty$.
This finishes the proof of the claim \eqref{Hardy dep eq 17}.

Finally, we claim that, for any $k\in{\mathbb{Z}}_+$ and $Q\in {\mathcal F}_k$,
\begin{align}\label{Hardy dep eq 25}
\sup_{x\in 3Q} \left|\left[M^{(u)}_{3Q}\right]^{-1} \vec{A}_{k,Q}(x)\right| \lesssim 1.
\end{align}

We first consider the case $k\in{\mathbb{N}}$ of \eqref{Hardy dep eq 25}.
Observe that, for any $k\in{\mathbb{N}}$, $Q\in {\mathcal F}_k$, and $x\in{\mathbb{R}^n}$,
\begin{align}\label{A1234}
&\left|\left[M^{(u)}_{3Q}\right]^{-1}\vec{A}_{k,Q}(x)\right| {\nonumber}\\
&\quad = \Bigg|\left[M^{(u)}_{3Q}\right]^{-1}\Bigg\{\vec{f}(x) \eta_Q(x) - P_Q\left( \vec{f} \right)(x) \eta_Q(x) {\nonumber}\\
&\quad \quad - \sum_{L\in \mathcal G(Q)} \left[ \vec{f}(x) - P_L\left( \vec{f} \right)(x) \right]\eta_L(x) \eta_Q(x) + \sum_{L\in \mathcal G(Q)}\vec c_{Q,L}(x) \eta_L(x)\Bigg\}\Bigg| \nonumber\\
&\quad  \leq \left|\left[M^{(u)}_{3Q}\right]^{-1}\vec{f}(x) {\mathbf{1}}_{3Q\setminus E_Q}(x)\right| + \left|\left[M^{(u)}_{3Q}\right]^{-1} P_Q\left( \vec{f} \right)(x) \eta_Q(x) \right| \nonumber\\
&\quad \quad+ \sum_{L\in \mathcal G(Q)} \left|\left[M^{(u)}_{3Q}\right]^{-1} P_L\left( \vec{f} \right)(x)\eta_L(x) \eta_Q(x)\right|
+ \sum_{L\in \mathcal G(Q)} \left|\left[M^{(u)}_{3Q}\right]^{-1}\vec c_{Q,L}(x) \eta_L(x)\right|\nonumber\\
&\quad  =: I(x) + II(x) + III(x) + IV(x).
\end{align}

To proceed, we first show that,
for any $k\in\mathbb N$, $Q\in {\mathcal F}_{k}$, $L\in\mathcal G(Q)$,
and $i \in \{1,\dots,M\}$,
\begin{align}\label{Hardy dep eq 13}
\left|\left[M^{(u)}_{3L}\right]^{-1} \left\langle\vec{f}, e_i^{(L)}\widetilde{\eta}_{L}\right\rangle\right|\lesssim 1\ \ \text{and}\ \
\left|\left[M^{(u)}_{3Q}\right]^{-1} \left\langle\vec{f}, e_i^{(L)}\widetilde{\eta}_{L}\right\rangle\right|\lesssim 1,
\end{align}
where the implicit positive constants are independent of $\vec{f}$, $i$, $L$, and $Q$.
From Lemma \ref{stop P},
we deduce that, for any $y\in 3L$,
\begin{align*}
\left| \left[M^{(u)}_{3L}\right]^{-1} \left\langle \vec{f}, e_i^{(L)} \widetilde{\eta}_L \right\rangle\right|
\lesssim \left| \left[M^{(u)}_{3L}\right]^{-1} M_N^{{\mathcal K}} \left( \vec{f} \right)(y)\right|,
\end{align*}
which, together with \eqref{Hardy dep eq 19}, further implies that
\begin{align*}
\left| \left[M^{(u)}_{3L}\right]^{-1} \left\langle \vec{f}, e_i^{(L)} \widetilde{\eta}_L \right\rangle\right|
\lesssim \left\{ \fint_{3L} \left| \left[M^{(u)}_{3L}\right]^{-1} M_N^{{\mathcal K}} \left( \vec{f} \right)(y)\right|^u\,dy\right\}^\frac1u
\lesssim 1.
\end{align*}
Moreover, using the property {\rm (v)} of ${\mathcal F}_k$,
we obtain $32L\cap [3Q\setminus E_Q] \neq \emptyset$,
which further implies that there exists $y_L \in 32L\cap [3Q\setminus E_Q]$.
Applying this and Lemma \ref{stop P} with $y:=y_L$,
we conclude that
$\langle\vec{f}, e_i^{(L)}\widetilde{\eta}_{L}\rangle
\in C M^{{\mathcal K}}_N(\vec{f})(y_L)$,
which, combined with the definition of $E_Q$,
further implies that
\begin{align*}
\left|\left[M^{(u)}_{3Q}\right]^{-1} \left\langle\vec{f}, e_i^{(L)}\widetilde{\eta}_{L}\right\rangle\right|
\lesssim \left| \left[M^{(u)}_{3Q}\right]^{-1} M^{{\mathcal K}}_N\left(\vec{f}\right)(y_L)\right| \lesssim 1.
\end{align*}
This finishes the proof of \eqref{Hardy dep eq 13}.

Now, we estimate \eqref{A1234}.
Using the definition of $E_Q$ and Lemma \ref{f to}, we obtain, for any $x\in 3Q$
\begin{align}\label{Hardy dep eq 30}
I(x) \leq \left|\left[M^{(u)}_{3Q}\right]^{-1}\vec{f}(x) {\mathbf{1}}_{3Q\setminus E_Q}(x)\right| \lesssim 1.
\end{align}
By the definition of $P_L$, Lemmas \ref{stop eta} and \ref{stop e},
and \eqref{Hardy dep eq 13},
we find that, for any $x\in 3Q$,
\begin{align}\label{Hardy dep eq 29}
II(x)
\lesssim \sum_{i = 1}^M \left|\left[M^{(u)}_{3Q}\right]^{-1} \left\langle \vec{f}, e_i^{(Q)}\widetilde{\eta}_Q \right\rangle\right|\left| e_i^{(Q)}(x) \right|
\lesssim 1
\end{align}
and, moreover, for any $x\in 3L$ with $L\in \mathcal G(Q)$,
\begin{align}\label{Hardy dep eq 31}
\left|\left[M^{(u)}_{3Q}\right]^{-1} P_L\left( \vec{f} \right)\eta_L \eta_Q\right|
\lesssim \sum_{i = 1}^M \left|\left[M^{(u)}_{3Q}\right]^{-1} \left\langle \vec{f}, e_i^{(L)}\widetilde{\eta}_L \right\rangle\right| \left| e_i^{(L)}(x) \right|
\lesssim 1,
\end{align}
which, together with the property {\rm (iv)} of ${\mathcal F}_k$ and \eqref{Ccap},
further implies that, for any $x\in 3Q$,
\begin{align*}
III(x) \lesssim \sum_{L\in \mathcal G(Q)} {\mathbf{1}}_{L^\ast}(x) \lesssim 1.
\end{align*}
From Lemmas \ref{stop eta} and \ref{stop e} and the fact that $l(L) < \frac{1}{16}l(Q)$,
we infer that, for any $|\beta|\le N+1$,
\begin{align*}
\sup_{x\in{\mathbb{R}^n}} \left|\partial^\beta \left( e^{(L)}_i \widetilde{\eta}_{L}\eta_{Q} \right)(x)\right|
\sim \sup_{x\in{\mathbb{R}^n}} \left|\sum_{\alpha + \gamma \leq \beta}
\partial^\alpha e^{(L)}_i(x) \partial^{\gamma} \widetilde{\eta}_{L}(x) \partial^{\beta-\alpha-\gamma} \eta_{Q}(x) \right|
\lesssim  l(L)^{-n-|\beta|}
\end{align*}
and
\begin{align*}
\sup_{x\in{\mathbb{R}^n}} \left|\partial^\beta \left( e^{(L)}_i \widetilde{\eta}_{L}\right)(x)\right|
\sim \sup_{x\in{\mathbb{R}^n}} \left|\sum_{\alpha\leq \beta}
\partial^\alpha e^{(L)}_i(x) \partial^{\beta-\alpha} \widetilde{\eta}_{L}(x) \right|
\lesssim  l(L)^{-n-|\beta|}.
\end{align*}
Applying this, Lemma \ref{W f var},
and the same argument as that used in the proof of \eqref{Hardy dep eq 13},
we conclude that
\begin{align*}
\left| \left[M^{(u)}_{3Q}\right]^{-1} \left\langle \vec{f}, e^{(L)}_i \widetilde{\eta}_L \eta_Q \right\rangle \right| \lesssim 1
\quad\text{and}\quad\left| \left[M^{(u)}_{3Q}\right]^{-1} \left\langle \vec{f}, e^{(L)}_i \widetilde{\eta}_L \right\rangle \right| \lesssim 1,
\end{align*}
which further implies that
\begin{align*}
\left|\left[M^{(u)}_{3Q}\right]^{-1}\vec{c}_{Q,L} \eta_L\right|
&\leq \left| \left[M^{(u)}_{3Q}\right]^{-1}P_L \left( \vec{f}\eta_Q \right) \right|{\mathbf{1}}_{L^\ast}
+ \left| \left[M^{(u)}_{3Q}\right]^{-1}P_L \left( P_L\left( \vec{f} \right)\eta_Q \right) \right|{\mathbf{1}}_{L^\ast}
\lesssim {\mathbf{1}}_{L^\ast}.
\end{align*}
By this, the property {\rm (iv)} of ${\mathcal F}_k$, and \eqref{Ccap},
we conclude that, for any $x\in 3Q$,
\begin{align*}
IV(x) \lesssim \sum_{L\in\mathcal G(Q)} {\mathbf{1}}_{L^\ast}(x) \lesssim 1,
\end{align*}
which, combined with \eqref{A1234}, \eqref{Hardy dep eq 30}, \eqref{Hardy dep eq 29},
and \eqref{Hardy dep eq 31}, further implies that \eqref{Hardy dep eq 25} holds for any $k\in{\mathbb{N}}$.

Now, we consider the case $k = 0$ of \eqref{Hardy dep eq 25}.
Note that, by \eqref{Hardy dep eq 16}, ${\mathop\mathrm{\,supp\,}} \vec{f}\subset Q_0$,
and $L^\ast \subset 3Q_0$ for any $L\in \mathcal{G}(Q_0)$,
we have ${\mathop\mathrm{\,supp\,}} \vec{g}\subset Q_0^\ast$.
Using \eqref{Hardy dep eq 16}, \eqref{Hardy dep eq 13},
and \eqref{Hardy dep eq 30}, we obtain,
for any $x\in 3Q_0$,
\begin{align*}
\left| \left[M^{(u)}_{3Q_0}\right]^{-1} \vec{A}_{0,Q_0}(x) \right|
& \leq \left| \left[M^{(u)}_{3Q_0}\right]^{-1} \vec{f}(x) {\mathbf{1}}_{E_1^\complement}(x)\right|
+ \left| \left[M^{(u)}_{3Q_0}\right]^{-1} \sum_{L\in \mathcal G(Q_0)} P_L\left(\vec{f}\right)(x)\eta_L(x) \right|
\lesssim 1,
\end{align*}
which further implies that \eqref{Hardy dep eq 25} holds for $k = 0$
and hence completes the proof of the claim \eqref{Hardy dep eq 25}.

To finish the proof of (ii), for any $k\in {\mathbb{Z}_+}$ and $Q\in {\mathcal F}_k$, let
$$\lambda_{k,Q} := \left\|{\mathbf{1}}_{3Q}\right\|_{L^{p(\cdot)}} \left\|A_{3Q} M^{(u)}_{3Q}\right\|\ \ \text{and}\ \
\vec{a}_{k,Q} := \lambda_{k,Q}^{-1} \vec{A}_{k,Q},$$
and hence ${\mathop\mathrm{\,supp\,}} \vec{a}_{k,Q} \subset 3Q$ for any $k\in{\mathbb{Z}_+}$
and $Q\in {\mathcal F}_k$.
Then, by \eqref{Hardy dep eq 25}, we find that, for any $k\in{\mathbb{Z}}_+$ and $Q\in {\mathcal F}_k$,
\begin{align}\label{Hardy dep eq 28}
\sup_{x\in 3Q} \left| A_{3Q} \vec{a}_{k,Q} \right|
  &=\left\|{\mathbf{1}}_{3Q}\right\|_{L^{p(\cdot)}}^{-1} \left\|A_{3Q} M^{(u)}_{3Q}\right\|^{-1}
  \sup_{x\in 3Q} \left| A_{3Q} \vec{A}_{k,Q} \right| \nonumber\\
  &\leq \left\|{\mathbf{1}}_{3Q}\right\|_{L^{p(\cdot)}}^{-1}
  \sup_{x\in 3Q} \left| \left[ M^{(u)}_{3Q} \right]^{-1} \vec{A}_{k,Q} \right|
  \lesssim \left\|{\mathbf{1}}_{3Q}\right\|_{L^{p(\cdot)}}^{-1}.
\end{align}
In addition, observe that, by the vanishing moments of $\vec{f}$ and $\vec{b}_1$,
we have $\langle \vec{g},\cdot^\gamma \rangle = 0$ for any $\gamma\in {\mathbb{Z}}_+^n$ with $|\gamma|\leq s$.
Using this, the definition of $\vec{A}_{k,Q}$, Lemma \ref{stop e}, and \eqref{Hardy dep eq 26},
we conclude that, for any $k\in{\mathbb{Z}}_+$, any $Q\in {\mathcal F}_k$, and any polynomial $q\in {\mathscr P}_s$,
\begin{align*}
\left\langle \vec{A}_{k,Q}, q \right\rangle = 0.
\end{align*}
This, together with \eqref{Hardy dep eq 28}, further implies that
$\vec{a}_{k,Q}$ is a $(p(\cdot),\infty,s)_W$-atom supported in $3Q$.
Moreover, from \eqref{Hardy dep eq 17}, it follows that
$\vec{f} = \sum_{k\in{\mathbb{Z}_+}} \sum_{Q\in {\mathcal F}_k} \lambda_{k,Q}
\vec{a}_{k,Q}$
in $(\mathcal S')^m$.

Using Lemma \ref{fg Lp}, we obtain
\begin{align}\label{Hardy dep eq 24}
\left\| \sum_{k = 0}^\infty \sum_{Q\in {\mathcal F}_k} \left[\frac{|\lambda_{k,Q}|}{\|{\mathbf{1}}_{3Q}\|_{L^{p(\cdot)}}}\right]^{r} {\mathbf{1}}_{3Q} \right\|_{L^{\frac{p(\cdot)}{r}}}
&= \left\| \sum_{k = 0}^\infty \sum_{Q\in {\mathcal F}_k} \left\|A_{3Q} M^{(u)}_{3Q}\right\|^{r} {\mathbf{1}}_{3Q} \right\|_{L^{\frac{p(\cdot)}{r}}} {\nonumber} \\
& = \sup_{\|g\|_{L^{(\frac{p(\cdot)}{r})'}}\leq 1} \int_{{\mathbb{R}^n}} \sum_{k = 0}^\infty \sum_{Q\in {\mathcal F}_k} \left\|A_{3Q} M^{(u)}_{3Q}\right\|^{r} {\mathbf{1}}_{3Q}(x) g(x)\,dx.
\end{align}
Now, fix $g\in L^{(\frac{p(\cdot)}{r})'}$.
For any $k\in{\mathbb{Z}}_+$ and $Q\in {\mathcal F}_k$, let $F_Q := 3Q\setminus E_{k+1}$ and hence,
by the definition of $E_{k+1}$,
we find that, for any $L\in {\mathcal F}_{k+1}$, $F_Q \cap F_L = \emptyset$.
Observe that, from \eqref{Hardy dep eq 2},
we deduce that
$$|F_Q| = |3Q| - |3Q\cap E_{k+1}| > (1 - 3^{-n}2^{-4n})|3Q|.$$
Applying these, \eqref{Hardy dep eq 19}, and
Lemma \ref{norm matrix}, we conclude that
\begin{align*}
&\int_{{\mathbb{R}^n}} \sum_{k = 0}^\infty \sum_{Q\in {\mathcal F}_k} \left\|A_{3Q} M^{(u)}_{3Q}\right\|^{r} {\mathbf{1}}_{3Q}(x) g(x)\,dx\\
&\quad \le \sum_{k = 0}^\infty \sum_{Q\in {\mathcal F}_k} |3Q| \left\|A_{3Q} M^{(u)}_{3Q}\right\|^{r} \fint_{3Q}  g(x)\,dx
\lesssim \sum_{k = 0}^\infty \sum_{Q\in {\mathcal F}_k} |F_Q| \left\|A_{3Q} M^{(u)}_{3Q}\right\|^{r} \fint_{3Q}  g(x)\,dx\\
&\quad \lesssim \sum_{k = 0}^\infty \sum_{Q\in {\mathcal F}_k} \int_{F_Q} \left[\fint_{3Q} \left|A_{3Q} \widetilde{M}^{{\mathcal K}}_{N,\epsilon} \left(\vec{f}\right)(y) \right|^u\,dy\right]^{\frac ru}  {\mathcal M}\left(g\right)(x)  \,dx\\
&\quad \lesssim \sum_{k = 0}^\infty \sum_{Q\in {\mathcal F}_k} \int_{F_Q} \left[\widetilde{{\mathcal M}}^{(u)}_W\left(W(\cdot)\widetilde{M}^{{\mathcal K}}_{N,\epsilon} \left(\vec{f}\right)\right)(x)\right]^r  {\mathcal M}\left(g\right)(x)  \,dx\\
&\quad \lesssim \int_{{\mathbb{R}^n}} \left[\widetilde{{\mathcal M}}^{(u)}_W\left(W(\cdot)\widetilde{M}^{{\mathcal K}}_{N,\epsilon} \left(\vec{f}\right)\right)(x)\right]^r  {\mathcal M}\left(g\right)(x)  \,dx,
\end{align*}
which, combined with Lemmas \ref{Holder}, \ref{con f}, and \ref{bound max wid}, further implies that
\begin{align*}
&\int_{{\mathbb{R}^n}} \sum_{k = 0}^\infty \sum_{Q\in {\mathcal F}_k} \left\|A_{3Q} M^{(u)}_{3Q}\right\|^{r} {\mathbf{1}}_{3Q}(x) g(x)\,dx\\
&\quad \lesssim \left\|\left[\widetilde{{\mathcal M}}^{(u)}_W\left(W(\cdot)\widetilde{M}^{{\mathcal K}}_{N,\epsilon} \left(\vec{f}\right)\right)\right]^r\right\|_{L^{\frac{p(\cdot)}{r}}} \left\| {\mathcal M}\left(g\right)\right\|_{L^{(\frac{p(\cdot)}{r})'}}
\lesssim \left\|W(\cdot) \widetilde{M}^{{\mathcal K}}_{N,\epsilon} \left(\vec{f}\right)^r\right\|_{L^{\frac{p(\cdot)}{r}}} \left\|g\right\|_{L^{(\frac{p(\cdot)}{r})'}}.
\end{align*}
Using this, \eqref{Hardy dep eq 24}, and Lemma \ref{con f}, we find that
\begin{align*}
&\left\| \left\{\sum_{k = 0}^\infty \sum_{Q\in {\mathcal F}_k} \left[\frac{|\lambda_{k,Q}|}{\|{\mathbf{1}}_{3Q}\|_{L^{p(\cdot)}}}\right]^{r} {\mathbf{1}}_{3Q}\right\}^\frac1r \right\|_{L^{p(\cdot)}} \\
&\quad \lesssim \left\|W(\cdot) \widetilde{M}^{{\mathcal K}}_{N,\epsilon} \left(\vec{f}\right)\right\|_{L^{p(\cdot)}}
\lesssim \left\|W(\cdot) M^{{\mathcal K}}_N \left(\vec{f}\right)\right\|_{L^{p(\cdot)}} +  \left\| \frac{\epsilon\|W(\cdot)\|}{(1 + |\cdot|)^L} \right\|_{L^{p(\cdot)}},
\end{align*}
which, together with choosing $\epsilon$ small enough such that
$$\left\| \frac{\epsilon\|W(\cdot)\|}{(1 + |\cdot|)^L} \right\|_{L^{p(\cdot)}}
\le \left\|W(\cdot) M^{{\mathcal K}}_N \left(\vec{f}\right)\right\|_{L^{p(\cdot)}},
$$
further implies that
\begin{align*}
\left\| \left\{\sum_{k = 0}^\infty \sum_{Q\in {\mathcal F}_k} \left[\frac{|\lambda_{k,Q}|}{\|{\mathbf{1}}_{3Q}\|_{L^{p(\cdot)}}}\right]^{r} {\mathbf{1}}_{3Q}\right\}^\frac1r \right\|_{L^{p(\cdot)}}
\lesssim \left\|W(\cdot) M^{{\mathcal K}}_N \left(\vec{f}\right)\right\|_{L^{p(\cdot)}} = \left\|\vec{f}\right\|_{H^{p(\cdot)}_W}.
\end{align*}
From this, the fact that $\{\vec{a}_{k,Q}\}_{k\in\mathbb Z_+,
Q\in\mathcal F_k}$ are $(p(\cdot),\infty,s)_W$-atoms,
statement (i) of this theorem, and the uniqueness of the limit,
we infer that
$\vec{f} = \sum_{k\in{\mathbb{Z}_+}} \sum_{Q\in {\mathcal F}_k} \lambda_{k,Q}
\vec{a}_{k,Q}$
in $H^{p(\cdot)}_W$.
This finishes the proof of {\rm (ii)} and hence Theorem \ref{atom con}.
\end{proof}

\section{Dual of $H^{p(\cdot)}_W$}\label{sec dual}
In this section, we consider the dual space of $H^{p(\cdot)}_W$.
We first introduce the following reducing matrix-weighted variable Campanato space
$\mathcal{L}_{p(\cdot),q,s,{\mathbb{A}}}$.
\begin{definition}\label{def Com}
Let $p(\cdot) \in {\mathcal P}_0$, $q\in [1,\infty)$, $s\in{\mathbb{Z}}_+$,
and ${\mathbb{A}} := \{A_Q\}_{{\rm cube}\ Q}$ be a family of positive definite matrices.
The \emph{${\mathbb{A}}$-matrix-weighted variable Campanato space $\mathcal{L}_{p(\cdot),q,s,{\mathbb{A}}}$}
is defined to be the set of all $\vec{g} \in (L^{1}_{\rm loc})^m$ such that
\begin{align*}
\left\|\vec{g}\right\|_{ \mathcal{L}_{p(\cdot),q,s,{\mathbb{A}}} }
:= \sup_{Q} \inf_{\vec{P} \in ({\mathcal P}_s)^m} \frac{|Q|}{\| {\mathbf{1}}_Q \|_{L^{p(\cdot)}}}
\left\{\frac1{|Q|} \int_Q \left| A_Q^{-1} \left[ \vec{g}(x) - \vec{P}(x) \right] \right|^q\,dx \right\}^\frac1q < \infty.
\end{align*}
\end{definition}
Next, we establish an equivalent characterization of $ \mathcal{L}_{p(\cdot),q,s,{\mathbb{A}}} $.
Here, and thereafter, for any $s\in{\mathbb{Z}}_+$ and any compact set $E\subset {\mathbb{R}^n}$,
let
$ \Pi^s_E : L^1(E) \rightarrow {\mathcal P}_s $
be the \emph{natural projection} satisfying, for any $f\in L^1(E)$ and $q\in {\mathcal P}_s$,
\begin{align*}
\int_{E} \Pi^s_E(f)(x) q(x)\,dx = \int_E f(x)q(x)\,dx.
\end{align*}
In what follows, for any $\vec{f} := (f_1,\dots,f_m)$, we let
$ \Pi^s_E(\vec{f}) := (\Pi^s_E(f_1),\dots,\Pi^s_E(f_m)) $.
\begin{lemma}\label{Com eqiv}
Let all the notation be the same as in Definition \ref{def Com}.
Then, for any $\vec{g} \in \mathcal{L}_{p(\cdot),q,s,{\mathbb{A}}}$,
\begin{align*}
\left\|\vec{g}\right\|_{ \mathcal{L}_{p(\cdot),q,s,{\mathbb{A}}} }
\sim \sup_{Q} \frac{|Q|}{\| {\mathbf{1}}_Q \|_{L^{p(\cdot)}}}
\left\{ \frac1{|Q|}\int_Q \left| A_Q^{-1} \left[ \vec{g}(x) - \Pi^s_Q\left(\vec{g}\right) (x) \right] \right|^q\,dx \right\}^\frac1q,
\end{align*}
where the positive equivalence constants are independent of $\vec{g}$.
\end{lemma}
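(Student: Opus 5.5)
The inequality ``$\lesssim$'' is immediate: for each cube $Q$ the polynomial $\vec P:=\Pi^s_Q(\vec g)$ is admissible in the infimum defining $\|\vec g\|_{\mathcal{L}_{p(\cdot),q,s,\mathbb{A}}}$. So everything rests on the reverse inequality. For that it suffices to show that, for every cube $Q$ and every $\vec P\in(\mathcal P_s)^m$,
\begin{align*}
\left\{\fint_Q \left|A_Q^{-1}\left[\vec g(x)-\Pi^s_Q(\vec g)(x)\right]\right|^q\,dx\right\}^{\frac1q}
\lesssim \left\{\fint_Q \left|A_Q^{-1}\left[\vec g(x)-\vec P(x)\right]\right|^q\,dx\right\}^{\frac1q},
\end{align*}
with the implicit constant independent of $Q$, $\vec P$ and $\vec g$. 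Multiplying by $|Q|/\|\mathbf 1_Q\|_{L^{p(\cdot)}}$, taking the infimum over $\vec P$ and then the supremum over $Q$ gives the claim.

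To prove this, I use the reproducing property $\Pi^s_Q(\vec P)=\vec P$ for $\vec P\in(\mathcal P_s)^m$. It holds because $\Pi^s_Q$ is the orthogonal projection onto $\mathcal P_s$ in $L^2(Q)$, applied componentwise, so it fixes polynomials. Linearity then gives $\vec g-\Pi^s_Q(\vec g)=\vec h-\Pi^s_Q(\vec h)$ with $\vec h:=\vec g-\vec P$. By Minkowski's inequality it is enough to bound
\begin{align*}
\left\{\fint_Q\left|A_Q^{-1}\Pi^s_Q(\vec h)(x)\right|^q\,dx\right\}^{\frac1q}
\lesssim \fint_Q\left|A_Q^{-1}\vec h(x)\right|\,dx,
\end{align*}
since Hölder's inequality with $q\ge1$ then controls the right-hand side by the $q$-average of $|A_Q^{-1}\vec h|$.

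The key observation is that the constant matrix $A_Q^{-1}$ commutes with $\Pi^s_Q$: the projection acts componentwise and linearly, so $A_Q^{-1}\Pi^s_Q(\vec h)=\Pi^s_Q(A_Q^{-1}\vec h)$. Hence the matrix disappears and I only need the scalar bound
\begin{align*}
\sup_{x\in Q}\left|\Pi^s_Q(f)(x)\right|\lesssim \fint_Q|f(y)|\,dy,
\end{align*}
applied to each component of $A_Q^{-1}\vec h$, then summed using the equivalence of norms on $\mathbb{C}^m$.

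This scalar bound is the only step with any content. I would prove it by translation and dilation to the unit cube $Q_0:=[0,1)^n$. Write $\Pi^s_{Q_0}(f)=\sum_{i}\langle f,e_i\rangle e_i$, where $\{e_i\}$ is an $L^2(Q_0)$-orthonormal basis of $\mathcal P_s$, with finitely many elements depending only on $n$ and $s$. Then
\begin{align*}
|\Pi^s_{Q_0}(f)(x)|\le\sum_i\|e_i\|_{L^\infty(Q_0)}^2\|f\|_{L^1(Q_0)}.
\end{align*}
Affine changes of variables preserve $\mathcal P_s$ and commute with the projection, with the normalizations working out to averages. This makes the constant uniform over all cubes $Q$, which is exactly the independence needed, and completes the argument.
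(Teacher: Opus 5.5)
Your proof is correct and is essentially the argument behind the paper's proof, which is a single citation of the known comparison between the infimum over polynomials and the natural projection $\Pi^s_Q$ (its reference [jtyyz22, (2.12)]). You also make explicit what the citation leaves implicit: the constant matrix $A_Q^{-1}$ commutes with the componentwise projection, $A_Q^{-1}\Pi^s_Q(\vec h)=\Pi^s_Q(A_Q^{-1}\vec h)$, and this is exactly what lets the scalar estimate $\sup_{x\in Q}|\Pi^s_Q(f)(x)|\lesssim\fint_Q|f(y)|\,dy$ be used in the matrix-weighted setting.
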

\begin{proof}
Using \cite[(2.12)]{jtyyz22}, we find that, for any cube $Q$ in ${\mathbb{R}^n}$,
\begin{align*}
\inf_{\vec{P} \in ({\mathcal P}_s)^m}  \left\{ \int_Q \left| A_Q^{-1} \left[ \vec{g}(x) - \vec{P}(x) \right] \right|^q\,dx \right\}^\frac1q
\sim \left\{ \int_Q \left| A_Q^{-1} \left[ \vec{g}(x) - \Pi^s_Q\left(\vec{g}\right) (x) \right] \right|^q\,dx \right\}^\frac1q,
\end{align*}
which completes the proof of Lemma \ref{Com eqiv}.
\end{proof}
The following dual theorem of $H^{p(\cdot)}_W$ is the main result of this section.
\begin{theorem}\label{dual Com}
Let $p(\cdot) \in {\mathcal P}_0\cap LH$ with $p_+ \leq 1$ and let $q\in [1,\infty)$, $r := \min\{1,p_-\},$
$s\in {\mathbb{Z}}_+ \cap [\lfloor d^{\rm upper}_{p(\cdot),\infty}(W) + n(\frac{1}{r} - 1)\rfloor,\infty)$,
$W\in {\mathscr A}_{p(\cdot),\infty}$, and $\{A_Q\}_{{\rm cube}\ Q}$ be a family of reducing operators of order $p(\cdot)$ for $W$.
Then the dual space of $H^{p(\cdot)}_W$, denoted by $(H^{p(\cdot)}_W)^\ast$,
is $ \mathcal{L}_{p(\cdot),q,s,{\mathbb{A}}} $ in the following sense:
\begin{itemize}
\item[{\rm (i)}] Suppose $\vec{g} \in \mathcal{L}_{p(\cdot),q,s,{\mathbb{A}}}$.
Then the linear functional
\begin{align}\label{def lg eq}
L_{\vec g} :\ \vec{f} \longmapsto L_{\vec g}\left( \vec{f} \right) := \int_{{\mathbb{R}^n}} \vec{f}(x)\cdot\vec{g}(x)\,dx,
\end{align}
initially defined for any $\vec{f} \in \mathcal{O}_s\cap H^{p(\cdot)}_W$, has a bounded linear extension to $H^{p(\cdot)}_W$.
\item[{\rm (ii)}] Conversely, any continuous linear functional on $H^{p(\cdot)}_W$
arises as in \eqref{def lg eq} with a unique $\vec{g} \in \mathcal{L}_{p(\cdot),q,s,{\mathbb{A}}}$.
\end{itemize}
\end{theorem}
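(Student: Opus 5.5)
The plan is the classical atomic duality argument, with Theorem \ref{atom con} and Proposition \ref{H dense O} as the main tools. Since $p_+\le 1$, we have $r=p_-$ and $\frac{p(\cdot)}{r}\ge 1$. The first step is a scaling estimate for the quasi-norm coming from Theorem \ref{atom con}. Because $\frac{r}{p(\cdot)}\le 1$, the space $L^{p(\cdot)/r}$ is a Banach function space. For $r\le 1$ and $p_+\le1$ we get $\|\{\sum_k(\frac{|\lambda_k|}{\|\mathbf 1_{Q_k}\|_{L^{p(\cdot)}}})^r\mathbf 1_{Q_k}\}^{1/r}\|_{L^{p(\cdot)}}\ge\|\sum_k\frac{|\lambda_k|}{\|\mathbf 1_{Q_k}\|_{L^{p(\cdot)}}}\mathbf 1_{Q_k}\|_{L^{p(\cdot)}}$. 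We also need the reverse comparison $\sum_k|\lambda_k|\lesssim\|\{\cdots\}^{1/r}\|_{L^{p(\cdot)}}$. This should follow from the embedding $L^{p(\cdot)}\subset$ (weak) control when $p_+\le1$: the map $\lambda\mapsto\|\lambda\mathbf 1_Q/\|\mathbf 1_Q\|\|$ is a norm-one embedding and $L^{p(\cdot)}$ is $p_-$-concave. More concretely, for $p_+\le 1$ the reverse Minkowski inequality gives $\|\sum_k f_k\|_{L^{p(\cdot)}}\ge\sum_k\|f_k\|_{L^{p(\cdot)}}$ for nonnegative $f_k$, up to constants depending on $p(\cdot)$ via modular arguments. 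I would establish this first, as the analogue of $\|\cdot\|_{H^p}^p$ being subadditive.

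\textbf{Proof of (i).} Take $\vec f\in\mathcal O_s\cap H^{p(\cdot)}_W$ and decompose it as in Theorem \ref{atom con}(ii), with $(p(\cdot),\infty,s)_W$-atoms. That construction produced compactly supported bounded pieces $\vec A_{k,Q}$ converging in $L^q$, so we may pair $\vec f$ with $\vec g$ term by term. For an atom $\vec a$ supported in $Q$, the vanishing moments let us subtract $\Pi^s_Q(\vec g)$. Writing $\vec a\cdot\vec g=(A_Q\vec a)\cdot(A_Q^{-1}(\vec g-\Pi^s_Q\vec g))$ and applying H\"older with exponents $q',q$ gives
\[
\Bigl|\int\vec a\cdot\vec g\Bigr|\le\|A_Q\vec a\|_{L^{q'}}\,\|A_Q^{-1}(\vec g-\Pi^s_Q\vec g)\|_{L^q(Q)}.
\]
By Remark after Definition \ref{def atom} and Lemma \ref{eq reduc M}, an $L^\infty$-type $W$-atom is a multiple of an $\mathbb A$-atom, so $\|A_Q\vec a\|_{L^{q'}}\lesssim|Q|^{1/q'}/\|\mathbf 1_Q\|_{L^{p(\cdot)}}$. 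Lemma \ref{Com eqiv} then bounds the pairing by $\|\vec g\|_{\mathcal L}$. The one subtle point is the passage from the sup bound in \eqref{Hardy dep eq 28} to the $A_Q$-bound: it holds there directly, since $\sup|A_{3Q}\vec a|\lesssim\|\mathbf 1_{3Q}\|^{-1}$. Summing and using the reverse Minkowski bound above gives $|L_{\vec g}(\vec f)|\lesssim\|\vec g\|_{\mathcal L}\|\vec f\|_{H^{p(\cdot)}_W}$. Density (Proposition \ref{H dense O}) then yields the bounded extension.

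\textbf{Proof of (ii).} Let $L\in(H^{p(\cdot)}_W)^*$ and fix a cube $Q$. Consider $L^{q'}_{A_Q}(Q)$ modulo moments, i.e. functions $\vec h$ on $Q$ with vanishing moments up to order $s$. For such $\vec h$, the normalized function $\vec a=\frac{|Q|^{1/q'}}{\|\mathbf 1_Q\|_{L^{p(\cdot)}}}\frac{\vec h}{\|A_Q\vec h\|_{L^{q'}}}$ is an $\mathbb A$-atom. For $q'>\max\{1,\frac{r_Wp_+}{r_W-1}\}$, which holds when $q$ is close to $1$, Theorem \ref{atom con}(i) gives $|L(\vec h)|\lesssim\|L\|\,\|\mathbf 1_Q\|_{L^{p(\cdot)}}|Q|^{-1/q'}\|A_Q\vec h\|_{L^{q'}}$. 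Hahn--Banach and Riesz representation on $L^{q'}(Q)$, with the weight $A_Q$ being a constant matrix, produce $\vec g_Q\in L^q(Q)$, unique modulo $(\mathcal P_s)^m$. Compatibility on nested cubes $Q(\mathbf 0,2^j)$ gives a global $\vec g\in(L^1_{\rm loc})^m$, and duality of the above estimate gives $\|\vec g\|_{\mathcal L_{p(\cdot),q,s,\mathbb A}}\lesssim\|L\|$. Then $L=L_{\vec g}$ on $\mathcal O_s$, hence everywhere by density.

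The main obstacle is the range of $q$. Theorem \ref{atom con}(i) needs $q'$ large, whereas the claim covers all $q\in[1,\infty)$. I would first try to show that the Campanato spaces coincide for all $q$ via a John--Nirenberg type argument, but this seems hard in the matrix variable setting. The alternative is to prove atomic (i) for $(p(\cdot),q',s)_{\mathbb A}$-atoms for every $q'>1$, using Lemma \ref{a atom eq le} with the constant matrix $A_Q$, which may relax the restriction. The reverse Minkowski step is the second delicate point.
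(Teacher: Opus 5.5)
Your route is the paper's. In (i) you decompose $\vec f\in\mathcal O_s\cap H^{p(\cdot)}_W$ by Theorem \ref{atom con}(ii) into $L^\infty$-type atoms and apply H\"older's inequality with $A_Q$ atom by atom. You then sum using $\sum_k|\lambda_k|\lesssim\bigl\|\bigl\{\sum_k(|\lambda_k|/\|\mathbf 1_{Q_k}\|_{L^{p(\cdot)}})^r\mathbf 1_{Q_k}\bigr\}^{1/r}\bigr\|_{L^{p(\cdot)}}$, which the paper imports from \cite[(4.7) and (4.10)]{ns12}. In (ii) you use Hahn--Banach/Riesz on moment-free $L^{q'}(Q)$ together with uniform boundedness of atoms, which is what the paper's appeal to \cite{cyy25} with Lemma \ref{La} amounts to.

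Your reverse Minkowski step is true, even with constant $1$. If $\lambda_k:=\|f_k\|_{L^{p(\cdot)}}$ and $\Lambda:=\sum_k\lambda_k$, concavity of $t\mapsto t^{p(x)}$ gives $\rho_{L^{p(\cdot)}}(\Lambda^{-1}\sum_kf_k)\ge\sum_k\frac{\lambda_k}{\Lambda}\rho_{L^{p(\cdot)}}(f_k/\lambda_k)=1$. Part (i) works for every $q\in[1,\infty)$. For the term-by-term pairing, use the bound $|\vec b_N|\lesssim\mathbf 1_{E_{N-1}}$ and dominated convergence, as the paper does, rather than $L^q$-convergence: for $q=1$ you only know $\vec g\in L^1_{\rm loc}$.

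The obstacle you flag in (ii) is a genuine gap, and neither of your proposed fixes can close it. The condition $q'>\max\{1,\frac{r_Wp_+}{r_W-1}\}$ is intrinsic to Lemma \ref{a atom eq le}: it is exactly what yields $d(\cdot)<(p(\cdot)/r)'$ there. The paper's proof of (ii) carries the same restriction, since Lemma \ref{La} is only established for atom exponents in that range, so the paper does not supply the missing step either.

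In fact a John--Nirenberg embedding fails here, and so does (ii) itself for large $q$. Take $m=n=1$, $p(\cdot)\equiv1$, and $W=w:=|x|^{-a}$ with $a\in(0,1)$; then $w\in A_1\subset A_\infty$, i.e., $w\in\mathscr A_{1,\infty}$.
\begin{itemize}
\item Here $A_Q\sim w(Q)/|Q|$, so $\|g\|_{\mathcal L_{1,q,s,\mathbb A}}\sim\sup_Q\frac{|Q|}{w(Q)}\inf_{P\in\mathcal P_s}(\frac1{|Q|}\int_Q|g-P|^q)^{1/q}$.
\item Hence $w\in\mathcal L_{1,1,s,\mathbb A}$ with norm $\lesssim1$, and $L_w\in(H^1_w)^*$ by (i).
\item But $\mathcal O_s\subset H^1_w$, since each element is a multiple of an $L^\infty$-atom. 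So any $g$ representing $L_w$ differs from $w$ by an element of $\mathcal P_s$, and is not in $L^q_{\rm loc}$ once $aq\ge1$.
\end{itemize}
By your own Riesz argument, this also shows that $L^t$-atoms with $at'\ge1$ are not uniformly bounded in $H^1_w$, so the "relax the atom exponent" route is blocked as well.

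What your argument, and the paper's, actually proves is (i) for all $q\in[1,\infty)$ and (ii) for $q\in[1,q_*)$, where $q_*:=(\max\{1,\frac{r_Wp_+}{r_W-1}\})'$. The case $q=1$, where Riesz on $L^\infty$ is unavailable, follows from any admissible $q_0\in(1,q_*)$ via $\|\cdot\|_{\mathcal L_{p(\cdot),1,s,\mathbb A}}\le\|\cdot\|_{\mathcal L_{p(\cdot),q_0,s,\mathbb A}}$. The statement should be restricted to this range.
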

\begin{remark}
If $p(\cdot)\equiv p$ with $p\in(0,1]$ is a constant exponent, then
Theorem \ref{dual Com} extends the duality result
\cite[Theorem 2.14]{cyy25} from matrix $A_p$ weights to matrix $A_{p,\infty}$
weights. On the other hand, for more general
$p(\cdot) \in {\mathcal P}_0\cap LH$ with $p_+ \leq 1$ and matrix
$\mathscr A_{p(\cdot),\infty}$ weights $W$, Theorem \ref{dual Com} is new.
\end{remark}
To prove this theorem, we need the following technical lemma.

\begin{lemma}\label{La}
Let $p(\cdot) \in {\mathcal P}_0\cap LH$ with $p_+ \leq 1$ and let $q\in (\max\{1, \frac{r_W p_+}{r_W - 1}\},\infty)$,
$s\in {\mathbb{Z}}_+ \cap [\lfloor d^{\rm upper}_{p(\cdot),\infty}(W) + n(\frac{1}{r} - 1)\rfloor,\infty)$,
and $W\in {\mathscr A}_{p(\cdot),\infty}$.
Then, for any continuous linear functional $L \in (H^{p(\cdot)}_W)^\ast$,
$$ \sup \left\{  \left| L\left(\vec{f}\right) \right|:\ \left\| \vec{f} \right\|_{H^{p(\cdot)}_W} \leq 1 \right\}
\sim \sup\left\{ \left| L\left( \vec{a} \right)\right|:\ \vec{a}\ \text{is a}\ (p(\cdot),q,s)_W\text{-atom}  \right\}, $$
where the positive equivalence constants are independent of $L$.
\end{lemma}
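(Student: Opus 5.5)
The equivalence splits into two inequalities: a direct one, obtained by feeding a single atom into Theorem \ref{atom con}(i), and a converse one, obtained from the density result Proposition \ref{H dense O}, the atomic decomposition Theorem \ref{atom con}(ii), and a reverse Minkowski inequality available because $p_+\le 1$.

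\emph{Direct inequality.} Let $\vec a$ be a $(p(\cdot),q,s)_W$-atom supported in a cube $Q$. Apply Theorem \ref{atom con}(i) to the one-term sequence with $\lambda=1$, which is allowed since $q>\max\{1,\frac{r_Wp_+}{r_W-1}\}$ and $s$ is in the admissible range. This gives
$$\|\vec a\|_{H^{p(\cdot)}_W}\lesssim \Big\|\frac{\mathbf 1_Q}{\|\mathbf 1_Q\|_{L^{p(\cdot)}}}\Big\|_{L^{p(\cdot)}}=1.$$
Hence $|L(\vec a)|\le \|L\|\,\|\vec a\|_{H^{p(\cdot)}_W}\lesssim\|L\|$, where $\|L\|$ denotes the left-hand supremum. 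Taking the supremum over atoms gives one direction.

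\emph{Converse inequality: reduction.} Let $S$ denote the right-hand supremum; we may assume $S<\infty$. By Proposition \ref{H dense O} and the continuity of $L$, it suffices to prove $|L(\vec f)|\lesssim S\|\vec f\|_{H^{p(\cdot)}_W}$ for $\vec f\in\mathcal O_s\cap H^{p(\cdot)}_W$. For such $\vec f$, Theorem \ref{atom con}(ii) gives $\vec f=\sum_k\lambda_k\vec a_k$ in $H^{p(\cdot)}_W$. Here the $\vec a_k$ are $(p(\cdot),\infty,s)_W$-atoms, hence also $(p(\cdot),q,s)_W$-atoms by the Remark following Definition \ref{def atom}(ii). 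The coefficients satisfy
$$\Big\|\Big\{\sum_k\Big[\frac{|\lambda_k|}{\|\mathbf 1_{Q_k}\|_{L^{p(\cdot)}}}\Big]^r\mathbf 1_{Q_k}\Big\}^{1/r}\Big\|_{L^{p(\cdot)}}\lesssim\|\vec f\|_{H^{p(\cdot)}_W}.$$
Since the series converges in $H^{p(\cdot)}_W$ and $L$ is continuous, $L(\vec f)=\sum_k\lambda_kL(\vec a_k)$. Therefore $|L(\vec f)|\le S\sum_k|\lambda_k|$, and it remains to show $\sum_k|\lambda_k|\lesssim\|\vec f\|_{H^{p(\cdot)}_W}$.

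\emph{Converse inequality: coefficient bound.} This is the main step, and it is where $p_+\le1$ is used. Put $f_k:=|\lambda_k|\mathbf 1_{Q_k}/\|\mathbf 1_{Q_k}\|_{L^{p(\cdot)}}$, so that $\|f_k\|_{L^{p(\cdot)}}=|\lambda_k|$. We prove the reverse Minkowski inequality $\sum_k\|f_k\|_{L^{p(\cdot)}}\le\|\sum_kf_k\|_{L^{p(\cdot)}}$ for nonnegative $f_k$.
\begin{itemize}
\item Let $\Lambda:=\sum_k|\lambda_k|$; we may assume it is finite by first treating finite partial sums. Write $\Lambda^{-1}\sum_kf_k=\sum_k\frac{|\lambda_k|}{\Lambda}\cdot\frac{f_k}{|\lambda_k|}$.
\item Since $t\mapsto t^{p(x)}$ is concave for $p(x)\le1$, pointwise $\big(\Lambda^{-1}\sum_kf_k\big)^{p(x)}\ge\sum_k\frac{|\lambda_k|}{\Lambda}\big(\frac{f_k}{|\lambda_k|}\big)^{p(x)}$.
\item Integrating, and using that each $f_k/|\lambda_k|$ has modular $1$ (as $p_+<\infty$), gives $\rho_{L^{p(\cdot)}}\big(\Lambda^{-1}\sum_kf_k\big)\ge1$.
\item Hence $\|\sum_kf_k\|_{L^{p(\cdot)}}\ge\Lambda$.
\end{itemize}
Finally, since $r\le1$, pointwise $\sum_kf_k\le\{\sum_kf_k^r\}^{1/r}$. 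Combining this with the coefficient estimate from Theorem \ref{atom con}(ii) gives $\Lambda\lesssim\|\vec f\|_{H^{p(\cdot)}_W}$, which completes the converse inequality.
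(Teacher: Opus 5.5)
Your proposal is correct and follows the paper's proof. One direction comes from applying Theorem \ref{atom con}(i) to a single atom. The other reduces to $\mathcal{O}_s\cap H^{p(\cdot)}_W$ via Proposition \ref{H dense O}, decomposes with Theorem \ref{atom con}(ii), and bounds $\sum_k|\lambda_k|$ by the $L^{p(\cdot)}$ quasi-norm of $\{\sum_k[|\lambda_k|/\|\mathbf{1}_{Q_k}\|_{L^{p(\cdot)}}]^r\mathbf{1}_{Q_k}\}^{1/r}$. The only difference is that the paper quotes this last coefficient bound from Nakai--Sawano, whereas you prove it directly: the concavity of $t\mapsto t^{p(x)}$ gives a reverse Minkowski inequality (valid since $p_+\le 1$), and $\ell^r\subset\ell^1$ finishes it, which makes your argument self-contained.
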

\begin{proof}
By Theorem \ref{atom con}(i), we find that, for any $(p(\cdot),q,s)_W$-atom $\vec{a}$,
$\|\vec{a}\|_{H^{p(\cdot)}_W} \lesssim 1$
and hence
$$ \sup \left\{  \left| L\left(\vec{f}\right) \right|:\ \left\| \vec{f} \right\|_{H^{p(\cdot)}_W} \leq 1 \right\}
\gtrsim \sup\left\{ \left| L\left( \vec{a} \right)\right|:\ \vec{a}\ \text{is a}\ (p(\cdot),q,s)_W\text{-atom} \right\}. $$
Next, we consider the converse inequality.
Let $\vec{f} \in \mathcal{O}_s \cap H^{p(\cdot)}_W$ with $\|\vec{f}\|_{H^{p(\cdot)}_W} \leq 1$.

Then, from Theorem \ref{atom con}{\rm (ii)} and \cite[(4.7) and (4.10)]{ns12}, we deduce that there exist a sequence $\{\lambda_k\}_{k\in{\mathbb{Z}}}$ in $\mathbb{C}$
and a sequence of $(p(\cdot),\infty,s)_W$-atoms $\{\vec{a}_k\}_{k\in{\mathbb{Z}}}$ such that
$\vec{f} = \sum_{k\in{\mathbb{Z}}} \lambda_k\vec{a}_k$ in both $H^{p(\cdot)}_W$ and $({\mathcal S}')^m$
and
$$ \sum_{k\in{\mathbb{Z}}} |\lambda_k| \lesssim \left\|\left\{\sum_{k\in{\mathbb{Z}}} \left[\frac{|\lambda_{k}|}{\|  {\mathbf{1}}_{Q_k} \|_{L^{p(\cdot)}} } \right]^r {\mathbf{1}}_{Q_k} \right\}^\frac1r \right\|_{L^{p(\cdot)}} \lesssim 1, $$
where $r = \min\{1,p_-\}$.
Applying this and the continuity of $L$,
we conclude that
\begin{align*}
\left| L\left( \vec{f} \right) \right| &\leq \left( \sum_{k\in{\mathbb{Z}}} |\lambda_k|  \left| L \left( \vec{a}_k \right)\right| \right)
\lesssim \sup\left\{ \left| L\left( \vec{a} \right)\right|:\ \vec{a}\ \text{is a}\ (p(\cdot),q,s)_W\text{-atom} \right\},
\end{align*}
which, combined with Proposition \ref{H dense O},
further implies that
\begin{align*}
\sup \left\{  \left| L\left(\vec{f}\right) \right|:\ \left\| \vec{f} \right\|_{H^{p(\cdot)}_W} \leq 1 \right\}
\lesssim \sup\left\{ \left| L\left( \vec{a} \right)\right|:\ \vec{a}\ \text{is a}\ (p(\cdot),q,s)_W\text{-atom} \right\}.
\end{align*}
This finishes the proof of Lemma \ref{La}.
\end{proof}
Now, we give the proof of Theorem \ref{dual Com}.
\begin{proof}[Proof of Theorem \ref{dual Com}]
By an argument similar to that used in the proof of
\cite[Theorem 2.14(ii)]{cyy25}
with
\cite[Lemma 2.21]{cyy25} replaced by Lemma \ref{La},
we prove (ii).

Now, we prove {\rm (i)}.
Let $\vec{f} \in \mathcal{O}_s \cap H^{p(\cdot)}_W$.
Then, from the proof of Theorem \ref{atom con}{\rm (ii)},
we infer that there exist a sequence of collections of cubes
${\mathcal F} = \bigcup_{k\in{\mathbb{Z}}_+} {\mathcal F}_k$
and the corresponding sequences $\{\lambda_Q\}_{Q\in{\mathcal F}}$ in $\mathbb C$
and $\{\vec{a}_Q\}_{Q\in{\mathcal F}}$ of $(p(\cdot),\infty,s)_W$-atoms
such that, for any $N\in{\mathbb{N}}$,
$$\vec{f} = \sum_{Q\in{\mathcal F}} \lambda_Q \vec{a}_Q
= \sum_{k = 0}^\infty \sum_{Q\in{\mathcal F}_k} \lambda_Q \vec{a}_Q
= \sum_{k = 0}^N \sum_{Q\in{\mathcal F}_k} \lambda_Q\vec{a}_Q + \vec{b}_{N+1}, $$
where $\vec{b}_{N+1} := \sum_{L\in{\mathcal F}_{N+1}} \vec{b}_L$.
Note that in the proof of Theorem \ref{atom con}{\rm (ii)} we obtain
$|E_{N}| \leq 2^{-4n N} |Q_0|,$
which, together with \eqref{2036}
and $\vec{g} \in\mathcal{L}_{p(\cdot),q,s,{\mathbb{A}}}\subset (L^{1}_{\rm loc})^m$,
further implies that
\begin{align*}
\int_{{\mathbb{R}^n}} \vec{b}_{N+1}(x)\cdot \vec{g}(x) \,dx \to 0
\end{align*}
as $N\to \infty$.
Using this, \eqref{AkQ}, and also $\vec{g} \in(L^{1}_{\rm loc})^m$, we find that
\begin{align}\label{dual com eq 1}
\int_{{\mathbb{R}^n}}  \vec{f}(x)\cdot \vec{g}(x) \,dx
&= \lim\limits_{N\to \infty} \int_{{\mathbb{R}^n}} \sum_{k = 0}^N \sum_{Q\in {\mathcal F}_k} \lambda_Q \vec{a}_Q(x)\cdot\vec g(x) \,dx{\nonumber} \\
&= \lim\limits_{N\to \infty} \sum_{k = 0}^N \sum_{Q\in {\mathcal F}_k} \lambda_Q \int_{{\mathbb{R}^n}}  \vec{a}_Q(x)\cdot \vec{g}(x)\,dx.
\end{align}
Observe that, by the definitions of $ \|\cdot\|_{\mathcal{L}_{p(\cdot),q,s,{\mathbb{A}}}} $
and $(p(\cdot),q',s)_W$-atoms,
for any $Q\in {\mathcal F}$,
\begin{align*}
\left| \int_{{\mathbb{R}^n}}  \vec{a}_Q(x)\cdot \vec{g}(x)\,dx \right|
& = \inf_{\vec{P} \in ({\mathcal P}_s)^m} \left| \int_{Q}  \vec{a}_Q\cdot \left[\vec{g}(x) - \vec{P}(x)\right] \,dx\right| \\
& \leq \left\| A_Q \vec{a}_Q \right\|_{L^{q'}} \inf_{\vec{P} \in ({\mathcal P}_s)^m}
\left\{\int_{Q} \left|A_Q^{-1} \left[ \vec{g}(x) - \vec{P}(x) \right]\right|^q \,dx \right\}^\frac1q\\
& \leq |Q|^{\frac{1}{q'}} \left\| {\mathbf{1}}_{Q} \right\|_{L^{p(\cdot)}}^{-1} \inf_{\vec{P} \in ({\mathcal P}_s)^m}
\left\{\int_{Q} \left|A_Q^{-1} \left[ \vec{g}(x) - \vec{P}(x) \right]\right|^q \,dx \right\}^\frac1q
 \leq \left\|\vec{g}\right\|_{ \mathcal{L}_{p(\cdot),q,s,{\mathbb{A}}} }.
\end{align*}
Applying this and \eqref{dual com eq 1}, we conclude that
\begin{align*}
\left| \int_{{\mathbb{R}^n}} \vec{f}(x)\cdot\vec{g}(x)\ \,dx \right|
&\leq \lim\limits_{N\to \infty} \sum_{k = 0}^N \sum_{Q\in {\mathcal F}_k} \left|\lambda_Q\right| \left|\int_{{\mathbb{R}^n}} \vec{a}_Q(x)\cdot \vec{g}(x)\,dx\right|\\
&\leq \left\|\vec{g}\right\|_{ \mathcal{L}_{p(\cdot),q,s,{\mathbb{A}}} }\lim\limits_{N\to \infty} \sum_{k = 0}^N \sum_{Q\in {\mathcal F}_k} \left|\lambda_Q\right|
 = \left\|\vec{g}\right\|_{ \mathcal{L}_{p(\cdot),q,s,{\mathbb{A}}} } \sum_{k\in{\mathbb{Z}}_+}\sum_{Q\in {\mathcal F}_k} \left|\lambda_Q\right|,
\end{align*}
which, combined with \cite[(4.7) and (4.10)]{ns12},
further implies that
\begin{align*}
\left| \int_{{\mathbb{R}^n}} \vec{f}(x)\cdot \vec{g}(x) \,dx \right|
\lesssim \left\|\vec{g}\right\|_{ \mathcal{L}_{p(\cdot),q,s,{\mathbb{A}}} } \left\|\sum_{k\in{\mathbb{Z}}} \left[\frac{|\lambda_{k}|}{\|  {\mathbf{1}}_{Q_k} \|_{L^{p(\cdot)}} } \right]^r {\mathbf{1}}_{Q_k} \right\|_{L^{\frac{p(\cdot)}{r}}}^{\frac1r}
\lesssim \left\|\vec{g}\right\|_{ \mathcal{L}_{p(\cdot),q,s,{\mathbb{A}}} } \left\| \vec{f} \right\|_{H^{p(\cdot)}_W},
\end{align*}
where $r = \min\{1,p_-\}$.
From this and Proposition \ref{H dense O}, we deduce that {\rm (i)} holds.
This finishes the proof of Theorem \ref{dual Com}.
\end{proof}

\section{Calder\'on--Zygmund Operators}\label{sec CZ}
In this section, we establish the boundedness of Calder\'on--Zygmund operators on $H^{p(\cdot)}_W$.
We first present the concept of the $s$-order standard kernel
(see, for instance, \cite[Chapter III]{s93}).
In what follows, for any $\gamma := (\gamma_1,\dots,\gamma_n) \in {\mathbb{Z}}_+^n$ and
any $\gamma$-order differentiable function $K(\cdot,\cdot)$ on $(x,y)\in{\mathbb{R}^n}\times {\mathbb{R}^n}$, let
\begin{align*}
\partial^\gamma_{(1)} K(x,y) := \frac{\partial^{|\gamma|}}{\partial x_1^{\gamma_1}\cdots \partial x_n^{\gamma_n}}K(x,y)
\ \ \text{and}\ \
\partial^\gamma_{(2)} K(x,y) := \frac{\partial^{|\gamma|}}{\partial y_1^{\gamma_1}\cdots \partial y_n^{\gamma_n}}K(x,y).
\end{align*}
\begin{definition}
Let $s\in{\mathbb{Z}}_+$ and $\delta \in (0,1]$. A measurable function $K$ on ${\mathbb{R}^n}\times{\mathbb{R}^n}\setminus \{ (x,x):\ x\in{\mathbb{R}^n} \}$
is called an \emph{$(s,\delta)$-type standard kernel} if there exists a positive constant $C$ such that,
for any $\gamma\in {\mathbb{Z}}_+^n$ with $|\gamma|\leq s$, the followings hold.
\begin{itemize}
\item[{\rm (i)}] For any $x,y\in{\mathbb{R}^n}$ with $x\neq y$,
\begin{align*}
\left|\partial^\gamma_{(1)} K(x,y)\right| \leq \frac{C}{|x-y|^{n + |\gamma|}}
\ \ \text{and}\ \
\left|\partial^\gamma_{(2)} K(x,y)\right| \leq \frac{C}{|x-y|^{n + |\gamma|}}.
\end{align*}
\item[{\rm (ii)}] For any $x,y,z\in{\mathbb{R}^n}$ with $x\neq y$ and $|x-y|\geq 2|y-z|$,
\begin{align*}
\left|\partial^\gamma_{(1)} K(y,x) - \partial^\gamma_{(1)} K(z,x)\right| \leq C\frac{|y - z|^\delta}{|x-y|^{n + |\gamma|+ \delta}}
\end{align*}
and
\begin{align*}
\left|\partial^\gamma_{(2)} K(x,y) - \partial^\gamma_{(2)} K(x,z)\right| \leq C\frac{|y - z|^\delta}{|x-y|^{n + |\gamma|+ \delta}}.
\end{align*}
\end{itemize}
\end{definition}
Next, we present the definition of Calder\'on--Zygmund operators.
\begin{definition}\label{def CZ}
Let $s\in {\mathbb{Z}}_+$ and $\delta\in (0,1]$. A linear operator $T$ is called an
\emph{$(s,\delta)$-type Calder\'on--Zygmund operator} if $T$ is bounded on $L^2$ and there exists
an $(s,\delta)$-type standard kernel $K$ such that, for any given $f\in L^2$ and for almost every $x\in{\mathbb{R}^n}$,
$$T(f)(x) := \lim_{\eta \to 0^+} T_\eta(f)(x),$$ where, for any $\eta\in (0,\infty)$,
\begin{align*}
T_\eta(f)(x) := \int_{{\mathbb{R}^n}\setminus B(x,\eta)} K(x,y)f(y)\,dy.
\end{align*}
\end{definition}
\begin{remark}
Let $T$ be the same as in Definition \ref{def CZ}. By \cite[p.\,102]{d01},
we find that, for any $q\in (1,\infty)$, $T$ is bounded on $L^q$ and, for any $f\in L^q$,
$T(f) = \lim_{\eta\to 0^+} T_\eta(f)$ both almost everywhere on ${\mathbb{R}^n}$ and in $L^q$.
\end{remark}
Next, we recall the concept of the well-known vanishing moments on $T$ (see, for instance, \cite[p.\,23]{mc97}).
\begin{definition}
Let $s\in{\mathbb{Z}}_+$ and $\delta\in (0,1]$.
An $(s,\delta)$-type Calder\'on--Zygmund operator $T$ is said to have the \emph{vanishing moments up to order $s$}
if, for any function $a\in L^2$ having compact support and satisfying that, for any $\gamma\in {\mathbb{Z}}_+^n$
with $|\gamma| \leq s$, $\int_{{\mathbb{R}^n}} x^\gamma a(x)\,dx = 0$, it holds that $\int_{{\mathbb{R}^n}}x^\gamma T(a)(x)\,dx = 0$.
\end{definition}
\begin{theorem}\label{bound CZ}
Let $p(\cdot) \in {\mathcal P}_0\cap LH$, $W\in {\mathscr A}_{p(\cdot),\infty}$,
$\delta\in (0,1]$, $r:=\min\{1,p_-\}$, and $s\in {\mathbb{Z}}_+ \cap (d^{\rm upper}_{p(\cdot),\infty}(W) + n(\frac{1}{r} - 1)-\delta,\infty)$.
Let $T$ be an $(s,\delta)$-type Calder\'on--Zygmund operator.
Then there exists a linear operator $\widetilde{T}_1$ bounded from $H^{p(\cdot)}_W$ to $L^{p(\cdot)}_W$
that agrees with $T$ on $ \mathcal{O}_s\cap H^{p(\cdot)}_W $.
If, in addition, $T$ has the vanishing moments up to order $s$, then there exists a linear operator $\widetilde{T}_2$
bounded from $H^{p(\cdot)}_W$ to $H^{p(\cdot)}_W$ that agrees with $T$ on $ \mathcal{O}_s \cap H^{p(\cdot)}_W $.
\end{theorem}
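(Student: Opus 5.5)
Both operators are first constructed on $\mathcal{O}_s\cap H^{p(\cdot)}_W$ and then extended by density (Proposition \ref{H dense O}). Fix $\vec f\in\mathcal{O}_s\cap H^{p(\cdot)}_W$. By Theorem \ref{atom con}(ii) and its proof, $\vec f=\sum_{k}\lambda_k\vec a_k$, where each $\vec a_k$ is a $(p(\cdot),\infty,s)_W$-atom supported in a cube $Q_k$. For a finitely supported $\vec f$ the partial sums converge in $L^q$ for every $q$ (see \eqref{Hardy dep eq 17}). Moreover,
\[
\Big\|\Big\{\sum_k[|\lambda_k|/\|\mathbf 1_{Q_k}\|_{L^{p(\cdot)}}]^r\mathbf 1_{Q_k}\Big\}^{1/r}\Big\|_{L^{p(\cdot)}}\lesssim\|\vec f\|_{H^{p(\cdot)}_W}.
\]
Since $T$ is bounded on $L^q$, we get $T\vec f=\sum_k\lambda_kT\vec a_k$ in $L^q$, hence almost everywhere along a subsequence. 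Therefore
\[
|W(x)T\vec f(x)|\le\sum_k|\lambda_k|\,|W(x)T\vec a_k(x)|,
\]
and it suffices to bound $\|\sum_k|\lambda_k||W T\vec a_k|\|_{L^{p(\cdot)}}$ by the atomic quantity above.

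\emph{Local part.} On $2\sqrt nQ_k$ write $|W(x)T\vec a_k(x)|\le\|W(x)A_{Q_k}^{-1}\|\,|T(A_{Q_k}\vec a_k)(x)|$. Lemma \ref{a atom eq le} applies with a fixed $q>\max\{1,r_Wp_+/(r_W-1)\}$ and the $L^q$-bounded operator $T$. Condition (ii) of the atom together with Lemma \ref{eq reduc M} gives $|Q_k|^{-1/q}\|A_{Q_k}\vec a_k\|_{L^q}\lesssim\|\mathbf 1_{Q_k}\|^{-1}_{L^{p(\cdot)}}$, so this part is bounded by the desired quantity.

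\emph{Global part.} For $x\notin2\sqrt nQ_k$, subtract the degree-$s$ Taylor polynomial of $K(x,\cdot)$ centered at $c_{Q_k}$ and use the vanishing moments of $\vec a_k$. The $(s,\delta)$ regularity of $K$ gives
\[
|W(x)T\vec a_k(x)|\lesssim\|W(x)A_{Q_k}^{-1}\|\frac{l(Q_k)^{s+\delta}}{|x-c_{Q_k}|^{n+s+\delta}}\int|A_{Q_k}\vec a_k|\lesssim\frac{\|W(x)A_{Q_k}^{-1}\|}{\|\mathbf 1_{Q_k}\|_{L^{p(\cdot)}}}\Big(\frac{l(Q_k)}{l(Q_k)+|x-c_{Q_k}|}\Big)^{n+s+\delta}.
\]
The hypothesis on $s$ gives $n+s+\delta>d^{\rm upper}_{p(\cdot),\infty}(W)+n/r$, so Lemma \ref{f dec eq le} with $L=n+s+\delta$ finishes this part. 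Combining the two parts gives $\|T\vec f\|_{L^{p(\cdot)}_W}\lesssim\|\vec f\|_{H^{p(\cdot)}_W}$ on the dense subspace, and $\widetilde T_1$ is the unique bounded extension, since $L^{p(\cdot)}_W$ is a complete quasi-Banach space.

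\emph{Hardy-space bound.} For $\widetilde T_2$ we instead estimate $M_W(T\vec f,\psi)$ with $\psi\in C_c^\infty(B(\mathbf 0,1))$ and $\int\psi=1$, and use Theorem \ref{H equal}. Again $M_W(T\vec f,\psi)\le\sum_k|\lambda_k|M_W(T\vec a_k,\psi)$ pointwise; this follows from $L^q$ convergence and the fact that $\psi_t*$ is continuous on $L^q$ uniformly in $t$.
\begin{itemize}
\item On $2\sqrt nQ_k$ we have $M_W(T\vec a_k,\psi)\lesssim\|WA_{Q_k}^{-1}\|\,\mathcal M(T(A_{Q_k}\vec a_k))$, and $\mathcal M\circ T$ is bounded on $L^q$ (for vector functions, componentwise), so Lemma \ref{a atom eq le} applies as before.
\item Off $2\sqrt nQ_k$ we need the molecular decay
\[
|\psi_t*T\vec a_k(x)|\lesssim\frac{1}{\|\mathbf 1_{Q_k}\|_{L^{p(\cdot)}}}\,|A_{Q_k}^{-1}\text{-weighted factor}|\Big(\frac{l(Q_k)}{l(Q_k)+|x-c_{Q_k}|}\Big)^{n+s+\delta}.
\]
The vanishing moments of $T$ are exactly what guarantee this. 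For $t\lesssim|x-c_{Q_k}|$ it follows from the pointwise decay of $T\vec a_k$ above together with the near-constancy of that decay on $B(x,t)$. For $t\gtrsim|x-c_{Q_k}|$, expand $\psi_t(x-\cdot)$ in a Taylor polynomial at $c_{Q_k}$ and kill the polynomial part using $\int y^\gamma T\vec a_k=0$ for $|\gamma|\le s$. The remainder is then controlled by $t^{-n-s-1}\int|y-c_{Q_k}|^{s+1}|A_{Q_k}T\vec a_k|$, after splitting that integral near and far from $Q_k$ and using the $L^q$ bound and the decay of $T\vec a_k$.
\end{itemize}

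\textbf{Main obstacle.} I expect this large-$t$ estimate to be the main difficulty. The moment integral $\int|y-c|^{s+1}|T\vec a_k(y)|\,dy$ diverges when the decay exponent is only $n+s+\delta$. The fix I would try first is to use only $s$-order Taylor expansion with a $\delta$-Hölder remainder, i.e.\ the moments up to $|\gamma|\le s$, obtaining the factor $(l(Q_k)/t)^{n+s+\delta}$ via the standard molecule argument of \cite[Chapter III]{s93}. Once that bound holds, Lemma \ref{f dec eq le} gives $\|T\vec f\|_{H^{p(\cdot)}_W}\lesssim\|\vec f\|_{H^{p(\cdot)}_W}$, and $\widetilde T_2$ is the extension by density using Proposition \ref{H comp}.
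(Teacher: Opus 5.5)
Your overall architecture is the paper's: the dense class $\mathcal O_s\cap H^{p(\cdot)}_W$, the $L^q$-convergent decomposition from the proof of Theorem \ref{atom con}(ii), a local piece handled by Lemma \ref{a atom eq le} (with $T$, resp.\ $\mathcal M\circ T$), a decaying piece handled by Lemma \ref{f dec eq le}, and then density. The $\widetilde T_1$ half is fine.

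The gap is the step you flagged yourself: the off-diagonal bound for $\psi_t*T(A_Q\vec a_Q)(x)$ when $t\gtrsim|x-c_Q|$. This is the only place where the vanishing moments of $T$ enter, and your proposed fix does not close it as stated. A single global remainder bound $t^{-n}(|y-c_Q|/t)^{s+\delta}$ leaves you with
\[
t^{-n-s-\delta}\int_{|y-c_Q|\ge\sqrt n\,l(Q)}|y-c_Q|^{s+\delta}\left[\frac{l(Q)}{|y-c_Q|}\right]^{n+s+\delta}dy .
\]
This integral diverges logarithmically, because the pointwise decay of $T(A_Q\vec a_Q)$ given by the kernel is of order exactly $n+s+\delta$. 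So you do not obtain $(l(Q)/t)^{n+s+\delta}$.

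The missing idea is to Taylor-expand $\psi$ only where $y$ is close to $c_Q$ relative to $|x-c_Q|$. The paper splits the $y$-integral into three regions: $|y-c_Q|<2r_Q$, $2r_Q\le|y-c_Q|<|x-c_Q|/2$, and $|y-c_Q|\ge|x-c_Q|/2$.

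\emph{Inner regions.} On the first two it uses the $(s+1)$-order remainder. Every intermediate point $\xi$ satisfies $|x-\xi|>|x-c_Q|/2$, so ${\mathop\mathrm{\,supp\,}}\psi\subset B(\mathbf 0,1)$ kills the remainder unless $t>|x-c_Q|/2$. This yields the prefactor $|x-c_Q|^{-n-s-1}$. The truncated moment $\int_{|y-c_Q|<|x-c_Q|/2}|y-c_Q|^{s+1}|T(A_Q\vec a_Q)(y)|\,dy$ is then finite: use \eqref{Hp CZ eq 1} near $Q$ and the pointwise decay on the annulus. It is $\lesssim\|\mathbf 1_Q\|_{L^{p(\cdot)}}^{-1}[l(Q)]^{n+s+\delta}|x-c_Q|^{1-\delta}$ when $\delta<1$, with an extra logarithm when $\delta=1$.

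\emph{Far region.} Here there is no expansion. The $\psi_t(x-y)$ term is bounded by $\|\psi\|_{L^1}\sup_{|y-c_Q|\ge|x-c_Q|/2}|T(A_Q\vec a_Q)(y)|$. Each Taylor monomial is nonzero only for $t>|x-c_Q|$, and is bounded separately by $|x-c_Q|^{-n-|\beta|}\int_{|y-c_Q|\ge|x-c_Q|/2}|y-c_Q|^{|\beta|}|T(A_Q\vec a_Q)(y)|\,dy$. This is finite precisely because $|\beta|\le s<s+\delta$.

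This treats all $t$ at once and gives \eqref{Hp CZ eq 3}.

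Alternatively, your global-remainder route works if you give up a little of $\delta$. Since the hypothesis on $s$ is strict, pick $\delta'\in(0,\delta)$ with $n+s+\delta'>d^{\rm upper}_{p(\cdot),\infty}(W)+\frac nr$. The remainder bound $t^{-n}(|y-c_Q|/t)^{s+\delta'}$ then gives a convergent integral and the decay $[l(Q)/(l(Q)+|x-c_Q|)]^{n+s+\delta'}$. Lemma \ref{f dec eq le} with $L=n+s+\delta'$ still applies, and the same slack absorbs the logarithm at $\delta=1$.
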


\begin{remark}
If $p(\cdot)\equiv p$ with $p\in(0,1]$ is a constant exponent, then
Theorem \ref{bound CZ} extends the boundedness result
\cite[Theorem 5.5]{bcyy24} from matrix $A_p$ weights to matrix $A_{p,\infty}$
weights. On the other hand, for more general $p(\cdot) \in {\mathcal P}_0\cap LH$ and matrix
$\mathscr A_{p(\cdot),\infty}$ weights, Theorem \ref{bound CZ} is new.
\end{remark}
\begin{proof}[Proof of Theorem \ref{bound CZ}]
We first consider the boundedness of $T$ from $H^{p(\cdot)}_W$ to $L^{p(\cdot)}_W$.
Let $\{A_Q\}_{{\rm cube}\ Q}$ be a family of reducing operators of order $p(\cdot)$ for $W$.
Assume that $\vec{a}_Q$ is a $(p(\cdot),\infty,s)_W$-atom
supported in cube $Q$ and let $r_Q := \frac{\sqrt{n}}{2}l(Q)$.
By the vanishing moments of $\vec{a}_Q$,
we have, for any $x\in B(c_Q,4r_Q)^\complement$,
\begin{align*}
\left|W(x) T \vec{a}_Q (x)\right|
& = \left|\int_{{\mathbb{R}^n}} W(x) K(x,y) \vec{a}_Q(y) \,dy\right| {\nonumber}\\
& = \left| \int_{Q} W(x) \left[ K(x,y) - \sum_{|\beta|\leq s} \frac{\partial^\beta_y K(x, c_Q)}{\beta!} \left( y - c_Q \right)^\beta \right] \vec{a}_Q(y) \,dy\right| {\nonumber}\\
& \leq \left\|W(x)A_Q^{-1}\right\| \int_{Q} \left| K(x,y) - \sum_{|\beta|\leq s} \frac{\partial^\beta_y K(x, c_Q)}{\beta!} \left( y - c_Q \right)^\beta \right| \left|A_Q \vec{a}_Q(y)\right| \,dy {\nonumber}\\
& = \left\|W(x)A_Q^{-1}\right\| \int_{Q} \left| \sum_{|\beta| = s} \frac{\partial^\beta_y K(x, c_Q) - \partial^\beta_yK(x,\xi_y)}{\beta!} \left( y - c_Q \right)^\beta \right| \left|A_Q \vec{a}_Q(y)\right| \,dy,
\end{align*}
which, together with the definition of $(s,\delta)$-kernels, further implies that
\begin{align*}
\left|W(x) T \vec{a}_Q (x)\right|
&\lesssim \left\|W(x)A_Q^{-1}\right\| \int_{Q} \left|A_Q \vec{a}_Q(y)\right| \frac{|y - c_Q|^{s+\delta}}{|x - c_Q|^{n + s+\delta}} \,dy\\
&\lesssim \left\|W(x)A_Q^{-1}\right\| \left[\frac{l(Q)}{|x - c_Q|}\right]^{n + s+\delta} \fint_{Q} \left|A_Q \vec{a}_Q(y)\right|\,dy.
\end{align*}
Using this and the linearity of $T$,
we obtain, for any $x\in{\mathbb{R}^n}$,
\begin{align*}
\left|W(x) T\vec{a}_Q(x)\right|
&\lesssim \left\|W(x) A_Q^{-1}\right\|  \left| T\left( A_Q \vec{a}_Q \right)(x) \right| {\mathbf{1}}_{B(c_Q,4r_Q)}(x){\nonumber}\\
&\quad + \left\|W(x)A_Q^{-1}\right\|\left[\frac{l(Q)}{l(Q) + |x - c_Q|}
\right]^{n+s+\delta} \fint_{Q} \left|A_Q \vec{a}_Q(y)\right|\,dy,
\end{align*}
which, combined with the size condition of $(p(\cdot),\infty,s)_W$-atoms,
further implies that
\begin{align}\label{0923}
\left|W(x) T\vec{a}_Q(x)\right|
&\lesssim \left\|W(x) A_Q^{-1}\right\|  \left| T\left( A_Q \vec{a}_Q \right)(x) \right| {\mathbf{1}}_{B(c_Q,4r_Q)}(x){\nonumber}\\
&\quad + \left\|W(x)A_Q^{-1}\right\|\left[\frac{l(Q)}{l(Q) + |x - c_Q|}\right]^{n+s+\delta} \left\|{\mathbf{1}}_Q\right\|_{L^{p(\cdot)}}^{-1}.
\end{align}
Fix $\vec{f}\in \mathcal{O}_s \cap H^{p(\cdot)}_W$ supported in $Q_0:=Q(c_0,l_0)$.
By Theorem \ref{atom con} and \eqref{Hardy dep eq 17}, we find that there exist
a sequence $\{\lambda_k\}_{k\in{\mathbb{Z}}}$ in $\mathbb C$ and a sequence of $(p(\cdot),\infty,s)_W$-atoms $\{\vec{a}_k\}_{k\in{\mathbb{Z}}}$
supported, respectively, in $\{Q_k\}_{k\in{\mathbb{Z}}}$ with $Q_k := Q(c_k,l_k)$ such that
\begin{align}\label{Hp CZ eq 5}
\vec{f} = \sum_{k\in{\mathbb{Z}}} \lambda_k \vec{a}_k
\end{align}
in $L^q$ for any $q\in(0,\infty)$ and
\begin{align}\label{Hp CZ eq 7}
\left\| \left\{ \sum_{k\in{\mathbb{Z}}}  \left[\frac{|\lambda_k|}{\| {\mathbf{1}}_{Q_k} \|_{L^{p(\cdot)}}}\right]^r {\mathbf{1}}_{Q_k} \right\}^\frac1r \right\|_{L^{p(\cdot)}} \lesssim \left\| \vec{f} \right\|_{H^{p(\cdot)}_W}.
\end{align}
For any $k\in\mathbb Z$, let $r_k:=\frac{\sqrt n}{2}l_k$. From \eqref{Hp CZ eq 5}, the boundedness of $T$ on $L^q$ for any $q\in(1,\infty)$, and \eqref{0923},
we infer that
\begin{align}\label{Hp CZ eq 12}
\left\| T\left(\vec{f}\right) \right\|_{L^{p(\cdot)}_W}
&\le\left\|\sum_{k\in{\mathbb{Z}}}|\lambda_k|T\vec a_k\right\|_{L^{p(\cdot)}_W}{\nonumber}\\
&\lesssim \left\|\sum_{k\in{\mathbb{Z}}} |\lambda_k|\left\|W(\cdot) A_{Q_k}^{-1}\right\|  \left| T\left( A_{Q_k} \vec{a}_{k} \right) \right| {\mathbf{1}}_{B(c_k,4r_k)}\right\|_{L^{p(\cdot)}}{\nonumber}\\
&\quad + \left\|\sum_{k\in{\mathbb{Z}}} |\lambda_k|\left\|W(\cdot)A_{Q_k}^{-1}\right\|\left[\frac{l_k}{l_k + |\cdot - c_k|}\right]^{n+s+\delta} \left\|{\mathbf{1}}_{Q_k}\right\|_{L^{p(\cdot)}}^{-1}\right\|_{L^{p(\cdot)}}.
\end{align}
By Lemma \ref{a atom eq le}, the boundedness of $T$ on $L^q$ for any $q\in(1,\infty)$,
and the definition of $(p(\cdot),\infty,s)_W$-atoms,
we find that
\begin{align*}
&\left\| \sum_{k\in{\mathbb{Z}}}|\lambda_k| \left\|W(\cdot) A_{Q_k}^{-1}\right\| \left| T\left( A_{Q_k} \vec{a}_k \right) \right| {\mathbf{1}}_{B(c_k,4r_k)}\right\|_{L^{p(\cdot)}} \\
&\quad \lesssim \left\| \left\{ \sum_{k\in{\mathbb{Z}}}  \left(|\lambda_k| |Q_k|^{-\frac1q} \| A_{Q_k}\vec{a}_k \|_{L^{q}}\right)^r {\mathbf{1}}_{Q_k} \right\}^\frac1r \right\|_{L^{p(\cdot)}}
\leq \left\| \left\{ \sum_{k\in{\mathbb{Z}}}  \left[\frac{|\lambda_k|}{\| {\mathbf{1}}_{Q_k} \|_{L^{p(\cdot)}}}\right]^r {\mathbf{1}}_{Q_k} \right\}^\frac1r \right\|_{L^{p(\cdot)}}.
\end{align*}
Applying this, \eqref{Hp CZ eq 12}, and Lemma \ref{f dec eq le},
we conclude that
\begin{align*}
\left\| T\left(\vec{f}\right) \right\|_{L^{p(\cdot)}_W} \lesssim \left\| \left\{ \sum_{k\in{\mathbb{Z}}}  \left[\frac{|\lambda_k|}{\| {\mathbf{1}}_{Q_k} \|_{L^{p(\cdot)}}}\right]^r {\mathbf{1}}_{Q_k} \right\}^\frac1r \right\|_{L^{p(\cdot)}}
\lesssim \left\| \vec{f} \right\|_{H^{p(\cdot)}_W}.
\end{align*}
Using this and Proposition \ref{H dense O}, we find that
$T$ can be extended to a bounded linear operator $\widetilde{T}_1$ from $H^{p(\cdot)}_W$ to $L^{p(\cdot)}_W$.

Next, we prove the boundedness of $T$ from $H^{p(\cdot)}_W$ to $H^{p(\cdot)}_W$.
Let $\psi\in {\mathcal S}$ satisfy ${\mathop\mathrm{\,supp\,}} \psi\subset B(\mathbf{0},1)$ and $\int_{{\mathbb{R}^n}} \psi(x)\,dx \neq 0$.
Let $\vec a_Q$ be a $(p(\cdot),\infty,s)_W$-atom supported in $Q:= Q(c_Q, l(Q))$.
Then we claim that, for any $x\in{\mathbb{R}^n}$,
\begin{align}\label{Hp CZ eq 4}
M_W\left( T\vec{a}_Q,\psi \right)(x)
&\lesssim \left\|W(x) A_Q^{-1}\right\| {\mathcal M}\left( \left| T\left( A_Q \vec{a}_Q \right) \right| \right)(x) {\mathbf{1}}_{B(c_Q,4r_Q)}(x){\nonumber}\\
&\quad + \left\|W(x)A_Q^{-1}\right\|\left[\frac{l(Q)}{l(Q) + |x - c_Q|}\right]^{n+s+\delta} \left\|{\mathbf{1}}_Q\right\|^{-1}_{L^{p(\cdot)}},
\end{align}
where $r_Q := \frac{\sqrt{n}}{2} l(Q)$.
By the assumption that $T$ is bounded on $L^2$ and the size condition of $(p(\cdot),\infty,s)_W$-atoms,
we conclude that
\begin{align}\label{Hp CZ eq 1}
\left\| T\left(A_Q \vec{a}_Q \right) \right\|_{L^2} \lesssim \left\| A_Q \vec{a}_Q \right\|_{L^2}
\leq |Q|^{\frac12} \left\|{\mathbf{1}}_Q\right\|^{-1}_{L^{p(\cdot)}}.
\end{align}
Using the vanishing moments of $T$ and $\vec a_Q$, we obtain, for any $t\in (0,\infty)$ and $x\in B(c_Q, 4r_Q)^\complement$,
\begin{align}\label{Hp CZ eq 2}
&\left|W(x)\psi_t\ast\left[ T\left( \vec{a}_Q \right) \right](x)\right| {\nonumber}\\
&\quad \leq t^{-n} \int_{{\mathbb{R}^n}} \left| \psi\left( \frac{x-y}{t} \right) W(x) T\left( \vec{a}_Q \right)(y) \right|\,dy {\nonumber}\\
&\quad \leq \left\|W(x)A_Q^{-1}\right\|t^{-n}  \int_{{\mathbb{R}^n}} \left| \psi\left( \frac{x-y}{t} \right) T\left(A_Q \vec{a}_Q \right)(y) \right|\,dy {\nonumber}\\
&\quad \leq \left\|W(x)A_Q^{-1}\right\|t^{-n}  \int_{{\mathbb{R}^n}} \left| \psi\left( \frac{x-y}{t} \right) - \sum_{|\beta|\leq s} \frac{\partial^\beta \psi(\frac{x- c_Q}{t})}{\beta!} \left( \frac{y - c_Q}{t} \right)^\beta \right|
\left|T\left(A_Q \vec{a}_Q \right)(y) \right|\,dy {\nonumber}\\
&\quad \leq \left\|W(x)A_Q^{-1}\right\|t^{-n}  \left(\int_{|y - c_Q| < 2r_Q} + \int_{2r_Q \leq |y - c_Q| < \frac{|x-c_Q|}{2}} + \int_{|y - c_Q| \geq \frac{|x-c_Q|}{2}}\right) {\nonumber}\\
&\quad \quad \times \left| \psi\left( \frac{x-y}{t} \right) - \sum_{|\beta|\leq s} \frac{\partial^\beta \psi(\frac{x- c_Q}{t})}{\beta!} \left( \frac{y - c_Q}{t} \right)^\beta \right|
\left|T\left(A_Q \vec{a}_Q \right)(y) \right|\,dy {\nonumber}\\
&\quad =: \left\|W(x)A_Q^{-1}\right\|\left( {\rm I}_1 + {\rm I}_2 + {\rm I}_3 \right).
\end{align}
Applying this and \eqref{Hp CZ eq 1} and repeating the estimation of \cite[(6.14), (6.15), and (6.16)]{zyyw21}
with $a_j$ and $\gamma$ replaced, respectively, by $A_Q \vec{a}_Q$ and $s+\delta$,
we find that
\begin{align*}
{\rm I}_1 \lesssim \frac{[l(Q)]^{s+1}}{|x - c_Q|^{n+s+1}} \left\| T\left(A_Q \vec{a}_Q \right) \right\|_{L^2} |Q|^\frac12
\lesssim \left[\frac{l(Q)}{|x - c_Q|}\right]^{n+s+1} \left\|{\mathbf{1}}_Q\right\|^{-1}_{L^{p(\cdot)}},
\end{align*}
\begin{align*}
{\rm I}_2 &\lesssim \frac{[l(Q)]^{s+\delta}}{|x - c_Q|^{n+s+1}}
\int_{2r_Q \leq |y - c_Q| < \frac{|x-c_Q|}{2}} \frac{1}{|y - c_Q|^{n+\delta}} \,dy
\left\| T\left(A_Q \vec{a}_Q \right) \right\|_{L^2} |Q|^\frac12 \\
&\lesssim \left[\frac{l(Q)}{|x - c_Q|}\right]^{n+s+\delta} \left\|{\mathbf{1}}_Q\right\|^{-1}_{L^{p(\cdot)}},
\end{align*}
and
\begin{align*}
{\rm I}_3 & \lesssim \left\| T\left(A_Q \vec{a}_Q \right) \right\|_{L^2} |Q|^\frac12
\left\{  \frac{[l(Q)]^{s+\delta}}{|x - c_Q|^{n+s+\delta}} \int_{|y - c_Q| \geq \frac{|x-c_Q|}{2}} \left| \psi_t(x-y) \right|\,dy\right. \\
&\quad + \left.\sum_{|\beta|\leq s} \left[ l(Q) \right]^{s + \delta} \int_{|y-c_Q|\geq \frac{|x-c_Q|}{2}}
\frac{1}{|x - c_Q|^{n+|\beta|}} \frac{1}{|y-c_Q|^{n+s+\delta-|\beta|}}\,dy \right\}\\
&\lesssim \left[\frac{l(Q)}{|x - c_Q|}\right]^{n+s+\delta} \left\|{\mathbf{1}}_Q\right\|^{-1}_{L^{p(\cdot)}}.
\end{align*}
These, together with \eqref{Hp CZ eq 2}, further implies that, for any $x\in B(c_Q,4r_Q)^{\complement}$,
\begin{align}\label{Hp CZ eq 3}
M_W\left( T\vec{a}_Q,\psi \right)(x)
&\lesssim \left\|W(x)A_Q^{-1}\right\|\left[\frac{l(Q)}{|x - c_Q|}\right]^{n+s+\delta} \left\|{\mathbf{1}}_Q\right\|^{-1}_{L^{p(\cdot)}}{\nonumber} \\
&\lesssim \left\|W(x)A_Q^{-1}\right\|\left[\frac{l(Q)}{l(Q) + |x - c_Q|}\right]^{n+s+\delta} \left\|{\mathbf{1}}_Q\right\|^{-1}_{L^{p(\cdot)}} .
\end{align}
From \cite[Corollary 2.1.12]{g14} and $\psi\in\mathcal S$, it follows that,
for any $x\in B(c_Q,4r_Q)$,
\begin{align*}
M_W\left( T\vec{a}_Q,\psi \right)(x)
\le\left\|W(x) A_Q^{-1}\right\|\sup_{t\in(0,\infty)}
\left|\psi_t*T(A_Q\vec a_Q)\right|
\lesssim \left\|W(x) A_Q^{-1}\right\| {\mathcal M}\left( \left| T\left( A_Q \vec{a}_Q \right) \right| \right),
\end{align*}
which, combined with \eqref{Hp CZ eq 3}, further implies that \eqref{Hp CZ eq 4} holds.

Now, fix $\vec{f}\in \mathcal{O}_s \cap H^{p(\cdot)}_W$.
Similarly to the proof of the boundedness of $T$ from $H^{p(\cdot)}_W$ to $L^{p(\cdot)}_W$,
there exist a sequence $\{\lambda_k\}_{k\in{\mathbb{Z}}}$ in $\mathbb C$
and a sequence of $(p(\cdot),\infty,s)_W$-atoms $\{\vec{a}_k\}_{k\in{\mathbb{Z}}}$
satisfying \eqref{Hp CZ eq 5} and \eqref{Hp CZ eq 7}
and, moreover, $\vec{f} := \sum_{k\in{\mathbb{Z}}} \lambda_k \vec{a}_k$
in $L^q$ for any $q\in (0,\infty)$.
Using \eqref{Hp CZ eq 4} and the linearity of $T$,
we find that
\begin{align}\label{Hp CZ eq 6}
\left\| T\vec f \right\|_{H^{p(\cdot)}_W}
&\lesssim \left\| \sum_{k\in\mathbb Z}|\lambda_k| M_W \left(T\vec{a}_k,\psi \right) \right\|_{L^{p(\cdot)}}{\nonumber} \\
&\lesssim \left\| \sum_{k\in\mathbb Z}|\lambda_k| \left\|W(\cdot) A_{Q_k}^{-1}\right\| {\mathcal M}\left( \left| T\left( A_{Q_k} \vec{a}_k \right) \right| \right) {\mathbf{1}}_{B(c_k,4r_k)}\right\|_{L^{p(\cdot)}}{\nonumber} \\
&\quad +\left\| \sum_{k\in\mathbb Z}\frac{|\lambda_k|}{\|{\mathbf{1}}_{Q_k}\|_{L^{p(\cdot)}}} \left\|W(\cdot)A_{Q_k}^{-1}\right\|\left[\frac{l_k}{l_k + |\cdot - c_k|}\right]^{n+s+\delta} \right\|_{L^{p(\cdot)}}.
\end{align}
Then, by Lemma \ref{a atom eq le} with $T$ replaced by ${\mathcal M} \circ T$ and the boundedness of ${\mathcal M}$ and $T$ in $L^q$
for any $q\in (1,\infty)$ and by the definition of $(p(\cdot),q,s)_W$-atoms, we obtain
\begin{align*}
\left\| \sum_{k\in\mathbb Z}|\lambda_k| \left\|W(\cdot) A_{Q_k}^{-1}\right\| {\mathcal M}\left( \left| T\left( A_{Q_k} \vec{a}_k \right) \right| \right) {\mathbf{1}}_{B(c_k,4r_k)}\right\|_{L^{p(\cdot)}}
\lesssim \left\| \left\{ \sum_{k\in\mathbb Z} \left[\frac{|\lambda_k|}{\| {\mathbf{1}}_{Q_k} \|_{L^{p(\cdot)}}}\right]^r {\mathbf{1}}_{Q_k} \right\}^\frac1r \right\|_{L^{p(\cdot)}}.
\end{align*}
This, together with \eqref{Hp CZ eq 6} and Lemma \ref{f dec eq le},
further implies that
\begin{align*}
\left\| T\vec f \right\|_{H^{p(\cdot)}_W}
&\lesssim \left\| \left\{ \sum_{k\in\mathbb Z}  \left[\frac{|\lambda_k|}{\| {\mathbf{1}}_{Q_k} \|_{L^{p(\cdot)}}}\right]^r {\mathbf{1}}_{Q_k} \right\}^\frac1r \right\|_{L^{p(\cdot)}}.
\end{align*}
Using this and \eqref{Hp CZ eq 7}, we conclude that
\begin{align*}
\left\| T\left(\vec{f}\right) \right\|_{H^{p(\cdot)}_W}
\lesssim \left\| \vec{f} \right\|_{H^{p(\cdot)}_W},
\end{align*}
which, combined with Proposition \ref{H dense O}, further implies that
$T$ can be extended to a bounded linear operator $\widetilde{T}_2$ from $H^{p(\cdot)}_W$ to $H^{p(\cdot)}_W$.
This finishes the proof of Theorem \ref{bound CZ}.
\end{proof}

\noindent\textbf{Funding}\quad This project is partially supported by the National
Natural Science Foundation of China (Grant Nos. 12431006 and 12371093),
the Beijing Natural Science Foundation
(Grant No. 1262011),
and the Fundamental Research Funds
for the Central Universities (Grant No. 2253200028).

\bigskip

\noindent\textbf{Data availability}\quad There is no data.

\bigskip

\noindent\textbf{Code availability}\quad Not applicable.

\section*{Declarations}

\noindent\textbf{Conflict of interest}\quad There is no conflict of interest.

\bigskip

\noindent Yiqun Chen, Dachun Yang(Corresponding author), Wen Yuan and
Zongze Zeng

\medskip

\noindent Laboratory of Mathematics and Complex Systems (Ministry of Education of China),
School of Mathematical Sciences, Institute for Advanced Study,
Beijing Normal University, Beijing 100875,
The People's Republic of China

\smallskip

\noindent{\it E-mails:} \texttt{yiqunchen@mail.bnu.edu.cn} (Y. Chen)

\noindent\phantom{{\it E-mails:} }\texttt{dcyang@bnu.edu.cn} (D. Yang)

\noindent\phantom{{\it E-mails:} }\texttt{wenyuan@bnu.edu.cn} (W. Yuan)

\noindent\phantom{{\it E-mails:} }\texttt{zzzeng@mail.bnu.edu.cn} (Z. Zeng)

\end{document}